\documentclass[12pt,a4paper]{amsart}
\usepackage{amssymb,,verbatim,ulem,soul}
\usepackage{graphicx,amssymb,amsfonts,epsfig,amsthm,a4,amsmath,url,graphicx,enumerate}
\usepackage[latin1]{inputenc}
\usepackage[all]{xy}

\usepackage[usenames,dvipsnames]{color}
\hyphenation{unimod-ular}
\hyphenation{non-unimod-ular}

\newtheorem*{thmm}{Theorem}

\theoremstyle{definition}

\numberwithin{equation}{section}
\newtheorem{thm}[equation]{Theorem}
\newtheorem{cor}[equation]{Corollary}
\newtheorem{lem}[equation]{Lemma}

\newtheorem{prop}[equation]{Proposition}
\theoremstyle{definition}
\newtheorem{defn}[equation]{Definition}

\newtheorem{que}[equation]{Question}

\newtheorem{rem}[equation]{Remark}
\newtheorem{ex}[equation]{Example}

\newtheorem{ThmIntro}{Theorem}
\newtheorem{CorIntro}[ThmIntro]{Corollary}
\newtheorem{PropIntro}[ThmIntro]{Proposition}
\theoremstyle{definition}
\newtheorem{DefnIntro}[ThmIntro]{Definition}
\newtheorem{RemIntro}[ThmIntro]{Remark}

\hyphenation{isometri-cally}

%
\newcommand{\eps}{\varepsilon}
\newcommand{\Z}{\mathbf{Z}}
\newcommand{\F}{\mathbf{F}}
\newcommand{\N}{\mathbf{N}}
\newcommand{\R}{\mathbf{R}}
\newcommand{\Q}{\mathbf{Q}}

\newcommand{\K}{\mathbf{K}}
\newcommand{\g}{\mathfrak{g}}

\newcommand{\mk}{\mathfrak}

\newcommand{\Hom}{\textnormal{Hom}}
\newcommand{\supp}{\textnormal{supp}}
\newcommand{\Aut}{\textnormal{Aut}}

\newcommand{\GL}{\textnormal{GL}}

\newcommand{\SL}{\textnormal{SL}}

\newcommand{\SOL}{\textnormal{SOL}}

\newcommand{\Aff}{\textnormal{Aff}}

\newcommand{\Contr}{\textnormal{Contr}}
\newcommand{\Ind}{\textnormal{Ind}}

\newcommand{\End}{\textnormal{End}}

\newcommand{\Diam}{\textnormal{Diam}}

\newcommand{\lp}{(\!(}
\newcommand{\rp}{)\!)}

\newcommand{\HFD}{\mathcal{H}_{\mathrm{fd}}}
\newcommand{\HT}{\mathcal{H}_{\mathrm{t}}}

\newcommand{\WAPT}{\mathcal{WAP}_{\mathrm{t}}}
\newcommand{\WAPAP}{\mathcal{WAP}_{\mathrm{ap}}}
\newcommand{\WAPFD}{\mathcal{WAP}_{\mathrm{fd}}}
\newcommand{\APFD}{\mathcal{AP}_{\mathrm{fd}}}
\newcommand{\APT}{\mathcal{AP}_{\mathrm{fd}}}
\newcommand{\KK}{\mathcal{K}}
\newcommand{\KT}{\mathcal{K}^\dagger}

\begin{document}

\title[Vanishing of reduced cohomology for Banach representations]{On the vanishing of reduced 1-cohomology for Banach representations}


\author[Cornulier]{Yves Cornulier}
\address{CNRS and Univ Lyon, Univ Claude Bernard Lyon 1, Institut Camille Jordan, 43 blvd. du 11 novembre 1918, F-69622 Villeurbanne}
\email{cornulier@math.univ-lyon1.fr}
\author[Tessera]{Romain Tessera}
\address{Laboratoire de Math\'ematiques\\
B\^atiment 425, Universit\'e Paris-Sud 11\\
91405 Orsay\\FRANCE}
\email{romain.tessera@math.u-psud.fr}

\subjclass[2010]{43A65 (primary); 20F16, 22D12, 37A30, 46B99 (secondary)}


		
\date{November 15, 2017}

\begin{abstract}
A theorem of Delorme states that every unitary representation of a connected Lie group with nontrivial reduced first cohomology has a finite-dimensional subrepresentation. More recently Shalom showed that such a property is inherited by cocompact lattices and stable under coarse equivalence among amenable countable discrete groups. We give a new geometric proof of Delorme's theorem which extends to a larger class of groups, including solvable $p$-adic algebraic groups, and finitely generated solvable groups with finite Pr\"ufer rank.  
 Moreover all our results apply to isometric representations in a large class of Banach spaces, including reflexive Banach spaces. As applications, we obtain an ergodic theorem in for integrable cocycles, as well as a new proof of Bourgain's Theorem that the 3-regular tree does not embed quasi-isometrically into any superreflexive Banach space.
\end{abstract} 

\maketitle

\setcounter{tocdepth}{1}
\tableofcontents
\section{Introduction}
\subsection{Background}

Let $G$ be a locally compact group. We consider representations of $G$ into Banach spaces. It is thus convenient to call $G$-module a Banach space $V$ endowed with a representation $\rho$ of $G$, by bounded automorphisms, in a way that the mapping $g\mapsto gv=\rho(g)v$ is continuous for all $v\in V$. We will denote by $V^G$ the subspace of $G$-fixed points.

The space $Z^1(G,V)$ (also denoted $Z^1(G,\rho)$) of 1-cocycles is the set of continuous maps $b:G\to V$ satisfying the 1-cocycle condition $\rho(gh)=\rho(g)b(h)+b(g)$. It is endowed with the topology of uniform convergence on compact subsets. The subspace of coboundaries $B^1(G,V)$ consists of those $b$ of the form $b(g)=v-\rho(g)v$ for some $v\in V$. It is not always closed, and the quotient of $Z^1(G,V)$ by its closure is called the first reduced cohomology space $\overline{H^1}(G,V)$ (or $\overline{H^1}(G,\rho)$). See notably the reference book \cite{Guit}.

Vanishing properties of the first reduced cohomology has especially been studied in the context of unitary representations on Hilbert spaces. If $G$ satisfies Kazhdan's Property T, it is a classical result of Delorme that $\overline{H^1}(G,V)=0$ (and actually $H^1$ itself vanishes) for every unitary Hilbert $G$-module $V$. See also \cite[Chapter 2]{BHV08}. For $G$ a discrete finitely generated group, the converse was established by Mok and Korevaar-Schoen \cite{Mok,KS}: if $G$ fails to satisfy Kazhdan's Property T then $\overline{H^1}(G,V)\neq 0$ for some unitary Hilbert $G$-module $V$. A more metrical proof was provided by Gromov in \cite{GrRW}, and Shalom \cite{Shal} extended the result to the setting of non-discrete groups, see also \cite[Chap.\ 3]{BHV08}. 

The groups in which we will be interested will usually be amenable (and non-compact), so that this non-vanishing result holds. However, it often happens that unitary representations with non-vanishing 1-cohomology for a given group are rare. For instance, it is an easy observation of Guichardet \cite{G72} that if $G$ is abelian, or more generally nilpotent, and $V$ is a Hilbert $G$-module with $V^G=0$, then $\overline{H^1}(G,\pi)=0$. In particular, the only irreducible unitary representation with non-vanishing $\overline{H^1}$ is the trivial 1-dimensional representation. Shalom \cite{Shalom} thus introduced the following terminology: if $G$ satisfies the latter property, it is said to satisfy Property $\HT$. He also introduced a natural slightly weaker invariant: $G$ has Property $\HFD$ if for every unitary Hilbert $G$-module with no $G$-submodule (= $G$-invariant closed subspace) of positive finite dimension, we have $\overline{H^1}(G,V)=0$. This can also be characterized as follows: $G$ has Property $\HFD$ if and only if every irreducible unitary representation with $\overline{H^1}\neq 0$ has finite dimension, and there are only countably many up to equivalence. Nontrivial examples were provided by the following theorem of P.\ Delorme. 

\begin{thmm}[\cite{Del}, Th.\ V6, Cor.\ V2]
Let $G$ be a connected solvable Lie group and let $V$ be an irreducible unitary $G$-module. Assume that $V$ is not a tensor power of a character occurring as quotient of the adjoint representation. Then $V$ has zero first reduced cohomology. 
\end{thmm}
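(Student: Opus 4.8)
The plan is to establish the contrapositive in sharp form: if some $b\in Z^1(G,V)$ has nonzero class in $\overline{H^1}(G,V)$, then $V$ is finite-dimensional and, after passing to a tensor power, arises from a character occurring as a quotient of the adjoint representation. I would argue by induction on $\dim G$.

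\emph{First reductions.} An irreducible finite-dimensional unitary module over a connected solvable group is one-dimensional, so in the finite-dimensional case the statement only asks which unitary characters $\chi$ can have $\overline{H^1}(G,\chi)\neq 0$; this I would settle by a direct computation on the abelianization of $G$, using the elementary identification $\overline{H^1}(H,\sigma)\cong\sigma^H$ when $H$ is cyclic or a one-parameter subgroup (so that a reduced class is detected by its ``linear part'', valued in the invariants), which forces the surviving $\chi$ to be exactly the ones that, up to a tensor power, come from a character occurring as a quotient of $\mathrm{Ad}$. Hence from now on $V$ is infinite-dimensional, $V^G=0$, and the goal is $\overline{H^1}(G,V)=0$.

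Next, let $R$ be the exponential radical of $G$, that is, the smallest closed normal subgroup with $G/R$ of polynomial growth; for connected solvable $G$ it is a simply connected nilpotent closed normal subgroup, nontrivial precisely when $G$ has exponential growth. If $V^R\neq 0$ then $V^R=V$ by irreducibility, $V$ is a module over the polynomial-growth group $G/R$, and the vanishing follows from Guichardet's nilpotent theorem recalled above together with a standard d\'evissage using amenability (induction on the derived length, averaging over the relatively compact part of the action on the nilradical). So assume $V^R=0$. The inductive step consists in showing that $b|_R$ lies in $\overline{B^1(R,V)}$ in a $G$-equivariant way; subtracting a coboundary then makes $b$ vanish on $R$, so that $b$ descends to a cocycle on $G/R$ and the polynomial-growth case closes the induction.

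The heart of the matter is thus the equivariant vanishing of $b$ on $R$, and this is the step I expect to be the main obstacle. I would use the defining dynamics of the exponential radical: there is a one-parameter subgroup $(a_s)_{s\in\R}$ of $G$ whose adjoint action on $\mathrm{Lie}(R)$ has eigenvalues $e^{s\lambda}$, with $\lambda$ ranging over a finite set of nonzero reals; filtering $R$ according to the sign of $\lambda$, one reduces to a closed normal subgroup $N\le R$ which the flow $(a_s)$ contracts to the identity exponentially fast as $s\to+\infty$. The cocycle identity then gives
\[
b(a_sna_{-s})=\rho(a_s)b(n)+\bigl(1-\rho(a_sna_{-s})\bigr)b(a_s)
\]
for $n\in N$, whose left-hand side tends to $b(1)=0$; since $\|\rho(a_s)b(n)\|=\|b(n)\|$ it would be enough to show $(1-\rho(a_sna_{-s}))b(a_s)\to 0$. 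The naive approach fails, however: a strongly continuous isometric representation has no uniform modulus of continuity on bounded sets, and subtracting a coboundary to make $b|_{\langle a_s\rangle}$ small on a time-window forces the modifying vector to have unbounded norm. I would instead exploit the geometry of the target space: the ``Cartan-direction'' cocycle $s\mapsto b(a_s)$ has reduced class in $V^{(a_s)}$ by the computation for $\R$ above, while conjugation by $a_s$ rescales the spectral data of $\rho|_N$ by $e^{\pm s\lambda}$; passing to an ultralimit or asymptotic cone of the affine action, the absence of $(a_s)$-invariant and of $N$-invariant vectors should force an honest fixed point there, hence the boundedness of $b|_N$. Performing all of this uniformly in $\overline{B^1}$ and $G$-equivariantly, rather than for a single cocycle, is the accompanying technical load, and is exactly where the hypothesis that $V$ is reflexive --- or, more generally, lies in a class of Banach spaces in which bounded affine isometric orbits possess fixed points and the mean ergodic theorem holds --- becomes indispensable.
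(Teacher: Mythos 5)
Your outline stalls exactly at the step you yourself flag as the main obstacle, and that step is a genuine gap, not a technicality. The claim that ``passing to an ultralimit or asymptotic cone of the affine action, the absence of $(a_s)$-invariant and of $N$-invariant vectors should force an honest fixed point there, hence the boundedness of $b|_N$'' is not an argument: nothing in the absence of invariant vectors prevents the affine action from having unbounded orbits on the contracted subgroup, and no mechanism is given for extracting a fixed point in a cone and transporting it back to $V$. Moreover, even granting what you want on $R$, the reduction is misstated: knowing $b|_R\in\overline{B^1(R,V)}$ (a closure) does not allow you to subtract a single coboundary making $b$ vanish on $R$; and if you could, then since $R$ is normal the cocycle identity forces $b$ to take values in $V^R=0$, i.e.\ $b$ would be an honest coboundary, contradicting your standing hypothesis. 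So the real difficulty is precisely the passage from ``almost coboundary on a subgroup'' to ``almost coboundary on $G$'', which your plan assumes rather than proves.

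The paper's proof is organized to avoid this head-on collision, and runs the dynamics in the opposite direction. Mautner's phenomenon (Lemma \ref{mautner}, Lemma \ref{prop:C}) is used only at the level of fixed vectors, to transfer $V^G=0$ to $V^N=0$ for a nilpotent (Cartan-type) subgroup $N$, not to kill the cocycle on the contracted part. Then $N$ has Property $\WAPT$ by a central-extension/ergodic-averaging argument (Proposition \ref{prop:ni}, Corollary \ref{nilwapt}), so $b|_N$ is an almost coboundary, hence \emph{sublinear} on $N$ (Proposition \ref{prop:sublin}); $N$-combability of the group (Theorem \ref{thm:Ncombable}) propagates sublinearity to all of $G$, and a controlled F\o lner sequence (Theorem \ref{th:C_strong}) converts sublinearity back into the almost-coboundary property on $G$ (this is the injectivity of restriction, Lemma \ref{cohinj}). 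Vanishing for triangulable groups then passes to all connected solvable Lie groups by induction/cocompact-subgroup arguments (Theorem \ref{thm:subgroup}, Propositions \ref{prop:subgroup} and \ref{prop:compactExt}), and the conclusion of Delorme's statement follows because finite-dimensional irreducible unitary modules of such groups are characters. The quantitative sublinearity--F\o lner--combing machinery is exactly the ingredient your proposal is missing; without it, or a worked-out substitute for your asymptotic-cone claim, the proof does not close.
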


Using Shalom's subsequent terminology, it follows that connected solvable Lie group have Property $\HFD$. Florian Martin \cite{FM} extended this (and the above theorem) to arbitrary amenable connected Lie groups.

Delorme's proof takes more than 10 pages, involving a lot of ad-hoc analytical arguments and strongly relies on representation theory of the Lie algebra. It implies in particular that such $G$ has Property $\HFD$. Shalom proved in \cite{Shalom} that Property $\HFD$ is invariant under passing to cocompact lattices. As a consequence, it is satisfied by virtually polycyclic groups: indeed such a group $\Gamma$ has a finite index subgroup $\Gamma'$ embedding as a cocompact lattice in a group $G$ as in the theorem, and then Property $\HFD$ thus successively passes from $G$ to $\Gamma'$ and then to $\Gamma$.

A motivation for Property $\HFD$ is to find interesting finite-dimensional representations. More precisely, if $G$ is an infinite discrete amenable group with Property $\HFD$, it is easy to deduce that $G$ admits an infinite virtually abelian quotient.

In this work, we provide a new, simpler proof of Delorme's theorem based on geometric/dynamical considerations. This allows to extend the previous results in two directions: first our approach allows to encompass a much larger class of groups, and second it allows to generalize it to uniformly bounded representations in more general Banach spaces.

A crucial feature which is used all the time in the context of unitary representations is the notion of ``orthogonal complement". Although this notion does not survive in the more general framework of Banach spaces, a weaker and yet very powerful property holds for a large class of Banach $G$-modules: the subspace of invariant vectors is a factor, in fact it admits a canonical complement. 

Recall that 
a $G$-module  $(V,\pi)$ is called weakly almost periodic (WAP for short) if for every $v\in V$, the orbit $\pi(G)v$ is relatively compact in $V$ in the weak topology. 
Note that this does not depend on a choice of topology on $G$. As observed in \cite{BRS}, it implies that $\rho$ is a uniformly bounded representation: $\sup_{g\in G}\|\rho(g)\|<\infty$, and in case $V$ is reflexive, this is equivalent to being a uniformly bounded representation. WAP representations turn out to be a convenient wide generalization of unitary representations.

\begin{DefnIntro}
We say that a locally compact group $G$ has Property $\WAPT$ if every WAP $G$-module $V$ with $V^G=\{0\}$ has $\overline{H^1}(G,V)=0$.
\end{DefnIntro}

Property $\WAPT$ is a strengthening of Property $\HT$. Using that there a $G$-invariant complement (\cite[Theorem 14]{BRS}, see \S\ref{sec:decoca}), this means that for every WAP $G$-module $V$, the reduced 1-cohomology is ``concentrated" in $V^G$. Observe that a 1-cocycle valued in $V^G$ is just a continuous group homomorphism. It follows, for instance that for a compactly generated, locally compact group $G$ without Kazhdan's Property T (e.g., amenable and non-compact), the condition $\mathrm{Hom}(G,\R)=0$ is an obstruction to Property $\HT$, and hence to Property $\WAPT$.  

This explains why it is, in the context of unitary representations, natural to deal with the more flexible Property $\HFD$. Defining an analogue in this broader context leads to some technical difficulties, which leads us to introduce two distinct notions. First, recall that a Banach $G$-module is almost periodic if all orbits closures are compact (in the norm topology). This is obviously a strengthening of being WAP. This is satisfied by finite-dimensional uniformly bounded modules, and it can be checked (Corollary \ref{sumfin}) that a uniformly bounded Banach $G$-module is almost periodic if and only if the union of its finite-dimensional submodules is dense. In general, the set of vectors whose $G$-orbit has compact closure is a closed submodule, denoted $V^{G,\mathrm{ap}}$, and, by the above, equals the closure of the union of finite-dimensional submodules.

\begin{DefnIntro}
Let $G$ be a locally compact group.
\begin{itemize}
\item $G$ has Property $\WAPAP$ if for every WAP Banach $G$-module $V$ with $V^{G,\mathrm{ap}}=0$ we have $\overline{H^1}(G,V)=0$;
\item $G$ has Property $\WAPFD$ if for every WAP Banach $G$-module $V$ and every 1-cocycle $b$ that is nonzero in $\overline{H^1}(G,V)$, there exists a closed submodule $W$ of nonzero finite codimension such that the projection of $b$ on $V/W$ is unbounded.
\end{itemize}
\end{DefnIntro}

Since finite-codimensional submodules are complemented (Proposition \ref{fcodim:compl}), Property $\WAPFD$ implies Property $\WAPAP$. We do not know if the converse holds (see Question \ref{qwaeq} and the discussion around it).

\subsection{Main results}
In the sequel, we abbreviate ``compactly generated locally compact" as ``CGLC".

Our main result is the fact that a relatively large class of CGLC groups, including connected solvable Lie groups, algebraic solvable $p$-adic groups, and finitely generated solvable groups with finite Pr\" ufer rank, satisfy property $\WAPAP$.  
Even in the case of connected solvable Lie groups, the proof is not merely an adaptation of Delorme's proof, which is specific to the Hilbert setting. Instead it uses a dynamical phenomenon which is very specific to these groups. 

In order to illustrate this dynamical phenomenon, let us examine the simplest example where it arises: the affine group $\Aff(\R):=U\rtimes A$, where $U\simeq A\simeq \R$, and where the group law is given by $(x,t)(y,s)=(x+e^ty,t+s)$. The important feature of this group is the fact that the normal subgroup $U$ is ``contracted" by the action of $A$: i.e.\ given $a_t=(0,t)\in A$ and $u=(x,0)\in U$, one has $$a_t^{-1}ua_t=(e^{-t}x,0),$$ from which we deduce that $a_t^{-1}ua_t\to (0,0)$ as $t\to \infty$. The group $\Aff(\R)$ turns out to have Property $\WAPT$, and, roughly speaking, the proof consists in proving that $U$ behaves as if it did not exist at all, so that everything boils down to the fact that $A$ itself satisfies $\WAPT$. 

An elaboration of this argument applies to the following ad-hoc class of groups:

\begin{DefnIntro}\label{defc}
Denote by $\mathfrak{C}$ the class of (solvable) locally compact groups $G$ having
 two closed subgroups $U$ and $N$ such that
\begin{enumerate}
\item $U$ is normal and $G=UN$;
\item\label{ssg0} $N$ is a CGLC compact-by-nilpotent group, i.e., is compactly generated and has a compact normal subgroup such that the quotient is nilpotent;
\item\label{ssg1} $U$ decomposes as a finite direct product $\prod U_i$, where each $U_i$ is normalized by the action of $N$ and is an open subgroup of a unipotent group $\mathbb{U}_i(\K_i)$ over some non-discrete locally compact field of characteristic zero $\K_i$. 
\item\label{ssg3} $U$ admits a cocompact subgroup $V$ with, for some $k$, a decomposition $V=V_1V_2\ldots V_k$ where each $V_i$ is a subset such that there is an element $t=t_i\in N$ such that $t^{-n}vt^n\to 1$ as $n\to \infty$ for all $v\in V_i$.
\end{enumerate}
\end{DefnIntro}

This notably includes (see Proposition \ref{ex_clasc})
\begin{itemize}
\item real triangulable connected Lie groups;
\item groups of the form $G=\mathbb{G}(\mathbf{Q}_p)$, where $\mathbb{G}$ is a solvable connected linear algebraic group defined over the $p$-adic field $\Q_p$ such that $G$ is compactly generated;
\item mixtures of the latter, such as the semidirect product $(\mathbf{K}_1\times\mathbf{K}_2)\rtimes_{(t_1,t_2)}\Z$, where $\mathbf{K}_i$ is a nondiscrete locally compact field and $|t_i|\neq 1$.
\end{itemize}

Let us also pinpoint that in many cases, the method applies without the characteristic zero assumption in (\ref{ssg1}). Namely, assuming that $U=U_0\times\bigoplus_p U_p$ where $U_0$ is the characteristic zero part and $U_p$ is the $p$-torsion ($p$ ranges over primes, with only finitely many $p$ for which $U_p\neq 1$), this applies if for every $p>0$, $U_p$ is $(p-1)$-step-nilpotent, so that it naturally has a Lie algebra, and this Lie algebra has a $G$-invariant structure of Lie algebra over $\mathbf{F}_p(\!(t)\!)$.

Here is our first main result

\begin{ThmIntro}\label{Main}
Locally compact groups in the class $\mathfrak{C}$ have Property $\WAPT$. In particular, they have Property $\HT$.
\end{ThmIntro}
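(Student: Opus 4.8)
The proof proceeds by reducing, step by step, the cohomological problem on a group $G = UN$ in the class $\mathfrak{C}$ to the corresponding problem on the compact-by-nilpotent group $N$, where it follows from Guichardet-type arguments. Fix a WAP $G$-module $V$ with $V^G = 0$, and let $b \in Z^1(G,V)$ represent a class in $\overline{H^1}(G,V)$; we must show $b$ is a limit of coboundaries. Using the canonical $G$-invariant decomposition $V = V^G \oplus V_G$ afforded by \cite[Theorem 14]{BRS} (see \S\ref{sec:decoca}) — here trivial since $V^G=0$, but used repeatedly for submodules — and the fact that $N$ is compactly generated, we may work with a fixed compact generating set and the topology of uniform convergence on compact sets.

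\medskip
\textbf{Step 1: killing $U$ via contraction.} The heart of the argument is the dynamical phenomenon highlighted for $\Aff(\R)$: because of condition \eqref{ssg3}, the cocompact subgroup $V \leq U$ is a product $V_1\cdots V_k$ of pieces each contracted by conjugation by some $t_i \in N$, i.e.\ $t_i^{-n} v t_i^n \to 1$ for $v \in V_i$. For $v \in V_i$, the cocycle identity gives $b(t_i^{-n}vt_i^n) = \rho(t_i^{-n})\big(b(v) + \rho(v)b(t_i^n) - b(t_i^n)\big) + b(t_i^{-n})$, and after rearranging one finds that $b(v)$ differs from a coboundary-type term $(1-\rho(v))w_n$ by something controlled by $b(t_i^{-n}vt_i^n) \to b(1) = 0$ together with uniform boundedness of $\rho$. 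The WAP hypothesis is what lets one extract, from the bounded sequences so produced, weak limit points that assemble into a vector realizing $b|_U$ (up to an arbitrarily small error) as a coboundary; this is the step where relative weak compactness of orbits is genuinely used, rather than mere uniform boundedness. Concretely I expect to show: modifying $b$ by a coboundary, one may assume $b$ vanishes on a cocompact subgroup of $U$, and then — using that $U_i$ is open in a unipotent group over a characteristic-zero local field, so that $V_i$-pieces generate $U$ topologically and the relevant exponential/coordinate structure is available — that $b|_U$ itself is, up to norm-small error, a coboundary; hence after a further small perturbation $b|_U = 0$.

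\medskip
\textbf{Step 2: descending to $N$.} Once $b|_U = 0$, the cocycle identity $b(un) = \rho(u)b(n)$ for $u\in U$, $n \in N$ shows that $b$ is determined by $b|_N$, and that $b|_N \in Z^1(N, V)$ takes values in the subspace of $U$-fixed vectors $V^U$ (indeed $b(n) = b(u^{-1}\cdot un) $ computations force equivariance constraints forcing $b|_N$ into $V^U$, using normality of $U$). Now $V^U$ is a closed $N$-submodule, still WAP, and one checks $(V^U)^N \subseteq V^G = 0$. So it suffices to prove that $N$ has Property $\WAPT$: this is where condition \eqref{ssg0} enters. A compact-by-nilpotent CGLC group has Property $\WAPT$ by the Guichardet argument adapted to the WAP setting — one runs the classical induction on nilpotency length, at each stage using that for a central (mod compact) subgroup $Z$ one can average/use the weak compactness to split off $V^Z$, together with $\mathrm{Hom}$-type vanishing; the compact normal subgroup is handled by averaging over Haar measure, which preserves WAP-ness. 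This sub-result should be isolated as a lemma.

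\medskip
\textbf{Main obstacle.} The delicate point is Step 1, and within it two things: first, passing from "$b$ small on each contracted piece $V_i$" to "$b|_U$ is (nearly) a coboundary" — the pieces only generate a cocompact subgroup, and their products need not behave multiplicatively in an obvious way, so one must carefully use the unipotent-group-over-local-field structure in \eqref{ssg1} (coordinates of the second kind, the fact that the group is generated by its contracted pieces with controlled word length) to propagate the estimate from generators to all of $U$ without losing control; second, the interchange of limits — the weak limits extracting the "potential" vector must be shown compatible across the different generators $t_i$ and across the successive perturbations, which is exactly where WAP (weak relative compactness) rather than reflexivity or uniform boundedness is essential. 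I also expect some care is needed to ensure each perturbation of $b$ by a coboundary is small in the compact-open topology (not just pointwise), so that the final $b$ genuinely lies in $\overline{B^1(G,V)}$. The characteristic-zero hypothesis in \eqref{ssg1} enters precisely to guarantee the exponential coordinates and the torsion-freeness needed to make the contraction estimates in Step 1 go through uniformly; the remark following Definition \ref{defc} indicates how a substitute works in positive characteristic when a Lie algebra over $\mathbf{F}_p(\!(t)\!)$ is available.
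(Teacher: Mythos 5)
Your Step 1 is where the argument breaks down, and the gap is not a technicality. From the contraction relation one does get, for each fixed $i$, that $x_n^{(i)}:=b(t_i^n)$ is a sequence of almost fixed points for the affine action restricted to elements of $V_i$ (this is the computation you sketch). But these sequences depend on $i$, and you give no mechanism for producing a \emph{single} sequence of vectors that works simultaneously for $V_1,\dots,V_k$ and for $N$; the vectors $b(t_i^n)$ are in general unbounded in norm, so the WAP hypothesis yields no weak limit points from them, and "weak compactness of orbits" cannot be invoked to assemble them. Worse, even if $b$ restricted to $U$ (or to the cocompact subgroup $V_1\cdots V_k$) were shown to be an \emph{almost} coboundary, you then propose to "perturb $b$ by a small coboundary" so that $b|_U=0$ exactly. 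That move is illegitimate: an element of $\overline{B^1}$ need not be in $B^1$, and the whole point of reduced cohomology is precisely this gap, so you cannot normalize $b$ to vanish on $U$ (nor on a cocompact subgroup of $U$) and then run Step 2, which genuinely needs $b|_U\equiv 0$ to force $b(N)\subset V^U$. Finally, your claim that the contracted pieces generate $U$ topologically is false in general: they generate only the divisible part $U_{\mathrm{div}}$ (Lemma \ref{divcon}), which is cocompact but can be proper (e.g.\ compact open $p$-adic directions carry no contraction); the paper has to enlarge $N$ to $N'=NW$ by a compact subgroup $W$ with $U=U_{\mathrm{div}}W$ and invoke Proposition \ref{prop:compactExt} to deal with exactly this point.

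The paper's proof avoids all of this by never attempting to trivialize the cocycle on $U$. The contraction enters only through Mautner's phenomenon (Lemma \ref{mautner} and Lemma \ref{prop:C}), i.e.\ at the level of fixed \emph{vectors}: it gives $V^{N'}=V^G=0$, so that Property $\WAPT$ of the compact-by-nilpotent group $N'$ (Corollary \ref{nilwapt} with Proposition \ref{prop:compactExt} — this part of your outline matches the paper) makes $b|_{N'}$ an almost coboundary. The passage back from $N'$ to $G$ — the step your plan tries to do by hand — is done geometrically, via the sublinearity criterion: $b|_{N'}$ almost coboundary $\Rightarrow$ $b$ sublinear on $N'$, combability of $G$ over $N'$ propagates sublinearity to all of $G$, and a controlled F\o lner sequence converts sublinearity back into the almost-coboundary property (Proposition \ref{prop:sublin}, Lemma \ref{cohinj}, Theorem \ref{prop:main}). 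These two geometric inputs, the strong controlled F\o lner sequences of Theorem \ref{th:C_strong} and the $N$-combability of Theorem \ref{thm:Ncombable}, are substantial and are exactly what your proposal omits; without them (or a genuine substitute for combining almost-fixed-point sequences across the pieces $V_i$ and $N$), the reduction to $N$ is not established.
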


The proof of Theorem \ref{Main} involves several steps of independent interest, including the existence of ``strong controlled F\o lner subsets" for groups in the class $\mathfrak{C}$ (Theorem \ref{th:C_strong}).

In order to extend Theorem \ref{Main}, we use induction methods as in \cite{Shalom,BFGM} to obtain

\begin{ThmIntro}(see Section \ref{propwa})~\label{introstab}
\begin{enumerate}
\item Properties $\WAPT$ and $\WAPAP$ are inherited by closed cocompact subgroups such that the quotient has an invariant probability measure.
In particular these properties are inherited by closed cocompact subgroups among amenable groups.
\item Properties $\WAPT$ and $\WAPAP$ are invariant (i.e., both the property and its negation is stable) under taking quotients by compact normal subgroups.
\item Let $G$ be a locally compact group with a closed normal cocompact subgroup $N$. If $N$ has Property $\WAPT$ then $G$ has Property $\WAPAP$.
\item Let $\Lambda$ be a countable discrete group with Property $\WAPAP$ and $\Gamma$ a countable discrete group admitting an RCE (random cocompact embedding, see \S\ref{mecmec}) into $\Lambda$. Then $\Gamma$ also has Property $\WAPAP$.  
In particular, Property $\WAPAP$ is stable under coarse equivalence among countable discrete amenable groups. 
\end{enumerate}
\end{ThmIntro}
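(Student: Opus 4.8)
\emph{Strategy.} All four assertions move the relevant reduced cohomology between two groups via the induction/restriction machinery of \cite{Shalom,BFGM}, adapted to WAP Banach modules in \S\ref{propwa}, using repeatedly that the invariant vectors of a WAP module form a complemented submodule (\cite[Theorem 14]{BRS}). The basic input, to be set up in \S\ref{propwa}: if $H$ is closed in $G$ and $G/H$ carries a $G$-invariant probability measure $\mu$, then for a WAP $H$-module $V$ the induced module $\Ind_H^G V$ is a WAP $G$-module, Shapiro's lemma holds as a natural isomorphism $\overline{H^1}(G,\Ind_H^G V)\cong\overline{H^1}(H,V)$, and $(\Ind_H^G V)^G=0$ (resp.\ $(\Ind_H^G V)^{G,\mathrm{ap}}=0$) whenever $V^H=0$ (resp.\ $V^{H,\mathrm{ap}}=0$), since a $G$-fixed section is constant with value in $V^H$ and a section with relatively compact $G$-orbit takes all its values in $V^{H,\mathrm{ap}}$ via the $H$-equivariant evaluation maps. \emph{Part (1)} is then immediate: if $H\le G$ is closed cocompact with a $G$-invariant probability measure on $G/H$ and $V$ is a WAP $H$-module with $V^H=0$ (resp.\ $V^{H,\mathrm{ap}}=0$), then $\Ind_H^G V$ is a WAP $G$-module with no nonzero invariant (resp.\ compact-orbit) vector, so Property $\WAPT$ (resp.\ $\WAPAP$) of $G$ forces $\overline{H^1}(G,\Ind_H^G V)=0$, hence $\overline{H^1}(H,V)=0$ by Shapiro; the amenable case follows since an amenable group acting on a compact quotient admits an invariant probability measure.

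\emph{Part (2).} Let $K\trianglelefteq G$ be compact normal, $\bar G=G/K$, and $V$ a WAP $G$-module. Averaging over $K$ gives a projection $P\colon V\to V^K$ which, by normality of $K$, commutes with the $G$-action, so $V=V^K\oplus V_0$ as $G$-modules with $V_0^K=0$. Every $c\in Z^1(G,V^K)$ vanishes on $K$ (its restriction to $K$ is a continuous homomorphism into the Banach space $V^K$, hence trivial, a Banach space having no nontrivial compact subgroup), hence is $K$-bi-invariant, hence inflated from a cocycle of $\bar G$ valued in the WAP $\bar G$-module $V^K$; matching coboundaries and their closures this gives $\overline{H^1}(G,V^K)\cong\overline{H^1}(\bar G,V^K)$. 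On the other hand $H^1(K,V_0)=0$ ($K$ compact, average a cocycle) and $V_0^K=0$, so the inflation--restriction sequence forces $H^1(G,V_0)=0$, a fortiori $\overline{H^1}(G,V_0)=0$. Therefore $\overline{H^1}(G,V)\cong\overline{H^1}(\bar G,V^K)$ canonically; as $(V^K)^{\bar G}=V^G$ and a vector of $V^K$ has compact $G$-orbit iff it has compact $\bar G$-orbit, this shows $G$ has $\WAPT$ (resp.\ $\WAPAP$) iff $\bar G$ does — the direction ``$\bar G\Rightarrow G$'' being direct, and ``$G\Rightarrow\bar G$'' following by inflating a WAP $\bar G$-module $\bar V$ to a WAP $G$-module with $V^K=\bar V$ and the same invariant/compact-orbit vectors.

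\emph{Part (3).} Let $N\trianglelefteq G$ be closed normal cocompact, so $G/N$ is a compact group, and let $V$ be a WAP $G$-module with $V^{G,\mathrm{ap}}=0$. Since $N$ is normal, $V^N$ is a closed $G$-submodule on which $N$ acts trivially, hence a module over the compact group $G/N$; thus every $G$-orbit in $V^N$ is compact, whence $V^N\subseteq V^{G,\mathrm{ap}}=0$. So $V|_N$ is a WAP $N$-module with $(V|_N)^N=0$, and Property $\WAPT$ of $N$ gives $\overline{H^1}(N,V|_N)=0$. Finally $G/N$ compact carries a $G$-invariant probability measure, so the Shapiro embedding $V\hookrightarrow\Ind_N^G(V|_N)$, $v\mapsto(g\mapsto\rho_V(g)^{-1}v)$, is split $G$-equivariantly by $\mu$-integration; hence the restriction map $\overline{H^1}(G,V)\to\overline{H^1}(N,V|_N)\cong\overline{H^1}(G,\Ind_N^G(V|_N))$ is injective, giving $\overline{H^1}(G,V)=0$, i.e.\ Property $\WAPAP$ for $G$. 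Only $\WAPAP$ is obtained here: it is the hypothesis $V^{G,\mathrm{ap}}=0$, and not merely $V^G=0$, that forces $V^N=0$.

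\emph{Part (4).} Given an RCE of $\Gamma$ into $\Lambda$ (a measure coupling cocompact on the $\Gamma$-side, \S\ref{mecmec}), a WAP $\Gamma$-module $V$ with $V^{\Gamma,\mathrm{ap}}=0$, and a cocycle $b$ nonzero in $\overline{H^1}(\Gamma,V)$, one induces through the measurable cocycle of the coupling a $\Lambda$-module $\widetilde V$ and a cocycle $\widetilde b$, and must verify that $\widetilde V$ is again WAP, that $\widetilde V^{\Lambda,\mathrm{ap}}=0$, and that $\widetilde b$ stays nonzero in $\overline{H^1}(\Lambda,\widetilde V)$; then $\WAPAP$ of $\Lambda$ yields a contradiction, so $\Gamma$ has $\WAPAP$. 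The last sentence of the theorem follows because a coarse equivalence between countable discrete amenable groups produces such an RCE (Shalom's coupling construction for quasi-isometric amenable groups). I expect this part to be the real obstacle: checking that the measure-theoretic induction of a WAP module is again WAP, and that the induced cocycle is not absorbed into an approximate coboundary, cannot rely on orthogonality or functional calculus as in the unitary case, and must instead exploit weak compactness of orbits together with the cocompactness built into the RCE — and, throughout all four parts, one has to carry the ``$\mathrm{ap}$'' notion rather than bare fixed vectors, which is the price of not knowing whether $\WAPFD$ and $\WAPAP$ coincide.
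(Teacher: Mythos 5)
Your parts (2) and (3) are sound, and part (1) is sound for $\WAPT$. In (2) your averaging projection onto $V^K$ and the observation that a cocycle valued in $V^K$ automatically vanishes on $K$ reproduce, in slightly different packaging, the computation of Proposition \ref{prop:compactExt}. In (3) your route differs from the paper's: you get $V^N\subset V^{G,\mathrm{ap}}=0$ and then deduce injectivity of restriction in reduced cohomology from the split $G$-equivariant embedding $V\hookrightarrow \Ind_N^G(V|_N)$ (using the invariant probability on the compact quotient $G/N$), whereas the paper proves this injectivity directly by averaging almost-fixed vectors over a bounded section (Lemma \ref{almostcococo}) and in fact obtains the stronger conclusion $\WAPFD$ in Proposition \ref{prop:subgroup}; your version suffices for the statement as given, granted the induction isomorphism in reduced $1$-cohomology of \S\ref{s:induction}. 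Likewise, for the $\WAPT$ half of (1), showing $(\Ind_H^G V)^G=0$ when $V^H=0$ and invoking that isomorphism is a legitimate variant of the paper's argument, which instead decomposes the induced module by Theorem \ref{decoca}(\ref{jlg1}) and averages the homomorphism part back to land in $V^H$.

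The genuine gap is in the $\WAPAP$ half of (1) and in (4). Both rest on the claim that induction preserves the absence of almost periodic vectors, i.e.\ $(\Ind_H^G V)^{G,\mathrm{ap}}=0$ (resp.\ $\widetilde V^{\Lambda,\mathrm{ap}}=0$) whenever $V^{H,\mathrm{ap}}=0$, and your one-line justification -- ``a section with relatively compact $G$-orbit takes its values in $V^{H,\mathrm{ap}}$ via the $H$-equivariant evaluation maps'' -- does not work: evaluation at a point is not defined on classes of measurable sections, it is not $H$-equivariant in the induced picture (the action moves the base point and twists by the cocycle $\alpha$), and relative norm-compactness of an orbit in $L^2(G/H,V)$ does not pass to the fibres. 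This is exactly the difficulty the paper does not resolve: it proves the $\WAPFD$ statements instead (Theorems \ref{thm:subgroup} and \ref{thm:stabRCE}), by splitting the induced module via Theorem \ref{decoca}(\ref{jlg2}), extracting a finite-dimensional factor on which the induced cocycle is unbounded, and transporting that factor back by integration over the fundamental domain -- the finite dimensionality of the range of the transported cocycle, together with Proposition \ref{prop:complement}, is what makes the transfer possible -- and it explicitly remarks after Theorem \ref{thm:subgroup} that the argument could not be adapted to $\WAPAP$. Part (4), moreover, is not yet a proof: you yourself flag as open precisely the verifications the paper supplies (WAP-ness of the induced module via Proposition \ref{prop:inducedWAP}, i.e.\ \cite{Talagrand}, and non-triviality of the induced class via the maps $I$ and $T$), in addition to the ap-vector claim above, which the paper's route avoids by working with the finite-codimension formulation built into $\WAPFD$.
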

We introduce the terminology RCE to name a slight reinforcement of uniform measure equivalence used in \cite{Shalom}.  

\begin{DefnIntro}\label{defcp}
Let $\mathfrak{C}'$ be the larger class consisting of those locally compact groups $G$ such that there exists a sequence of copci (= continuous, proper with cocompact image) homomorphisms $G\to G_1\leftarrow G_2\to G_3$ such that the image of $G_2\to G_1$ is normal in $G_1$ and $G_3$ belongs to the class $\mathfrak{C}$. 
\end{DefnIntro}

This may sound a bit artificial, but the point is that this definition ensures that all amenable, virtually connected Lie groups belong to the class $\mathfrak{C}'$ (Proposition \ref{vcalfd}), as well as all groups with a open finite index subgroup in the class $\mathfrak{C}$. 

\begin{CorIntro}\label{CPWAPAP}
Locally compact groups in the class $\mathfrak{C}'$ have Property $\WAPFD$. In particular, they have Property $\HFD$.
\end{CorIntro}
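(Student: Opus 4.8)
The plan is to carry Property $\WAPT$ from the core group $G_3\in\mathfrak{C}$ down the defining chain and convert it into Property $\WAPFD$ en route. Fix $G\in\mathfrak{C}'$ together with copci homomorphisms $G\to G_1\leftarrow G_2\to G_3$, with $G_3\in\mathfrak{C}$ and with the image of $G_2\to G_1$ normal in $G_1$; write $\phi_1\colon G\to G_1$, $\phi_2\colon G_2\to G_1$, $\phi_3\colon G_2\to G_3$ for the three maps. First observe that all the groups in sight are amenable: groups in $\mathfrak{C}$ are solvable, and amenability passes to closed subgroups, to extensions by the compact kernels of copci homomorphisms, and to quotients. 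Hence every quotient of one of these groups by a cocompact subgroup carries an invariant probability measure, so the hypotheses of Theorem \ref{introstab}(1) are automatic in all instances below; moreover the arguments proving Theorem \ref{introstab}(1)--(2) apply equally to Property $\WAPFD$, so $\WAPFD$ is likewise inherited by closed cocompact subgroups and invariant under quotients by compact normal subgroups.

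The substantive ingredient is a sharpening of Theorem \ref{introstab}(3): \emph{if a locally compact group $H$ has a closed normal cocompact subgroup $N$ with Property $\WAPT$, then $H$ has Property $\WAPFD$}. To see this, let $V$ be a WAP $H$-module and $b$ a cocycle nonzero in $\overline{H^1}(H,V)$. Viewing $V$ as an $N$-module, \cite[Theorem 14]{BRS} gives a canonical splitting $V=V^N\oplus V''$ with $V''$ a closed submodule satisfying $(V'')^N=0$; since $N$ is normal in $H$, both summands are $H$-submodules. Property $\WAPT$ for $N$ gives $\overline{H^1}(N,V'')=0$, and the cocompact induction used in the proof of Theorem \ref{introstab}(3) upgrades this to $\overline{H^1}(H,V'')=0$; hence, modulo reduced coboundaries, $b$ is carried by $V^N$, on which $H$ acts through the \emph{compact} quotient $H/N$. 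It then remains to prove: for a compact group $K$ acting on a WAP --- hence, by Corollary \ref{sumfin}, almost periodic --- Banach module $A$, every cocycle nonzero in $\overline{H^1}(K,A)$ becomes unbounded after projection to a suitable finite-dimensional quotient module. Here one uses that a uniformly bounded action of a compact group is semisimple (averaging a bounded projection over Haar measure equivariantly complements any closed, in particular any finite-dimensional, submodule of $A$) together with the fact that a bounded cocycle into a finite-dimensional module is a coboundary (a bounded affine action on a finite-dimensional space has a fixed point): choosing a finite-dimensional equivariant projection $P$ that approximates $b$ uniformly on a compact set witnessing $[b]\neq 0$, one finds that $Pb$ must be unbounded --- otherwise $Pb$, being a coboundary, would exhibit $b$ as a reduced coboundary --- and $\Ker P$ is the desired finite-codimensional submodule.

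Granting the sharpening, the corollary follows by traversing the chain. Theorem \ref{Main} gives $G_3$ Property $\WAPT$. Since $\phi_3(G_2)$ is a closed cocompact subgroup of the amenable group $G_3$, it has Property $\WAPT$ by Theorem \ref{introstab}(1), and this crosses the compact kernel $\Ker\phi_3$ to give $G_2$ Property $\WAPT$ by Theorem \ref{introstab}(2). The subgroup $N:=\phi_2(G_2)$ is a quotient of $G_2$ by a compact normal subgroup, hence has Property $\WAPT$, and it is normal and cocompact in $G_1$; by the sharpening, $G_1$ has Property $\WAPFD$. Finally $\phi_1(G)$ is a closed cocompact subgroup of the amenable group $G_1$ and $G\to\phi_1(G)$ has compact kernel, so by the $\WAPFD$-analogues of Theorem \ref{introstab}(1)--(2) noted above the group $G$ has Property $\WAPFD$. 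The last clause is immediate: if a unitary Hilbert $G$-module $V$ with no $G$-submodule of positive finite dimension had $\overline{H^1}(G,V)\neq 0$, then a nonzero class would, by Property $\WAPFD$, survive to a nonzero finite-dimensional quotient $V/W\cong W^{\perp}$ --- a $G$-submodule of positive finite dimension --- which is absurd; so $G$ has Property $\HFD$.

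I expect the main obstacle to lie in the compact-quotient case inside the sharpening of Theorem \ref{introstab}(3): producing a finite-dimensional \emph{quotient} module (equivalently, a finite-codimensional submodule) that detects unboundedness, rather than merely a finite-dimensional submodule. The difficulty is that the isotypic components of $A$ under $K=H/N$ may be infinite-dimensional, so sharp spectral truncations are not finite-dimensional and the isotypic series of a vector need not converge --- compare the divergence of Fourier series in $L^{1}$ --- so one must manufacture finite-dimensional equivariant approximations to the identity of controlled norm (a Fej\'er-type device suited to the Banach module $A$) while retaining enough of $b$ to certify that $[b]\neq 0$. This is exactly the point where the Banach setting is harder than the Hilbert one, and where the semisimplicity and almost periodicity furnished by \cite{BRS} and Corollary \ref{sumfin} must be used quantitatively rather than qualitatively.
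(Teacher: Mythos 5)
Your reduction of the corollary to the chain $G\to G_1\leftarrow G_2\to G_3$ is exactly the paper's: Theorem \ref{Main} gives $\WAPT$ for $G_3$, cocompact-subgroup plus compact-kernel arguments (Theorem \ref{thm:subgroup}, Proposition \ref{prop:compactExt}) carry it to $G_2$ and to $N=\phi_2(G_2)\subset G_1$, the ``normal cocompact $\WAPT$ subgroup $\Rightarrow$ $\WAPFD$'' step handles $G_1$, and the $\WAPFD$ versions of the same transfer results give $G$. The statement you call a sharpening of Theorem \ref{introstab}(3) is precisely the paper's Proposition \ref{prop:subgroup}(\ref{ps2}), so no new lemma is needed there. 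The problem is your proof of that step. After splitting $V=V^N\oplus V''$ and disposing of $V''$ via $\WAPT$ of $N$ and the averaging lemma, the surviving cocycle $b_1$ with values in $V^N$ is a cocycle of $H$, \emph{not} of the compact quotient $K=H/N$: only the linear action factors through $K$, while $b_1$ restricted to $N$ is in general a nonzero homomorphism and so does not descend. Your ``remaining step'' about classes in $\overline{H^1}(K,A)$ is therefore vacuous --- for a compact group $K$ and any Banach $K$-module $A$ one has $H^1(K,A)=0$ (average the cocycle over Haar measure to produce a fixed point) --- and the reduction has discarded exactly the part of the cocycle where the cohomology lives, namely its restriction to $N$.

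Even read charitably as a statement about $H$-cocycles with coefficients in an almost periodic module, your argument is incomplete at the point you yourself flag: you would need, for the compact witnessing set $Q$, a finite-rank $H$-equivariant projection $P$ with $(1-P)b_1$ uniformly small on $Q$, and this requires equivariant projections onto growing finite-dimensional submodules with uniformly controlled norm; isotypic projections have infinite-dimensional images in general, and no such ``Fej\'er-type device'' is constructed (this is essentially the open territory of Question \ref{qapfd} in the paper). The paper's proof of Proposition \ref{prop:subgroup}(\ref{ps2}) sidesteps all of this by using the structure of the cocycle itself: since $N$ acts trivially on $V^N$, the restriction $b_1|_N$ is a continuous homomorphism $N\to V^N$, which is nonzero (otherwise Lemma \ref{almostcococo} would make $b_1$ an almost coboundary on $H$); because $N$ is compactly generated, $b_1(N)$ spans a \emph{finite-dimensional} subspace $F$, which is $\pi(H)$-invariant by normality of $N$, and $F$ is complemented in the almost periodic module $V^N$ by Proposition \ref{prop:complement}; the projection of $b$ onto the resulting finite-dimensional quotient is unbounded already on $N$. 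So the missing idea in your proposal is to exploit this homomorphism structure on $N$ rather than to attempt a spectral truncation of the module; as written, the central step of your argument does not go through.
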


\begin{CorIntro}\label{vcalfd2}
Every virtually connected amenable Lie group $G$ has Property $\WAPFD$.
\end{CorIntro}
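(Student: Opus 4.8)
The corollary is essentially immediate from two results already available: Proposition~\ref{vcalfd}, which locates every virtually connected amenable Lie group in the class $\mathfrak{C}'$, and Corollary~\ref{CPWAPAP}, which guarantees Property~$\WAPFD$ for all groups in $\mathfrak{C}'$. So the whole task is to place $G$ in $\mathfrak{C}'$, and all the real content sits in Proposition~\ref{vcalfd}; the plan below is a plan for that proposition.

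Let $G$ be virtually connected amenable Lie. Its identity component $G^{\circ}$ is open of finite index. By the structure theory of connected Lie groups, the solvable radical $R$ of $G^{\circ}$ is a closed connected solvable Lie subgroup, and $G^{\circ}/R$ is connected semisimple; since $G$ (hence $G^{\circ}$) is amenable, $G^{\circ}/R$ is compact, so $R$ is cocompact in $G^{\circ}$ and therefore in $G$. Moreover $R$ is characteristic in $G^{\circ}$ and $G^{\circ}$ is normal in $G$, so $R$ is normal in $G$. Next I would invoke the nilshadow (Auslander--Green) structure of connected solvable Lie groups: $R$ embeds as a closed cocompact normal subgroup of a group $H = R^{\dagger}\rtimes K$, where $R^{\dagger}$ is the nilshadow of $R$ --- a connected nilpotent, hence real triangulable, Lie group --- and $K$ is a compact group; one takes $K$ large enough (enlarging the usual compact torus) to also absorb the compact Levi factor of $G^{\circ}$ and the finite component group $G/G^{\circ}$, arranging that $G$ itself embeds as a closed cocompact normal subgroup of $H$. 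Since $R^{\dagger}$ is connected nilpotent it lies in $\mathfrak{C}$ (Proposition~\ref{ex_clasc}), and it too is a closed cocompact normal subgroup of $H$. The chain $G \hookrightarrow H \hookleftarrow R^{\dagger}$, followed by the identity map $R^{\dagger}\to R^{\dagger}$, is then a chain of copci homomorphisms (closed cocompact subgroup inclusions are continuous, proper, with cocompact image) in which the image of the backward arrow is normal in $H$ and the final group lies in $\mathfrak{C}$; this is exactly the data showing $G \in \mathfrak{C}'$. Feeding this into Corollary~\ref{CPWAPAP} yields Property~$\WAPFD$, and in particular Property~$\HFD$.

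The main obstacle is the passage to $R^{\dagger}\rtimes K$. A connected solvable Lie group need not be triangulable --- already the universal cover $\widetilde{E(2)}$ of the group of motions of the plane is not --- and the ``rotational'' (compact-type) part of the adjoint action responsible for this cannot be removed either by passing to a cocompact subgroup or by dividing by a compact normal subgroup; the nilshadow is the device that repairs it. Making this fit the length-three zigzag of Definition~\ref{defcp} takes some care: one must check that the Auslander--Green construction really yields a closed cocompact subgroup inclusion $R \hookrightarrow R^{\dagger}\rtimes T$ with normal image, and then that the compact torus $T$ can be enlarged to a compact group $K$ into which the compact Levi factor and the finite component group of $G$ are folded, so that all of $G$, not merely its radical, embeds copci with normal image into $R^{\dagger}\rtimes K$. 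Once this structural statement is in hand the rest is bookkeeping with the definitions of copci maps and of the classes $\mathfrak{C}$ and $\mathfrak{C}'$.
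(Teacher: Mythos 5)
Your opening reduction is exactly the paper's: Corollary \ref{vcalfd2} is obtained by combining Corollary \ref{CPWAPAP} with Proposition \ref{vcalfd}, so everything hinges on placing $G$ in the class $\mathfrak{C}'$. Where you diverge is that the paper does not reprove any Lie-theoretic structure: Proposition \ref{vcalfd} is established by quoting \cite[Lemma 3.A.1]{CTDehn} (resting on \cite{cornulier}), which supplies a chain of copci homomorphisms $G\leftarrow G_1\to G_2\leftarrow G_3$ with $G_3$ real triangulable and with the image of $G_1\to G_2$ normal (because $G_2$ is generated by its centre and that image); triangulable groups lie in $\mathfrak{C}$ by Proposition \ref{ex_clasc}, and Corollary \ref{CPWAPAP} then finishes. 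You instead propose to reprove this structural input, and that is where the problems are.

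First, the assertion that the nilshadow $R^\dagger$ of the radical $R$ is ``a connected nilpotent, hence real triangulable, Lie group'' is wrong on both counts. The nilshadow is nilpotent only when $R$ has polynomial growth; if an exponential-growth radical such as $R=\SOL$ sat cocompactly in $R^\dagger\rtimes K$ with $R^\dagger$ nilpotent and $K$ compact, it would be quasi-isometric to $R^\dagger$ and hence of polynomial growth, a contradiction. Also ``connected nilpotent $\Rightarrow$ real triangulable'' fails (tori); what your argument actually needs is the triangulable (completely solvable) shadow. Second, and more seriously, the claim that $G$ itself embeds as a closed cocompact (normal) subgroup of $(\text{shadow})\rtimes(\text{compact})$ is precisely the hard structural statement, and you assert it rather than prove it: the Auslander--Green construction is formulated for simply connected solvable groups, whereas $R$ need not be simply connected, and ``folding'' the compact Levi factor and the finite component group into $K$ requires the whole shadow construction to be carried out equivariantly with respect to them. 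This is exactly why the result the paper cites produces the longer zigzag $G\leftarrow G_1\to G_2\leftarrow G_3$ (one first passes to a closed cocompact subgroup of $G$, e.g.\ the radical times a maximal torus of the Levi factor, and at the end to a cocompact triangulable subgroup that need not be normal), rather than a single copci embedding of $G$ into a triangulable-by-compact group. Your two-arrow chain $G\to H\leftarrow R^\dagger$ is therefore a strictly stronger claim than the lemma the paper invokes, and no argument is offered for it beyond saying it ``takes some care.'' As it stands the proposal is not a proof: to complete it you would either have to establish this stronger embedding theorem (which is not in the paper or its cited references in this form) or revert to the citation of \cite[Lemma 3.A.1]{CTDehn} as the paper does.
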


In view of Proposition \ref{ex_clasc}, we deduce

\begin{CorIntro}\label{cor:realtriangulated}
Real-triangulable Lie groups and compactly generated amenable Zariski-(closed connected) subgroups of $\mathrm{GL}_n(\Q_p)$ have $\WAPT$.
\end{CorIntro}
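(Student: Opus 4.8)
The plan is to deduce Corollary~\ref{cor:realtriangulated} directly from Theorem~\ref{Main} together with Proposition~\ref{ex_clasc}, which was announced to provide the membership of the relevant groups in the class $\mathfrak{C}$. First I would recall that a real-triangulable Lie group is, by definition, a connected Lie group whose Lie algebra admits a basis in which the adjoint action is by upper-triangular real matrices; Proposition~\ref{ex_clasc} asserts precisely that such groups lie in $\mathfrak{C}$ (they appear as the first bullet in the list following Definition~\ref{defc}). Likewise, a compactly generated amenable Zariski-closed-connected subgroup $\mathbb{G}(\Q_p) \le \mathrm{GL}_n(\Q_p)$ is solvable (amenable + linear algebraic over a local field of characteristic zero forces solvability of the Zariski closure by the Tits alternative, or one invokes that $\mathbb{G}$ is solvable as announced), and hence is covered by the second bullet of that same list, so it too belongs to $\mathfrak{C}$.

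Once membership in $\mathfrak{C}$ is established, Theorem~\ref{Main} immediately yields Property $\WAPT$ for both families, and there is nothing further to prove: the corollary is just the specialization of the general statement to these two concrete classes. So the only real content is the verification hidden in Proposition~\ref{ex_clasc}, namely checking conditions (1)--(4) of Definition~\ref{defc} for these groups. For a real-triangulable $G$, one takes $U$ to be the exponential of the nilradical (or more precisely of the subalgebra on which the diagonal part of $\mathrm{ad}$ acts nontrivially, together with the derived nilpotent part), and $N$ to be a maximal torus-and-nilpotent complement; conditions (1)--(3) are then structural, and condition (4) — the existence of a cocompact $V = V_1 \cdots V_k$ each $V_i$ contracted by some single element $t_i \in N$ — follows from triangulability, since the weights of the $\mathrm{ad}$-action can be made to have, for each root space, some element of $N$ acting with eigenvalue of modulus $< 1$. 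For the $p$-adic case the same scheme applies with $\mathbb{U}_i$ the unipotent radical and the contraction coming from an element of $N$ scaling by a power of $p$.

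The main obstacle I anticipate is condition~(\ref{ssg3}) of Definition~\ref{defc}: it is not enough that $U$ be ``contracted'' in some averaged or joint sense — one needs a finite product decomposition $V = V_1 \cdots V_k$ of a \emph{cocompact} subgroup of $U$ in which each piece is contracted by a \emph{single} element $t_i$. When the group $N$ has rank~$\ge 2$ (several independent ``directions''), no single $t \in N$ need contract all of $U$ at once; one has to partition the set of roots/weights of the $N$-action on the Lie algebra of $U$ into finitely many classes, on each of which some fixed $t_i$ is strictly contracting, then reassemble the corresponding root subgroups into the $V_i$'s and check that their product is cocompact in $U$. This is a genuine (if elementary) Lie-theoretic/algebraic-group bookkeeping step, and it is where triangulability (real case) or the structure theory of solvable $p$-adic algebraic groups (non-archimedean case) is really used; everything else is either definitional or a direct appeal to Theorem~\ref{Main}. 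Since the statement of Corollary~\ref{cor:realtriangulated} only combines Proposition~\ref{ex_clasc} with Theorem~\ref{Main}, both of which we are entitled to assume, the proof itself reduces to citing these two results.
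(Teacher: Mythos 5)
Your proposal is correct and matches the paper's own argument: the corollary is stated immediately after the phrase ``In view of Proposition \ref{ex_clasc}, we deduce'', i.e.\ it is exactly the combination of Proposition \ref{ex_clasc} (membership of these two families in the class $\mathfrak{C}$, with the root/weight bookkeeping for condition (\ref{ssg3}) delegated to the structure theory as in \cite[\S 6.B]{CTDehn}) with Theorem \ref{Main}. Nothing further is needed.
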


We combine these results to obtain the following Banach space version of Delorme's theorem. 
 
\begin{CorIntro}\label{Main'} (see Corollary \ref{cor:Delorme})
Let $G$ be a connected solvable Lie group. Then every WAP $G$-module with nonzero first cohomology has a 1-dimensional factor (with nonzero first cohomology).
\end{CorIntro}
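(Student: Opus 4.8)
The plan is to deduce Corollary \ref{Main'} directly from the machinery established in Theorem \ref{Main} and Theorem \ref{introstab}, together with the structure theory of connected solvable Lie groups. First I would recall that a connected solvable Lie group $G$ is real-triangulable up to finite cover in the following sense: $G$ has a closed subgroup of finite index (indeed $G$ itself is connected, so this is automatic) and, more usefully, by Proposition \ref{ex_clasc} every real-triangulable connected Lie group lies in the class $\mathfrak{C}$. A general connected solvable Lie group need not be real-triangulable, but it has a cocompact real-triangulable closed subgroup, or alternatively one passes through a copci homomorphism to a real-triangulable group; in any case $G$ belongs to the class $\mathfrak{C}'$ (this is essentially Proposition \ref{vcalfd} restricted to the solvable case). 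Hence by Corollary \ref{CPWAPAP}, $G$ has Property $\WAPFD$.

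Next I would unwind what Property $\WAPFD$ gives us for a WAP $G$-module $V$ with $\overline{H^1}(G,V)\neq 0$. Pick a 1-cocycle $b$ nonzero in $\overline{H^1}(G,V)$; Property $\WAPFD$ provides a closed submodule $W$ of nonzero finite codimension such that the image $\bar b$ of $b$ in the finite-dimensional module $V/W$ is unbounded. An unbounded 1-cocycle on a finite-dimensional module is automatically nonzero in $\overline{H^1}$ (on a finite-dimensional space, $B^1$ is closed, being the image of a linear map between finite-dimensional spaces, so an unbounded — hence certainly non-coboundary — cocycle represents a nonzero reduced class). So $V$ has a nonzero finite-dimensional factor $V/W$ carrying a 1-cocycle that is nonzero in reduced cohomology.

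Then I would reduce from a finite-dimensional factor to a 1-dimensional factor. Here I invoke the nilpotent/solvable structure: $G$ acts on the finite-dimensional space $V/W$ by a uniformly bounded (WAP) representation, hence — after averaging, or since finite-dimensional WAP modules are almost periodic and a uniformly bounded finite-dimensional representation is conjugate to one by a compact, hence (being solvable-by-compact image) essentially a unitary/orthogonal one — the image is relatively compact in $\GL$, so $V/W$ decomposes into irreducibles. At least one irreducible factor must carry a nonzero reduced 1-cohomology class (reduced $H^1$ is subadditive under such finite direct-sum-type decompositions, or one simply projects $\bar b$ to the factors and notes not all projections can be coboundaries). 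For $G$ connected solvable, by Lie's theorem over $\C$ the irreducible (finite-dimensional, continuous) representations with the relevant cohomology are $1$-dimensional characters — this is where I would either cite Delorme's classification of which characters occur, or argue directly: a finite-dimensional irreducible $G$-module is $1$-dimensional because $[G,G]$ is contained in the kernel of any such character-type representation once we know it factors through an abelian quotient, which holds because a connected solvable group has a dense set of ``triangulable directions'' forcing the relatively compact image to be abelian. So we land on a $1$-dimensional factor $V/W'$ with nonzero first (reduced, hence ordinary since it is a character) cohomology, as claimed.

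The main obstacle I anticipate is the passage from ``nonzero finite-dimensional factor with a reduced-cohomology class'' to ``$1$-dimensional factor'', specifically making sure the irreducible constituent we extract is genuinely $1$-dimensional and that the cohomology class does not die. The cleanest route is: (i) the finite-dimensional WAP $G$-module has relatively compact image in $\GL_n$, so it is conjugate to a representation into a compact group $K$; (ii) being the continuous image of a connected solvable group, that compact group is abelian (a compact connected solvable Lie group is a torus, and one handles the component group by the finite-index/copci flexibility already built into $\mathfrak{C}'$, or by noting the closure of a connected solvable group is connected); (iii) an abelian image means the representation is a direct sum of characters; (iv) projecting the cocycle to the characters, at least one projection is unbounded (else the original finite-dimensional cocycle would be bounded, contradicting Property $\WAPFD$), and an unbounded cocycle into a $1$-dimensional module is nonzero in $\overline{H^1}$. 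A secondary technical point is verifying that a connected solvable Lie group really lies in $\mathfrak{C}'$ and not merely that some cocompact subgroup does — but this is exactly the content of the cited Propositions \ref{ex_clasc} and \ref{vcalfd}, and the copci-zigzag in Definition \ref{defcp} is designed precisely to absorb the non-triangulable case.
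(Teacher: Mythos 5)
Your proposal is correct and follows essentially the same route as the paper: deduce Property $\WAPFD$ for $G$ from Proposition \ref{vcalfd} and Corollary \ref{CPWAPAP} (i.e.\ Corollary \ref{vcalfd2}), then note that the resulting finite-dimensional factor splits into irreducibles which, for a connected solvable Lie group, are necessarily $1$-dimensional since the uniformly bounded image has compact closure that is a torus. The paper states this last step in one line (finite-dimensional unitary irreducibles of connected solvable Lie groups are characters); you merely spell out the same argument in more detail.
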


We can also apply Theorem \ref{Main} to deduce many new examples of discrete groups with Property $\WAPAP$.

\begin{CorIntro}
The class of groups satisfying Property $\WAPAP$ includes all discrete groups that are virtually cocompact lattices in a finite direct product of connected Lie groups and algebraic groups over $\Q_p$ (for various primes $p$). This includes polycyclic groups and more generally all amenable groups already known to satisfy Property $\HFD$ (see \cite{Shalom}).   
Some of these, being cocompact lattices in groups in the class $\mathfrak{C}$, actually  have Property $\WAPT$: this includes for instance of $\SOL$,  solvable Baumslag-Solitar groups and lamplighter groups $(\Z/n\Z)\wr \Z$.
\end{CorIntro}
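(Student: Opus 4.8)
The plan is to assemble the statement purely from Theorem~\ref{Main}, Corollary~\ref{CPWAPAP} and the permanence properties of Theorem~\ref{introstab}; there is no new cohomological content. Here, as is implicit in the statement, the connected Lie factors and the $\Q_p$-algebraic factors are understood to be amenable (and the latter compactly generated): amenability cannot be dropped, since e.g.\ a cocompact lattice $\Gamma$ in $\mathrm{PSL}_2(\R)$ already fails Property $\HT$ --- the regular representation on $\ell^2(\Gamma)$ is WAP with $\ell^2(\Gamma)^{\Gamma,\mathrm{ap}}=0$ but $\overline{H^1}(\Gamma,\ell^2\Gamma)\neq 0$ when $\Gamma$ has genus $\geq 2$ --- hence fails $\WAPAP$.

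The first step is to check that the ambient group has Property $\WAPAP$. Write $G=\prod_{i=1}^{m}G_i$, where each $G_i$ is either a connected amenable Lie group, or the group of $\Q_p$-points of an amenable, compactly generated, connected linear algebraic group over $\Q_p$. A connected amenable Lie group lies in $\mathfrak{C}'$ by Proposition~\ref{vcalfd}; and an amenable connected linear algebraic $\Q_p$-group, via a Levi decomposition and the fact that an amenable reductive $\Q_p$-group has compact derived subgroup, is treated along the lines of Proposition~\ref{ex_clasc} and likewise lies in $\mathfrak{C}'$ (in the solvable case it is directly in $\mathfrak{C}$). The one point to verify is that $\mathfrak{C}$, hence $\mathfrak{C}'$, is closed under finite direct products: if $G'=U'N'$ and $G''=U''N''$ are decompositions as in Definition~\ref{defc}, then $G'\times G''=(U'\times U'')(N'\times N'')$ again satisfies (1)--(4) of that definition (concatenate the factorisations of $U$ in (\ref{ssg1}) and of $V$ in (\ref{ssg3}), and use the elements $t_i\times 1$, resp.\ $1\times t_j$, for the required contractions), and $\mathfrak{C}'$ then inherits product-closedness by taking the product of the defining chains of copci homomorphisms. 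Therefore $G\in\mathfrak{C}'$, so $G$ has Property $\WAPFD$ by Corollary~\ref{CPWAPAP}, and a fortiori Property $\WAPAP$.

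Next one descends to lattices. If $\Gamma\leq G$ is a cocompact lattice then $G/\Gamma$ is compact with a $G$-invariant probability measure, so $\Gamma$ inherits Property $\WAPAP$ by Theorem~\ref{introstab}(1). If $\Gamma$ is only virtually such a lattice, pick a finite-index subgroup $\Gamma_0\leq\Gamma$ that is a cocompact lattice in $G$; then $\Gamma_0$ has Property $\WAPAP$, and $\Gamma$, being a finitely generated discrete amenable group coarsely equivalent to $\Gamma_0$, has Property $\WAPAP$ by Theorem~\ref{introstab}(4). Since a virtually polycyclic group has a finite-index subgroup that is a cocompact lattice in a connected solvable Lie group, this recovers in particular all amenable groups previously known to have Property $\HFD$. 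For the final assertion one realizes each group as a cocompact lattice in a member of $\mathfrak{C}$ (or of the positive-characteristic variant described after Definition~\ref{defc}) and applies Theorem~\ref{introstab}(1), this time with $\WAPT$: the group $\SOL$ is a cocompact lattice in the real-triangulable group $\R^2\rtimes_{\mathrm{diag}(e^{t},e^{-t})}\R\in\mathfrak{C}$; for $\BS(1,n)=\Z[1/n]\rtimes_n\Z$ with $n=\prod_i p_i^{e_i}$, the diagonal embedding $\Z[1/n]\hookrightarrow\R\times\prod_i\Q_{p_i}$ has discrete cocompact image and intertwines multiplication by $n$, which is expanding on $\R$ and contracting on each $\Q_{p_i}$, so $\BS(1,n)$ is a cocompact lattice in the mixture $(\R\times\prod_i\Q_{p_i})\rtimes_n\Z\in\mathfrak{C}$; and $(\Z/n\Z)\wr\Z$ is a cocompact lattice in a group $\bigl(\prod_i A_i\bigr)\rtimes_t\Z$ with the $A_i$ nondiscrete locally compact abelian (local fields, or $(\Z/p^{e}\Z)$-Laurent series) on which the generator acts by multiplication by an element of absolute value $\neq 1$, which is covered by Theorem~\ref{Main} through the abelian (hence $1$-step nilpotent) case of the positive-characteristic extension.

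The genuinely non-formal work is confined to the first step --- verifying that the $\Q_p$-algebraic factors, and their products with Lie factors, match Definition~\ref{defc} (or the chain defining $\mathfrak{C}'$) --- together with writing down the standard cocompact-lattice realisations of $\BS(1,n)$ and of the lamplighters in the last step; everything else is bookkeeping on top of Theorem~\ref{Main}, Corollary~\ref{CPWAPAP} and Theorem~\ref{introstab}.
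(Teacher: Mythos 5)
Your assembly is essentially the paper's own (implicit) argument: place the ambient product in $\mathfrak{C}'$ via Propositions \ref{ex_clasc} and \ref{vcalfd} together with the (easy, unstated) product-closedness of $\mathfrak{C}$ and $\mathfrak{C}'$, apply Theorem \ref{Main} and Corollary \ref{CPWAPAP}, then descend to the (virtually) cocompact lattices by the stability results of Theorem \ref{introstab}, with the standard hulls for $\SOL$-lattices, $\BS(1,n)$ and the lamplighters giving the $\WAPT$ cases -- and your explicit amenability caveat correctly captures the paper's intent. The only cosmetic improvement would be to perform the descent with $\WAPFD$ via Theorem \ref{thm:subgroup} and Proposition \ref{prop:subgroup} (the versions actually proved in the body) rather than the introduction's $\WAPAP$ form of Theorem \ref{introstab}(1), since $\WAPFD$ implies $\WAPAP$; otherwise the route coincides with the paper's.
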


We can also, along with additional structural work, deduce the following result, which answers a question of Shalom \cite{Shalom} (who asked whether these groups have $\HFD$).

Recall that a group has finite Pr\"ufer rank if for some $k$, all its finitely generated subgroups admit a generating $k$-tuple. Let us abbreviate ``virtually solvable of finite Pr\"ufer rank" to ``VSP".

\begin{ThmIntro}\label{VSPWAPAP} (Corollary \ref{cor:VSP})
Every finitely generated, VSP group has Property $\WAPFD$, and in particular has Property $\HFD$.
 \end{ThmIntro}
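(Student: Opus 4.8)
The plan is to reduce the statement for finitely generated VSP groups to the already-established Corollary~\ref{CPWAPAP}, i.e.\ to membership in the class $\mathfrak{C}'$, via the structure theory of finitely generated groups of finite Pr\"ufer rank. First I would recall the classical structural facts (due essentially to Mal'cev and refined by Robinson, Wehrfritz, Kropholler and others): a finitely generated VSP group $\Gamma$ is \emph{minimax} (it has a finite subnormal series whose factors are cyclic or quasicyclic of Pr\"ufer type), it is virtually torsion-free-by-finite up to a controlled extent, and, most importantly, it embeds into a product of linear groups over local fields. Concretely, one uses that a finitely generated VSP group has a finite-index subgroup $\Gamma_0$ which is a cocompact lattice in a finite direct product $G=G_\infty\times\prod_{p\in S}G_p$, where $G_\infty$ is a connected solvable Lie group and each $G_p$ is a solvable algebraic group over $\Q_p$ (the set $S$ of ``bad'' primes being finite). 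This is exactly the kind of ambient group that appears in the displayed corollary about cocompact lattices.

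**Key steps.** (1) \emph{Reduce to torsion-free (or at least clean up finite-index and finite-normal issues).} Using Theorem~\ref{introstab}(1) and (2), Property $\WAPFD$/$\WAPAP$ passes to and from finite-index subgroups (finite index is cocompact with invariant probability measure, and the complementary direction is handled by the stability statements and the fact that $\mathfrak{C}'$ is designed — per the remark after Definition~\ref{defcp} — to absorb open finite-index subgroups). So it suffices to treat a finite-index subgroup $\Gamma_0$ of $\Gamma$ chosen as favorably as the structure theory allows. (2) \emph{Embed $\Gamma_0$ cocompactly in a product of local groups.} Invoke the minimax structure: $\Gamma_0$ is a cocompact lattice in $G=G_\infty\times\prod_{p\in S}G_p$ with $G_\infty$ connected solvable Lie and $G_p$ solvable $\Q_p$-algebraic, all compactly generated. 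This uses the fact that a finitely generated solvable minimax group has a nilpotent-by-abelian-by-finite substructure and that the relevant completions are precisely the real and $p$-adic points. (3) \emph{Place the ambient group in $\mathfrak{C}'$.} By Proposition~\ref{ex_clasc}, real-triangulable connected Lie groups and compactly generated solvable Zariski-connected subgroups of $\GL_n(\Q_p)$ lie in $\mathfrak{C}$; a finite direct product of groups in $\mathfrak{C}$ is again in $\mathfrak{C}$ (the class is visibly closed under finite products, since one can take the product of the $U$'s and the product of the $N$'s, and the Folner/contraction data (\ref{ssg3}) multiplies). Passing from $G_\infty$ to its real-triangulable form and from $G_p$ to a triangulable subgroup costs only finite-index and cocompact modifications, which $\mathfrak{C}'$ tolerates by Definition~\ref{defcp}. (4) \emph{Conclude.} $\Gamma_0$ is a cocompact lattice — hence a closed cocompact subgroup with invariant probability measure (being a lattice in an amenable, hence unimodular, group) — in a group of class $\mathfrak{C}'$, which has $\WAPFD$ by Corollary~\ref{CPWAPAP}; then Theorem~\ref{introstab}(1) gives $\WAPFD$ for $\Gamma_0$, and the finite-index stability gives it for $\Gamma$.

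**Main obstacle.** The analytic heart — proving that class $\mathfrak{C}$ groups have $\WAPT$ — is already done (Theorem~\ref{Main}), so the remaining difficulty is purely structural: producing the cocompact embedding of a finite-index subgroup of a finitely generated VSP group into a product of connected solvable Lie and solvable $p$-adic algebraic groups, and controlling the ``defect'' (finite kernels, finite-index passages, non-triangulability of a Levi-type piece) so that it is absorbed by the moves permitted in the definition of $\mathfrak{C}'$. The subtle points are: (a) ensuring only finitely many primes are involved, which is where finite Pr\"ufer rank (equivalently, the minimax condition for finitely generated solvable groups) is essential — without it the group could involve infinitely many primes and no single $p$-adic completion would suffice; (b) handling torsion, i.e.\ the passage through a finite normal subgroup, via Theorem~\ref{introstab}(2); and (c) checking that ``virtually solvable'' rather than ``solvable'' only introduces a finite-index issue, already covered. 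I expect step (2), the explicit construction of the $S$-arithmetic-type overgroup from the minimax data, to be the one requiring the most care and the most citation of (or mild reproving of) classical Robinson--Wehrfritz--Kropholler structure theory; everything after it is bookkeeping with the stability results.
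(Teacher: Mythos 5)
Your reduction hinges on the structural claim in step (2): that a finitely generated VSP group has a finite-index subgroup which is a cocompact lattice in a finite direct product $G_\infty\times\prod_{p\in S}G_p$ of a connected solvable Lie group and solvable $p$-adic algebraic groups. This is not a classical theorem, and it is false in general; it is precisely the point where the paper has to do genuinely new work. Two separate obstructions arise. First, a finitely generated VSP group need not be virtually torsion-free (it can contain infinite locally finite normal subgroups, e.g.\ Pr\"ufer $p^\infty$-subgroups), hence is not linear over any finite product of characteristic-zero local fields and cannot be a lattice in such a product; your remedy ``handle torsion via Theorem \ref{introstab}(2)'' only treats compact (in the discrete case, finite) normal subgroups and does not address this. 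The paper instead reduces to the virtually torsion-free case, i.e.\ to finitely generated amenable subgroups of $\GL_m(\Q)$, by the Kropholler--Lorensen theorem that every finitely generated VSP group is a quotient of such a group, together with the (easy) fact that $\WAPFD$ passes to quotients -- a step absent from your argument. Second, even in the torsion-free case the lattice statement fails: already for $\Z[1/6]\rtimes_{2/3}\Z$ or solvable Baumslag--Solitar groups, the correct hull is a ``mixture'' such as $(\R\times\Q_2\times\Q_3)\rtimes_{2/3}\Z$, a proper closed cocompact subgroup gluing the places along a common $\Z$, not a direct product of algebraic groups (unimodularity of each direct factor, forced by the existence of a lattice, is incompatible with the one-dimensional weight data). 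So the pivot of your proof -- membership of a direct-product ambient group in $\mathfrak{C}$ via closure of $\mathfrak{C}$ under products -- never gets off the ground, and ``$\mathfrak{C}'$ tolerates cocompact modifications'' is not available: Definition \ref{defcp} only allows specific copci maps with a normality condition, not passage to arbitrary closed cocompact subgroups.

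What the paper actually does, and what is missing from your proposal, is the construction of a cocompact hull for a finitely generated amenable $\Gamma\le\GL_m(\Q)$ (Theorem \ref{thm:ccpp}): one completes only the unipotent radical $U$, forming its ``fine closure'' $\mathcal{C}(U)$ inside $\GL_m(\mathbf{A}\times\R)$ (an adelic, semirestricted product over all primes, compact at almost every place), proves $\Gamma$ is cocompact in $\Gamma\mathcal{C}(U)$ (Proposition \ref{unipclo}), and then splits off a compactly generated complement $F$ of polynomial growth (Lemma \ref{existsplit}, Proposition \ref{propgp}); the resulting hull lies in the enlarged class $\mathfrak{C}''$, not in $\mathfrak{C}$ or $\mathfrak{C}'$. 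One then needs Theorem \ref{cccap} -- which passes $\WAPFD$ from the finite-index subgroup in $\mathfrak{C}''$ to the ambient group via the $\APFD$/$\KT(G)$ analysis and Lemma \ref{gkt_poly} -- rather than a formal ``finite-index stability of $\WAPFD$'', which the paper never proves as a black box, before applying Theorem \ref{thm:subgroup} to descend to $\Gamma$. In short, the analytic input you cite (Theorem \ref{Main}, Corollary \ref{CPWAPAP}, Theorem \ref{thm:subgroup}) is used correctly, but the structural step you defer to ``classical Robinson--Wehrfritz--Kropholler theory'' is exactly the paper's new construction, and the statement you substitute for it is not true.
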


Finitely generated amenable (or equivalently, virtually solvable) subgroups of $\GL(d,\Q)$ are notably covered by the theorem: more precisely, these are precisely (when $d$ is allowed to vary) the virtually torsion-free groups in the class of finitely generated VSP groups. Actually, the theorem precisely consists of first proving it in this case, and deduce the general case using a recent result of Kropholler and Lorensen \cite{KL17}: every finitely generated VSP group is quotient of a virtually torsion-free finitely generated VSP group. The case of finitely generated amenable (or equivalently, virtually solvable) subgroups of $\GL(d,\Q)$ is based on an embedding result (see Proposition \ref{propgp} and \ref{ccpp}) of independent interest.

\begin{RemIntro}For a solvable group $G$ with derived series $G_0=G$, $G_{i+1}=[G_i,G_i]$, the sum $\sum_{i\ge 0}\dim_\Q((G_i/G_{i+1})\otimes\Q)$ is known as the Hirsch number of $G$. 

Shalom conjectured \cite[\S 6.6]{Shalom} that a finitely generated solvable group with finite Hirsch number has Property $\HFD$. This was recently disproved by Brieussel and Zheng, who showed that the groups $(\Z/m\Z)\wr \Z^d$ for $d\geq 3$ and $m\geq 2$ do not have Property $\HFD$. (Shalom previously proved that $\Z\wr\Z$ does not have Property $\HFD$: all this shows that our methods cannot apply to such ``large" solvable groups.)

However, it follows from Theorem \ref{VSPWAPAP} that the conjecture holds in the case of torsion-free groups: indeed, it is known \cite{K86} that finitely generated torsion-free solvable groups with finite Hirsch number are VSD groups.
\end{RemIntro}




We deduce the following strengthening of \cite[Theorem 1.3]{Shalom}, which is the particular case of polycyclic groups.

\begin{CorIntro}
Let $\Lambda$ be a finitely generated, (virtually) solvable group of finite Pr\"ufer rank. Let $\Gamma$ be a finitely generated group quasi-isometric to $\Lambda$. Then $\Gamma$ has a finite index subgroup with infinite abelianization. 
\end{CorIntro}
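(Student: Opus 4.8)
The plan is to combine Theorem~\ref{VSPWAPAP} with the coarse‑invariance statement of Theorem~\ref{introstab}(4) and the fact, recalled in the introduction, that an infinite discrete amenable group with Property~$\HFD$ admits an infinite virtually abelian quotient.

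First I would record that $\Gamma$ is amenable: $\Lambda$ is amenable, being virtually solvable, and amenability of a finitely generated group is a quasi‑isometry invariant. We may also assume $\Lambda$, hence $\Gamma$, is infinite, the statement being empty otherwise. Now Theorem~\ref{VSPWAPAP} gives that $\Lambda$ has Property~$\WAPFD$, hence Property~$\WAPAP$; and since a quasi‑isometry between finitely generated groups is in particular a coarse equivalence and $\Gamma,\Lambda$ are countable discrete amenable, Theorem~\ref{introstab}(4) yields that $\Gamma$ has Property~$\WAPAP$. Applying the defining property of $\WAPAP$ to unitary Hilbert $\Gamma$‑modules, and using that such a module $V$ satisfies $V^{G,\mathrm{ap}}=0$ as soon as it has no positive‑dimensional finite‑dimensional submodule, we conclude that $\Gamma$ has Property~$\HFD$.

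To finish I would run the standard argument behind the quoted observation. As $\Gamma$ is infinite and amenable it does not have Kazhdan's Property~T, so by the Mok--Korevaar--Schoen theorem there is a unitary $\Gamma$‑module $V$ with $\overline{H^1}(\Gamma,V)\ne 0$. Splitting $V$ orthogonally into the closed span $V_{\mathrm{fd}}$ of its finite‑dimensional submodules --- a Hilbert direct sum of finite‑dimensional irreducibles --- and its complement, Property~$\HFD$ kills the reduced cohomology of the complement, and a short approximation argument (here using finite generation of $\Gamma$, so that compact sets are finite and the tail of the defining $\ell^2$‑sum is uniformly small) shows that some finite‑dimensional irreducible constituent $\sigma$ has $\overline{H^1}(\Gamma,\sigma)\ne 0$. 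Then $\sigma(\Gamma)$ is a finitely generated amenable linear group, hence virtually solvable by the Tits alternative, and, sitting inside a compact unitary group, virtually abelian. If $\sigma(\Gamma)$ is finite, then $\ker\sigma$ has finite index and the restriction to $\ker\sigma$ of a cocycle representing a nonzero class is a homomorphism into the additive group of $V$; this homomorphism cannot vanish, for otherwise the cocycle would descend to the finite group $\sigma(\Gamma)$ and hence be a coboundary by averaging, so $\ker\sigma$ is itself a finite‑index subgroup of $\Gamma$ with infinite abelianization. If instead $\sigma(\Gamma)$ is infinite, it is an infinite virtually abelian quotient $Q$ of $\Gamma$; choosing a finite‑index abelian subgroup $A\cong\Z^n\times T$ of $Q$ with $T$ finite and $n\ge 1$, its preimage $\Gamma_0\le\Gamma$ is a finite‑index subgroup surjecting onto $A$, so $\Gamma_0^{\mathrm{ab}}$ surjects onto $\Z^n$ and is in particular infinite.

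The step I expect to be the main obstacle is precisely this last paragraph: extracting from Property~$\HFD$ a finite‑dimensional representation with nonvanishing reduced cohomology and then manufacturing an infinite virtually abelian quotient. This is where finite generation of $\Gamma$ is genuinely used; the rest is a bookkeeping application of the results quoted above.
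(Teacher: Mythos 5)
Your argument is correct and follows the same overall strategy as the paper: transfer the relevant property from $\Lambda$ to $\Gamma$ across the quasi-isometry, then extract a finite-index subgroup with infinite abelianization from Property $\HFD$. The difference is in the inputs: the paper simply quotes two results of Shalom, namely \cite[Theorem 4.3.3]{Shalom} (quasi-isometry invariance of $\HFD$ among finitely generated amenable groups) and \cite[Theorem 4.3.1]{Shalom} (an infinite finitely generated amenable group with $\HFD$ has a finite-index subgroup with infinite abelianization), whereas you route the transfer through the paper's own Theorem~\ref{introstab}(4) for $\WAPAP$ together with the implications $\WAPFD\Rightarrow\WAPAP\Rightarrow\HFD$ (the latter via Corollary~\ref{sumfin}, since a unitary module is WAP and uniformly bounded), and you reprove Shalom's Theorem 4.3.1 in-line. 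Both substitutions are sound: your $\ell^2$-truncation over a finite generating set to locate a finite-dimensional irreducible constituent $\sigma$ with $\overline{H^1}(\Gamma,\sigma)\neq 0$, and the two-case analysis of $\sigma(\Gamma)$ (finite image forcing a nonzero homomorphism $\ker\sigma\to V$, infinite image giving an infinite virtually abelian quotient) are exactly the standard proof of that fact, and what your approach buys is independence from Shalom's Theorems 4.3.1 and 4.3.3 at the cost of re-deriving them; note also that the transfer step could equally be done at the level of $\WAPFD$ via Theorem~\ref{thm:stabRCE}, which is the form actually proved in the body of the paper. The only cosmetic point is that for finite $\Lambda$ the conclusion as stated fails rather than being vacuous, so the implicit hypothesis that $\Lambda$ is infinite is genuinely needed; the paper's proof makes the same implicit assumption when invoking Shalom's Theorem 4.3.1.
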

\begin{proof}
This consists in combining Theorem \ref{VSPWAPAP} with two results of Shalom:
\begin{itemize}
\item \cite[Theorem 4.3.1]{Shalom}, which says that every infinite finitely generated amenable group with Property $\HFD$ has a finite index subgroup with infinite abelianization;
\item \cite[Theorem 4.3.3]{Shalom}: among finitely generated amenable groups, Property $\HFD$ is a quasi-isometry invariant.\qedhere
\end{itemize}
\end{proof}

\subsection{Cocompact hull of amenable subgroups of $\GL(d,\Q)$}\label{ichas}
Our proof of Theorem \ref{VSPWAPAP} relies on a construction of independent interest. 
We start introducing a second variant of the class $\mathfrak{C}$.
\begin{DefnIntro}\label{dcpp} Let $\mathfrak{C}''$ be the class of compactly generated locally compact groups defined as the class $\mathfrak{C}$ (Definition \ref{defc}) but replacing (\ref{ssg1}) with: $N$ has polynomial growth. \end{DefnIntro}

\begin{ThmIntro}\label{thm:ccpp}
Every finitely generated amenable (= virtually solvable) subgroup of $\GL_m(\Q)$ embeds as a compact lattice into a locally compact group $G$ with an open subgroup of finite index $G'$ in the class $\mathfrak{C}''$.
\end{ThmIntro}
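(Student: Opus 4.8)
The plan is to realise $\Gamma$ explicitly as a cocompact lattice in a carefully chosen closed subgroup of a product of real and $p$-adic points of its Zariski closure, and then to read the structure required by $\mathfrak{C}''$ off the Levi decomposition of that closure.

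I would start by reducing to the case where $\Gamma$ is solvable, passing to a finite-index subgroup; this is harmless since the conclusion only demands that an open finite-index subgroup $G'$ of $G$ lie in $\mathfrak{C}''$. As $\Gamma$ is finitely generated, all matrix entries of $\Gamma$ and of $\Gamma^{-1}$ lie in $\Z[1/N]$ for some $N$; put $S=\{\infty\}\cup\{p:p\mid N\}$, so that the diagonal map $\iota\colon\Gamma\to\prod_{v\in S}\GL_m(\Q_v)$ has discrete image. Let $\mathbb{G}$ be the Zariski closure of $\Gamma$; it is solvable, and replacing $\Gamma$ by $\Gamma\cap\mathbb{G}^\circ(\Q)$ I may assume it connected. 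Enlarging $N$, I fix over $\Z[1/N]$ the unipotent radical $\mathbb{U}=\mathrm{R}_u(\mathbb{G})$, a maximal torus $\mathbb{T}$ and a Levi decomposition $\mathbb{G}=\mathbb{U}\rtimes\mathbb{T}$, available in characteristic zero.

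The crux is to choose the closed subgroup $G\le\prod_{v\in S}\mathbb{G}(\Q_v)$ containing $\iota(\Gamma)$ cocompactly, and it must be ``as small as possible''. On the torus side one takes $N_0$ the closure of the image of $\Gamma$ in $\prod_{v\in S}\mathbb{T}(\Q_v)$; being the closure of a finitely generated abelian group it is compactly generated abelian, hence compact-by-nilpotent of polynomial growth, and $\Gamma_{\mathbb{T}}:=\pi_{\mathbb{T}}(\Gamma)$ is cocompact in it by the Dirichlet $S$-unit theorem. On the unipotent side one decomposes, place by place, $\mathbb{U}(\Q_v)$ into weight spaces for the $\Gamma_{\mathbb{T}}$-action. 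Since $\Gamma$ is Zariski dense in $\mathbb{G}$, every nonzero weight takes a value which is an $S$-unit $\neq\pm1$, hence of absolute value $\neq1$ at some place, so each nonzero weight space is genuinely contracted by a suitable element of $\Gamma_{\mathbb{T}}$ or its inverse. One then lets $U$ be the product over $v\in S$ of the closures of the relevant projections of $\Gamma\cap\mathbb{U}$ along the directions that are either contracted or arise as commutators of contracted ones, while the remaining weight-zero directions — those splitting off as a direct factor on which $\mathbb{T}$ acts trivially, and which are finitely generated — are kept discrete and absorbed into the ``$N$'' part, a direction being completed only at the places it genuinely needs. Finally $G:=U\cdot N$, where $N$ is built from $N_0$ together with these discrete directions and acts on $U$ via the $\mathbb{G}$-action extended continuously to the completions.

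Two verifications remain. First, that $\iota(\Gamma)$ is a cocompact lattice in $G$: discreteness is inherited from the ambient product, and cocompactness combines the statement for the torus above with a strong-approximation type argument for the unipotent radical, using that $\Gamma\cap\mathbb{U}$ is generated, as a $\Gamma_{\mathbb{T}}$-group, by finitely many elements, is Zariski dense in $\mathbb{U}$, and is divisible enough along the scaled directions. Second, that $G$, or an open finite-index subgroup, lies in $\mathfrak{C}''$: by construction $U$ is normal with $G=U N$, $N$ is compact-by-nilpotent of polynomial growth (and, if desired, $U$ is a finite product of open subgroups of unipotent algebraic groups over the $\Q_v$), so the only delicate axiom is the contraction condition, which is met by producing a cocompact $V=V_1\cdots V_k\le U$ whose factors $V_i$ are each contracted by an element of $N$ — using the genuine contraction on the nonzero weight spaces together with the fact that the weight-zero directions inside $U$ are products of commutators of contracted directions, exactly as in $\Aff(\R)$ and its nilpotent-by-abelian elaborations. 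I expect the main obstacle to be precisely this balancing act: choosing, weight space by weight space and place by place, which directions must be completed (and hence belong to $U$ and be recovered through contractions) and which must be left discrete (and routed into $N$), in such a way that the assembled group $G$ simultaneously contains $\Gamma$ cocompactly and has the special shape that $\mathfrak{C}''$ requires.
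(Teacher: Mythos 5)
Your strategy points in a reasonable direction, but the two statements that actually constitute the theorem are left unproved, and you concede as much when you defer the ``balancing act'' at the end. The first genuine gap is the cocompactness (indeed even the well-definedness) of the hull. Your $U$ is assembled from ``closures of the relevant projections of $\Gamma\cap\mathbb{U}$ along directions'': when $\mathbb{U}$ is non-abelian the weight spaces are not subgroups and the coordinate projections are not homomorphisms, so this recipe does not obviously produce a closed $N$-invariant subgroup, let alone one containing cocompactly the set of unipotent parts of elements of $\Gamma$ (which is in general larger than $\Gamma\cap\mathbb{U}$); the ``strong-approximation type argument'' you invoke is exactly what must be proved, and the appeal to Dirichlet's unit theorem is beside the point (the torus projection is trivially cocompact in its own closure; the issue is $\Gamma$ inside the assembled $G$). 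The paper sidesteps the place-by-place, weight-by-weight selection entirely: it completes the whole unipotent radical at every finite place and takes the real Zariski closure (the fine closure $\mathcal{C}(U)$), proves $U$ cocompact there via the structure of closed subgroups of semirestricted products of $p$-elliptic groups (Lemmas \ref{clostan} and \ref{coficlo}), and obtains cocompactness of $\Gamma$ in $\Gamma\mathcal{C}(U)$ from the ``diagonal trace'' homomorphism (Proposition \ref{unipclo}).

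The second gap is the existence of the complement $N$. Keeping the weight-zero directions ``discrete and absorbed into $N$'' cannot work as stated: the zero-weight part forced into $U$ by commutators of contracted directions (Heisenberg-type centers) has a connected archimedean component, which is neither discrete nor contracted by any element; and even after deciding what to complete, one needs an actual closed, compactly generated subgroup $F$ of polynomial growth with $FU=G'$ and $\langle F\cup\Contr(F)\rangle$ cocompact. Such a partial splitting of $1\to U\to G'\to A\to 1$ is not automatic; it is the technical heart of the paper (Lemma \ref{existsplit}), proved by induction on $\dim U$ using centralizers of contracting automorphisms (Lemma \ref{gs1}), and it is precisely why $\mathfrak{C}''$ only demands polynomial growth of $N$ rather than compact-by-nilpotency: the distal connected part of $U$ is placed inside $F$, not kept discrete. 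Your proposal asserts such an $N$ exists and that condition (\ref{ssg3}) of Definition \ref{defc} can be checked, but gives no construction and no verification (compare Proposition \ref{propgp}); these are the points where the theorem is actually proved, so as it stands the argument is a programme rather than a proof.
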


This is a key step in the proof of Theorem \ref{VSPWAPAP}. Let us pinpoint other consequences of the existence of this cocompact hull.

Let $G$ be a compactly generated locally compact group, and let $\lambda$ be the left representation of $G$ on real-valued functions
on $G$, namely $\lambda(g)f(x)=f(g^{-1}x)$. We let $S$ be a compact symmetric generating subset of $G$. For any $1\leq p\leq
\infty$, and any subset $A$ of $G$, define
$$J_p(A)=\sup_{f}\frac{\|f\|_p}{\sup_{s\in S}\|f-\lambda(s)f\|_p},$$
where $f$ runs over functions in $L^p(G)$, supported in $A$. Recall \cite{tes3} that the $L^p$-isoperimetric
profile inside balls is given by
$$J^b_{G,p}(n)=J_p(B(1,n)).$$
\begin{CorIntro} (see Corollary \ref{cor:Jb})\label{corjbi}
For every finitely generated VSP group $G$ equipped with a finite generating subset $S$, we have 
\[J^b_{G,p}(n)\succeq n,\]
i.e., there exists $c>0$ such that $J^b_{G,p}(n)\geq cn$ for all $n$.
\end{CorIntro}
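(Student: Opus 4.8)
The plan is to derive the lower bound on the $L^p$-isoperimetric profile inside balls from Property $\WAPFD$ (equivalently, in the relevant direction, from the existence of unbounded harmonic-type cocycles being obstructed), rather than the other way around. The key mechanism is the standard duality between a \emph{linear} lower bound on $J^b_{G,p}$ and the non-existence of certain sublinearly-growing 1-cocycles on $G$ valued in $\ell^p$-like WAP modules; this is the technique of \cite{tes3}. Concretely, the failure of $J^b_{G,p}(n) \succeq n$ would, by the standard compactness/ultralimit argument over a sequence of optimal functions on balls $B(1,n)$, produce a nonzero element of $\overline{H^1}(G, V)$ for a WAP $G$-module $V$ built as an $\ell^p$-direct sum (or ultraproduct) of translates of $L^p(G)$ — which is reflexive when $1 < p < \infty$ and WAP in general here since the relevant orbits live in a space of functions with the regular action — such that the associated cocycle has \emph{sublinear} growth. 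So the first step is to set up this dictionary precisely: $J^b_{G,p}(n) \succeq n$ fails $\iff$ there is a WAP $\ell^p$-type module carrying a nonzero reduced cocycle $b$ with $\|b(g)\| = o(|g|)$.

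Second, I would invoke Theorem \ref{VSPWAPAP}: a finitely generated VSP group has Property $\WAPFD$. By definition of $\WAPFD$, any cocycle $b$ that is nonzero in $\overline{H^1}(G,V)$ for a WAP module $V$ must, after projecting to a suitable finite-codimensional quotient module $V/W$, become \emph{unbounded}. But a finite-codimensional submodule $W$ is complemented (Proposition \ref{fcodim:compl}), and $V/W$ is finite-dimensional; on a finitely generated group a 1-cocycle into a finite-dimensional module that is unbounded in reduced cohomology forces, modulo coboundaries, linear growth — because the finite-dimensional WAP module decomposes (up to compact stuff) into characters, and an unbounded cocycle into $\R$ (a homomorphism to $\R$ with nonzero image) grows linearly in any word metric, while cocycles into genuinely ``rotating'' pieces are bounded. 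Hence a nonzero reduced cocycle into any WAP $G$-module cannot have sublinear growth. Combining with the first step gives $J^b_{G,p}(n) \succeq n$.

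There is one subtlety to handle carefully, which I expect to be the main obstacle: the module $V$ produced by the isoperimetric-profile argument is an $\ell^p$-sum/ultraproduct of copies of the left-regular representation of $G$, and I must confirm it is genuinely a WAP $G$-module (so that Property $\WAPFD$ applies) and that the limiting cocycle is genuinely nonzero in $\overline{H^1}$ rather than merely a coboundary — i.e., I must rule out that the optimal functions on balls are ``almost invariant'' in a trivial way. For $1 < p < \infty$ reflexivity makes the WAP condition automatic from uniform boundedness of the regular representation, so the real work is the endpoint-type bookkeeping and the non-triviality of the limiting class, which is exactly the content of the profile-versus-cohomology correspondence in \cite{tes3}; I would cite that correspondence and check that the hypotheses match. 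A cleaner alternative, if the direct citation is available, is to quote from \cite{tes3} the equivalence ``$J^b_{G,p}(n)\succeq n$ $\iff$ $G$ admits no nonzero reduced $L^p$-cocycle of sublinear growth'' as a black box and then only feed in Theorem \ref{VSPWAPAP}; that reduces the proof to two lines, at the cost of relying on an external statement. I would present the argument in that streamlined form, with a remark indicating the self-contained route via the ultralimit construction.
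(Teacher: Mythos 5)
There is a genuine gap, and it sits at the foundation of your argument: the ``dictionary'' you need in Step 1 --- that failure of $J^b_{G,p}(n)\succeq n$ produces a nonzero class in $\overline{H^1}(G,V)$ of sublinear growth for some WAP ($\ell^p$-type) module $V$ --- is not a known result, and \cite{tes3}, which you cite for it, contains no such profile-versus-cohomology correspondence (it is about large-scale Sobolev inequalities and monotonicity of profiles under coarse embeddings; the paper uses its Theorem 1 only for the quasi-isometry invariance of $J^b$). Note that the negation of the linear lower bound is a \emph{universal} statement (a strong Poincar\'e inequality for all functions supported in balls along a subsequence of scales), not an existence statement, so the ``sequence of optimal functions'' has no reason to assemble into a nontrivial cocycle; you would be inverting the known implications, which all run from geometry to cohomology: controlled F\o lner sequences $\Rightarrow$ sublinear cocycles are almost coboundaries (Proposition \ref{prop:sublin}), and strong controlled F\o lner sequences $\Rightarrow$ $J^b_{G,p}(n)\succeq n$ (\cite[Prop.~4.9]{tes1}). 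A sanity check that cohomological vanishing alone cannot yield the profile bound: an infinite property (T) group has $\overline{H^1}(G,\mathcal H)=0$ for all unitary modules (hence $\HFD$, $\HT$), yet $J^b_{G,p}$ is bounded; so any such mechanism must use amenability and in fact essentially the F\o lner-type geometry itself --- indeed a linear bound on $J^b_{G,1}$ is, via the co-area formula, essentially equivalent to controlled F\o lner sets inside balls, a strictly stronger piece of information than $\WAPFD$, which is derived from it in this paper and not conversely. Your Step 2 (unbounded cocycles into finite-dimensional almost periodic quotients are not sublinear) is also asserted rather than proved, but it is a secondary issue.

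For comparison, the paper's proof of Corollary \ref{corjbi} is purely geometric and does not mention cohomology at all: groups in the class $\mathfrak{C}''$ admit strong controlled F\o lner sequences (Theorem \ref{th:C_strong}), hence satisfy $J^b_{G,p}(n)\succeq n$ by \cite[Prop.~4.9]{tes1}; every finitely generated amenable subgroup of $\GL_m(\Q)$ is a cocompact lattice in a group with a finite-index open subgroup in $\mathfrak{C}''$ (Theorem \ref{thm:ccpp}), so the bound transfers by quasi-isometry invariance of $J^b$ \cite[Theorem 1]{tes3}; finally the general VSP case follows from Kropholler--Lorensen \cite{KL17} (every finitely generated VSP group is a quotient of a virtually torsion-free one) together with the monotonicity of $J^b$ under quotients \cite[Theorem 1]{tes2}. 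If you want to salvage your write-up, the realistic route is to replace Step 1 by this chain; as it stands, the reduction of the corollary to Theorem \ref{VSPWAPAP} is not available.
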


Corollary \ref{corjbi} has a consequence in terms of equivariant $L^p$-compression rate.
Recall that the equivariant $L^p$-compression rate $B_p(G)$ of a
locally compact compactly generated group is the supremum of those $0\leq \alpha\leq 1$ such that there
exists a proper isometric affine action $\sigma$ on some $L^p$-space satisfying, for all $g \in G$,
$\|\sigma(g).0\|_p \geq  |g|_S^{\alpha}-C$
for some constant $C<\infty$. It follows from \cite[Corollary 13]{tes1} that for a group $G$ with $J^b_{G,p}(n)\succeq n$, we have  
$B_p(G)=1$; hence 
\begin{CorIntro}
Let $1\leq p<\infty$, and $G$ be a finitely generated VSP group. Then $B_p(G)=1$.
\end{CorIntro}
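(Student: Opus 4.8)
The plan is to deduce the statement directly by chaining the two cited results in the order the text suggests. First I would recall the hypotheses: $G$ is a finitely generated VSP group and $1 \le p < \infty$. The goal is $B_p(G) = 1$, i.e.\ the equivariant $L^p$-compression rate is maximal.

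The first step is to invoke Corollary \ref{corjbi}, which asserts that the $L^p$-isoperimetric profile inside balls satisfies $J^b_{G,p}(n) \succeq n$ for any finitely generated VSP group $G$ with a finite generating set $S$; the validity of this input is itself a consequence of the cocompact hull construction (Theorem \ref{thm:ccpp}) together with the analysis of the class $\mathfrak{C}''$, but we may take it as given here. Thus there is $c > 0$ with $J^b_{G,p}(n) \ge cn$ for all $n$.

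The second step is to feed this into \cite[Corollary 13]{tes1}, which is quoted in the excerpt in exactly the form we need: for a group $G$ with $J^b_{G,p}(n) \succeq n$, one has $B_p(G) = 1$. Applying this with our $G$ gives $B_p(G) = 1$, which is the assertion. Concretely, the linear lower bound on the isoperimetric profile inside balls forces, via the construction in \cite{tes1} that manufactures an affine $L^p$-action out of a Poincar\'e-type inequality on F\o lner-type sets, the existence for every $\alpha < 1$ of a proper affine isometric action on an $L^p$-space with compression at least $\alpha$; taking the supremum yields $B_p(G) = 1$ (and the trivial upper bound $B_p(G) \le 1$ holds by definition).

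Since both ingredients are already available --- one as Corollary \ref{corjbi} proved earlier, the other as an external citation stated verbatim --- there is no genuine obstacle in this final deduction; it is a one-line consequence. The only point requiring minor care is that \cite[Corollary 13]{tes1} be applicable to locally compact compactly generated groups in the generality needed, and that the notion of $B_p$ used there matches the one defined in the excerpt; this is a routine compatibility check. The substance of the theorem lies entirely upstream, in Corollary \ref{corjbi} and hence in Theorem \ref{thm:ccpp}.
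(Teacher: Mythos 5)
Your proposal is correct and follows exactly the paper's route: the statement is obtained by combining Corollary \ref{corjbi} (the linear lower bound $J^b_{G,p}(n)\succeq n$ for finitely generated VSP groups) with the cited result \cite[Corollary 13]{tes1}, which is precisely how the paper deduces it. Nothing further is needed.
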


See also \cite[Theorem 10]{tes1} for a finer consequence.

\subsection{An mean ergodic theorem in $L^1$}
Let $X$ be a probability space and let $T:X\to X$ be a measure-preserving ergodic self-map of $X$.
Recall that Birkhoff's theorem states that for all $f\in L^1(X)$, the sequence 
$\frac{1}{n}\sum_{i=0}^{n-1}T^i(f)$ converges a.e.\ and in $L^1$ to the integral of $f$. 
Observe that the map $n\mapsto c(n)=\sum_{i=0}^{n-1}T^i(f)\in L^1(X)$ (a priori well defined on positive integers, and more generally on $\Z$ if $T$ is invertible) satisfies the cocycle relation: $c(n+1)=T(c(n))+c(1)$. Hence, assuming that $T$ is invertible, Birkhoff's ergodic theorem can be restated in a more group-theoretic fashion: given an ergodic measure measure-preserving action of $\Z$ on a probability space $X$, every continuous cocycle 
$c\in Z^1(\Z,L^1(X))$ is such that $$\frac{1}{|n|}\left(c(n)(x)-\int c(n)(x')d\mu(x')\right)\to 0,$$ both a.e.\ and in $L^1$.
By measure-preserving action of $G$ on a probability space $X$, we mean a measurable map $G\times X\to X$, denoted $(g,x)\mapsto g.x$ ($G$ being endowed with the Lebesgue $\sigma$-algebra), such that for every $g,h\in G$, the functions $g.(hx)$ and $(gh)x$ coincide outside a subset of measure zero. This makes $L^p(G)$ a Banach $G$-module for all $1\le p<\infty$.

A generalization of this result is due to Boivin and Derriennic \cite{BD} for $\Z^d$ (and similarly for $\R^d$). To obtain almost sure convergence, stronger integrability conditions are required  when 
 $d>1$ (see \cite[Theorems 1 and 2]{BD}).  Here however, we focus on convergence in $L^1$:
  
\begin{DefnIntro}A CGLC group $G$ satisfies the {\it mean ergodic theorem for cocycles in $L^1$} if for every ergodic measure-preserving action of $G$ on a probability space $X$, and every continuous cocycle $c\in Z^1(G,L^1(X))$, we have $$\lim_{|g|\to \infty}\frac{1}{|g|}\left(c(g)(x)-\int_Xc(g)(x')d\mu(x')\right)=0,$$
where the convergence is in $L^1(X)$, and $|g|$ denotes a word length on $G$ associated to some compact generating subset.
\end{DefnIntro}
In \cite[Theorem 4]{BD}, Boivin and Derriennic prove that $\Z^d$   
 and $\R^d$ satisfy the mean ergodic theorem in $L^1$.
We start by the following observation.
\begin{PropIntro}\label{ergoprop}
A group $G$ with Property $\WAPAP$ satisfies the mean ergodic theorem for 1-cocycles in $L^1$ if and only if $G$ satisfies $\WAPT$. 
\end{PropIntro}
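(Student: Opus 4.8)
The plan is to prove both implications by unwinding the definition of the mean ergodic theorem in terms of reduced cohomology of the $L^1$-modules $L^1(X)$. For the forward direction, assume $G$ has Property $\WAPT$; I want to show that for every ergodic measure-preserving action of $G$ on a probability space $X$ and every $c\in Z^1(G,L^1(X))$, the quantity $\frac{1}{|g|}\big(c(g)-\int_X c(g)\,d\mu\big)$ tends to $0$ in $L^1$ as $|g|\to\infty$. The first step is to split $L^1(X)=\R\oplus L^1_0(X)$, where $L^1_0(X)$ is the codimension-one submodule of functions of zero integral (this decomposition is $G$-invariant since the action is measure-preserving), and to write $c=c_0\oplus c_1$ accordingly, where $c_1$ is the projection of $c$ to the trivial module $\R$. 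Since $c_1$ is a continuous homomorphism $G\to\R$, it is Lipschitz: $\|c_1(g)\|\le C|g|$, so $\frac{1}{|g|}c_1(g)$ need not go to zero in general — but the statement only asks about $c(g)-\int_X c(g)\,d\mu$, which is exactly $c_0(g)$. So the task reduces to showing $\frac{1}{|g|}\|c_0(g)\|_1\to 0$. Now the key input: $L^1_0(X)$ need \emph{not} be WAP in general. So the real content is that one cannot directly apply $\WAPT$ to $L^1_0(X)$; instead one should observe that $\frac{1}{|g|}\|b(g)\|\to 0$ for a cocycle $b$ into a Banach module is equivalent to $b$ having sublinear growth, and that sublinear growth of a cocycle is a statement about a ``cone'' or ``asymptotic'' version which is insensitive to passing to the WAP part. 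The cleanest route: use that a cocycle with sublinear growth represents $0$ in a suitable sense, and conversely reduce the $L^1$-statement to the WAP setting by the observation (implicit in the paper's framework, cf.\ \S\ref{sec:decoca}) that for the question of sublinear growth, only the ``recurrent'' or WAP part of the module matters.

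Let me restructure. The honest approach is: \textbf{(i)} Observe that for any Banach $G$-module $V$ and $b\in Z^1(G,V)$, the condition $\frac{1}{|g|}\|b(g)\|\to 0$ depends only on the class of $b$ in $\overline{H^1}(G,V)$ (adding a coboundary changes $b(g)$ by a bounded amount since orbits... — careful, orbits need not be bounded for general $V$; but they are for WAP modules, and for $L^1(X)$ one has $\|g\cdot f\|_1=\|f\|_1$ so in fact $L^1(X)$ is an isometric module and coboundaries are bounded). \textbf{(ii)} So $\frac{1}{|g|}\|b(g)\|\to 0$ holds for all $b\in Z^1(G,L^1_0(X))$ iff it holds for all $b$ representing classes in $\overline{H^1}(G,L^1_0(X))$, and in particular it holds automatically for $b\in\overline{B^1}$. \textbf{(iii)} The crucial claim is then: $\overline{H^1}(G,L^1_0(X))$ is detected by its ``WAP-ification'', i.e.\ there is a WAP $G$-module $W$ with $W^G=0$ (or $W^{G,\mathrm{ap}}=0$) and a morphism $L^1_0(X)\to W$ through which every relevant cohomology class factors. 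The candidate is the reflexive quotient or the associated Hilbert/uniformly convex space, or simply: use that the unitary module $L^2_0(X)$ associated to the Koopman representation is WAP, together with an interpolation/comparison argument to pass sublinearity of the $L^2$-cocycle back to $L^1$. Actually the simplest: by \cite{BD}-style arguments or by the structure in the paper, $G$ has $\WAPT$ forces every WAP cocycle to be sublinear (this is essentially what $\overline{H^1}=0$ gives, after noting $\overline{H^1}=0$ implies every cocycle is approximately a coboundary, hence sublinear). So \textbf{(iv)} apply $\WAPT$ to deduce sublinearity of the cocycle associated to the Koopman representation on $L^2_0(X)$, and transfer.

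For the converse direction, assume $G$ satisfies the mean ergodic theorem for $1$-cocycles in $L^1$ but fails $\WAPT$: then there is a WAP $G$-module $V$ with $V^G=0$ and a cocycle $b$ nonzero in $\overline{H^1}(G,V)$. I would then want to manufacture from $(V,b)$ an ergodic measure-preserving action on a probability space $X$ and a cocycle $c\in Z^1(G,L^1(X))$ witnessing failure of the mean ergodic theorem, i.e.\ with $\frac{1}{|g|}\|c_0(g)\|_1\not\to 0$. The natural construction: realize $V$ (or a sub/quotient module on which $b$ is still nontrivial in reduced cohomology and has linear growth along some sequence) inside an $L^1$-space of a probability space with measure-preserving $G$-action — this is a Gaussian-type or Poisson-type functor construction. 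The obstacle here is ergodicity and the measure-preservation: one may first need to pass to a reflexive, or even Hilbert, quotient of $V$ (using $V^{G,\mathrm{ap}}=0$ and the decomposition results) on which $b$ remains unbounded in $\overline{H^1}$, then apply the Gaussian functor, which produces an $L^p$-space for all $p$ simultaneously with a measure-preserving (even measure-class-preserving, in fact probability-preserving) $G$-action, and whose $L^1$-cocycle inherits the growth of $b$. Ergodicity can be arranged by the usual ergodic decomposition, replacing $X$ by an ergodic component on which the cocycle is still nontrivial.

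\textbf{Main obstacle.} The delicate point is the passage between the $L^1$-setting of the ergodic theorem and the WAP-setting of $\WAPT$: $L^1_0(X)$ is not reflexive and not WAP, so neither implication is a formal consequence of the definitions. In the forward direction one must argue that sublinear growth of the relevant cocycle can be \emph{read off} from a WAP (e.g.\ Hilbert) module associated functorially to $L^1(X)$ — presumably via the Koopman representation on $L^2_0(X)$ — and then transported back, which requires controlling $\|c(g)\|_1$ by a function of $\|(\text{associated }L^2\text{-cocycle})(g)\|_2$ or by a direct argument that the $L^1$-cocycle is cohomologous to a bounded one whenever the $L^2$-one is. In the converse direction the obstacle is the Gaussian-type functorial construction producing an \emph{ergodic probability-preserving} action realizing a given WAP cocycle in $L^1$ with controlled growth; this is where one needs the structure theory (decomposition of WAP modules, reduction to a Hilbertian or reflexive quotient via $V^{G,\mathrm{ap}}=0$) developed earlier in the paper. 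I expect the converse to be the easier of the two once the Gaussian machinery is in place, and the forward direction — transferring sublinearity from the WAP world to $L^1$ — to be the crux.
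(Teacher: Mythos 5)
Your forward direction rests on a false premise, and that is where the genuine gap lies. You assert that $L^1_0(X)$ ``need not be WAP'' and therefore build an elaborate detour (Koopman representation on $L^2_0(X)$, interpolation, transfer of sublinearity back to $L^1$). In fact, for a measure-preserving action on a \emph{probability} space the $G$-module $L^1(X)$ \emph{is} WAP: the orbit of any $f\in L^1(X)$ consists of functions with the same distribution as $f$, hence is uniformly integrable, hence relatively weakly compact by Dunford--Pettis; this is exactly the fact the paper invokes (citing \cite[Corollary 6.5]{EFH}), and with it the ``if'' direction is immediate. One splits $L^1(X)=\R\oplus L^1_0(X)$ as you do, notes $(L^1_0(X))^G=0$ by ergodicity, applies $\WAPT$ to conclude that every cocycle into $L^1_0(X)$ is an almost coboundary, and then uses the easy half of Proposition \ref{prop:sublin} (almost coboundary $\Rightarrow$ sublinear, valid for uniformly bounded modules) to get $\frac{1}{|g|}\|c_0(g)\|_1\to 0$. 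Your substitute route does not work as stated: a cocycle in $Z^1(G,L^1(X))$ need not take values in $L^2(X)$, so there is no associated $L^2$-cocycle to which $\WAPT$ could be applied, and no interpolation argument is given that would bridge this; the ``crux'' you identify is a non-issue once the WAP-ness of $L^1(X)$ is known, and your workaround for it is unsubstantiated.

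Your converse direction is essentially the paper's argument, though vaguer. The paper uses the finite-dimensional reduction (it argues with $\WAPFD$-type consequences of the hypothesis) to produce a finite-dimensional \emph{orthogonal} representation $\pi$ with $\pi^G=0$ and nonzero reduced cohomology, then invokes the standard Gaussian construction \cite[Corollary A.7.15]{BHV08} to embed $\pi$ into the Koopman representation of an ergodic measure-preserving action on a probability space; the corresponding zero-average cocycle $c$ is not sublinear in $L^2$, and since $L^2(X)\hookrightarrow L^1(X)$ continuously on a probability space, the mean ergodic theorem fails in $L^1$. Your sketch (pass to a Hilbertian/finite-dimensional piece where the cocycle stays nontrivial, then a Gaussian-type functor, with ergodicity handled separately) is the right strategy, but as written it leaves open precisely the points the paper settles by quoting the Gaussian construction with ergodicity built in and by doing the reduction to a concrete finite-dimensional orthogonal representation. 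So: converse acceptable as a plan, forward direction has a real gap caused by missing the key fact that probability-measure-preserving actions give WAP $L^1$-modules.
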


The {\it if} part immediately follows from the well-known fact \cite[Corollary 6.5]{EFH} that the representation of $G$ on $L^1(X)$ is WAP. The ``only if" part is more anecdotical, see \S\ref{aux} for the proof.
\begin{CorIntro}
Groups in the class $\mathfrak{C}$ and their closed cocompact subgroups satisfy the  ergodic theorem for cocycles in $L^1$.
\end{CorIntro}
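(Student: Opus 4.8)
The plan is to deduce the corollary directly by chaining together three previously established facts. First, by Theorem~\ref{Main}, every locally compact group $G$ in the class $\mathfrak{C}$ has Property $\WAPT$. Second, by Theorem~\ref{introstab}(1), Property $\WAPT$ is inherited by closed cocompact subgroups such that the quotient carries an invariant probability measure, and in particular by \emph{all} closed cocompact subgroups provided the ambient group is amenable; since groups in $\mathfrak{C}$ are solvable, hence amenable, any closed cocompact subgroup $H\le G$ again has Property $\WAPT$. Third, Property $\WAPT$ trivially implies Property $\WAPAP$ (a WAP module with $V^{G,\mathrm{ap}}=0$ in particular has $V^G=0$, so its reduced $1$-cohomology vanishes), so both $G$ and $H$ satisfy the hypotheses of Proposition~\ref{ergoprop}.

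The remaining step is to apply Proposition~\ref{ergoprop}: a group with Property $\WAPAP$ satisfies the mean ergodic theorem for $1$-cocycles in $L^1$ if and only if it satisfies $\WAPT$. Since we have just shown that every group $G\in\mathfrak{C}$ and every closed cocompact subgroup $H\le G$ has both $\WAPAP$ and $\WAPT$, the ``if'' direction of Proposition~\ref{ergoprop} applies and yields the mean ergodic theorem for cocycles in $L^1$ for all such groups. Concretely, the ``if'' direction only needs the fact (cited from \cite[Corollary 6.5]{EFH} in the excerpt) that the representation of $G$ on $L^1(X)$ associated to a measure-preserving action is WAP, so that a cocycle $c\in Z^1(G,L^1(X))$ decomposes, modulo coboundaries and using the canonical $G$-invariant complement, into a homomorphism part valued in the invariants $L^1(X)^G$ (the constants, by ergodicity) and a part with vanishing reduced cohomology; the quantitative asymptotics $\frac{1}{|g|}(c(g)(x)-\int_X c(g)\,d\mu)\to 0$ in $L^1(X)$ then follow.

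There is essentially no obstacle here beyond bookkeeping: the corollary is a formal consequence of Theorem~\ref{Main}, Theorem~\ref{introstab}(1), and Proposition~\ref{ergoprop}. The one point requiring a word of care is the amenability hypothesis in Theorem~\ref{introstab}(1) needed to pass $\WAPT$ to arbitrary closed cocompact subgroups without assuming an invariant probability measure on the quotient; this is immediate since the class $\mathfrak{C}$ consists of solvable, hence amenable, groups, and closed cocompact subgroups of amenable groups are amenable. One should also note that a closed cocompact subgroup of a CGLC group is again CGLC, so that the statement of the mean ergodic theorem is well-posed for $H$.
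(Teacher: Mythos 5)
Your proposal is correct and follows essentially the same route the paper intends: Theorem~\ref{Main} gives Property $\WAPT$ for groups in $\mathfrak{C}$, Theorem~\ref{introstab}(1) (via amenability, so the compact quotient carries an invariant probability measure) passes it to closed cocompact subgroups, and the ``if'' direction of Proposition~\ref{ergoprop} — resting on the WAP-ness of the $L^1(X)$-module and the decomposition along the canonical complement of the invariants — yields the mean ergodic theorem. Your added remarks (that $\WAPT\Rightarrow\WAPAP$, and that closed cocompact subgroups of CGLC groups are again CGLC) are exactly the bookkeeping needed.
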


To our knowledge, this is new even for the group SOL. For nilpotent groups, it can be easily deduced from Proposition \ref{ergoprop} together with the fact that these groups have $\WAPT$, an observation due to \cite{BRS}.

\subsection{Bourgain's theorem on tree embeddings}

We obtain a new proof of the following result of Bourgain.
\begin{CorIntro}[Bourgain, \cite{Bourgain}]\label{corbo}
The $3$-regular tree does not  quasi-isometrically embed into any superreflexive Banach space.
\end{CorIntro}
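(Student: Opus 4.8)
The plan is to argue by contradiction using the equivariant machinery developed above, combined with a standard ultraproduct/limit-group argument. Suppose the $3$-regular tree $T$ embeds quasi-isometrically into a superreflexive Banach space $E$. Fix a vertex transitive action: $T$ is the Cayley graph of the infinite dihedral group, or more conveniently of the free product $\Z/2 * \Z/2 * \Z/2$ acting on its Cayley graph (a $3$-regular tree), but for the purposes of an equivariant reduction we would rather pass to a group acting properly cocompactly on $T$ whose $\WAPAP$/$\WAPT$ behaviour we understand. The first step is thus the classical observation (going back to Gromov, cf.\ the proof scheme in \cite{Shalom}) that from a quasi-isometric embedding $f:\Gamma\to E$ of a finitely generated group one can, after taking an ultralimit of the translates $\gamma\mapsto f(\gamma g)-f(\gamma)$ along a suitable non-principal ultrafilter, produce a nontrivial affine isometric action of $\Gamma$ on an ultrapower $E^{\mathcal U}$ whose linear part has no nonzero invariant vector and whose cocycle $b$ is unbounded but \emph{Lipschitz} (quasi-isometric control of $f$ forces $\|b(\gamma)\|\asymp |\gamma|$), hence nonzero in $\overline{H^1}(\Gamma,E^{\mathcal U})$. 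Crucially, $E^{\mathcal U}$ is again superreflexive with the same modulus of convexity, so it is in particular reflexive, and a uniformly bounded — here isometric — representation on a reflexive space is WAP.

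The second step is to choose $\Gamma$ so that Theorem \ref{Main} (or rather its consequences) applies and yields a contradiction. A clean choice is to realise the $3$-regular tree as quasi-isometric to $\Gamma=\mathrm{BS}(1,2)=\Z[1/2]\rtimes\Z$ — no, that group is quasi-isometric to a horocyclic product, not to the tree; instead one uses that the $3$-regular tree is quasi-isometric to (the Cayley graph of) a free product such as $\Z/3 * \Z/3$ or to a tree-lattice, but free groups are \emph{not} in the class $\mathfrak C$. The correct route, which is the one implicit in the paper's logic, is: the tree $T$ is bi-Lipschitz equivalent to the lamplighter-type picture only via SOL-type groups; what actually works is the observation that the regular tree quasi-isometrically embeds in $E$ iff the lamplighter group $\Gamma=(\Z/2)\wr\Z$ — which is quasi-isometric to (a subgraph of) the $3$-regular tree's horocyclic structure, more precisely whose Cayley graph with respect to the standard generators is the Diestel--Leader graph $\mathrm{DL}(2,2)$, and a horoball in $\mathrm{DL}(2,2)$ is the $3$-regular tree. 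So a quasi-isometric embedding of the $3$-regular tree into $E$ restricts/extends to one of a horoball of the Cayley graph of $\Gamma=(\Z/2)\wr\Z$, and by the same ultralimit construction one obtains an unbounded Lipschitz cocycle for an isometric WAP representation of $\Gamma$ on $E^{\mathcal U}$ with no invariant vectors. But $(\Z/2)\wr\Z$ is a cocompact lattice in a group in the class $\mathfrak C$, so by Corollary \ref{cor:realtriangulated} together with Theorem \ref{introstab}(1) it has Property $\WAPT$; hence $\overline{H^1}(\Gamma,E^{\mathcal U})=0$, i.e.\ every cocycle is a coboundary, hence bounded on bounded sets and in particular sublinear — contradicting $\|b(\gamma)\|\succeq|\gamma|$.

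The step I expect to be the genuine obstacle is making the reduction from ``the tree embeds'' to ``some group in our list has an unbounded Lipschitz cocycle'' completely rigorous: one must (a) identify a group $\Gamma$ with Property $\WAPT$ (from the class $\mathfrak C$, up to cocompact lattice) such that the $3$-regular tree quasi-isometrically embeds into $E$ whenever $\Gamma$ does — this is the Diestel--Leader/horoball geometry and requires checking that a quasi-isometric embedding of a horoball extends to one of the whole group, or equivalently that $\Gamma$ itself quasi-isometrically embeds in $E$ once a horoball does, which follows since $\Gamma$ is a bounded-valence union of translates of the horoball; and (b) verify that the ultralimit cocycle is genuinely \emph{unbounded} — this uses properness of the original embedding $f$ on the tree together with the fact that the horoball contains geodesic rays, so $\|b(\gamma_n)\|\ge \frac1L|\gamma_n| - C\to\infty$ for $\gamma_n$ marching out such a ray. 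Once these geometric points are in place, the superreflexivity enters only through the single soft fact that ultrapowers of superreflexive spaces are reflexive, whence the representation is WAP and Property $\WAPT$ applies; everything else is the standard Shalom-style translation between coarse embeddings and reduced cohomology.
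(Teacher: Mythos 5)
Your overall architecture matches the paper's: reduce to the lamplighter $\Gamma=(\Z/2)\wr\Z$, which is a cocompact lattice in a group of the class $\mathfrak{C}$ (namely $(\F_2\lp t\rp)^2\rtimes\Z$) and hence has $\WAPT$ by Theorems \ref{Main} and \ref{introstab}(1); then use amenability to convert a quasi-isometric embedding into a superreflexive space into an affine isometric action with quasi-isometrically embedded orbits (this is exactly \cite[Theorem 9.1]{NP}, which the paper cites and which your ultralimit sketch is a stand-in for), note the representation is WAP by reflexivity of the ultrapower, and contradict linear growth of the cocycle via $\WAPT$. The genuine gap is in the step you yourself single out as the obstacle: the passage from ``$T_3$ embeds into $E$'' to ``$\Gamma$ embeds into a superreflexive space''. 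You base it on the tree sitting \emph{inside} $\mathrm{DL}(2,2)$ (as a ``horoball'') and then try to promote an embedding of that subset to one of all of $\Gamma$ by saying $\Gamma$ is a bounded-valence union of translates of it. That gluing argument is not valid: separate coarse embeddings of translates of a subset cannot in general be assembled into a coarse embedding of the union (any space is a union of embeddable pieces), and in fact the containment ``tree $\subset$ Cayley graph of $\Gamma$'' only yields the converse implication ($\Gamma$ embeds $\Rightarrow$ $T_3$ embeds). Moreover the horoball claim itself is off: the connected component of a height-sublevel set of $\mathrm{DL}(2,2)$ is not quasi-isometric to a tree (it is not hyperbolic); what is true is that the section over a fixed vertical geodesic in the second tree is a quasi-isometrically embedded copy of $T_3$ -- but again that is the wrong containment. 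The correct reduction, which is the paper's key geometric step and which you never supply, goes the other way: $\Gamma$, or rather its cocompact hull $(\F_2\lp t\rp)^2\rtimes\Z$, quasi-isometrically embeds into the product of two copies of the group $A(\F_2\lp t\rp,t)$, each acting properly cocompactly on the Bass--Serre tree of $\SL_2(\F_2\lp t\rp)$; equivalently $\mathrm{DL}(2,2)\subset T\times T$ is a quasi-isometric embedding. Hence if $T_3$ embeds into $E$, then $\Gamma$ embeds into $E\oplus E$, which is still superreflexive, and only then does your contradiction get off the ground.

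Two smaller repairs are also needed. First, you assert that the limiting representation has no nonzero invariant vectors and conclude from $\WAPT$ that the cocycle is a coboundary; neither is justified, and vanishing of reduced cohomology gives only an \emph{almost} coboundary. The paper instead decomposes $b=b_1+b_2$ with $b_1$ a continuous homomorphism into the invariant vectors and $b_2$ an almost coboundary (hence sublinear), and then uses that $b_1$ kills the torsion lamp subgroup (a homomorphism to a Banach space factors through the torsion-free part of the abelianization, here $\Z$), so that $b$ is sublinear along the lamp subgroup, contradicting the quasi-isometric lower bound; some such argument must be included. Second, the reference to Corollary \ref{cor:realtriangulated} is misplaced (the hull lives over $\F_2\lp t\rp$, not a characteristic-zero field); membership in $\mathfrak{C}$ uses the paper's remark dispensing with the characteristic-zero hypothesis when the unipotent part is abelian, and $\WAPT$ for $\Gamma$ then follows from Theorem \ref{Main} together with Theorem \ref{thm:subgroup} (or Theorem \ref{introstab}(1)).
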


The idea is to use a CGLC group in the class $\mathfrak{C}$ that is quasi-isometric to the 3-regular tree, and make use of amenability and Property $\WAPT$. In \cite{CTV}, the authors and Valette used a similar argument based on property $\HT$ to show Bourgain's result in the case of a Hilbert space. See Section \ref{aux} for the proof.

\medskip

\noindent {\bf Acknowledgement.} We thank Yehuda Shalom for useful remarks and references.

\section{Preliminaries on Banach modules}

\subsection{First reduced cohomology versus affine actions}

Let $G$ be a locally compact group, and $(V,\pi)$ be a Banach $G$-module.

Observe that, given a continuous function $b:G\to V$, we can define for every $g\in G$ an affine transformation $\alpha_b(g)v=\pi(g)v+b(g)$. Then the condition $b\in Z^1(G,\pi)$ is
a restatement of the condition $\alpha_b(g)\alpha_b(h)=\alpha_b(gh)$ for all $g,h\in G$, meaning that $\alpha$ is an action by affine transformations.
Then the subspace $B^1(G,\pi)$ is the set of $b$ such that $\alpha_b$ has a $G$-fixed point, and its closure $\overline{B^1}(G,\pi)$ is
the set of 1-cocycles $b$ such that the action $\alpha_b$ almost has fixed points, that is, for
every $\eps>0$ and every compact subset $K$ of $G$, there exists a
vector $v\in V$ such that for every $g\in K$,
$$\|\alpha_b(g)v-v\|\leq \eps.$$
If $G$ is compactly generated and if $S$ is a compact generating
subset, then this is equivalent to the existence of a sequence of
almost fixed points, i.e.\ a sequence $v_n$ of vectors satisfying
$$\lim_{n\to\infty}\sup_{s\in S}\|\alpha_b(s)v_n-v_n\|=0.$$

\subsection{Almost periodic actions}

\begin{defn}[Almost periodic actions]\label{dapa}
Let $G$ be a group acting on a metric space $X$. Denote by $X^{G,\mathrm{ap}}$ the set of $x\in X$ whose $G$-orbit has a compact closure in $X$.
Say that $X$ is almost ($G$-)periodic if $X^{G,\mathrm{ap}}=X$.
\end{defn}

Note that this definition does not refer to any topology on $G$. Although we are mainly motivated by Banach $G$-modules, some elementary lemmas can be established with no such restriction.

The following lemma is well-known when $X=V$ is a WAP Banach $G$-module.

\begin{lem}\label{vap}
Let $G$ be a group and $X$ a complete metric space with a uniformly Lipschitz $G$-action (in the sense that $C<\infty$, where $C$ is the supremum over $g$ of the Lipschitz constant $C_g$ of the map $x\mapsto gx$). Then $X^{G,\mathrm{ap}}$ is closed in $X$.
\end{lem}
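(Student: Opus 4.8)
The goal is to show that $X^{G,\mathrm{ap}}$, the set of points with relatively compact $G$-orbit, is closed in the complete metric space $X$ when $G$ acts uniformly Lipschitz with global constant $C<\infty$. The natural strategy is to take a sequence $x_n \in X^{G,\mathrm{ap}}$ converging to some $x \in X$ and verify directly that the orbit closure $\overline{Gx}$ is compact. Since $X$ is complete, $\overline{Gx}$ is complete, so by the standard characterization it suffices to prove that $Gx$ is totally bounded: for every $\eps>0$, the set $\{gx : g\in G\}$ admits a finite $\eps$-net.

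First I would fix $\eps>0$ and choose $n$ large enough that $d(x_n,x) < \eps/(3C)$ (or $\eps/3$ if $C\le 1$; in any case $< \eps/(3\max(C,1))$). Since $x_n \in X^{G,\mathrm{ap}}$, the orbit $Gx_n$ is totally bounded, so there are finitely many points $g_1 x_n, \dots, g_k x_n$ forming an $\eps/3$-net for $Gx_n$. Now for an arbitrary $g\in G$, pick $i$ with $d(gx_n, g_i x_n) < \eps/3$. The key estimate is the triangle inequality combined with the uniform Lipschitz bound applied to the maps $y\mapsto gy$ and $y \mapsto g_i y$:
\[
d(gx, g_i x) \le d(gx, gx_n) + d(gx_n, g_i x_n) + d(g_i x_n, g_i x) \le C\, d(x,x_n) + \tfrac{\eps}{3} + C\, d(x_n,x) < \eps.
\]
Hence $g_1 x, \dots, g_k x$ form an $\eps$-net for $Gx$. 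As $\eps$ was arbitrary, $Gx$ is totally bounded, so $\overline{Gx}$ is compact, i.e.\ $x \in X^{G,\mathrm{ap}}$, proving closedness.

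\textbf{Main obstacle.} There is essentially no hard step here; the proof is a routine three-$\eps$ argument. The one point requiring a little care is to make sure the Lipschitz constant used is \emph{uniform} over $g$ — this is exactly what the hypothesis $C<\infty$ provides, and it is why the conclusion can fail without it (a sequence of maps $g$ with Lipschitz constants $C_g \to \infty$ could blow up the distance $d(gx,gx_n)$ even when $d(x,x_n)$ is tiny). A secondary bookkeeping point is that $X^{G,\mathrm{ap}}$ is defined via compactness of orbit closures rather than total boundedness of orbits, so I should explicitly invoke completeness of $X$ (hence of the closed set $\overline{Gx}$) to pass between the two. I would also remark that when $X=V$ is a Banach $G$-module this recovers the familiar statement that the almost-periodic vectors form a closed (necessarily $G$-invariant) submodule.
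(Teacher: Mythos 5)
Your proof is correct and takes essentially the same route as the paper's: approximate the limit point by points of $X^{G,\mathrm{ap}}$ and run the three-$\varepsilon$ estimate using the uniform Lipschitz constant $C$ twice. The only cosmetic difference is the compactness criterion — the paper extracts a subsequence making each $(g_nv_j)_n$ converge and shows $(g_nv)$ is Cauchy, while you establish total boundedness of the orbit and invoke completeness of $X$; the key estimate is identical.
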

\begin{proof}
Let $v$ be a point in the closure of $X^{G,\mathrm{ap}}$. Choose $v_j\in X^{G,\mathrm{ap}}$ with $v=\lim_jv_j$.

Let $(g_n)$ be a sequence in $G$. We have to prove that $(g_nv)$ has a convergent subsequence. First, up to extract, we can suppose that $(g_nv_j)$ is convergent for all $j$. Then for all $m,n,j$ we have 
\begin{align*}d(g_nv,g_mv)\le & d(g_nv,g_nv_j)+d(g_nv_j,g_mv_j)+d(g_mv_j,g_mv)\\
\le & 2Cd(v,v_j)+d(g_nv_j,g_mv_j).
\end{align*}
Now fix $\eps>0$ and choose $j$ such that $d(v,v_j)\le\eps/3$. Then there exists $n_0$ such that for all $n,m\ge n_0$, we have $d(g_nv_j,g_mv_j)\le\eps/3$. It then follows from the above inequality that for all $n,m\ge n_0$, we have $d(g_nv,g_mv)\le\eps$.
\end{proof}

\begin{lem}\label{vapcocom}
Let $G$ be a locally compact group and $H$ a closed cocompact subgroup. Let $X$ be a metric space with a uniformly Lipschitz, separately continuous $G$-action. Then $X^{H,\mathrm{ap}}=X^{G,\mathrm{ap}}$.
\end{lem}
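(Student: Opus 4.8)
The plan is to prove the two inclusions $X^{G,\mathrm{ap}}\subseteq X^{H,\mathrm{ap}}$ and $X^{H,\mathrm{ap}}\subseteq X^{G,\mathrm{ap}}$ separately. The first is immediate and requires no hypothesis on $H$: if $x\in X^{G,\mathrm{ap}}$ then $\overline{Gx}$ is compact, and since $Hx\subseteq Gx$ we have $\overline{Hx}\subseteq\overline{Gx}$ is a closed subset of a compact set, hence compact, so $x\in X^{H,\mathrm{ap}}$. The content is in the reverse inclusion, and this is where cocompactness and the uniform-Lipschitz/continuity hypotheses enter.

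For the reverse inclusion, let $x\in X^{H,\mathrm{ap}}$, so $K:=\overline{Hx}$ is compact. Fix a compact subset $Q\subseteq G$ with $G=QH$ (possible since $H$ is cocompact). I would then argue that $\overline{Gx}$ is contained in $\overline{Q\cdot K}$, where $Q\cdot K=\{qy: q\in Q,\ y\in K\}$. Indeed any $g\in G$ writes $g=qh$ with $q\in Q$, $h\in H$, so $gx=q(hx)$, and $hx\in K$; thus $Gx\subseteq Q\cdot K$. It remains to check that $Q\cdot K$ is relatively compact: consider the action map $\mu: G\times X\to X$. Since the action is separately continuous and $Q$, $K$ are compact, one expects $\mu(Q\times K)$ to be relatively compact; the uniform Lipschitz bound $C<\infty$ makes this clean. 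Concretely, $Q$ compact in a locally compact (hence, one may assume, metrizable on the relevant neighbourhood, or argue via nets) group: cover $Q$ by finitely many small neighbourhoods $g_1V,\dots,g_nV$; for $q\in g_iV$ and $y\in K$ one has $d(qy,g_iy)=d(g_i(g_i^{-1}q)y,g_iy)\le C\,d((g_i^{-1}q)y,y)$, and since $g_i^{-1}q$ ranges over the small neighbourhood $V$, separate continuity of the action (at points of the compact set $K$, upgraded to uniform continuity on $K$ by a standard compactness argument) forces $d((g_i^{-1}q)y,y)$ to be uniformly small. Hence $Q\cdot K$ is totally bounded; as $X$ is a metric space one then passes to $\overline{Q\cdot K}$, which is complete and totally bounded, hence compact — here one may either assume $X$ complete as in Lemma~\ref{vap}, or note that total boundedness of $\overline{Gx}$ together with the ambient setting suffices for the orbit closure to be compact. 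Therefore $\overline{Gx}$ is compact and $x\in X^{G,\mathrm{ap}}$.

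I expect the main obstacle to be the equicontinuity step: turning "separately continuous action" plus "$K$ compact" into "the family of maps $y\mapsto gy$, for $g$ in a small neighbourhood, moves points of $K$ uniformly little". This is where the uniform Lipschitz hypothesis $C<\infty$ is genuinely used — it lets one transfer a continuity estimate at a fixed group element $g_i$ to all nearby elements with a uniform constant, avoiding any appeal to joint continuity of the action (which is not assumed). A secondary point of care is the precise compactness/completeness bookkeeping for $X$: one should either invoke completeness to conclude a totally bounded set has compact closure, or observe that it is enough to exhibit, for each sequence $(g_n)$, a Cauchy subsequence of $(g_nx)$ lying in the already-known-compact $K$ up to the $Q$-twist, which again reduces to the equicontinuity estimate. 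Everything else is routine manipulation of the decomposition $G=QH$.
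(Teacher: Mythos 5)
Your easy inclusion and the reduction to showing that $Gx\subseteq Q\cdot\overline{Hx}$ has compact closure are fine, and your equicontinuity step is legitimate: with a uniform Lipschitz constant $C$, separate continuity does upgrade to ``uniformly small displacement of the points of a compact set $K$ by group elements near $1$'' via a finite $\delta$-net of $K$ and a $3\varepsilon$-estimate, so total boundedness of $Q\cdot K$ follows. The genuine soft spot is the very last step: the lemma assumes only that $X$ is a metric space, with no completeness hypothesis, and ``totally bounded'' does not give ``compact closure'' in an incomplete space. Neither of your fallbacks repairs this: importing completeness from Lemma~\ref{vap} changes the statement (the lemma is later applied to arbitrary metric spaces, e.g.\ inside Lemma~\ref{prop:C}), and a Cauchy subsequence of $(g_nx)$ need not converge without completeness, so ``Cauchy'' is the wrong target.

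The paper's proof sidesteps both the completeness issue and the equicontinuity machinery by extracting limits directly. Given a sequence $(g_nx)$, write $g_n=k_nh_n$ with $k_n$ in a fixed compact $K\subset G$ satisfying $G=KH$ and $h_n\in H$; pass to a subsequence along which $k_n\to k$ in $K$ and $h_nx\to w$ in the compact set $\overline{Hx}$; then
$d(g_nx,kw)\le d(k_nh_nx,k_nw)+d(k_nw,kw)\le C\,d(h_nx,w)+C\,d(w,k_n^{-1}kw)\to 0$,
using only the uniform Lipschitz bound and the continuity of the single orbital map $g\mapsto gw$. The candidate limit $kw$ is an explicit point of $X$, so no completeness is needed, and no uniformity over $\overline{Hx}$ is required. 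Your argument is salvageable along exactly these lines: replace ``Cauchy subsequence'' by this explicit subsequential limit $qw$ (with $q_n\to q$ in $Q$ and $h_nx\to w$ in $\overline{Hx}$), and your proof closes — at which point it coincides with the paper's, while your totally-bounded route remains a valid alternative whenever $X$ is complete (in particular for Banach modules).
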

\begin{proof}
Choose $x\in X^{H,\mathrm{ap}}$.
Choose a compact subset $K\subset G$ such that $G=KH$. Let $(g_nx)$ be any sequence in the $G$-orbit of $x$. Write $g_n=k_nh_n$ with $k_n\in K$, $h_n\in H$. We can find an infinite subset $I$ of integers such that $k_n\underset{n\to\infty}{{}^{\underrightarrow{n\in I}}}k$ and $h_nv\underset{n\to\infty}{{}^{\underrightarrow{n\in I}}}w$. Then, denoting $C$ the supremum of Lipschitz constants, we have
\begin{align*}d(g_nx,kw)= d(k_nh_nx,kw)\le &d(k_nh_nx,k_nw)+d(k_nw,kw)\\
\le & Cd(h_nx,w)+Cd(w,k_n^{-1}kw)\underset{n\to\infty}{{}^{\underrightarrow{n\in I}}}0.
\end{align*}
Thus the $G$-orbit of $x$ has compact closure, that is, $x\in V^{G,\mathrm{ap}}$, showing the nontrivial inclusion.
\end{proof}

\begin{prop}\label{ap_com}
Let $G$ be a topological group and $(X,\pi)$ a metric space with a separately continuous uniformly Lipschitz $G$-action (denoted $gx=\pi(g)x$). Then $X$ is almost $G$-periodic if and only if the action of $G$ on $X$ factors through a compact group $K$ with a separately continuous action of $K$ on $X$ and a continuous homomorphism with dense image $G\to K$.
\end{prop}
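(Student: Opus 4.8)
The plan is to prove both directions of the equivalence.

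\medskip

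\noindent\textbf{The ``if'' direction.} Suppose the action factors as $G \to K \xrightarrow{\pi_K} \mathrm{Homeo}(X)$ with $K$ compact, $G \to K$ continuous with dense image, and the $K$-action separately continuous and uniformly Lipschitz (the Lipschitz constant passes to $K$ by density and continuity, or one simply notes the orbits under $G$ and $K$ have the same closure). For any $x \in X$, the orbit $Gx$ is dense in $Kx$, so $\overline{Gx} = \overline{Kx}$. Since $K$ is compact and $k \mapsto kx$ is continuous, $Kx$ is compact, hence $\overline{Gx} = Kx$ is compact. Thus $X^{G,\mathrm{ap}} = X$.

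\medskip

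\noindent\textbf{The ``only if'' direction.} This is the substantive part. Assume $X$ is almost $G$-periodic. Let $C < \infty$ be the supremum of the Lipschitz constants. The natural candidate for $K$ is the closure of the image of $G$ in the space of $C$-Lipschitz self-maps of $X$, topologized by pointwise (= uniform-on-compacta, for equi-Lipschitz families) convergence. Concretely, embed $G$ via $g \mapsto (x \mapsto gx)$ into $\prod_{x \in X} \overline{Gx}$, equipped with the product topology; each factor $\overline{Gx}$ is compact by hypothesis, so the ambient product is compact (Tychonoff). Let $K$ be the closure of the image of $G$ in this product. Then I would check: (i) $K$ consists of $C$-Lipschitz maps $X \to X$ — this follows since the $C$-Lipschitz condition $d(\phi(x),\phi(y)) \le C\, d(x,y)$ is closed in the product topology; (ii) composition $K \times K \to K$ is well-defined and $K$ is a group — associativity and the closure of the group law under limits use equi-Lipschitzness to control cross terms, exactly as in the computations in Lemmas \ref{vap} and \ref{vapcocom}; one also needs that the inverse extends continuously, which again uses that each $\overline{Gx}$ is compact so a limiting map is a bijection (its inverse is the limit of the inverses along the same net, after passing to a subnet using compactness in each coordinate); (iii) $K$ is compact, being a closed subset of a compact product; (iv) the multiplication and inversion are continuous in the product topology — continuity of $(\phi,\psi) \mapsto \phi\psi$ at a point is checked coordinate-wise using the estimate $d(\phi\psi x, \phi_0\psi_0 x) \le C\, d(\psi x, \psi_0 x) + d(\phi(\psi_0 x), \phi_0(\psi_0 x))$; (v) the $K$-action on $X$, $(\phi,x) \mapsto \phi(x)$, is separately continuous — continuity in $\phi$ for fixed $x$ is by definition of the product topology, continuity in $x$ for fixed $\phi$ is the Lipschitz property; (vi) the map $G \to K$ is a continuous homomorphism with dense image — density is by construction, and continuity holds because for each $x$ the map $G \to \overline{Gx}$, $g \mapsto gx$, is continuous (this is the separate continuity of the original action), so $G \to \prod_x \overline{Gx}$ is continuous.

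\medskip

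\noindent\textbf{Main obstacle.} The delicate point is verifying that $K$ is genuinely a \emph{group} of homeomorphisms and that inversion is continuous: a priori a limit in the product topology of maps $g_\alpha \in G$ is only a $C$-Lipschitz map, not obviously invertible. The resolution is that, for each fixed $x$, the net $(g_\alpha^{-1} x)$ lies in the compact set $\overline{Gx}$, so after passing to a subnet it converges for every $x$ (diagonal/Tychonoff argument), giving a candidate inverse; one then checks $\phi \circ \phi^{-1} = \mathrm{id} = \phi^{-1} \circ \phi$ pointwise using the equi-Lipschitz estimates to pass the relations $g_\alpha g_\alpha^{-1} = 1$ to the limit. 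This also shows $K$ is closed under inversion and that inversion is continuous (the subnet argument shows every net has a subnet along which inverses converge to the inverse of the limit, which in a compact space forces continuity of inversion). Once $K$ is known to be a compact topological group acting separately continuously, the remaining verifications are the routine equi-Lipschitz manipulations already illustrated in the preceding lemmas.
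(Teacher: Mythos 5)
Your proof is correct, and it follows the same basic strategy as the paper -- compactify $G$ through its pointwise action on the compact orbit closures and take $K$ to be the closure of the image of $G$ -- but the realization differs in one genuine way. The paper sends $G$ into the product $\prod_Y I_Y$ of the isometry groups $I_Y$ of the orbit closures $Y$, so that $K$, being a closed subgroup of a product of compact groups, is a compact group with no further work; the remaining task there is to extend the action to $K$ by a Cauchy-net argument and to check the action property and separate continuity using density of the image of $G$. You instead close up inside $\prod_{x\in X}\overline{Gx}$, which is compact but not a group, and must therefore verify by hand (in the style of an Ellis semigroup) that the closure is a group with jointly continuous multiplication and continuous inversion; your iterated-limit argument for closure under composition, the subnet argument producing the inverse and forcing continuity of inversion, and the equi-Lipschitz estimates throughout do exactly this and are sound. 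A small payoff of your route: the paper's appeal to the isometry group $I_Y$ is slightly loose, since the action is only uniformly Lipschitz rather than isometric, so strictly one should remetrize each orbit closure by the equivalent invariant metric $d'(x,y)=\sup_{g\in G} d(gx,gy)$, or replace $I_Y$ by the $C$-bi-Lipschitz transformations together with an Arzel\`a--Ascoli compactness argument; your construction never needs the action to be isometric on the orbit closures, at the cost of the extra group-structure verifications, which you carry out correctly.
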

\begin{proof}
The ``if" part is immediate. Conversely, suppose that $X$ is almost periodic. Let $\mathcal{O}$ be the set of $G$-orbit closures in $X$. Each $Y\in\mathcal{O}$ is a compact metric space; let $I_Y$ be its isometry group; we thus have a continuous homomorphism $G\to I_Y$. So we have a canonical continuous homomorphism $q:G\to \prod_{Y\in\mathcal{O}}Y$; let $K$ be the closure of its image; this is a compact group. We claim that the representation has a unique separately continuous extension to $K$. The uniqueness is clear. To obtain the existence, consider any net $(g_i)$ in $g$ such that $(q(g_i))$ converges in $K$, $\pi(g_i)v$ converges in $X$ for every $v\in X$. Indeed, we have $d(\pi(g_i^{-1}g_j)v,v)\to 0$ when $i,j\to\infty$, and hence, since the representation is uniformly bounded, $d(\pi(g_j)v,\pi(g_i)v)$ tends to 0. So $(\pi(g_i)v)$ is Cauchy and thus converges; the limit only depends on $v$ and on the limit $k$ of $(q(g_i))$; we define it as $\tilde{\pi}(k)v$. 

Also, if $c$ is the supremum of all Lipschitz constants, then, as a pointwise limit of $c$-Lipschitz maps, $\tilde{\pi}(k)$ is $c$-Lipschitz.

Now let us show that it defines an action of $K$, namely $\tilde{\pi}(k)\tilde{\pi}(\ell)=\tilde{\pi}(k\ell)$ for all $k,\ell$ in $K$. We first claim that $k\mapsto\tilde{\pi}(k)v$ is continuous for every fixed $v\in X$. Indeed, if $Y$ is the closure of the orbit of $v$, then this map can be identified to the orbital map of the action of the image of $k$ in $I_Y$, which is continuous. The same argument shows that $(k,\ell)\mapsto\tilde{\pi}(k)\tilde{\pi}(\ell)v$ is continuous, and also this implies, by composition, that $(k,\ell)\mapsto\tilde{\pi}(k\ell)v$ is continuous. Since these two maps coincide on the dense subset $q(G)\times q(G)$, they agree. Thus $\tilde{\pi}$ defines an action, and we have also checked along the way that it is separately continuous. 
\end{proof}

\subsection{Almost periodic Banach modules}

We recall the following well-known fact.

\begin{lem}\label{b_ub}
Let $G$ be a group and $(V,\pi)$ a Banach $G$-module. If every $G$-orbit is bounded, then the representation is uniformly bounded. In particular,
\begin{itemize}
\item every WAP representation is uniformly bounded;
\item for $G$ compact, every Banach $G$-module is uniformly bounded.
\end{itemize}

\end{lem}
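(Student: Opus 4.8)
The statement to prove is Lemma \ref{b_ub}: if every $G$-orbit in a Banach $G$-module $(V,\pi)$ is bounded, then $\sup_{g\in G}\|\pi(g)\|<\infty$. The obvious engine here is the Uniform Boundedness Principle (Banach–Steinhaus), applied to the family of bounded operators $\{\pi(g):g\in G\}\subset\mathcal{B}(V)$. The hypothesis says exactly that for each fixed $v\in V$ the set $\{\pi(g)v:g\in G\}$ is bounded in norm, i.e.\ the family is pointwise bounded; Banach–Steinhaus then yields a uniform bound on the operator norms, which is the conclusion. The only genuine input is completeness of $V$, which holds since $V$ is a Banach space.

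First I would note that being WAP implies every orbit is weakly relatively compact, hence weakly bounded, hence norm-bounded (a weakly compact set in a Banach space is norm-bounded, by another application of uniform boundedness, viewing $V\hookrightarrow V^{**}$); so the first bullet follows from the main statement. For the second bullet, if $G$ is compact then for each $v$ the orbit map $g\mapsto\pi(g)v$ is continuous (by the definition of $G$-module) and $G$ is compact, so the orbit $\pi(G)v$ is compact, in particular bounded; again the main statement applies. Thus both itemized consequences are immediate corollaries once the headline assertion is established, and I would present them in that order right after the one-line Banach–Steinhaus argument.

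There is essentially no obstacle: the lemma is a direct citation of a classical functional-analytic principle, and the paper itself calls it ``well-known''. The only point requiring a word of care is the reduction in the first bullet—making explicit that weak relative compactness of orbits gives norm-boundedness of orbits—but this too is standard and can be dispatched in a sentence. I would keep the write-up to three or four lines.

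\begin{proof}
For each fixed $v\in V$, the set $\{\pi(g)v:g\in G\}$ is bounded by hypothesis; that is, the family of bounded operators $\{\pi(g):g\in G\}$ is pointwise bounded on the Banach space $V$. By the Banach--Steinhaus theorem, $\sup_{g\in G}\|\pi(g)\|<\infty$, which is the assertion. For the first bullet, if $(V,\pi)$ is WAP then for each $v$ the orbit $\pi(G)v$ is relatively weakly compact, hence weakly bounded, hence norm-bounded (a weakly bounded subset of a Banach space is norm-bounded, by uniform boundedness applied to its image in $V^{**}$); so the main statement applies. For the second bullet, if $G$ is compact then $g\mapsto\pi(g)v$ is continuous for each $v$, so $\pi(G)v$ is the continuous image of a compact set, hence compact, hence bounded; again the main statement applies.
\end{proof}
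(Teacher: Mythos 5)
Your proof is correct and is essentially the paper's argument: the paper simply unfolds the Banach--Steinhaus principle by hand (setting $V_n=\{v\in V:\forall g\in G,\ \|gv\|\le n\}$ and running the Baire category argument), whereas you cite it directly, and your treatment of the two bullets (weakly compact orbits are norm-bounded; continuous orbit maps on a compact group give compact orbits) matches the intended deductions, correctly avoiding any appeal to norm-continuity of $g\mapsto\pi(g)$.
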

\begin{proof}
Define $V_n=\{v\in V:\forall g\in G,\|gv\|\le n\}$. Since the action is by bounded (=continuous) operators, $V_n$ is closed for all $n$. Since all $G$-orbits are bounded, $\bigcup_n V_n=V$. By Baire's theorem, there exists $V_n$ with non-empty interior. Since $V_n=-V_{n}$ and $V_k+V_\ell\subset V_{k+\ell}$ for all $k,\ell$, the set $V_{2n}$ contains the centered ball of radius $\eps$ for some $\eps>0$. This implies that $\sup_{g\in G}\|\pi(g)\|\le 2n/\eps$.

(Note that the last consequence was not a trivial consequence of the definition, since the map $g\mapsto\pi(g)$ often fails to be continuous for the norm topology on operators.)
\end{proof}

\begin{lem}\label{vap1}
Let $V$ be a Banach $G$-module. Then $V^{G,\mathrm{ap}}$ is a subspace of $V$.
\end{lem}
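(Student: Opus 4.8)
The statement is that $V^{G,\mathrm{ap}}$ is closed under the two vector space operations (it obviously contains $0$, whose orbit is $\{0\}$). I would verify each operation separately, using nothing more than the fact that continuous images of compact sets are compact.

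First, scalar multiplication. Fix $v\in V^{G,\mathrm{ap}}$ and $\lambda$ a scalar. The $G$-orbit of $\lambda v$ is $\{\pi(g)(\lambda v):g\in G\}=\lambda\cdot\overline{\pi(G)v}\cap(\text{orbit})$; more precisely it is contained in $\lambda\cdot\overline{\pi(G)v}$, which is the image of the compact set $\overline{\pi(G)v}$ under the continuous map $w\mapsto\lambda w$, hence compact. Thus the orbit of $\lambda v$ has compact closure, i.e.\ $\lambda v\in V^{G,\mathrm{ap}}$.

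Second, addition. Fix $v,w\in V^{G,\mathrm{ap}}$. Since $\pi(g)(v+w)=\pi(g)v+\pi(g)w$, the orbit $\pi(G)(v+w)$ is contained in $\overline{\pi(G)v}+\overline{\pi(G)w}$, which is the image of the compact set $\overline{\pi(G)v}\times\overline{\pi(G)w}$ under the continuous addition map $V\times V\to V$, hence compact. Therefore $\overline{\pi(G)(v+w)}$ is a closed subset of a compact set, so it is compact and $v+w\in V^{G,\mathrm{ap}}$.

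There is essentially no obstacle here; the only point worth noting is that, unlike in Lemma \ref{b_ub}, we do not need to know in advance that the representation is uniformly bounded, since the argument only uses compactness of the two individual orbit closures and continuity of the algebraic operations of the Banach space. (Alternatively, once one knows $V^{G,\mathrm{ap}}\subseteq V^{G,\mathrm{bdd}}$, Lemma \ref{b_ub} gives uniform boundedness on that submodule, but this is not needed for the present lemma.)
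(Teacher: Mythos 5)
Your proof is correct and matches the paper's argument, which is the one-line observation that $G(\lambda v+w)\subset\lambda\overline{Gv}+\overline{Gw}$, a compact set; you simply spell out the two operations separately via continuity of scalar multiplication and addition. Nothing further is needed.
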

\begin{proof}
This is clear since $G(\lambda v+w)\subset \lambda\overline{Gv}+\overline{Gw}$ for all $v,w$.
\end{proof}

\begin{thm}\cite[Theorem 2]{Shi}\label{thm:PW}
Let $G$ be a compact group and $(V,\pi)$ be a Banach $G$-module. Then the sum of finite-dimensional irreducible $G$-submodules of $V$ is dense in $V$.
\end{thm}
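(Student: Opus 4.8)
The plan is to run the classical Peter--Weyl argument, using that $(V,\pi)$ is uniformly bounded by Lemma \ref{b_ub}; write $C=\sup_{g\in G}\|\pi(g)\|<\infty$ and fix the Haar probability measure on $G$. For $f\in C(G)$ and $v\in V$ I would set $\pi(f)v=\int_G f(g)\,\pi(g)v\,dg$; this Bochner integral makes sense because $g\mapsto\pi(g)v$ is norm-continuous on the compact group $G$ (hence bounded, with compact and therefore separable range), and it satisfies $\|\pi(f)v\|\le C\,\|f\|_1\,\|v\|$.

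First I would show that the vectors $\pi(f)v$ span a dense subspace: given $v\in V$ and $\eps>0$, continuity provides a neighborhood $U$ of $1$ with $\|\pi(g)v-v\|<\eps$ for $g\in U$; choosing $f\ge 0$ continuous, supported in $U$, with $\int_G f\,dg=1$, one gets $\|\pi(f)v-v\|\le\int_G f(g)\|\pi(g)v-v\|\,dg<\eps$. Next, by the Peter--Weyl theorem the linear span $\mathcal M$ of matrix coefficients of finite-dimensional unitary representations of $G$ is dense in $C(G)$ for the uniform norm, hence for $\|\cdot\|_1$; since $f\mapsto\pi(f)v$ is $\|\cdot\|_1$-continuous, it is enough to treat $f\in\mathcal M$, and by linearity a single matrix coefficient $f=\sigma_{ij}$ of a finite-dimensional representation $\sigma\colon G\to\GL(W)$.

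The key computation is that $\pi(f)v$ then lies in a finite-dimensional submodule: writing $L_{g_0}f(g)=f(g_0^{-1}g)$ and changing variables in the Haar integral gives $\pi(g_0)\pi(f)v=\pi(L_{g_0}f)v$, while $L_{g_0}\sigma_{ij}=\sum_k\sigma_{ik}(g_0^{-1})\,\sigma_{kj}$, whence $\pi(g_0)\,\pi(\sigma_{ij})v=\sum_k\sigma_{ik}(g_0^{-1})\,\pi(\sigma_{kj})v$. Thus $\mathrm{span}\{\pi(\sigma_{kj})v:k\}$ is a finite-dimensional $G$-submodule containing $\pi(f)v$. Combining the three points, the algebraic sum of all finite-dimensional $G$-submodules of $V$ is dense. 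Finally, every finite-dimensional $G$-submodule $F$ is itself uniformly bounded, so averaging an arbitrary inner product on $F$ over $G$ produces a $G$-invariant one; hence $F$ is unitarizable and therefore a direct sum of irreducible $G$-submodules. So the sum of the finite-dimensional irreducible $G$-submodules already contains every finite-dimensional submodule, and is dense.

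The only points needing care are the vector-valued (Bochner) integration underlying the definition of $\pi(f)$ and the appeal to Peter--Weyl for $C(G)$; neither is a genuine obstacle, and the rest is the standard convolution/averaging calculus. Conceptually, the single new ingredient compared with the familiar unitary case is Lemma \ref{b_ub}: uniform boundedness is exactly what lets the averaging arguments (both for the approximate identity and for unitarizing the pieces $F$) go through over an arbitrary Banach space.
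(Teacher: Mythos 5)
Your argument is correct: uniform boundedness (Lemma \ref{b_ub}) plus Bochner integration, the approximate-identity step, Peter--Weyl density of matrix coefficients, the translation identity $\pi(g_0)\pi(\sigma_{ij})v=\sum_k\sigma_{ik}(g_0^{-1})\pi(\sigma_{kj})v$, and unitarization of finite-dimensional pieces together give exactly the stated density. The paper offers no proof of its own here --- it simply cites Shiga's theorem --- and your convolution/averaging argument is essentially the classical proof behind that citation, so there is nothing to reconcile beyond the routine remark that for real Banach spaces one works with real and imaginary parts of matrix coefficients.
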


Let now $G$ be an arbitrary topological group. Recall that a Banach $G$-module is almost periodic if every $G$-orbit is relatively compact in the norm topology. 

\begin{cor}\label{sumfin}
Let $V$ be a uniformly bounded Banach $G$-module. Then $V^{G,\mathrm{ap}}$ is the closure of the sum of all finite-dimensional submodules of $V$, and is also the closure of the sum all irreducible finite-dimensional submodules of $V$.
\end{cor}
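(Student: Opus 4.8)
The plan is to prove the two chains of inclusions separately, using $W:=V^{G,\mathrm{ap}}$ as the object of study. First note that, by Lemma \ref{b_ub}, the representation on $V$ is uniformly bounded, so $V$ is a complete metric space on which $G$ acts by uniformly Lipschitz maps; hence Lemma \ref{vap} shows $W$ is closed, and Lemma \ref{vap1} shows it is a linear subspace. So $W$ is a closed $G$-submodule of $V$.

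The easy inclusion is that every finite-dimensional submodule $F\le V$ is contained in $W$: indeed $F$, with the restricted action, is a uniformly bounded finite-dimensional Banach $G$-module, so each $G$-orbit in $F$ is a bounded subset of a finite-dimensional normed space, hence relatively compact, giving $F\subseteq W$. Since $W$ is a closed subspace, it therefore contains the closure of the sum of all finite-dimensional submodules, which in turn contains the closure of the sum of the irreducible ones. This gives $\overline{\sum_{F\ \mathrm{irr}}F}\subseteq\overline{\sum_F F}\subseteq W$, where the first sum runs over irreducible finite-dimensional submodules and the second over all finite-dimensional submodules.

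For the reverse inclusion $W\subseteq\overline{\sum_{F\ \mathrm{irr}}F}$, the point is that $W$ is almost $G$-periodic by construction, so Proposition \ref{ap_com} applies: the $G$-action on $W$ factors through a continuous homomorphism with dense image $q\colon G\to K$ onto a compact group $K$, with a separately continuous uniformly Lipschitz action $\tilde\pi$ of $K$ on $W$. Each operator $\tilde\pi(k)$ is a pointwise limit of linear maps $\pi(g_i)$, hence linear, and is bounded by $\sup_{g\in G}\|\pi(g)\|$; together with separate continuity this makes $(W,\tilde\pi)$ a Banach $K$-module. Theorem \ref{thm:PW} then gives that the sum of the finite-dimensional irreducible $K$-submodules of $W$ is dense in $W$. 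It remains to transfer this back to $G$: for a finite-dimensional (hence closed) subspace $U\le W$ with basis $u_1,\dots,u_n$, the set $\{g\in G:\pi(g)U\subseteq U\}=\bigcap_i\{g:\pi(g)u_i\in U\}$ is closed by separate continuity, and it contains $q^{-1}$ of the stabilizer of $U$ in $K$; since $q(G)$ is dense, a finite-dimensional subspace of $W$ is $G$-invariant iff it is $K$-invariant. Hence a finite-dimensional $K$-irreducible submodule of $W$ is a finite-dimensional irreducible $G$-submodule of $V$, which yields $W\subseteq\overline{\sum_{F\ \mathrm{irr}}F}$. Combining the two parts, all three subspaces coincide.

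The only genuinely delicate point is the passage through Proposition \ref{ap_com}: one must check that its conclusion (a separately continuous uniformly Lipschitz action of the compact group $K$) is in fact a Banach $K$-module structure, i.e. that the extended operators are linear and uniformly bounded, and then that the lattice of finite-dimensional submodules — together with irreducibility — is the same for $G$ and for $K$. Everything else is routine.
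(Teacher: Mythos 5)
Your proof is correct and follows essentially the same route as the paper: the easy inclusion from boundedness of orbits in finite-dimensional submodules, then Proposition \ref{ap_com} to factor the action on $V^{G,\mathrm{ap}}$ through a compact group and Theorem \ref{thm:PW} to conclude, with the extra verifications (linearity and boundedness of the extended operators, and the matching of $G$- and $K$-invariant finite-dimensional subspaces via density of $q(G)$) being exactly the details the paper leaves implicit. The only cosmetic point is that invoking Lemma \ref{b_ub} is unnecessary, since uniform boundedness is already part of the hypothesis.
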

\begin{proof}
Let $V^{G,\mathrm{ap}},V_2,V_3$ be the three subspaces in the corollary. That $V_3\subset V_2$ is clear.

That every finite-dimensional submodule is contained in $V^{G,\mathrm{ap}}$ is clear (even without assuming $V$ uniformly bounded). So the sum of all finite-dimensional submodules is contained in $V^{G,\mathrm{ap}}$, and hence its closure by Lemma \ref{vap}, since $V$ is uniformly bounded. So $V_2\subset V^{G,\mathrm{ap}}$.

For the inclusion $V^{G,\mathrm{ap}}\subset V_3$, we use that the $G$-action on $V^{G,\mathrm{ap}}$ factors through a compact group (Proposition \ref{ap_com}), and then invoke Theorem \ref{thm:PW}.
\end{proof}

\begin{defn}\label{dptscont}
Let $G$ be a locally compact group and let $(V,\pi)$ be a Banach $G$-module. Define $V^*_{[G]}$ as the set of $f\in V^*$ such that the orbital function $\nu_f:g\mapsto g\cdot f$ is continuous on $G$.
\end{defn}

\begin{lem}\label{ptscont}
Let $G$ be a locally compact group and $(V,\pi)$ be a Banach $G$-module. Then $V^*_{[G]}$ is a closed subspace of $V^*$ (and thus is a Banach $G$-module). Moreover, $V^*_{[G]}$ separates the points of $G$.
\end{lem}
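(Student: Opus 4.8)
The plan is to treat the two assertions separately: first, that $V^*_{[G]}$ is a closed, $G$-invariant subspace of $V^*$ — so that, equipped with the restricted dual action and the operator norm, it is a Banach $G$-module — and second, that it separates the points of $V$ (i.e.\ for every $v\neq 0$ there is $f\in V^*_{[G]}$ with $f(v)\neq 0$). A fact I will use repeatedly is that $\pi$ is \emph{locally bounded}: for every compact $K\subseteq G$ one has $\sup_{g\in K}\|\pi(g)\|<\infty$. This follows from the uniform boundedness principle, since for each fixed $v\in V$ the orbit $\{\pi(g)v:g\in K\}$ is the continuous image of a compact set, hence bounded. Consequently $\pi^*$ is locally bounded as well, with $\sup_{g\in K}\|\pi^*(g)\|=\sup_{g\in K^{-1}}\|\pi(g)\|$. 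This point matters because $\pi$ is not assumed uniformly bounded.

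For the first assertion, $V^*_{[G]}$ is a linear subspace because $\nu_{f+f'}=\nu_f+\nu_{f'}$ and $\nu_{\lambda f}=\lambda\nu_f$ as maps valued in the Banach space $V^*$, and it is $G$-invariant because $\nu_{\pi^*(g_0)f}$ is $\nu_f$ precomposed with the continuous right translation $h\mapsto hg_0$. For closedness, take $f_n\in V^*_{[G]}$ with $\|f_n-f\|\to 0$, fix $g_0\in G$ and a compact neighborhood $K$ of $g_0$, and set $M=\sup_{g\in K}\|\pi^*(g)\|<\infty$. For $g\in K$,
\[
\|\pi^*(g)f-\pi^*(g_0)f\|\ \le\ 2M\,\|f-f_n\|+\|\pi^*(g)f_n-\pi^*(g_0)f_n\|.
\]
Given $\eps>0$, choose $n$ with $2M\|f-f_n\|<\eps/2$, then a neighborhood $U\subseteq K$ of $g_0$ with $\|\pi^*(g)f_n-\pi^*(g_0)f_n\|<\eps/2$ for $g\in U$ (possible since $\nu_{f_n}$ is continuous at $g_0$). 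Thus $\nu_f$ is continuous at $g_0$, hence everywhere, so $f\in V^*_{[G]}$.

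For the second assertion I will use a G\aa rding-type smoothing. Fix a left Haar measure $dg$ on $G$ and, for $\psi\in C_c(G)$ with $\psi\ge 0$, $\int\psi\,dg=1$, and $\phi\in V^*$, define $f=\pi^*(\psi)\phi\in V^*$ by $f(w)=\int_G\psi(g)\,\phi(\pi(g^{-1})w)\,dg$; this is a well-defined bounded functional by local boundedness of $\pi$. A change of variables gives $\pi^*(h)f=\pi^*(L_h\psi)\phi$ with $(L_h\psi)(g)=\psi(h^{-1}g)$, and combining the elementary estimate $\|\pi^*(\eta)\phi\|\le\|\phi\|\big(\sup_{g\in\supp\eta}\|\pi(g^{-1})\|\big)\|\eta\|_{1}$ with the standard continuity of $h\mapsto L_h\psi$ from $G$ into $L^1(G)$ — noting that, for $h$ in a compact neighborhood of a fixed $h_0$, the functions $L_h\psi$ all have support inside one compact set on which $\|\pi(g^{-1})\|$ is bounded — shows that $h\mapsto\pi^*(h)f$ is continuous; hence $f\in V^*_{[G]}$. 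Finally, given $v\neq 0$, pick by Hahn--Banach $\phi\in V^*$ with $\phi(v)>0$; since $g\mapsto\phi(\pi(g^{-1})v)$ is continuous and equals $\phi(v)$ at $g=e$, choosing $\psi$ supported in a small enough neighborhood of $e$ on which this function exceeds $\phi(v)/2$ yields $f(v)\ge\phi(v)/2>0$.

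Most steps are routine; the two requiring a little attention are the appeal to Banach--Steinhaus (needed precisely because $\pi$ is only locally, not uniformly, bounded) and the verification that $\pi^*(L_h\psi)\phi$ depends continuously on $h$ in operator norm, which I expect to be the main — if modest — technical point. The complex-scalar case is identical, replacing $\phi(v)>0$ by $\operatorname{Re}\phi(v)>0$ and the condition on $\psi$ accordingly.
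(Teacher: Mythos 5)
Your proof is correct and follows essentially the same route as the paper: closedness via the estimate showing that norm convergence $f_n\to f$ forces (locally) uniform convergence of the orbital maps, and separation by averaging a Hahn--Banach functional against a bump function concentrated near the identity, exactly the paper's construction $u(\xi)=\int_G\varphi(g)f(g^{-1}\xi)\,dg$. The one genuine difference is your treatment of boundedness: the paper takes $c=\sup_{g\in G}\|\pi(g)\|<\infty$ by appealing to Lemma \ref{b_ub}, whose hypothesis (bounded orbits) is not automatic for an arbitrary Banach $G$-module, whereas you use only local boundedness on compact sets via Banach--Steinhaus, which always holds; this makes your argument slightly more general and, strictly speaking, more careful than the written proof, at no extra cost.
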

\begin{proof}
That $V^*_{[G]}$ is a subspace is clear.

Write $c=\sup_{g\in G}\|\pi(g)\|$ (it is finite by Lemma \ref{b_ub}).  For $f,f'\in V^*$ and $v\in V$, we have \[\nu_f(g)(v)-\nu_{f'}(g)(v)=(g\cdot f)(v)-(g\cdot f')(v)=(f-f')(g^{-1}v);\]
hence
\[\|\nu_f(g)(v)-\nu_{f'}(g)(v)\|\le \|f-f'\|\|g^{-1}v\|\le c\|f-f'\|\|v\|\]
and thus 
\[\|\nu_f(g)-\nu_{f'}(g)\|\le c\|f-f'\|\]
Suppose that $(f_n)$ converges to $f$, with $f_n\in V^*_{[G]}$ and $f\in V^*$. The above inequality shows that $\nu_{f_n}$ converges uniformly, as a function on $G$, to $\nu_f$. By assumption, $\nu_{f_n}$ is continuous, and thus $\nu_f$ is continuous, meaning that $f\in V^*_{[G]}$.

For the separation property, we consider $v\in V\smallsetminus\{0\}$ and have to find an element in $V^*_{[G]}$ not vanishing on $v$. First, we choose $f\in V^*$ such that $f(v)=1$. 

For every $g\in G$, $(g\cdot f)(v)=f(g^{-1}v)$, and hence $q:g\mapsto (g\cdot f)(v)$ is continuous on $G$; we have $q(1)=1$. Let $U$ be a compact neighborhood of $1$ in $G$ on which $q$ takes values $\ge 1/2$. Let $\varphi$ be a non-negative continuous function on $G$, with support in $U$, and integral 1 ($G$ being endowed with a left Haar measure).

For $\xi\in V$, define $u(\xi)=\int_G \varphi(g)f(g^{-1}\xi)dg$. Then $u$ is clearly linear, and $\|u\|\le c\|f\|$, so $u$ is continuous. We have $u(v)=\int_G h_nq\ge 1/2$. 

It remains to show that $u\in V^*_{[G]}$. It is enough to check that $h\mapsto h\cdot u$ is continuous at 1. We have, for $h\in G$ and $\xi\in V$
\begin{align*}u(\xi)-(h\cdot u)(\xi)= 
& \int_G \varphi(g)f(g^{-1}\xi)dg-\int_G \varphi(g)f(g^{-1}h^{-1}\xi)dg\\
=& \int_G \varphi(g)f(g^{-1}\xi)dg-\int_G \varphi(h^{-1}g)f(g^{-1}\xi)dg\\
=& \int_G (\varphi(g)-\varphi(h^{-1}g))f(g^{-1}\xi)dg.
\end{align*}
Define $\eps_h=\sup_{g\in G}|\varphi(g)-\varphi(h^{-1}g)|$. Since $\varphi$ has compact support, it is uniformly continuous. Hence $\eps_h$ tends to 0 when $h\to 1$. We conclude
\[\|u(\xi)-(h\cdot u)(\xi)\|\le \eps_h\int_G|f(g^{-1}\xi)|dg\le\eps_hc\|f\|\|\xi\|,\]
so $\|u-h\cdot u\|\le \eps_hc\|f\|$, which tends to 0 when $h\to 1$.
\end{proof}
 
\begin{prop}\label{prop:complement}
Let $V$ be an almost periodic Banach $G$-module. Then every finite-dimensional submodule is complemented in $V$ as $G$-module.
\end{prop}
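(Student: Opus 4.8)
The plan is to reduce to the case of a compact group and then average a non-equivariant projection against Haar measure, the classical Maschke-type trick.

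First, since $V$ is almost periodic its orbits are bounded, so by Lemma \ref{b_ub} the representation is uniformly bounded; put $c=\sup_{g\in G}\|\pi(g)\|<\infty$. By Proposition \ref{ap_com} the $G$-action factors through a continuous homomorphism $\phi:G\to K$ with dense image, where $K$ is a compact group acting separately continuously on $V$ via some $\tilde\pi$; moreover each $\tilde\pi(k)$, being a pointwise limit of operators $\pi(g)$, satisfies $\|\tilde\pi(k)\|\le c$. Let $W\subseteq V$ be a finite-dimensional $G$-submodule. Since $W$ is $\phi(G)$-invariant and $\phi(G)$ is dense in $K$, separate continuity of the orbital maps $k\mapsto\tilde\pi(k)w$ forces $W$ to be $K$-invariant; and as $W$ is finite-dimensional the associated homomorphism $\rho:K\to\GL(W)$ is continuous (its matrix entries in a basis of $W$ are continuous functions of $k$), hence so is $k\mapsto\rho(k)^{-1}$.

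Next, by Hahn--Banach choose a bounded linear map $P_0:V\to W$ with $P_0|_W=\mathrm{id}_W$ (extend coordinate functionals of $W$, or split off a fixed linear projection). Let $dk$ be the normalized Haar measure on $K$ and define, for $v\in V$,
\[
P(v)=\int_K \tilde\pi(k)^{-1}P_0\bigl(\tilde\pi(k)v\bigr)\,dk .
\]
The integrand takes values in the finite-dimensional space $W$ and, being the product of the continuous maps $k\mapsto\rho(k)^{-1}\in\GL(W)$ and $k\mapsto P_0(\tilde\pi(k)v)\in W$, is continuous; so the integral is a well-defined element of $W$, and $\|P(v)\|\le c^2\|P_0\|\,\|v\|$, whence $P:V\to W$ is bounded and linear.

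Finally I would verify the three properties making $P$ a $G$-equivariant projection onto $W$. For $w\in W$ one has $\tilde\pi(k)w\in W$, so $P_0(\tilde\pi(k)w)=\tilde\pi(k)w$ and the integrand is constantly $w$, giving $P|_W=\mathrm{id}_W$. For $\ell\in K$, substituting $k\ell^{-1}$ for $k$ and using invariance of $dk$ yields $P(\tilde\pi(\ell)v)=\tilde\pi(\ell)P(v)$, i.e.\ $P$ is $K$-equivariant. Since $P(V)\subseteq W$ and $P|_W=\mathrm{id}_W$ we get $P^2=P$, so $P$ is a bounded idempotent and $V=W\oplus\ker P$ as Banach spaces; moreover $\ker P$ is closed and $K$-invariant by equivariance of $P$, hence $G$-invariant because the $G$-action factors through $K$. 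Thus $\ker P$ is the desired $G$-module complement of $W$. The only delicate point is the continuity of the integrand defining $P$: this is exactly where finite-dimensionality of $W$ is used, to upgrade separate continuity of the $K$-action to genuine continuity of $k\mapsto\tilde\pi(k)^{-1}$ on $W$; everything else is the standard averaging argument.
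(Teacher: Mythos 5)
Your argument is correct, but it is genuinely different from the paper's. You reduce to a compact group $K$ via Proposition \ref{ap_com} (as the paper also does) and then run the classical Maschke averaging: choose a bounded, not necessarily equivariant projection $P_0$ onto the finite-dimensional submodule by Hahn--Banach and average $\tilde\pi(k)^{-1}P_0\tilde\pi(k)$ against Haar measure. The point you correctly isolate is that the integrand is $W$-valued and continuous, because finite-dimensionality of $W$ upgrades the separate continuity furnished by Proposition \ref{ap_com} to continuity of $k\mapsto\rho(k)^{-1}$, so the vector-valued integral is unproblematic even though the $K$-action on all of $V$ is only separately continuous; together with the bound $\|\tilde\pi(k)\|\le c$ (the $\tilde\pi(k)$ are linear, being pointwise limits of the $\pi(g)$), the equivariance, idempotence and boundedness of $P$ all check out, and $\ker P$ is the desired closed $G$-invariant complement. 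The paper instead proceeds by induction on the dimension of the submodule and, in the irreducible case, works in the dual: it introduces the submodule $V^*_{[G]}$ of functionals with continuous orbit maps (Definition \ref{dptscont}, Lemma \ref{ptscont}), applies Shiga's theorem (Theorem \ref{thm:PW}) there to produce a finite-dimensional irreducible submodule $M\subset V^*_{[G]}$ not vanishing on the given submodule $C$, and takes the annihilator of $M$, which is a maximal proper submodule giving $V=C\oplus W$. Your route is more elementary and self-contained (no Shiga, no dual-module machinery, no induction) and gives an explicit norm bound $\|P\|\le c^2\|P_0\|$ for the equivariant projection; the paper's route avoids vector-valued integration altogether and builds the $V^*_{[G]}$/density apparatus that it reuses elsewhere (for instance in Proposition \ref{apfd_kt}), producing in the irreducible case a complement that is a maximal submodule. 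Both establish the proposition.
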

\begin{proof}
By Proposition \ref{ap_com}, we can suppose that $G$ is compact.

Let $C$ be a finite-dimensional submodule; let us show, by induction on $d=\dim(C)$, that $C$ is complemented. This is clear if $d=0$; assume now that $C$ is irreducible. Beware that $V^*$ need not be a Banach $G$-module ($G$ does not always act continuously). We consider the subspace $V^*_{[G]}\subset V$ of Definition \ref{dptscont}, which is a Banach $G$-module by Lemma \ref{ptscont}. Let $F\subset V^*_{[G]}$ the sum of all irreducible finite-dimensional submodules. By Theorem \ref{thm:PW}, $F$ is dense in $V^*_{[G]}$, and by Lemma \ref{ptscont}, the latter separates the points of $V$. So there exists an element in $F$ that does not vanish on $C$. In turn, this means that there is an irreducible finite-dimensional submodule $M$ of $F$ that does not vanish on $C$, or equivalently whose orthogonal $W$ does not contain $C$. Note that $W\subset V$ is closed and that $M$ is isomorphic, as $G$-module, to the dual of $V/W$; in particular, $V/W$ is an irreducible $G$-module; in other words, $W$ is maximal among proper $G$-submodules of $V$. It follows that $V=C\oplus W$. 

If $C$ is not irreducible, let $C'$ be a nonzero proper submodule. Then by induction, $C/C'$ is complemented in $V/C'$, which means that $V=C+W$ with $W$ a $G$-submodule and $W\cap C=C'$. By induction, we can write $W=C'\oplus W'$ with $W'$ a submodule. Hence $V=C\oplus W'$.
\end{proof}

\begin{prop}\label{fcodim:compl}
Let $V$ be an almost periodic Banach $G$-module. Then every finite-codimensional submodule is complemented in $V$ as $G$-module.
\end{prop}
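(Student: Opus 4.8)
The plan is to dualize the statement of Proposition \ref{prop:complement}. Let $W \subseteq V$ be a submodule of finite codimension $d$. By Proposition \ref{ap_com} I may assume $G$ is compact, so every Banach $G$-module is uniformly bounded (Lemma \ref{b_ub}); in particular the restriction of the $G$-action to $W$ and the induced action on the finite-dimensional quotient $V/W$ make sense, and I want to produce a $G$-submodule $C$ with $V = W \oplus C$, equivalently a $G$-equivariant linear section of the projection $p : V \to V/W$.

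First I would pass to the dual. Since $G$ is compact, consider the Banach $G$-module $V^*_{[G]}$ of Lemma \ref{ptscont}; recall it is a closed $G$-submodule of $V^*$ that separates points of $V$. The annihilator $W^\perp = \{f \in V^* : f|_W = 0\}$ is a $d$-dimensional subspace of $V^*$, it is $G$-invariant (because $W$ is), and it is canonically isomorphic as a $G$-module to $(V/W)^*$. The key point is that $W^\perp$ actually lies inside $V^*_{[G]}$: each $f \in W^\perp$ factors as $f = \bar f \circ p$ with $\bar f \in (V/W)^*$, and since $V/W$ is finite-dimensional with continuous (hence, by compactness of $G$, automatically "nice") action, the orbital map $g \mapsto g\cdot f$ is continuous — this is where I must be slightly careful, checking that $p$ intertwines the actions and that continuity on the finite-dimensional quotient pulls back. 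So $W^\perp$ is a finite-dimensional submodule of the Banach $G$-module $V^*_{[G]}$.

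Now I apply Proposition \ref{prop:complement} to $V^*_{[G]}$ and its finite-dimensional submodule $W^\perp$: there is a closed $G$-submodule $Z \subseteq V^*_{[G]}$ with $V^*_{[G]} = W^\perp \oplus Z$. Set $C = {}^\perp Z = \{v \in V : f(v) = 0 \ \forall f \in Z\}$, a closed $G$-submodule of $V$. I then need to check $V = W \oplus C$. That $W \cap C = \{0\}$ follows because $W^\perp + Z$ spans $V^*_{[G]}$, which separates points of $V$: if $v \in W \cap C$ then $v$ is killed by all of $W^\perp$ and all of $Z$, hence by $V^*_{[G]}$, so $v = 0$. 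For $W + C = V$: since $Z \cap W^\perp = \{0\}$ and $Z$ has codimension $d$ in $V^*_{[G]}$, a dimension count on the finite-dimensional quotient shows $Z$ maps isomorphically onto $(V/W)^*$ under restriction... actually cleaner is: the quotient map $V \to V/C$ identifies $(V/C)^*$ with the weak-* closure of $Z$ inside $V^*$; since $Z$ is a complement to the $d$-dimensional $W^\perp$ in $V^*_{[G]}$ and $W^\perp$ separates the $d$-dimensional space $V/W$, one gets $\dim(V/C) \le d$, while $W \cap C = 0$ forces $\dim(V/C) \ge \dim(V/W) = d$; hence equality and $V = W \oplus C$.

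The main obstacle I anticipate is the bookkeeping in this last paragraph: making the duality between "$C = {}^\perp Z$" and "$V/W$" precise enough to conclude $V = W \oplus C$ without circularity, since $V^*$ itself is not a $G$-module and one must stay within $V^*_{[G]}$ throughout. A clean way around it is to observe that, because $V/W$ is finite-dimensional, $(V/W)^* \cong W^\perp$ is finite-dimensional and automatically contained in $V^*_{[G]}$, and then the equivariant projection $V^* \supseteq V^*_{[G]} \to W^\perp$ along $Z$ composed with the canonical identification $W^\perp \cong (V/W)^*$ dualizes (using finite-dimensionality of $V/W$ to avoid any reflexivity issue) to the desired $G$-equivariant section $V/W \to V$, whose image is the wanted complement $C$. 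I expect the finite-dimensionality of $V/W$ to make every potentially delicate duality step elementary.
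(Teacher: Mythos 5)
Your reduction to compact $G$, the observation that $W^\perp\cong (V/W)^*$ is a finite-dimensional submodule of $V^*_{[G]}$, and the application of Proposition \ref{prop:complement} to get $V^*_{[G]}=W^\perp\oplus Z$ are all fine. The gap is the final step: from $V^*_{[G]}=W^\perp\oplus Z$ you cannot conclude $V=W\oplus{}^{\perp}Z$. The dual of $V/{}^{\perp}Z$ is not $Z$ but the weak-* closure of $Z$ in $V^*$, and a norm-closed complement $Z$ of a finite-dimensional subspace need not be weak-* closed, so ${}^{\perp}Z$ can be drastically smaller than a complement of $W$ --- it can even be $\{0\}$. Concretely, take $G$ trivial (so every closed subspace is a submodule and every Banach module is almost periodic), $V=c_0$, $W=\{x:x_1=0\}$, so $V^*_{[G]}=V^*=\ell^1$ and $W^\perp=\R e_1$; the $G$-invariant closed subspace $Z=\{f\in\ell^1:\sum_n f_n=0\}$ satisfies $\ell^1=W^\perp\oplus Z$ topologically, yet $Z$ is weak-* dense in $\ell^1$, so ${}^{\perp}Z=\{0\}$ and $W\oplus{}^{\perp}Z\neq V$. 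Since your argument uses only the \emph{existence} of an invariant closed complement, this example shows the inference "$\dim(V/C)\le d$" is unjustified. The proposed "clean way around" has the same flaw: dualizing the projection $P:V^*_{[G]}\to W^\perp\cong(V/W)^*$ gives a map $V/W\to (V^*_{[G]})^*$, and nothing places its image inside (the canonical copy of) $V$; that would require $P$ to have a pre-adjoint, i.e.\ to be weak-* continuous, which is exactly the missing weak-* closedness of $Z$. Finite-dimensionality of $V/W$ does not rescue this.

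The repair does not need duality at all. The paper's route: lift a basis of $V/W$, perturb each lift within a small ball so that it lies in a finite-dimensional submodule (possible by Corollary \ref{sumfin}, since the union of finite-dimensional submodules is dense), obtaining a finite-dimensional submodule $F$ with $F+W=V$; then, $G$ acting on $F$ through a compact group, take the orthogonal complement $F'$ of $F\cap W$ in $F$ for an invariant inner product, and $V=F'\oplus W$. Alternatively, your setup can be salvaged by averaging rather than dualizing: pick any bounded projection $\Pi_0$ of $V$ with kernel $W$; its defining functionals lie in $W^\perp\subset V^*_{[G]}$, so $g\mapsto \pi(g)\Pi_0\pi(g)^{-1}$ is norm-continuous, and the Haar average $\Pi$ is a $G$-equivariant projection with kernel $W$ and invariant finite-dimensional image, which is the desired complement.
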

\begin{proof}
Let $W$ be a submodule of finite codimension. Lift a basis of the quotient to a family in the complement $(e_1,\dots,e_n)$. Then there exists an open ball $B_i$ around $e_i$ such that for every $(e'_1,\dots,e'_n)\in\prod B_i$, the family $(e'_1,\dots,e'_n)$ projects to a basis of $V/W$. Since by Corollary \ref{sumfin}, the union of finite-dimensional submodules of $V$ is dense, we can choose $e'_i$ to belong to a finite-dimensional submodule $F_i$, and define $F=\sum F_i$.

Then $F$ is a finite-dimensional $G$-submodule and $F+W=V$. Then $G$ preserves a scalar product on $F$, so preserves the orthogonal $F'$ of $F\cap W$ for this scalar product. Thus $V=F'\oplus W$.
\end{proof}

\subsection{Canonical decompositions}\label{sec:decoca}

We use the following known results.

\begin{thm}\label{decoca}
Let $G$ be a locally compact group and $V$ a WAP $G$-module. Then
\begin{enumerate}
\item\label{jlg1} $V^G$ has a canonical complement, consisting of those $v$ such that 0 belongs to the closure of the convex hull of the orbit $Gv$;
\item\label{jlg2} $V^{G,\mathrm{ap}}$ has a canonical complement, consisting of those $v$ such that 0 belongs to the weak closure of the orbit $Gv$.
\end{enumerate}
\end{thm}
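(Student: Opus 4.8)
The plan is to establish the two canonical complements via the classical Ryll-Nardzewski / Jacobs–de Leeuw–Glicksberg machinery, which is exactly what WAP hypotheses are designed to feed. For part (\ref{jlg1}), fix $v\in V$ and consider the weakly compact set $K_v=\overline{\mathrm{conv}}\,^{\,w}(Gv)$, which is weakly compact by the WAP assumption together with Krein–Šmulian (the closed convex hull of a weakly compact set is weakly compact). The group $G$ acts on $K_v$ by affine homeomorphisms (for the weak topology), and by the Ryll-Nardzewski fixed point theorem there is a $G$-fixed point $p_v\in K_v$; since fixed points of the $G$-action on $V$ are exactly $V^G$, we get $p_v\in V^G$. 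Uniqueness of $p_v$: if $p,p'$ are two fixed points in $K_v$, their difference lies in the weakly compact convex $G$-invariant set $K_v-K_v$ on which $G$ acts by linear isometries of a weak-compatible seminorm, and Ryll-Nardzewski's uniqueness (or the usual averaging/strict-convexity-free argument via the "separated" hypothesis) forces $p=p'$. One then checks $v\mapsto p_v$ is linear ($p_{\lambda v+w}$ lies in $\lambda K_v+K_w$ and is $G$-fixed, hence equals $\lambda p_v+p_w$ by uniqueness) and is a norm-decreasing projection with image $V^G$; its kernel is $\{v: p_v=0\}=\{v: 0\in\overline{\mathrm{conv}}\,^{\,w}(Gv)\}$. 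Finally one shows this kernel is norm-closed and $G$-invariant: invariance is immediate, and closedness follows because $v\mapsto p_v$ is continuous (being a contraction). This gives the decomposition $V=V^G\oplus \ker$ with the stated description of the complement. Since a weakly closed convex set is norm-closed, one can equally describe the complement using the norm-closed convex hull, matching the statement.

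For part (\ref{jlg2}), the same scheme runs inside the Jacobs–de Leeuw–Glicksberg framework. Consider the closure $\overline{\pi(G)}$ of the operators $\pi(g)$ in the weak operator topology; by the WAP hypothesis this is a compact semitopological semigroup $S$. The JdLG splitting theorem provides a minimal idempotent $Q\in S$, and $V$ decomposes as $V=QV\oplus (1-Q)V$ where $QV$ is the "reversible"/almost periodic part — the set of $v$ whose orbit closure is compact in the norm topology, i.e. $V^{G,\mathrm{ap}}$ — and $(1-Q)V=\{v:0\in \overline{Gv}^{\,w}\}$ is the "flight" part. Here $Q$ is a bounded $G$-equivariant projection, so the splitting is a $G$-module decomposition; $V^{G,\mathrm{ap}}$ being a closed submodule has already been recorded (Lemma \ref{vap}, Lemma \ref{vap1}), and the complement $(1-Q)V$ is automatically closed and $G$-invariant. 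The only remaining point is to verify that the JdLG "flight part" is described exactly as $\{v:0\in\overline{Gv}^{\,w}\}$: one shows $0\in\overline{Gv}^{\,w}$ forces $Qv=0$ (apply a net $\pi(g_i)\to Q$ weakly along a subnet with $\pi(g_i)v\to 0$), and conversely if $Qv=0$ then $v=(1-Q)v$ and the minimal-idempotent structure of $S$ exhibits a net $\pi(g_i)v\to 0$ weakly (this is the standard "$0$ is in the weak closure of the orbit of every flight vector" statement for compact semitopological semigroups).

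The main obstacle, and the only place requiring real care, is the passage from the semigroup-theoretic statements to the explicit orbit descriptions and to $G$-equivariance of the projections — in particular checking that the Ryll-Nardzewski fixed point is genuinely unique in our (not necessarily Hilbert, not necessarily reflexive) setting so that $v\mapsto p_v$ is well-defined and linear, and that the JdLG idempotent $Q$ commutes with every $\pi(g)$ so that $QV$ and $(1-Q)V$ are submodules. Both are handled by the uniform boundedness of $\pi$ (Lemma \ref{b_ub}): uniform boundedness gives an equivalent $G$-invariant "norm" as a sup of seminorms on the relevant weakly compact sets, restoring enough of the isometric picture for Ryll-Nardzewski's uniqueness clause to apply, and it ensures $\overline{\pi(G)}$ in the weak operator topology consists of operators of bounded norm, so that $Q$ and each $\pi(g)$ lie in the same compact semigroup and commute by continuity of multiplication in each variable. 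Since all of this is classical (and the statement is flagged as "known results"), I would ultimately cite the JdLG / Ryll-Nardzewski literature — e.g. the treatments in Glasner's book or in \cite{BRS} where the $G$-invariant complement of $V^G$ is Theorem~14 — and only spell out the orbit-closure characterizations, which are the parts actually used in the sequel.
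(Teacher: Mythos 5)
Your proposal is correct and takes essentially the same route as the paper, which in fact offers no independent proof at all: it records these statements as known results, citing Theorems 14 and 12 of \cite{BRS}, Alaoglu--Birkhoff \cite{AB} and \cite[Proposition 2.6]{BFGM} for part (1), and the Jacobs--de Leeuw--Glicksberg theorem as stated in \cite{BJM} for part (2) --- exactly the literature you ultimately defer to. The only small imprecision is your appeal to a ``uniqueness clause'' of Ryll-Nardzewski (the theorem has none; uniqueness of the fixed point in $\overline{\mathrm{conv}}(Gv)$ comes from the mean-ergodic/invariant-mean argument, and the identification of the reversible part with $V^{G,\mathrm{ap}}$ is precisely the content of the cited results), but since you close by invoking those same sources this does not affect the conclusion.
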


Here by complement of a subspace $W_1$ in a Banach space $V$ we mean a closed submodule $W_2$ such that the canonical map $W_1\oplus W_2\to V$ is an isomorphism of Banach spaces. The complement being here defined in a ``canonical" way, it follows that if $G$ preserves these complements.

These statements are Theorems 14 and 12 in \cite{BRS}. The part (\ref{jlg1}) is due to Alaoglu-Birkhoff \cite{AB} in the special case superreflexive $G$-modules (that is, whose underlying Banach space is superreflexive), and \cite[Proposition 2.6]{BFGM} in general. Part (\ref{jlg2}) is a generalized version of a theorem of Jacobs and de Leeuw-Glicksberg, stated in this generality in \cite{BJM}.

\section{Induction}

\subsection{A preliminary lemma}
\begin{lem}\label{lem:CocycleContinous}
Let $G$ be a locally compact group, let $(E,\pi)$ be a continuous Banach $G$-module. Let $b:G\to E$ satisfy the cocycle relation 
$b(gh)=\pi(g)b(h)+b(g)$. If $b$ is measurable and locally integrable, then it is continuous.
Moreover if a sequence $(b_n)$ in $Z^1(G,\pi)$ is locally uniformly bounded, i.e.\ for every compact subset $K\subset G$, we have $\sup_{g\in K,n\in\N}\|b_n(g)\|<\infty$, and if $(b_n)$ converges pointwise to $b$, then the convergence is uniform on compact subsets.
\end{lem}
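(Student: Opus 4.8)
The plan is to prove the two assertions in sequence, using the cocycle relation to trade local information for global information.

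\emph{Continuity.} First I would fix a compact neighborhood $K$ of $1$ in $G$ with positive finite Haar measure, and a nonnegative continuous $\varphi$ supported in $K$ with $\int_G\varphi=1$. The idea is to regularize $b$ by convolution: consider $\tilde b(g)=\int_G\varphi(h)\,b(hg)\,dh$. Using the cocycle relation $b(hg)=\pi(h)b(g)+b(h)$, one computes $\tilde b(g)=\bigl(\int_G\varphi(h)\pi(h)\,dh\bigr)b(g)+\int_G\varphi(h)b(h)\,dh$, i.e.\ $\tilde b(g)=T b(g)+c_0$ where $T=\int_G\varphi(h)\pi(h)\,dh$ is a fixed bounded operator and $c_0$ a fixed vector; this uses only local integrability of $b$, so the integrals converge. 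On the other hand, a change of variables gives $\tilde b(g)=\int_G\varphi(hg^{-1})\,b(h)\,dh$ (up to the modular function, which is continuous and harmless), and since $\varphi$ is uniformly continuous and compactly supported while $b$ is locally integrable, $g\mapsto\tilde b(g)$ is continuous. So $Tb(g)+c_0$ is continuous in $g$. To conclude that $b$ itself is continuous, I would choose $\varphi$ supported in a small enough neighborhood that $\|T-\mathrm{Id}\|<1$ (possible because $\pi$ is a continuous representation, so $\pi(h)\to\mathrm{Id}$ strongly as $h\to1$, and $T$ is an average of such operators over a shrinking neighborhood); then $T$ is invertible, and $b(g)=T^{-1}(\tilde b(g)-c_0)$ is continuous. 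One subtlety: strong convergence $\pi(h)v\to v$ only gives $Tv\to v$ pointwise, not $\|T-\mathrm{Id}\|\to0$ in operator norm; I would instead argue directly that $T$ is invertible on the closed subspace spanned by the range, or—cleaner—use that it suffices to show $b$ is continuous at $1$, and near $1$ the operator $\pi(h)$ is close to $\mathrm{Id}$ on the relevant finite collection of vectors. Alternatively, bypass invertibility: since $\tilde b=Tb+c_0$ is continuous and one can take a net $\varphi_i$ of such mollifiers with $T_i\to\mathrm{Id}$ strongly, $T_i b(g)\to b(g)$ pointwise; combined with local boundedness this is enough to run a Lebesgue-point / approximate-identity argument showing $b$ agrees a.e.\ with a continuous function, hence (being already defined everywhere and satisfying the cocycle identity, which forces the values) is continuous. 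The main obstacle is exactly this passage from the regularized, manifestly continuous $\tilde b$ back to $b$; handling it via invertibility of a single well-chosen $T$ is the slickest route.

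\emph{Uniform convergence on compacta.} Here I would first reduce to a neighborhood of $1$: if $S$ is a compact symmetric generating set and $K$ any compact set, every $g\in K$ is a product of boundedly many elements of $S$, say at most $\ell$ of them (by compactness of $K$ and openness of $\bigcup_m S^m$), and the cocycle relation gives $b(s_1\cdots s_\ell)=\sum_{j}\pi(s_1\cdots s_{j-1})b(s_j)$, so $\|b_n(g)-b(g)\|$ is controlled by $\sup_{g'\in S^{\le\ell}}$ of a similar expression; iterating, it suffices to prove uniform convergence on a single compact neighborhood $K_0$ of $1$. On $K_0$, I would apply the regularization identity from the first part: $b_n(g)=T^{-1}(\tilde b_n(g)-c_{0,n})$ with $\tilde b_n(g)=\int_G\varphi(hg^{-1})\rho(\cdots)\,$—more precisely $\tilde b_n(g)=\int_G\varphi(hg^{-1})b_n(h)\,dh$ (modular factor suppressed). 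Pointwise convergence $b_n\to b$ plus local uniform boundedness gives, by dominated convergence, $\tilde b_n(g)\to\tilde b(g)$ for each $g$; and the family $\{\tilde b_n\}$ is uniformly equicontinuous on $K_0$ because $\|\tilde b_n(g)-\tilde b_n(g')\|\le\|b_n\|_{L^\infty(\text{cpt})}\cdot\|\varphi(\cdot\,g^{-1})-\varphi(\cdot\,g'^{-1})\|_{L^1}$ and the last factor is small uniformly in $n$ by uniform continuity of $\varphi$. Equicontinuity plus pointwise convergence on the compact set $K_0$ yields uniform convergence of $\tilde b_n$ to $\tilde b$ on $K_0$ (Arzelà–Ascoli, or a direct $\varepsilon/3$ argument), and then applying the bounded operator $T^{-1}$ and subtracting the convergent constants $c_{0,n}\to c_0$ transfers uniform convergence to $b_n\to b$ on $K_0$. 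Combined with the reduction to $K_0$, this gives uniform convergence on every compact set. The one point needing care is that the same $\varphi$ (hence same invertible $T$) works simultaneously for all $b_n$, which is fine since $T$ depends only on $\pi$ and $\varphi$, not on the cocycle.
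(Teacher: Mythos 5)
There is a genuine gap, and you have put your finger on it yourself: your regularization averages over the \emph{first} variable of the cocycle identity, $b(hg)=\pi(h)b(g)+b(h)$, which produces $\tilde b(g)=Tb(g)+c_0$ with $T=\int\varphi(h)\pi(h)\,dh$ acting on the unknown value $b(g)$. Since the module is only strongly continuous (the orbit maps $g\mapsto\pi(g)v$ are continuous, not $g\mapsto\pi(g)$ in operator norm), there is no reason for $\|T-\mathrm{Id}\|<1$, nor for $T$ to be invertible at all, however small the support of $\varphi$. None of your fallback arguments closes this hole: ``invertibility on the closed span of the range'' does not let you recover $b(g)$ if $T$ is not injective or not bounded below on the set of values $\{b(g)\}$ (which is not a finite collection, so the ``finitely many relevant vectors'' remark does not apply either); and the approximate-identity variant only exhibits $b(g)=\lim_i\bigl(\tilde b_i(g)-c_{0,i}\bigr)$ as a pointwise limit of continuous functions, i.e.\ a Baire class~1 function -- that is far from ``agrees a.e.\ with a continuous function''. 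Moreover $b$ is only assumed measurable and locally \emph{integrable}, not locally bounded, and a Lebesgue-point argument is not available on a general locally compact group. Your second part inherits the same defect, since it invokes the nonexistent $T^{-1}$ (it also assumes a compact generating set, which is not hypothesized, though that reduction could be patched or simply dropped).

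The paper avoids the inversion entirely by averaging over the \emph{second} variable: with $\nu=\phi\,d\mu$ a probability measure with continuous compactly supported density, it sets $\tilde b(g)=\int b(h)\bigl(\phi(g^{-1}h)-\phi(h)\bigr)dh$; by left invariance of Haar measure and $b(gh)=\pi(g)b(h)+b(g)$ one gets $\tilde b(g)-b(g)=\pi(g)v-v$ with $v=\int b\,d\nu$, so $b(g)$ appears with scalar coefficient $1$ and the operator $\pi(g)$ only hits the fixed vector $v$. Continuity of $\tilde b$ (your argument for this is fine) plus continuity of $g\mapsto\pi(g)v$ then give continuity of $b$, and the second statement follows exactly along the lines you sketch for $\tilde b_n$ (dominated convergence for $v_n\to v$ and for pointwise convergence of $\tilde b_n$, equicontinuity of the $\tilde b_n$ from local uniform boundedness and uniform continuity of $\phi$) -- but applied to the identity $b_n(g)=\tilde b_n(g)-\pi(g)v_n+v_n$, which needs no operator inversion. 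So the estimates in your second part are essentially the right ones; what is missing is the correct choice of regularization, without which the passage from $\tilde b$ back to $b$ fails.
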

\begin{proof}
Pick a probability measure $\nu$ on $G$ with compactly supported continuous density $\phi$ and define $\tilde{b}(g)=\int b(h)(\phi(g^{-1}h)-\phi(h))dh$. One checks (see the proof of Lemma 5.2 in \cite{tesLp}) that  $\tilde{b}$ is continuous and 
that $\tilde{b}(g)-b(g)=\pi(g)v-v$, where $v=\int b(h)d\nu(h)$. Hence $b$ is continuous. 

For the second statement, we first note that $v_n=\int b_n(h)d\nu(h)$ converges to $v=\int b(h)d\nu(h)$, by Lebesgue's dominated convergence theorem. So we are left to consider $(\tilde{b}_n)$, which converges pointwise for the same reason. We conclude observing that the $\tilde{b}_n$ are equicontinuous. Indeed,  let $g_1,g_2\in G$ and define $$C= \sup\{\|b_n(g)\|; \; g\in g_1\supp(\phi)\cup  g_2\supp(\phi), n\in \N\}.$$ We have $\|\tilde{b}_n(g_1)-\tilde{b}_n(g_2)\|$
$$=\left\|\int b_n(h)(\phi(g_1^{-1}h)-\phi(g_2^{-1}h))dh\right\|\leq C\sup_{h\in G}|\phi(g_1^{-1}h)-\phi(g_2^{-1}h)|,$$
and we conclude thanks to the fact that $g\mapsto \phi(g^{-1})$ is uniformly continuous.
\end{proof}

\subsection{Measure equivalence coupling}\label{mec}

For the notions introduced in this subsection, we refer to \cite{Shalom}.

\subsubsection{ME coupling and ME cocycles}\label{mecmec}

Given countable discrete groups $\Gamma$ and $\Lambda$, a measure equivalence (ME) coupling between them is a nonzero $\sigma$-finite measure space $(X,\mu)$, which admits commuting free $\mu$-preserving actions of  $\Gamma$ and $\Lambda$  which both have finite-measure fundamental domains, respectively  $X_{\Gamma}$ and $X_{\Lambda}$. Let $\alpha:\Gamma\times X_\Lambda\rightarrow \Lambda$  
 (resp.\ $\beta:\Lambda\times X_\Gamma\rightarrow \Gamma$) be the corresponding cocycle defined by the rule: for all $x\in X_{\Lambda}$, and all 
 $\gamma\in \Gamma$, $\alpha(\gamma, x)\gamma x \in X_\Lambda$ (and symmetrically for $\beta$). If,  for any $\lambda\in \Lambda$ and 
 $\gamma\in \Gamma$, there exists finite subsets $A_{\lambda}\subset \Gamma$ and $B_{\gamma}\subset \Lambda$ such that
 $\lambda X_{\Gamma}\subset A_{\lambda}X_{\Gamma}$ and  $\gamma X_{\Lambda}\subset B_{\gamma}X_{\Lambda}$, then we say the coupling is uniform, and call it a UME coupling, in which case the groups $\Gamma$ and $\Lambda$ are called UME. 
 We now introduce the following reinforcement of UME.
 
 \begin{defn}\label{def:RCE}
  A random cocompact embedding of $\Lambda$ inside $\Gamma$ is a UME coupling satisfying in addition $X_{\Gamma}\subset X_{\Lambda}$.
 \end{defn} 

\subsubsection{Induction of WAP modules}
We assume that $\Lambda$ and $\Gamma$ are ME and we let $\alpha:\Gamma\times X_{\Lambda}\to \Lambda$ be the corresponding cocycle. Now let $(V,\pi)$ be an $\Lambda$-module. 
The induced module is the $\Gamma$-module $(W,\Ind_{\Lambda}^{\Gamma}\pi)$ defined as follows: $W$ is the space $L^1(X_{\Lambda},\mu,V)$ of measurable maps $f:X_{\Lambda}\to V$ such that 
$$\int_{X_\Lambda}|f(x)|d\mu(x)<\infty,$$
and we let $\Gamma$ act on $W$ by 
$$\Ind_{\Lambda}^{\Gamma}\pi(g)f(x)=\pi(\alpha(g,x))(f(g^{-1}\cdot x).$$
\begin{prop}\label{prop:inducedWAP}
If $(V,\pi)$ is WAP, then so is $(W,\Ind_{\Lambda}^{\Gamma}\pi)$.
\end{prop}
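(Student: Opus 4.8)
The goal is to show that if $(V,\pi)$ is a WAP $\Lambda$-module then the induced $\Gamma$-module $(W, \Ind_\Lambda^\Gamma\pi)$, with $W = L^1(X_\Lambda,\mu,V)$, is again WAP; that is, every $\Gamma$-orbit in $W$ is relatively weakly compact. The strategy is to use the standard characterization of relative weak compactness in $L^1$: a bounded subset of $L^1(X_\Lambda,\mu,V)$ is relatively weakly compact as soon as it is uniformly integrable \emph{and} its pointwise values lie in a (fixed) weakly compact subset of $V$ — more precisely, I would invoke the Dunford--Pettis-type criterion for Bochner $L^1$-spaces (see e.g.\ Diestel--Uhl or Ulger's theorem): a bounded set $H \subset L^1(\mu,V)$ is relatively weakly compact iff it is uniformly integrable and for a.e.\ $x$ the set $\{f(x): f\in H\}$ is relatively weakly compact in $V$. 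Actually it is cleaner to combine uniform integrability with the fact that the orbit is ``essentially valued in a single weakly compact set'': since $(V,\pi)$ is WAP, for each $v\in V$ the orbit closure $\overline{\pi(\Lambda)v}^{\,w}$ is weakly compact, and the cochain $\alpha(g,\cdot)$ only ever applies elements of $\Lambda$ to the values of $f$.

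\textbf{Key steps.} First, fix $f \in W$ and consider the $\Gamma$-orbit $\{\Ind_\Lambda^\Gamma\pi(g)f : g\in\Gamma\}$. Step one: show this orbit is bounded in $W$ — this follows from $\|\Ind_\Lambda^\Gamma\pi(g)f\|_1 = \int_{X_\Lambda}\|\pi(\alpha(g,x))f(g^{-1}x)\|\,d\mu(x) \le C\int_{X_\Lambda}\|f(g^{-1}x)\|\,d\mu(x)$ where $C = \sup_{\lambda}\|\pi(\lambda)\|<\infty$ (finite by Lemma \ref{b_ub}, since WAP implies uniformly bounded); and the $\Gamma$-action on $X_\Lambda$ (induced through the fundamental-domain identification $X_\Lambda \cong X/\Lambda$) is measure-preserving, so $\int_{X_\Lambda}\|f(g^{-1}x)\|\,d\mu(x) = \|f\|_1$. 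Step two: establish uniform integrability of the orbit. Given $\eps>0$, pick $\delta>0$ (by absolute continuity of $x\mapsto \|f(x)\|$) so that $\int_E \|f\| < \eps/C$ whenever $\mu(E)<\delta$; then for any measurable $E\subset X_\Lambda$ with $\mu(E)<\delta$, since the $\Gamma$-action is measure-preserving, $\int_E \|\Ind_\Lambda^\Gamma\pi(g)f\| \le C\int_E \|f(g^{-1}x)\|\,d\mu(x) = C\int_{g^{-1}E}\|f\| < \eps$, as $\mu(g^{-1}E)=\mu(E)<\delta$. Step three: control the pointwise values. For a.e.\ $x$, the value $(\Ind_\Lambda^\Gamma\pi(g)f)(x) = \pi(\alpha(g,x))f(g^{-1}x)$ lies in $\bigcup_{y}\overline{\pi(\Lambda)f(y)}^{\,w}$; one needs this union (over $y$ in the essential range of $f$, up to null sets) to sit inside a single weakly compact set, which holds because $f$ is Bochner-integrable hence a.e.\ separably-valued with relatively norm-compact essential range modulo small sets — combined with WAP of each orbit this gives the needed pointwise weak compactness after a routine exhaustion/truncation argument. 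Step four: assemble steps one through three via the Dunford--Pettis/Ulger criterion to conclude the orbit is relatively weakly compact in $W$, i.e.\ $(W,\Ind_\Lambda^\Gamma\pi)$ is WAP.

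\textbf{Main obstacle.} The delicate point is step three: verifying the ``pointwise-into-a-weakly-compact-set'' hypothesis of the vector-valued Dunford--Pettis theorem. One cannot literally say all values of the orbit lie in one weakly compact set (the essential range of $f$ itself need not be relatively weakly compact a priori), so the argument must first reduce to simple functions $f = \sum_{i=1}^k v_i \mathbf{1}_{A_i}$ — for which the orbit values genuinely lie in $\bigcup_{i=1}^k \overline{\pi(\Lambda)v_i}^{\,w}$, a finite union of weakly compact sets, hence weakly compact — and then pass to general $f\in L^1(\mu,V)$ by density, using that a norm-convergent sequence of orbits (uniformly in $g$, since $\|\Ind\pi(g)(f-f')\|_1 \le C\|f-f'\|_1$) of relatively weakly compact sets has relatively weakly compact limit. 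I would also need to double-check the measurability of $(g,x)\mapsto \alpha(g,x)$ and hence of $\Ind_\Lambda^\Gamma\pi(g)f$, but this is part of the setup of ME cocycles recalled above and is routine. An alternative to the whole argument, avoiding the vector-valued Dunford--Pettis theorem, is to observe that WAP modules are stable under the operations used to build $W$: if $V$ is WAP then $L^1(X_\Lambda,\mu,V)$ with the $\Lambda$-action twisted by a measurable cocycle into a measure-preserving $\Gamma$-action is WAP, which can be reduced to the scalar case $V=\R$ (where $L^1$ of a probability-like space is WAP by \cite[Corollary 6.5]{EFH}, cited later in the paper) tensored appropriately — but the direct Dunford--Pettis route above is the most transparent.
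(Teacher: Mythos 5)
Your argument is essentially correct, but it is a genuinely different route from the paper's: the paper disposes of this proposition in one line, citing the main result of Talagrand \cite{Talagrand} on weak Cauchy sequences in $L^1(E)$, whereas you give a hands-on proof via uniform integrability plus a vector-valued Dunford--Pettis-type criterion. One caveat on your formulation: the ``iff'' you first state (bounded, uniformly integrable, and $\{f(x):f\in H\}$ relatively weakly compact for a.e.\ $x$, with the compact set allowed to vary with $x$) is not a correct theorem -- the genuine characterization (\"Ulger, Diestel--Ruess--Schachermayer) requires a.e.\ weak convergence of convex blocks, and the pointwise version fails in general. But your actual argument never uses that ``iff'': what you use is the sufficient criterion (Diestel's lemma) that a uniformly integrable family taking values a.e.\ in a \emph{single} weakly compact set is relatively weakly compact, applied to simple functions $f=\sum_i v_i\mathbf{1}_{A_i}$, whose orbit values lie in the finite union $\bigcup_i \overline{\pi(\Lambda)v_i}^{\,w}$ (weakly compact, and here uniform integrability is even automatic since the values are uniformly bounded on the finite measure space $X_\Lambda$); the passage to general $f\in L^1(X_\Lambda,\mu,V)$ by the uniform estimate $\|\Ind_\Lambda^\Gamma\pi(g)(f-f')\|_1\le C\|f-f'\|_1$ is then Grothendieck's lemma (a set contained in $K_\eps+\eps B$ with $K_\eps$ relatively weakly compact for every $\eps$ is relatively weakly compact). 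Steps one and two are correct as written, using that the $\Gamma$-action on $X_\Lambda$ is measure-preserving and that WAP implies uniform boundedness. So your proof buys self-containedness (only classical weak-compactness facts in Bochner $L^1$), at the cost of more bookkeeping; the paper's citation of Talagrand buys brevity and hides exactly the technical points (pointwise weak compactness of values and the truncation/density argument) that you had to handle explicitly.
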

\begin{proof}
This follows from the main result of \cite{Talagrand}.
\end{proof}

\subsubsection{Induction of cohomology}

For simplicity, we shall restrict our discussion to cohomology in degree one. We now assume that $\Lambda$  and $\Gamma$ are UME, and we let $\alpha$ and $\beta$ be the cocycles associated to some UME-coupling. Assume $(V,\pi)$ is a Banach $\Lambda$-module.  Following \cite{Shalom}, we define a topological isomorphism  $I:H^1(\Gamma,\pi)\to H^1(\Gamma,\Ind_{\Lambda}^{\Gamma}\pi)$ as follows: for every $b\in Z^1(\Lambda,\pi)$ define
$$Ib(g)(x)=b(\alpha(g,x)),$$
for a.e.\ $x\in X$ and all $g\in \Gamma$.
Note that $Ib$ is continuous by Lemma \ref{lem:CocycleContinous}. 

Observe that the UME assumption ensures that $Ib(g)$ has finite norm for all $g\in G$ and therefore is a well-defined $1$-cocycle. Now assume in addition that the coupling satisfies $X_{\Gamma}\subset X_{\Lambda}$ (i.e.\ that $\Lambda$ randomly cocompact embeds inside $\Gamma$).  

Then one can define an inverse  $T$ of $I$, defined for all $c\in Z^1(\Gamma,\Ind_{\Lambda}^{\Gamma}\pi)$ and $h\in \Gamma$ by 
$$Tc(h)= \int_{X_{\Gamma}} c(\beta(h,y))(y)d\mu(y).$$ 
The fact that $I$ and $T$ induce inverse maps in cohomology follows from the proof of \cite[Theorem 3.2.1]{Shalom}.

\subsection{Induction from a closed cocompact subgroup}\label{s:induction}

Let $G$ be a LCSC group and $H$ a closed cocompact subgroup of finite covolume. 

Let $(V,\pi)$ be a WAP $H$-module. Let $\mu$ be a $G$-invariant probability measure on the quotient $G/H$. Let $E=L^2(G/H,V,\mu)$ be the space of Bochner-measurable functions $f:G/H\to V$ such that $\|f\|\in L^2(G/H,\mu)$. Let $D\subset G$ be a bounded fundamental domain for the right action of $H$ on $G$; let $s:G/H\to D$ be the measurable section. Define the cocycle $\alpha: G\times G/H\to H$ by the condition that 
$$\alpha(g,x)=\gamma \Longleftrightarrow g^{-1} s(x)\gamma\in D,$$
We can now define a $G$-module $(E,\Ind_{H}^G\pi)$ {\it induced} from $(V,\pi)$ by letting an element $g\in G$ act on $f\in E$ by 
$$(gf)(kH)=\pi(\alpha(g,kH))f(g^{-1}kH).$$
The fact that the induced representation is WAP follows from \cite{Talagrand}. 

Note that one can similarly induce affine actions (the same formula holds, replacing $\pi$ by an affine action $\sigma$). The corresponding formula for $1$-cocycles (corresponding to the orbit of $0$) is as follows: given $b\in Z^1(H,\pi)$, one defines the induced cocycle $\tilde{b}\in Z^1(G,\Ind_{H}^G\pi)$ by 
$$\tilde{b}(h)(gH)= b(\alpha(h,gH)).$$
This defines a continuous cocycle by Lemma \ref{lem:CocycleContinous}.

The map $b\to \tilde{b}$ induces a topological isomorphism in 1-cohomology (by Lemma \ref{lem:CocycleContinous}). The inverse is defined as follows: given a cocycle $c\in Z^1(G,\Ind_{H}^G\pi)$, one gets a cocycle  $\bar{c}\in Z^1(H,\pi)$ by averaging  (see for instance \cite[Theorem 3.2.2]{Shalom}):
\begin{equation}\label{eq:average}
\bar{c}(\gamma)=\int_{D}c(x\gamma x^{-1})(x)d\mu(x).
\end{equation}
Observe that that $\bar{\tilde{b}}=b.$

\section{Properties $\WAPT$ and $\WAPAP$}\label{propwa}

\subsection{The definitions}
\begin{defn}Let $G$ be a locally compact group. We say that $G$ has
\begin{itemize}
\item Property $\WAPT$ if $\overline{H^1}(G,V)=0$ for every WAP Banach $G$-module $V$ such that $V^G=0$;
\item Property $\WAPAP$ if $\overline{H^1}(G,V)=0$ for every WAP Banach $G$-module $V$ such that $V^{G,\mathrm{ap}}=0$;
\item Property $\WAPFD$ if for every $G$-module $V$ and $b\in Z^1(G,V)$ that is not an almost coboundary, there exists a closed $G$-submodule of positive finite codimension modulo which $b$ is unbounded.
\item Property $\APFD$: same as $\WAPFD$, but assuming that $V$ is almost periodic.
\end{itemize}
\end{defn}

There is a convenient restatement of the definition of $\WAPFD$, in view of the following lemma:

\begin{lem}
Let $V$ be an almost periodic Banach $G$-module and $b$ a 1-cocycle. The following are equivalent:
\begin{enumerate}
\item\label{ape1} there exists a $G$-module decomposition $V=V_1\oplus V_2$ such that $\dim(V_1)<\infty$ and, under the corresponding decomposition $b=b_1+b_2$, we have $b_1$ unbounded;
\item\label{ape2} there is a closed $G$-submodule $W\subset V$ of positive finite codimension such that the projection of $b$ in $V/W$ is unbounded.
\end{enumerate}
\end{lem}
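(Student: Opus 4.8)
The implication (\ref{ape1}) $\Rightarrow$ (\ref{ape2}) is immediate: given the decomposition $V=V_1\oplus V_2$ with $\dim(V_1)<\infty$, set $W=V_2$; this is a closed $G$-submodule of finite codimension $\dim(V_1)>0$ (the codimension is positive precisely because $b_1$ is unbounded, so in particular $V_1\ne 0$), and the projection $V\to V/W\cong V_1$ sends $b$ to $b_1$, which is unbounded by hypothesis. So the content is in the converse.

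For (\ref{ape2}) $\Rightarrow$ (\ref{ape1}), the plan is to produce a $G$-invariant complement to $W$. Since $V$ is almost periodic and $W$ has finite codimension, Proposition \ref{fcodim:compl} applies and gives a closed $G$-submodule $V_1$ with $V=V_1\oplus W$; set $V_2=W$. This is a $G$-module decomposition with $\dim(V_1)=\mathrm{codim}(W)<\infty$. Under the corresponding decomposition $b=b_1+b_2$, the component $b_1$ is the image of $b$ under the composition $Z^1(G,V)\to Z^1(G,V/W)\to Z^1(G,V_1)$, where the second arrow is the isomorphism induced by $V_1\hookrightarrow V\to V/W$; since the projection of $b$ in $V/W$ is unbounded and this isomorphism is a bounded-with-bounded-inverse linear map between the coefficient spaces (the decomposition being topological), $b_1$ is unbounded as well. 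This gives (\ref{ape1}).

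The only genuine point to check — and the step I would single out as the main obstacle, though it is mild — is that "positive finite codimension" in (\ref{ape2}) really forces $V_1\neq 0$ in (\ref{ape1}), and conversely that unboundedness of $b_1$ in (\ref{ape1}) is what makes the codimension in (\ref{ape2}) positive rather than merely finite: a bounded 1-cocycle valued in a space on which the action is uniformly bounded (which it is, by Lemma \ref{b_ub}, since $V$ is almost periodic hence WAP) could a priori be unbounded only if the space is nonzero, so the two positivity clauses match up exactly. Everything else is the observation that finite-codimensional submodules of almost periodic modules are complemented (Proposition \ref{fcodim:compl}), which does all the real work, together with the elementary fact that a topological direct sum decomposition $V=V_1\oplus W$ identifies $V/W$ with $V_1$ as Banach $G$-modules, so boundedness of a cocycle is preserved under the passage.
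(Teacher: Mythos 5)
Your argument is correct and is exactly the paper's: the forward implication is immediate by taking $W=V_2$, and the converse follows from complementation of finite-codimensional submodules (Proposition \ref{fcodim:compl}), with the identification $V/W\cong V_1$ transferring unboundedness. You have merely spelled out the details the paper leaves implicit.
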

\begin{proof}
Clearly (\ref{ape1}) implies (\ref{ape2}), and the converse follows from the fact that $W$ is complemented (Proposition \ref{fcodim:compl}).
\end{proof}

\begin{prop}\label{wapfd_conj}
Property $\WAPFD$ is equivalent to the conjunction of Properties $\WAPAP$ and $\APFD$.
\end{prop}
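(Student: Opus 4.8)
The plan is to prove the two implications separately, using the fact (Proposition \ref{fcodim:compl}) that finite-codimensional submodules of an almost periodic Banach $G$-module are complemented, together with the canonical decomposition $V=V^{G,\mathrm{ap}}\oplus V_\bullet$ supplied by Theorem \ref{decoca}(\ref{jlg2}) for a WAP module.

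First I would show that $\WAPFD$ implies both $\WAPAP$ and $\APFD$. The implication $\WAPFD\Rightarrow\APFD$ is essentially tautological: $\APFD$ is just $\WAPFD$ restricted to the (smaller) class of almost periodic modules, so nothing is to be proved. For $\WAPFD\Rightarrow\WAPAP$, let $V$ be a WAP $G$-module with $V^{G,\mathrm{ap}}=0$ and let $b\in Z^1(G,V)$; I must show $b\in\overline{B^1}(G,V)$. If not, $\WAPFD$ produces a closed submodule $W$ of positive finite codimension such that the projection of $b$ to the finite-dimensional module $V/W$ is unbounded. But $V/W$ is finite-dimensional and uniformly bounded, hence almost periodic, so $(V/W)^{G,\mathrm{ap}}=V/W\neq 0$; since $V^{G,\mathrm{ap}}=0$, the quotient map $V\to V/W$ must kill $V^{G,\mathrm{ap}}=0$... this needs a little more care, so instead: the canonical complement of $V^{G,\mathrm{ap}}=0$ is all of $V$, meaning for every $v\in V$, $0$ lies in the weak closure of $Gv$; this passes to the quotient $V/W$, forcing $(V/W)^{G,\mathrm{ap}}$ to have the same property, which is impossible for a nonzero finite-dimensional almost periodic module. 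This contradiction gives $b\in\overline{B^1}(G,V)$.

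Next I would prove the converse: $\WAPAP$ and $\APFD$ together imply $\WAPFD$. Let $V$ be a WAP $G$-module and $b\in Z^1(G,V)$ with $b\notin\overline{B^1}(G,V)$; I must produce a finite-codimensional submodule modulo which $b$ is unbounded. Decompose $V=V^{G,\mathrm{ap}}\oplus V_\bullet$ canonically (Theorem \ref{decoca}(\ref{jlg2})), and write $b=b_{\mathrm{ap}}+b_\bullet$ accordingly. Since $V_\bullet^{G,\mathrm{ap}}=0$ and $V_\bullet$ is WAP, Property $\WAPAP$ gives $b_\bullet\in\overline{B^1}(G,V_\bullet)$; hence $b_{\mathrm{ap}}\notin\overline{B^1}(G,V^{G,\mathrm{ap}})$ (otherwise $b$ would be an almost coboundary). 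Now $V^{G,\mathrm{ap}}$ is an almost periodic WAP module, so Property $\APFD$ applies to $b_{\mathrm{ap}}$ and yields a closed submodule $W_0\subset V^{G,\mathrm{ap}}$ of positive finite codimension modulo which $b_{\mathrm{ap}}$ is unbounded. Then $W:=W_0\oplus V_\bullet$ is a closed $G$-submodule of $V$ of positive finite codimension, $V/W\cong V^{G,\mathrm{ap}}/W_0$, and the projection of $b$ to $V/W$ is the projection of $b_{\mathrm{ap}}$ to $V^{G,\mathrm{ap}}/W_0$, which is unbounded. This is exactly the conclusion of $\WAPFD$.

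The only genuinely delicate point is verifying, in the first direction, that a nonzero finite-dimensional quotient of a module whose $G^{\mathrm{ap}}$-part vanishes cannot itself have a nonzero $G^{\mathrm{ap}}$-part — i.e.\ that the ``weak-closure-of-orbit-contains-$0$'' characterization from Theorem \ref{decoca}(\ref{jlg2}) survives a quotient map but is incompatible with finite-dimensionality (where weak and norm topologies coincide and orbit closures are automatically compact). I expect this to be the main obstacle; it may be cleaner to argue contrapositively using Corollary \ref{sumfin}: a nonzero finite-codimensional quotient $V/W$ that is almost periodic contains a nonzero finite-dimensional, hence irreducible, submodule, whose preimage structure in $V$ would contradict $V^{G,\mathrm{ap}}=0$ via the complementation in Proposition \ref{fcodim:compl}. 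Once that lemma-level fact is pinned down, the rest is bookkeeping with the canonical decomposition.
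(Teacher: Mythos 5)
Your proof is correct and follows essentially the same route as the paper: the nontrivial direction is handled exactly as there, via the canonical decomposition $V=V^{G,\mathrm{ap}}\oplus V_\bullet$ of Theorem \ref{decoca}(\ref{jlg2}), with $\WAPAP$ disposing of the cocycle on the complement and $\APFD$ of the almost periodic part (your explicit assembly $W=W_0\oplus V_\bullet$ is the small step the paper leaves implicit). The only other difference is that you give an actual argument for $\WAPFD\Rightarrow\WAPAP$ --- using that $V^{G,\mathrm{ap}}=0$ forces $0$ into the weak orbit closure of every vector, a property that passes to the finite-dimensional quotient $V/W$ but fails there for nonzero vectors since the quotient representation is uniformly bounded with uniformly bounded inverses --- whereas the paper simply declares this direction obvious; your elaboration is correct.
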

\begin{proof}
It is obvious that $\WAPFD$ implies both other properties. Conversely, assume that $G$ has both latter properties. Let $V$ be a $G$-module and let $b$ be a 1-cocycle that is not an almost coboundary. By Theorem \ref{decoca}(\ref{jlg2}), we have $V=V^{G,\mathrm{ap}}\oplus W$ for some $G$-submodule $W$. Let $b=b_1+b_2$ be the corresponding decomposition of $b$. Since $W^{G,\mathrm{ap}}=0$, Property $\WAPAP$ implies that $b_2$ is an almost coboundary. Hence $b_1$ is not an almost coboundary for the almost periodic $G$-module $V^{G,\mathrm{ap}}$. Then Property $\APFD$ yields the conclusion.
\end{proof}

\begin{prop}
$G$ has Property $\WAPT$ (resp.\ $\WAPAP$) if and only if, for every $G$-module $V$ and $b\in Z^1(G,V)$ that is not an almost coboundary, $V^G\neq 0$ (resp.\ $V$ admits a nonzero finite-dimensional submodule).
\end{prop}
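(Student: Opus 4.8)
The plan is to treat this as a reformulation of the definitions by contraposition, together with one translation step. Throughout, $V$ ranges over WAP Banach $G$-modules, matching the definitions of Properties $\WAPT$ and $\WAPAP$; recall (Lemma~\ref{b_ub}) that any such $V$ is uniformly bounded. The initial observation is that, for a WAP $G$-module $V$, the condition $\overline{H^1}(G,V)=0$ means exactly $Z^1(G,V)=\overline{B^1}(G,V)$, i.e.\ that \emph{every} $b\in Z^1(G,V)$ is an almost coboundary.

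For Property $\WAPT$: unwinding the definition, $G$ has $\WAPT$ iff for every WAP $G$-module $V$ with $V^G=0$ and every $b\in Z^1(G,V)$, the cocycle $b$ is an almost coboundary. Merging the two universal quantifiers (over $V$ and over $b$) and contraposing the inner implication $(V^G=0)\Rightarrow(b\text{ is an almost coboundary})$ to $(b\text{ is not an almost coboundary})\Rightarrow(V^G\neq 0)$ yields precisely the claimed statement. The same manipulation, with $V^G$ replaced by $V^{G,\mathrm{ap}}$, shows that $G$ has $\WAPAP$ iff for every WAP $G$-module $V$ and every $b\in Z^1(G,V)$ that is not an almost coboundary, one has $V^{G,\mathrm{ap}}\neq 0$.

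It then remains only to convert, in the $\WAPAP$ case, the condition $V^{G,\mathrm{ap}}\neq 0$ into ``$V$ admits a nonzero finite-dimensional submodule''. This is where I would use Corollary~\ref{sumfin}: since $V$ is uniformly bounded, $V^{G,\mathrm{ap}}$ equals the closure of the sum of all finite-dimensional submodules of $V$. Hence a nonzero finite-dimensional submodule forces $V^{G,\mathrm{ap}}\neq 0$, and conversely, if the sum of all finite-dimensional submodules were $\{0\}$ then its closure $V^{G,\mathrm{ap}}$ would be $\{0\}$ as well, so $V^{G,\mathrm{ap}}\neq 0$ forces the existence of a nonzero finite-dimensional submodule. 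Substituting this equivalence finishes the $\WAPAP$ case. There is no genuine obstacle here: the argument is definition-chasing plus contraposition, the only external ingredient being Corollary~\ref{sumfin} (which itself rests on Proposition~\ref{ap_com} and the Peter--Weyl-type Theorem~\ref{thm:PW}); the one point deserving care is that this last translation passes through the \emph{sum}, not merely the closure, of finite-dimensional submodules, which is why uniform boundedness --- guaranteed by WAP --- is invoked.
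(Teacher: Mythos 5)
Your argument is correct: the quantifier manipulation is valid (the hypothesis $V^{G,\mathrm{ap}}=0$ does not involve $b$, so the inner implication can be contraposed after merging the two universal quantifiers), and the translation between ``$V^{G,\mathrm{ap}}\neq 0$'' and ``$V$ has a nonzero finite-dimensional submodule'' is exactly what Corollary \ref{sumfin} provides for a uniformly bounded (in particular WAP, by Lemma \ref{b_ub}) module; your reading of ``$G$-module'' as WAP module also matches the paper's own convention in this section. Where you genuinely diverge from the paper is in the converse direction for $\WAPAP$: the paper does not argue by contradiction on $V$ itself, but first splits $V=V^{G,\mathrm{ap}}\oplus W$ using the canonical complement of Theorem \ref{decoca}(2), decomposes $b=b_1+b_2$, applies Property $\WAPAP$ to $W$ to kill $b_2$, and concludes that $b_1$ is nontrivial, hence $V^{G,\mathrm{ap}}\neq 0$. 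Your route is more elementary --- it needs only Corollary \ref{sumfin} and avoids the Jacobs--de Leeuw--Glicksberg-type decomposition altogether --- while the paper's route yields the slightly stronger conclusion that the reduced cohomology class of $b$ is concentrated in the almost periodic part $V^{G,\mathrm{ap}}$ (its component $b_1$ is itself not an almost coboundary), which is the form of the statement the authors exploit in neighbouring arguments such as Proposition \ref{wapfd_conj}; for the proposition as stated, however, your shorter argument suffices.
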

\begin{proof}
The case $\WAPT$ is trivial and only stated to emphasize the analogy.

Suppose that $G$ satisfies the given property (in the second case). Let $V$ be a WAP $G$-module with $V^{G,\mathrm{ap}}=0$. Let $b$ be a 1-cocycle. Since the condition $V^{G,\mathrm{ap}}=0$ implies that $V$ has no nonzero finite-dimensional subrepresentation, the assumption implies that $b$ is an almost coboundary.

Conversely, suppose that $G$ has Property $\WAPAP$. Let $V$ and $b$ be as in the assumptions. By Theorem \ref{decoca}(\ref{jlg2}), write $V=V^{G,\mathrm{ap}}\oplus W$ with $W$ its canonical complement. Decompose $b=b_1+b_2$ accordingly. Then by Property $\WAPAP$, $b_2$ is an almost coboundary. So $b_1$ is not a coboundary. Hence $V^{G,\mathrm{ap}}\neq 0$. Hence, it admits a nonzero finite-dimensional subrepresentation, by Corollary \ref{sumfin}.
\end{proof}

As a consequence, we have the implications
\[\WAPT\Rightarrow\WAPFD\Rightarrow\WAPAP.\]
The left-hand implication is not an equivalence, for instance the infinite dihedral group is a counterexample. 

\begin{que}Are Properties $\WAPFD$ and $\WAPAP$ equivalent?\label{qwaeq}
\end{que}

This is the case for the unitary Hilbert analogue, because any almost periodic unitary Hilbert $G$-module can be written as an $\ell^2$-direct sum of finite-dimensional ones. A positive answer to the question, even with some restrictions on the class of $G$-modules considered, would be interesting (at least if the given class has good stability properties under induction of actions).

In view of Proposition \ref{wapfd_conj}, a positive answer would follow from a positive answer to:

\begin{que}\label{qapfd}
Does Property $\APFD$ hold for an arbitrary locally compact group $G$?
\end{que}

In turn, a positive answer would result from the following less restrictive question:

\begin{que}
Let $G$ be a locally compact group and $V$ an almost periodic Banach $G$-module. Consider $b\in Z^1(G,V)\smallsetminus \overline{B^1(G,V)}$. Does there exist a $G$-submodule of finite codimension $W\subset V$ such that the image of $b$ in $Z^1(G,V/W)$ is unbounded?
\end{que}
For instance, the answer is positive in the case of unitary Hilbert $G$-modules.

See \S\ref{pAPFD} for more on Property $\APFD$.

\subsection{Extension by a compact normal subgroup}

\begin{prop}\label{prop:compactExt}
Let $1\to K\to G\to Q\to 1$ be a short exact sequence of locally compact groups, and assume that $K$ is compact. Then $G$ has property $\WAPT$ (resp.\ $\WAPAP$, resp.\ $\WAPFD$) if and only if $Q$ does.
\end{prop}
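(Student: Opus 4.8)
The plan is to prove both directions by comparing cohomology and modules for $G$ and $Q$, using the compactness of $K$ as the essential device.

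First, suppose $Q$ has the property in question (any of the three), and let $V$ be a WAP $G$-module; I want to understand $V$ via $Q$. Because $K$ is compact and acts on $V$ by uniformly bounded operators (Lemma \ref{b_ub}), I can average over $K$: the projection $P_K v = \int_K kv\,dk$ onto $V^K$ is a $G$-equivariant bounded projection (it is $G$-equivariant since $K$ is normal, so conjugation permutes Haar measure), giving a $G$-module decomposition $V=V^K\oplus V_K$ where $V_K=\ker P_K$. On $V^K$ the action factors through $Q$; on $V_K$, I claim the restriction to $K$ has no nonzero invariant vectors and, more usefully, that $\overline{H^1}(K,V_K)=0$ — in fact $H^1(K,W)=0$ for \emph{any} Banach $K$-module $W$ when $K$ is compact, by the standard averaging argument (integrate a cocycle against Haar measure to produce the vector realizing it as a coboundary). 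Combined with the Hochschild–Serre-type inflation-restriction for reduced $H^1$, or more simply by a direct argument, a WAP $G$-cocycle $b$ restricted to $K$ is a coboundary $b(k)=v-kv$; subtracting the $G$-coboundary associated to $P_K$-type correction and using that $K$ is compact (hence cocompact and of finite covolume in $G$ only if $Q$ is compact — so this needs care), I instead argue: the continuous cocycle $b$ on $V$ splits as $b=b'+b''$ along $V=V^K\oplus V_K$; since $H^1(K,V_K)=0$, after modifying $b$ by a genuine coboundary I may assume $b''|_K=0$, and then the cocycle identity forces $b''$ to descend appropriately — concretely $b''$ is determined by its values on a set of coset representatives and one checks $b''$ is cohomologous (via a $G$-coboundary) to one supported on $V^K$, hence $b$ represents a class coming from $Z^1(Q,V^K)$. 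Then apply the property for $Q$: $\overline{H^1}(Q,V^K)=0$ (resp.\ the $\WAPFD$/$\WAPAP$ conclusion), noting $(V^K)^Q=V^G$ and $(V^K)^{Q,\mathrm{ap}}=V^{G,\mathrm{ap}}$ and finite-dimensional $Q$-submodules of $V^K$ are exactly finite-dimensional $G$-submodules of $V^K$, which by the decomposition are the finite-dimensional $G$-submodules of $V$ (any finite-dimensional $G$-submodule of $V$ is $K$-almost-periodic, and being finite-dimensional with $K$ compact-connected... more simply, project it to $V^K$).

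For the converse, suppose $G$ has the property and let $\bar V$ be a WAP $Q$-module with a cocycle $\bar b\in Z^1(Q,\bar V)$ not an almost coboundary. Inflate: view $\bar V$ as a $G$-module via $G\to Q$, still WAP, and $\bar b$ pulls back to $b\in Z^1(G,\bar V)$. One checks $b$ is not an almost coboundary for $G$: almost-fixed vectors for the inflated affine $G$-action are almost-fixed for the $Q$-action since $G\to Q$ is surjective (images of compact generating sets generate $Q$ cocompactly, or directly: a net of almost-fixed vectors over compact subsets of $Q$ lifts). Also $\bar V^G=\bar V^Q$ and $\bar V^{G,\mathrm{ap}}=\bar V^{Q,\mathrm{ap}}$ (compactness of $K$ is irrelevant here since $K$ acts trivially), and finite-dimensional $G$-submodules coincide with finite-dimensional $Q$-submodules. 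So the property for $G$ transfers directly to $Q$.

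The main obstacle is the forward direction — showing a WAP $G$-cocycle can be reduced, modulo a genuine $G$-coboundary, to one living on the $K$-invariants $V^K$. The clean way is: since $H^1(K,V)=0$ (compact $K$, averaging), there is $v_0\in V$ with $b(k)=v_0-\pi(k)v_0$ for all $k\in K$; replacing $b$ by the cohomologous cocycle $b-\partial v_0$ we may assume $b|_K=0$. Then for $k\in K$, $g\in G$: $b(gk)=b(g)$ and $b(kg)=\pi(k)b(g)$, so $b(g)=b(k g k^{-1}\cdot k)=\pi(k)b(gk^{-1})\cdot$... working this out shows $\pi(k)b(g)=b(g)$ for all $k\in K$ when we also use $b(gk^{-1})=b(g)$, i.e.\ $b$ takes values in $V^K$ and factors through $G/K=Q$. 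This is the crux and I expect it to be short once set up correctly; the $\WAPFD$ bookkeeping (matching finite-codimensional submodules and unboundedness across the decomposition, using Proposition \ref{fcodim:compl}) is routine but must be written carefully.
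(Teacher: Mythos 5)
Your ``clean way'' at the end is essentially the paper's own proof: the quotient direction is the trivial one (inflation), and for the converse the paper likewise averages over the compact $K$ to replace $b$ by a cohomologous cocycle vanishing on $K$, then uses normality plus the cocycle identity to get $\pi(k)b(g)=b(kg)=b(g\cdot g^{-1}kg)=b(g)$, so that $b$ takes values in $V^K$ and descends to $Q$, with the $\WAPFD$ case handled via a $G$-invariant complement of $V^K$ (the paper uses Theorem \ref{decoca}, you use the averaging projection --- an immaterial difference). Only minor slips: your displayed identity $b(g)=b(kgk^{-1}\cdot k)$ is garbled (it should be the computation just quoted), and the equalities $(V^K)^{Q,\mathrm{ap}}=V^{G,\mathrm{ap}}$ and ``finite-dimensional $G$-submodules of $V$ lie in $V^K$'' are false in general, but the argument only needs the inclusions $(V^K)^{Q}\subset V^G$ and $(V^K)^{Q,\mathrm{ap}}\subset V^{G,\mathrm{ap}}$, which do hold.
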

\begin{proof}
These properties are obviously stable under taking quotients.

For the converse, consider a WAP $G$-module $(V,\pi)$. Let $b$ be a 1-cocycle. Since $K$ is compact, we can find a cohomologous 1-cocycle $b'$ that vanishes on $K$. Then $b'$ takes values in $V^K$: indeed, for $g\in G$ and $k\in K$, $b(kg)=\pi(k)b(g)+b(k)=\pi(k)b(g)$, so 
\[\pi(k)b(g)=b(kg)=b(gg^{-1}kg)=\pi(g)b(g^{-1}kg)+b(g)=b(g).\] 

If we assume that $Q$ has Property $\WAPT$, and we assume $V^G=0$, then $(V^K)^G=0$ and we deduce that $b$ is an almost coboundary, showing that $G$ has Property $\WAPT$. If we assume that $Q$ has Property $\WAPAP$, and assume $V^{G,\mathrm{ap}}=0$, we deduce that $(V^K)^{Q,\mathrm{ap}}=0$. It follows that $b'$ is an almost coboundary, and hence $b$ as well.

If we assume that $Q$ has Property $\WAPFD$, we first invoke Theorem \ref{decoca}(\ref{jlg1}): we have $V=V^K\oplus W$, where $W$ is a canonically defined complement. Then by Property $\WAPFD$ of $Q$, we have $V^K=W'\oplus W''$, where $W'$ is a finite-dimensional $G$-submodule and $b$ has an unbounded projection on $W'$ modulo $W''$. This shows that $G$ has Property $\WAPAP$.
\end{proof}

\subsection{Invariance of $\WAPT$ under central extension}
The material of this section uses some trick which was exploited in \cite{ANT} in the case of Heisenberg's group. See \cite[Theorem 4.1.3]{Shalom} in the Hilbert setting and \cite[Theorem 2]{BRS} for a more general statement (involving reduced cohomology in any degree).

\begin{prop}\label{prop:ni}
Let $G$ be a locally compact group with a compactly generated, closed central subgroup $Z$ such that $G/Z$ has Property $\WAPT$. Then $G$ has Property $\WAPT$.
\end{prop}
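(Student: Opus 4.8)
The plan is to reduce the statement for $G$ to the statement for $G/Z$ by a ``killing the center'' argument on cocycles, using that $Z$ is compactly generated and central. Let $(V,\pi)$ be a WAP $G$-module with $V^G=0$, and let $b\in Z^1(G,\pi)$; we must show $b\in\overline{B^1}(G,\pi)$. The first step is to analyze the restriction $b|_Z$. Since $Z$ is central, for each $z\in Z$ and $g\in G$ the cocycle relation gives $\pi(g)b(z)+b(g)=b(gz)=b(zg)=\pi(z)b(g)+b(z)$, so $(\pi(z)-1)b(g)=(\pi(g)-1)b(z)$ for all $g$. Restricting to $z,z'\in Z$, this shows in particular that $z\mapsto b(z)$ is a cocycle for the restricted $Z$-action, and that the subspace spanned by $b(Z)$ together with its $G$-translates is controlled by the finitely many values of $b$ on a compact generating set of $Z$.

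Next I would use Property $\WAPT$ for $Z$ itself (which follows, e.g., because $Z$ is abelian and hence, by Guichardet's argument as recalled in the introduction — or more directly because a compactly generated abelian group has $\WAPT$) applied to the WAP $Z$-module $\overline{(\pi(Z)-1)V}$, or alternatively work directly with the canonical decomposition of Theorem \ref{decoca}(\ref{jlg1}) applied to the $Z$-action: write $V=V^Z\oplus W_Z$ with $W_Z$ the canonical ($G$-invariant, since $Z$ is central and the complement is canonical) complement. The component of $b$ in $W_Z$ restricted to $Z$ is, modulo a perturbation by a coboundary $v-\pi(z)v$, an almost coboundary; because $Z$ is compactly generated one upgrades ``almost coboundary'' to the existence of a genuine sequence of almost-fixed vectors, and a standard argument (replacing $b$ by a cohomologous cocycle, absorbing the correction vector) lets us assume $b$ vanishes on $Z$, at the cost of passing to the component in $V^Z$. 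Thus we may assume $b|_Z=0$ and $V=V^Z$, i.e.\ $Z$ acts trivially on the relevant part.

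Once $Z$ acts trivially and $b$ kills $Z$, the cocycle $b$ factors through $G/Z$: it descends to $\bar b\in Z^1(G/Z,\bar\pi)$ where $\bar\pi$ is the induced (still WAP) $G/Z$-module structure on $V$. The hypothesis $V^G=0$ gives $V^{G/Z}=0$, so Property $\WAPT$ for $G/Z$ yields that $\bar b$ is an almost coboundary, and pulling back the almost-fixed vectors along $G\to G/Z$ shows $b$ is an almost coboundary for $G$. This completes the reduction.

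\textbf{Main obstacle.} The delicate point is the ``killing the center'' step: justifying rigorously that after subtracting a coboundary one may assume $b|_Z=0$ \emph{and simultaneously} that the ambient module is replaced by $V^Z$ without losing WAP-ness or the condition on invariant vectors, and handling the interaction between the canonical $Z$-complement and the full $G$-action. The compact generation of $Z$ is exactly what makes ``almost coboundary'' concrete enough (finitely many generators, sequences of almost-fixed vectors) to perform this surgery; without it the argument would stall. The WAP hypothesis enters twice — to get the canonical $G$-stable complement for the $Z$-action from Theorem \ref{decoca}, and to ensure the quotient module stays WAP — and keeping track that these are genuinely $G$-equivariant (not merely $Z$-equivariant) decompositions is where care is needed.
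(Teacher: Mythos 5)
Your decomposition $V=V^Z\oplus W_Z$ via Theorem \ref{decoca}(\ref{jlg1}) and your treatment of the $V^Z$-component are essentially the paper's: on $V^Z$ the identity $(\pi(g)-1)b(z)=(\pi(z)-1)b(g)=0$ forces $b_1(z)\in V^G=0$, so $b_1$ kills $Z$, descends to $G/Z$, and $\WAPT$ of $G/Z$ applies. The genuine gap is in your handling of the $W_Z$-component. You argue that $b_2|_Z$ is an almost coboundary for $Z$ (true, since $Z$ is compactly generated abelian and $W_Z^Z=0$) and then claim a ``standard surgery'' lets you replace $b$ by a cohomologous cocycle vanishing on $Z$. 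This step fails twice over: an almost coboundary is a \emph{limit} of coboundaries, not a coboundary, so no single subtraction $v-\pi(\cdot)v$ makes $b_2$ vanish (or even stay uniformly small) on $Z$; and, more seriously, knowing that $b_2$ restricted to $Z$ is an almost $Z$-coboundary says nothing a priori about $b_2$ as a $G$-cocycle --- the vectors $v_n$ witnessing almost-fixedness for $Z$ need not be almost fixed for a compact generating set of $G$, and injectivity of the restriction map $\overline{H^1}(G,W_Z)\to\overline{H^1}(Z,W_Z)$ is exactly the kind of statement the paper only obtains under extra hypotheses (cocompactness as in Lemma \ref{almostcococo}, or combability plus controlled F\o lner sequences as in Lemma \ref{cohinj}), none of which hold for a central $Z$ in general.

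What is missing is the paper's key idea for this component: one proves directly that $\overline{H^1}(G,W_Z)=0$ by an ergodic-averaging argument along the center. After reducing (via a cocompact lattice $\Lambda\cong\Z^d$ in $Z$ and Proposition \ref{prop:compactExt}) to $Z$ discrete cyclic with generator $z$, one sets $x_n=\frac1n\sum_{i=0}^{n-1}b_2(z^i)$ and computes, using centrality ($gz^i=z^ig$) and the cocycle relation, that $\sigma(g)x_n-x_n=\frac1n\sum_{i=0}^{n-1}\pi(z)^ib_2(g)$, which tends to $0$ by the mean ergodic theorem in the WAP module $W_Z$ (the Ces\`aro averages converge to the projection on $Z$-invariants, which is $0$ since $W_Z^Z=0$). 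Thus the $x_n$ are almost fixed points for the \emph{whole} affine $G$-action, so $b_2$ is an almost coboundary on $G$ --- with no appeal to restriction-injectivity. Without this (or an equivalent substitute), your reduction to ``$b|_Z=0$ and $V=V^Z$'' does not go through, so the proof as proposed has a genuine gap rather than being an alternative route.
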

\begin{proof}
We start with the case when $Z$ is discrete cyclic. Let $1\to \Z\to G\to Q\to 1$ be a central extension where $Q$ has property $\WAPT$. Let $V$ be a weakly almost periodic Banach space and let $(V,\pi)$ be a $G$-module with $V^G=0$, and let $b$ be a cocycle. Let $V^{\Z}$ be the subspace of $V$ consisting of fixed $\Z$-vectors. Because $\Z$ is central,  $V$ decomposes as a $G$-invariant direct sum $V=V_1\oplus V_2$, where $V_1=V^{\Z}$ and $V_2$ is its canonical complement (Proposition \ref{decoca}(\ref{jlg1})). 

Let us decompose $\pi=\pi_1\oplus \pi_2$ and $b=b_1\oplus b_2$ accordingly, with $b_i\in Z^1(G,\pi_i)$.
Since $\overline{H}^1(G,\pi)=\oplus_i \overline{H}^1(G,\pi_i)$, it is enough to show that both terms in the direct sum vanish. Let $z$ be a generator of $\Z$. 
Let us first show that $\overline{H}^1(G,\pi_2)=0$, showing that under this assumption the sequence $x_n=\frac{1}{n}\sum_{i=0}^{n-1}b(z^i)$ is almost fixed by the affine action $\sigma$ of $G$ associated to $b$. The cocycle relation together with the fact that $\Z$ is central imply that

$$\sigma(g)x_n =  \frac{1}{n}\sum_{i=0}^{n-1}b(gz^i)
                    =  \frac{1}{n}\sum_{i=0}^{n-1}b(z^ig)
                   = \frac{1}{n}\sum_{i=0}^{n-1}\pi(z)^ib(g)
$$
which goes to zero by the ergodic theorem (which states in general that this converges to a $G$-invariant vector and is an immediate verification).
Let $\phi:\overline{H}^1(G,\pi_1)\to \overline{H}^1(\Z,\pi_1)$ be the map in cohomology obtained by restricting cocycles to the central subgroup $\Z$. Using that $\Z$ is central and that the restriction of $\pi_1$ to $\Z$ is trivial, we deduce that $b_1(z)$ is a $\pi(G)$-invariant vector, which therefore equals $0$. It follows that $b_1$ induces a cocycle $\tilde{b}_1$ for the representation $\tilde{\pi}_1$ of $Q$. It is easy to see that $b_1$ is an almost coboundary if and only if  $\tilde{b}_1$ is an almost coboundary. So we conclude thanks to the fact that $Q$ has $\WAPT$. 

Let us now prove the general case.
As a CGLC abelian group, $Z$ has a cocompact discrete subgroup $\Lambda$ isomorphic to $\Z^d$ for some $d$. Then $G/Z$ is quotient of $G/\Lambda$ with compact kernel, and hence by Proposition \ref{prop:compactExt}, $G/\Lambda$ has Property $\WAPT$.
Then by an iterated application of the case with discrete cyclic kernel, $G$ has Property $\WAPT$ as well.
\end{proof}

\begin{cor}
Among compactly generated locally compact groups, the class of compactly presented groups with Property $\WAPT$ is closed under taking central extensions.
\end{cor}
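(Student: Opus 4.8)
The plan is to combine Proposition~\ref{prop:ni} with standard facts about compact presentability of locally compact groups. So let $1\to Z\to G\to Q\to 1$ be a central extension with $G$ compactly generated and $Q=G/Z$ compactly presented and with Property $\WAPT$; we must show that $G$ is compactly presented and has Property $\WAPT$. The first and only really substantial step is to observe that $Z$ is compactly generated. Indeed, when $G$ is compactly generated and the quotient $Q$ is compactly presented, the kernel is compactly generated as a normal subgroup of $G$: choosing a compact generating set $S$ of $G$ whose image generates $Q$ with relators of length $\le r$, the set of elements of $Z$ represented by $S$-words of length $\le r$ is compact and normally generates $Z$. Since $Z$ is central, ``compactly generated as a normal subgroup'' coincides with ``compactly generated as a topological group''.

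Granting this, the remainder is routine. Property $\WAPT$ for $G$ follows immediately from Proposition~\ref{prop:ni}, applied to the closed, compactly generated, central subgroup $Z$, using that $G/Z=Q$ has Property $\WAPT$. For compact presentability: $Z$ is a compactly generated locally compact abelian group, hence topologically isomorphic to $\R^a\times\Z^b\times K$ with $K$ compact, and such groups are compactly presented; since an extension of a compactly presented group by a compactly presented group is compactly presented, $G$ is compactly presented. This finishes the proof.

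Thus the argument is an assembly of known results, and the only point requiring care is the very first step. It is essential there that $Q$ is assumed compactly presented and not merely compactly generated: this is precisely what forces the central kernel $Z$ to be compactly generated, and compact generation of $Z$ is exactly the hypothesis needed both to invoke Proposition~\ref{prop:ni} and to run the extension argument for compact presentability. Without it, $Z$ may fail to be compactly generated and both invocations break down, which is why the statement is phrased for compactly presented groups.
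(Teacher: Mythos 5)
Your proof is correct and follows essentially the same route as the paper: deduce compact generation of the central kernel $Z$ from compact generation of $G$ together with compact presentability of $G/Z$, then invoke Proposition~\ref{prop:ni} for Property $\WAPT$ and standard extension facts for compact presentability. The extra details you supply (the bounded-length relator argument and the structure $\R^a\times\Z^b\times K$ of compactly generated locally compact abelian groups) are accurate elaborations of what the paper leaves implicit.
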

\begin{proof}
Let $G$ be compactly generated, with a central subgroup $Z$ such that $G/Z$ is also compactly presented and has Property $\WAPT$. Since $G/Z$ is compactly presented and $G$ is compactly generated, $Z$ is compactly generated. Hence Proposition \ref{prop:ni} applies (and $G$ is compactly presented).
\end{proof}

Since CGLC nilpotent groups are compactly presented, we deduce

\begin{cor}\label{nilwapt}[\cite{BRS}, Theorem 8]
CGLC nilpotent groups have Property $\WAPT$.
\end{cor}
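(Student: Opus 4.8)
The statement to prove is \textbf{Corollary \ref{nilwapt}}: CGLC nilpotent groups have Property $\WAPT$.

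The plan is to reduce to the abelian case by induction on the nilpotency class, using the invariance of $\WAPT$ under central extensions just established (Proposition \ref{prop:ni}) together with the base case of $\Z^d$ (or more generally CGLC abelian groups). First I would treat the abelian case: a CGLC abelian group $G$ is, by the structure theory of LCA groups, of the form $\R^a\times\Z^b\times K$ with $K$ compact; by Proposition \ref{prop:compactExt} it suffices to handle $\R^a\times\Z^b$, and since $\Z^b$ is a cocompact lattice there, by Theorem \ref{introstab}(1) --- or more elementarily by a direct argument --- it suffices to handle $\Z^d$, or even just $\Z$ and $\R$. For $\Z$: given a WAP module $V$ with $V^\Z=0$ and a cocycle $b$, the sequence $x_n=\frac1n\sum_{i=0}^{n-1}b(z^i)$ (where $z$ generates $\Z$) satisfies $\sigma(z)x_n-x_n=\frac1n(\pi(z)^n b(z)-b(z))\to 0$ since the orbit is bounded (WAP implies uniformly bounded, Lemma \ref{b_ub}), so $b$ is an almost coboundary --- this is exactly the computation already appearing inside the proof of Proposition \ref{prop:ni}. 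For $\R$ one replaces the sum by an integral $\frac1T\int_0^T b(r)\,dr$ and runs the same estimate.

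Next, for the general CGLC nilpotent $G$ of class $c\geq 2$: the center $Z(G)$ is closed and nontrivial, and $G/Z(G)$ is CGLC nilpotent of class $c-1$ (compact generation passes to quotients). A CGLC nilpotent group is compactly presented (this is standard, and is recalled in the excerpt just before the corollary), so the hypothesis of the preceding Corollary is met: $G/Z(G)$ has $\WAPT$ by the induction hypothesis, $Z(G)$ is compactly generated because $G$ is compactly generated and $G/Z(G)$ is compactly presented, and hence Proposition \ref{prop:ni} (equivalently its Corollary) gives that $G$ has $\WAPT$. This closes the induction.

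The main obstacle, such as it is, lies not in the inductive step --- which is a clean application of Proposition \ref{prop:ni} --- but in making sure the base case and the reduction steps are genuinely available in the WAP Banach setting: one needs that WAP modules are uniformly bounded (Lemma \ref{b_ub}), that the canonical decomposition $V=V^G\oplus W$ and the "ergodic theorem" (convergence of Cesàro/integral averages of a bounded orbit to a fixed vector) hold for WAP Banach modules, and that $\WAPT$ descends from a CGLC group to a cocompact lattice among amenable groups (Theorem \ref{introstab}(1)). All of these are recorded earlier in the excerpt, so the proof is essentially an assembly. Note that this corollary is not logically needed for the main theorems --- the broader class $\mathfrak{C}$ is handled by a quite different, dynamical argument --- but it is worth stating as a sanity check and because the central-extension machinery of this subsection yields it for free; the attribution to \cite{BRS} reflects that it was known before.
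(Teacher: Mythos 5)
Your route is the same as the paper's: the corollary is deduced from the statement immediately preceding it (closure of compactly presented groups with Property $\WAPT$ under central extensions, itself a consequence of Proposition \ref{prop:ni}) by induction on the nilpotency class, exactly as you do, with compact presentability of CGLC nilpotent groups supplying the compact generation of the central subgroup. One citation in your base case goes the wrong way, however: Theorem \ref{introstab}(1) passes $\WAPT$ \emph{down} from a group to its closed cocompact subgroups, not up from the lattice $\Z^{a+b}$ to $\R^a\times\Z^b$; the upward direction is genuinely false for $\WAPT$ (the infinite dihedral group contains $\Z$ as a cocompact normal subgroup, and Proposition \ref{prop:subgroup}(\ref{ps2}) only yields $\WAPFD$). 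This does not harm the proof, since the abelian case needs no structure theory at all: apply Proposition \ref{prop:ni} with $Z=G$ (a CGLC abelian group is a compactly generated closed central subgroup of itself, with trivial quotient), or iterate it over the $\R$ and $\Z$ factors; your Ces\`aro computation for $\Z$ is precisely the averaging argument already carried out inside the proof of Proposition \ref{prop:ni}.
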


\subsection{Cocompact subgroups}

\begin{lem}\label{almostcococo}
Let $G$ be a locally compact group, $H$ a closed normal cocompact subgroup. Let $(V,\pi)$ be a Banach $G$-module. If $b\in Z^1(G,V)$ is an almost coboundary in restriction to $H$, then it is an almost coboundary.
\end{lem}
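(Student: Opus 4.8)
The plan is to average over the compact quotient $G/H$ to transfer an almost-fixed-point sequence for $H$ to one for $G$. First, since $H$ is normal and cocompact, $G/H$ is a compact group; fix a normalized Haar measure on it and a bounded Borel section $s:G/H\to G$, with associated cocycle $\beta:G\times G/H\to H$ defined by $\beta(g,\bar x)=s(g\bar x)^{-1}g s(\bar x)\in H$. Let $\sigma=\sigma_b$ be the affine $G$-action associated to $b$, so $\sigma(g)v=\pi(g)v+b(g)$. The hypothesis gives a sequence $(v_n)$ in $V$ with $\sup_{h\in S_H}\|\sigma(h)v_n-v_n\|\to 0$ for a compact generating set $S_H$ of $H$; equivalently (since $H$ is compactly generated, being cocompact in the compactly generated... — in any case we can pass to the genuine ``almost fixed'' formulation) for every compact $K\subset H$, $\sup_{h\in K}\|\sigma(h)v_n-v_n\|\to 0$.

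Next I would define the averaged vectors $w_n=\int_{G/H}\sigma(s(\bar x))^{-1}v_n\,d\bar x$ (this is a Bochner integral in $V$, well-defined because $s$ is bounded, $\pi$ is uniformly bounded by Lemma \ref{b_ub}, and $b$ is continuous hence bounded on compact sets). The key computation is to estimate $\sigma(g)w_n-w_n$ for $g$ in a compact generating set $S$ of $G$. Using the affine cocycle identity $\sigma(g)\sigma(s(\bar x))^{-1}=\sigma(gs(\bar x)^{-1})$ and the relation $gs(\bar x)^{-1}=s(g\overline{\phantom{x}}\cdot)^{-1}\beta(\cdot)^{\pm1}$ coming from $\beta$, one rewrites $\sigma(g)w_n$ as an integral over $G/H$ of terms $\sigma(s(\bar y))^{-1}\sigma(h_{g,\bar y})v_n$ where $\bar y=g\bar x$ ranges over $G/H$ (change of variables, using invariance of Haar measure on the compact group $G/H$) and $h_{g,\bar y}\in H$ lies in a fixed compact subset of $H$ as $g$ ranges over $S$ and $\bar y$ over $G/H$ — here boundedness of $s$ and compactness of $S$ and of $G/H$ are exactly what makes $\{h_{g,\bar y}\}$ relatively compact in $H$. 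Therefore
\begin{align*}
\|\sigma(g)w_n-w_n\|
&\le \int_{G/H}\big\|\sigma(s(\bar y))^{-1}\big(\sigma(h_{g,\bar y})v_n-v_n\big)\big\|\,d\bar y\\
&\le C\cdot\sup_{h\in K_0}\|\sigma(h)v_n-v_n\|\xrightarrow[n\to\infty]{}0,
\end{align*}
uniformly over $g\in S$, where $C=\sup_{g\in G}\|\pi(g)\|<\infty$ and $K_0\subset H$ is the fixed compact set containing all the $h_{g,\bar y}$. This shows $(w_n)$ is a sequence of almost fixed points for $\sigma$ restricted to $S$, hence $b\in\overline{B^1(G,V)}$, i.e.\ $b$ is an almost coboundary.

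The main obstacle — really the only delicate point — is the bookkeeping in the change of variables: making precise the identity relating $gs(\bar x)^{-1}$ to $s(g\bar x)^{-1}$ and an $H$-valued correction term, and checking that these correction terms stay in one fixed compact subset of $H$ when $g$ runs over the compact generating set $S$ and $\bar x$ over the compact group $G/H$. This uses normality of $H$ (so that conjugating/translating keeps us inside $H$) together with boundedness of the section $s$ and joint continuity; once that relative compactness is in hand, the estimate above and the uniform boundedness of $\pi$ close the argument. (One should also note at the outset that measurability/local integrability of the integrand is automatic since $b$ is continuous and $s$ is Borel and bounded, so all Bochner integrals above make sense.)
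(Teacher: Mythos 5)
Your plan is essentially the paper's own proof: the paper likewise averages the $H$-almost fixed vectors over a bounded measurable section of the compact quotient, uses normality of $H$ to see that the correction terms $\nu(x,y,s)=\hat{x}^{-1}\hat{y}s\widehat{y^{-1}x}$ lie in a fixed compact subset of $H$, and concludes with the uniform bound $\sup_g\|\pi(g)\|<\infty$. The only (harmless) bookkeeping slip is that with your convention of averaging $\sigma(s(\bar x))^{-1}v_n$, the $H$-valued identity is $g\,s(\bar x)^{-1}=s(\bar x\,\bar g^{-1})^{-1}h_{g,\bar x}$, so the change of variables is $\bar y=\bar x\,\bar g^{-1}$ (right translation, legitimate since $G/H$ is compact hence unimodular), not $\bar y=g\bar x$; everything else goes through exactly as you describe.
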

\begin{proof}
Use a bounded measurable section $P\subset G$, so that $P\times H\to G$ is a measurable bijective map (with measurable inverse). Denote by $y\mapsto\hat{y}$ the section $G/H\to P$. Let $(v_n)$ be a sequence of $H$-almost fixed vectors. Let $S$ be a compact generating subset of $H$; enlarging $S$ if necessary, we can suppose that $PS$ contains a compact generating subset of $G$.

Define $\xi_n=\int_{x\in P}\alpha(\hat{x})v_ndx$. Then for $y\in G/H$ and $s\in S$, we have 
\begin{align*}\xi_n-\alpha(\hat{y}s)\xi_n=&
\int_{x\in P}\alpha(x)v_ndx-\alpha(\hat{y}s)\int_{x\in P}\alpha(x)v_ndx\\
=& \int_{x\in G/H}\alpha(\hat{x})v_n-\int_{x\in G/H}\alpha(\hat{y}s\hat{x})v_ndx\\
=& \int_{x\in G/H}\alpha(\hat{x})v_n-\int_{x\in G/H}\alpha(\hat{y}s\widehat{y^{-1}x})v_ndx\\
=& \int_{x\in G/H}(\alpha(\hat{x})v_n-\alpha(\hat{x}\nu(x,y,s))v_n)dx\\
=&\int_{x\in G/H}\pi(\hat{x})(v_n-\alpha(\nu(x,y,s))v_n)dx,
\end{align*}
with $\nu(x,y,s)=\hat{x}^{-1}\hat{y}s\widehat{y^{-1}x}$. Since $\nu(x,y,s)$ belongs to some fixed ball of $H$ (independently of $x,y,s$), we have $\|v_n-\alpha(\nu(x,y,s))v_n\|\le\eps_n$ for some sequence $(\eps_n)$ depending only of this ball, tending to zero.
 Thus
 $\|\xi_n-\alpha(\hat{y}s)\xi_n\|\le c\eps_n$, where $c=\sup_{g\in G}\|\pi(g),\|$
and hence $(\xi_n)$ is a sequence of almost fixed points for $G$.
\end{proof}

\begin{prop}\label{prop:subgroup}
Let $G$ be a locally compact group and $H$ a closed cocompact subgroup.
\begin{enumerate}
\item\label{ps1} if $H$ has Property $\WAPAP$, so does $G$;
\item\label{ps2} if $H$ has Property $\WAPT$ and is normal in $G$, then $G$ has Property $\WAPFD$.
\end{enumerate}
\end{prop}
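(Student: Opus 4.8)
For (\ref{ps1}) the plan is to reduce the question to $H$ through the induction isomorphism of \S\ref{s:induction}, using the reformulation of $\WAPAP$: it suffices to prove that whenever $V$ is a WAP $G$-module and $b\in Z^1(G,V)$ is not an almost coboundary, $V$ contains a nonzero finite-dimensional $G$-submodule. The first step is to realize $V$ as a complemented $G$-submodule of $E:=\Ind_H^G(V|_H)$. With $s\colon G/H\to D$ the measurable section of \S\ref{s:induction}, one sets $\iota(v)(x)=\pi(s(x)^{-1})v$ and $r(f)=\int_{G/H}\pi(s(x))f(x)\,d\mu(x)$; using the identity $\alpha(g,x)=s(x)^{-1}g\,s(g^{-1}x)$ and the $G$-invariance of $\mu$, one checks that $\iota$ and $r$ are $G$-equivariant with $r\circ\iota=\mathrm{id}_V$. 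Since $E$ is WAP (by \cite{Talagrand}, as recalled in \S\ref{s:induction}), the induced map $\iota_*\colon\overline{H^1}(G,V)\to\overline{H^1}(G,E)$ is split injective, so $\iota\circ b$ is not an almost coboundary; the topological isomorphism $\overline{H^1}(G,E)\cong\overline{H^1}(H,V|_H)$ of \S\ref{s:induction} then gives $\overline{H^1}(H,V|_H)\neq 0$. As $V|_H$ is a WAP $H$-module and $H$ has $\WAPAP$, this forces $V^{H,\mathrm{ap}}\neq 0$, hence $V^{G,\mathrm{ap}}=V^{H,\mathrm{ap}}\neq 0$ by Lemma \ref{vapcocom}, and Corollary \ref{sumfin} produces the required finite-dimensional $G$-submodule.

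For (\ref{ps2}), normality of $H$ makes Lemma \ref{almostcococo} available. Let $V$ be a WAP $G$-module and $b\in Z^1(G,V)$ that is not an almost coboundary; by the contrapositive of Lemma \ref{almostcococo}, $b|_H$ is not an almost coboundary over $H$. Applying Theorem \ref{decoca}(\ref{jlg1}) to the $H$-action yields the canonical decomposition $V=V^H\oplus W_0$, and since the complement is canonical and $G$ normalizes $H$, both summands are $G$-submodules; write $b=b_1+b_2$ with $b_1\in Z^1(G,V^H)$ and $b_2\in Z^1(G,W_0)$. Because $(W_0)^H=0$ and $W_0$ is WAP, Property $\WAPT$ of $H$ gives $\overline{H^1}(H,W_0)=0$, so $b_2|_H$ is an almost coboundary over $H$; hence $b_1|_H=b|_H-b_2|_H$ is not an almost coboundary over $H$, i.e.\ (the $H$-action on $V^H$ being trivial) $b_1|_H\neq 0$. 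Pick $h_0\in H$ with $v_0:=b_1(h_0)\neq 0$; then $b_1(h_0^n)=n v_0$ since $b_1|_H$ is a homomorphism. The $G$-action on $V^H$ factors through the compact group $Q=G/H$, so $V^H$ is almost periodic, and by the argument in the proof of Proposition \ref{prop:complement} (Lemma \ref{ptscont} together with Theorem \ref{thm:PW} applied to $Q$) there is a finite-dimensional $G$-submodule of $(V^H)^*_{[Q]}$ not vanishing at $v_0$; its orthogonal $W_1\subset V^H$ is a closed $G$-submodule with $1\le\dim(V^H/W_1)<\infty$ and $v_0\notin W_1$. Then $W:=W_1\oplus W_0$ is a $G$-submodule of $V$ of positive finite codimension, and modulo $W$ the cocycle $b$ becomes the projection of $b_1$, which on the powers $h_0^n$ takes the unbounded values $n\cdot(\text{image of }v_0)$. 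Thus $b$ is unbounded modulo $W$, as $\WAPFD$ requires.

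Beyond the routine equivariance computations and cohomological bookkeeping, the main difficulty in (\ref{ps1}) is the construction of the $G$-equivariant retraction $E\to V$, which tacitly uses a $G$-invariant probability measure on $G/H$ (i.e.\ the finite-covolume assumption under which \S\ref{s:induction} operates, automatic when $H$ is normal or $G$ is amenable); in (\ref{ps2}) the delicate point is that one must produce a \emph{finite-codimensional}, not merely finite-dimensional, submodule witnessing the unboundedness of $b$, which is exactly where compactness of $Q=G/H$ enters.
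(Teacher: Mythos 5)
Your argument for (\ref{ps1}) takes a genuinely different route from the paper's, but as written it proves less than the statement. The induction machinery of \S\ref{s:induction} that you rely on (the module $E=L^2(G/H,V,\mu)$, the cocycle $\alpha$, the averaging inverse, hence also your retraction $r(f)=\int \pi(s(x))f(x)\,d\mu(x)$ and the isomorphism $\overline{H^1}(G,E)\cong\overline{H^1}(H,V|_H)$) requires a $G$-invariant probability measure on $G/H$, i.e.\ the finite-covolume setting. Part (\ref{ps1}) assumes only that $H$ is closed and cocompact --- no normality and no amenability --- so your closing remark that the invariant measure is ``automatic'' does not restore the stated generality (a minimal parabolic in $\SL_2(\R)$ is closed and cocompact, yet $G/H$ carries no invariant probability measure). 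So your proof of (\ref{ps1}) covers only the finite-covolume case, which does include every use the paper makes of the proposition (open finite-index subgroups, normal cocompact subgroups), but not the statement as given. The paper's own proof avoids induction entirely: since $V^{H,\mathrm{ap}}=V^{G,\mathrm{ap}}=0$ by Lemma \ref{vapcocom}, Property $\WAPAP$ of $H$ makes $b|_H$ an almost coboundary, and Lemma \ref{almostcococo} then promotes $H$-almost fixed vectors to $G$-almost fixed vectors by averaging over a bounded fundamental domain. (Admittedly that lemma is stated for normal $H$, so the paper also leans on extra structure there; but that averaging step, not induction, is the intended mechanism, and it is exactly where your proof and the paper's diverge.)

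Your proof of (\ref{ps2}) is correct and follows the paper's skeleton --- normality gives the $G$-invariant decomposition $V=V^H\oplus W_0$ from Theorem \ref{decoca}(\ref{jlg1}), Property $\WAPT$ of $H$ disposes of $b_2$, and Lemma \ref{almostcococo} (in your case its contrapositive) transfers the failure of being an almost coboundary between $H$ and $G$ --- but your endgame is different. The paper takes $F=\mathrm{span}\,b_1(H)$, finite dimensional because $H$ is compactly generated and $G$-invariant by normality, complements it inside $V^H$ by Proposition \ref{prop:complement}, and asserts that the resulting finite-dimensional component of $b$ is unbounded. You instead fix a single value $v_0=b_1(h_0)\neq 0$, use Lemma \ref{ptscont} and Theorem \ref{thm:PW} for the compact quotient $Q=G/H$ to produce a closed $G$-submodule $W_1\subset V^H$ of positive finite codimension with $v_0\notin W_1$, and read off unboundedness from $b(h_0^n)\equiv n\,v_0$ modulo $W=W_1\oplus W_0$. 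This duality trick is the same one the paper uses in the proof of Proposition \ref{apfd_kt}, and your version is in fact a bit more careful at the final step: it does not need compact generation of $H$ to span a finite-dimensional module, it does not require $b_1$ to be $F$-valued on all of $G$, and the explicit sequence $h_0^n$ supplies the justification of unboundedness that the paper's last sentence leaves implicit.
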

\begin{proof}
Suppose that $H$ has Property $\WAPAP$. Let $V$ be a $G$-module with $V^{G,\mathrm{ap}}=0$ and $b\in Z^1(G,\pi)$. Since $H$ is cocompact in $G$, we have $V^{H,\mathrm{ap}}\subset V^{G,\mathrm{ap}}$ and hence $V^{H,\mathrm{ap}}=0$. By Property $\WAPAP$ of $H$, $b$ is an almost coboundary in restriction to $H$, and hence on $G$ by Lemma \ref{almostcococo}. Hence $G$ has Property $\WAPAP$.

Now assume that $H$ is normal and has Property $\WAPT$.
Let $V$ be a WAP $G$-module and $b$ a 1-cocycle that is not an almost coboundary.
Since $H$ is normal, Theorem \ref{decoca}(\ref{jlg1}) implies that $V$ decomposes as a $G$-invariant direct sum $V=V^H\oplus V_2$. Decompose $b=b_1+b_2$ accordingly. Since $H$ has Property $\WAPT$, $b_2$ is an almost coboundary in restriction to $H$, and hence, by Lemma \ref{almostcococo}, $b_2$ is an almost coboundary on $G$. Hence $b_1$ is not an almost coboundary (on $G$). But $b_1$ is a group homomorphism in restriction to $H$, and since $H$ is CGLC, $b_1(H)$ generates a finite-dimensional subspace $F$ of $V$. Since $H$ is normal, this subspace is $\pi(G)$-invariant. By Proposition \ref{prop:complement}, $F$ as a complement $W$ in $V^H$ as a $G$-module, and under the decomposition $V=F\oplus (W\oplus V_2)$ we have $b=b_1+(0+b_2)$, where $b_1$ is unbounded. This shows Property $\WAPFD$.
\end{proof}

\begin{thm}\label{thm:subgroup}
Properties $\WAPT$ and $\WAPFD$ are inherited by closed cocompact subgroups $H\subset G$ of finite covolume. 
\end{thm}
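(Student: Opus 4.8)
The plan is to apply the induction functor of \S\ref{s:induction}. Let $H\subset G$ be closed and cocompact of finite covolume, and fix the $G$-invariant probability measure $\mu$ on $G/H$. For a WAP $H$-module $(V,\pi)$, form the WAP $G$-module $E=\Ind_H^G V=L^2(G/H,V,\mu)$; recall from \S\ref{s:induction} that the induced-cocycle construction $b\mapsto\tilde b$, together with the averaging map \eqref{eq:average}, induces a topological isomorphism $\overline{H^1}(H,\pi)\cong\overline{H^1}(G,\Ind_H^G\pi)$ carrying almost coboundaries to almost coboundaries. The extra structural ingredient is the evaluation-at-basepoint map $\mathrm{ev}_H\colon E\to V$, $f\mapsto f(H)$: since $\alpha(h,H)=h$ for $h\in H$, it is a norm-$\le 1$ surjective morphism of $H$-modules satisfying $\mathrm{ev}_H\circ(\tilde b|_H)=b$.

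For $\WAPT$: a function in $E$ is $G$-fixed if and only if it is constant with value in $V^H$ (the nontrivial implication because $(g^{-1}f)(H)=\pi(\alpha(g^{-1},H))f(gH)$, so $f\in E^G$ with $f(H)=v$ forces $f(gH)=v$, as $v\in V^H$ and $\alpha(g^{-1},H)\in H$), so $\mathrm{ev}_H$ identifies $E^G$ with $V^H$. Hence if $G$ has $\WAPT$ and $V^H=0$, then $E^G=0$, every $\tilde b$ is an almost coboundary, so every $b$ is, i.e.\ $\overline{H^1}(H,V)=0$; thus $H$ has $\WAPT$. The same scheme shows $\WAPAP$ descends to $H$ (this will be reused below): if $f\in E^{G,\mathrm{ap}}$ then $\overline{Gf}$ is norm-compact, hence so is $\mathrm{ev}_H(\overline{Gf})$, which contains $\overline{\mathrm{ev}_H(Hf)}=\overline{\pi(H)f(H)}$; so $f(H)\in V^{H,\mathrm{ap}}$, and applying this to each translate $g^{-1}f\in E^{G,\mathrm{ap}}$ (which lies in $E^{G,\mathrm{ap}}$ since that set is a $G$-stable subspace, Lemma \ref{vap1}) gives $f(gH)=0$ for all $g$ once $V^{H,\mathrm{ap}}=0$, so $E^{G,\mathrm{ap}}=0$, and one concludes as in the $\WAPT$ case.

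For $\WAPFD$: let $V$ be a WAP $H$-module and $b$ a $1$-cocycle nonzero in $\overline{H^1}(H,V)$; then $\tilde b$ is nonzero in $\overline{H^1}(G,E)$, so Property $\WAPFD$ of $G$ yields a closed $G$-submodule $\mathcal W\subsetneq E$ of finite codimension with $p\circ\tilde b$ unbounded, where $Q:=E/\mathcal W$ is a nonzero finite-dimensional $G$-module and $p\colon E\to Q$ the projection. As $Q$ is finite-dimensional and $G/H$ is compact, a Frobenius-reciprocity-type identification matches $p$ with a nonzero morphism of $H$-modules $T\colon V\to Q|_H$ (informally, writing $p(f)=\int_{G/H}T_x(f(x))\,d\mu(x)$ with $(T_x)$ a measurable family in $\Hom(V,Q)$ subject to $T_{gx}\circ\pi(\alpha(g,x))=g\cdot T_x$, one takes $T=T_H$, which is $H$-equivariant by the case $g\in H,\ x=H$, and is nonzero since otherwise all $T_x$ vanish). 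Put $W=\ker T$, a closed $H$-submodule of $V$ of positive finite codimension. It remains to see that $b$ is unbounded modulo $W$, i.e.\ that $T\circ b\in Z^1(H,T(V))$ is unbounded: for this one checks, using the cocycle identity for $\alpha$, the explicit form of $\tilde b$, and the equivariance of $(T_x)$, that (after the inclusion $T(V)\hookrightarrow Q|_H$) the $H$-cocycle $T\circ b$ differs from $(p\circ\tilde b)|_H$ by a coboundary. Since coboundaries of a uniformly bounded module are bounded, and $p\circ\tilde b$ is unbounded on $G$ — hence on the cocompact subgroup $H$, as boundedness of a continuous cocycle transfers between $G$ and a cocompact subgroup — the cocycle $T\circ b$ is unbounded, and $H$ has $\WAPFD$.

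I expect the main obstacle to be precisely the last step: making the Frobenius-reciprocity identification of the $G$-datum $(\mathcal W,p)$ with the $H$-morphism $T$, and the accompanying explicit cocycle bookkeeping showing $T\circ b$ and $(p\circ\tilde b)|_H$ are $H$-cohomologous (equivalently, differ by a bounded function) — note that this is a cocycle-level statement, not merely a consequence of the cohomology isomorphism, since $\WAPFD$ asks about genuine (un)boundedness. Everything else reduces to the elementary properties of $\mathrm{ev}_H$ established above together with the facts recalled in \S\ref{s:induction}. (Alternatively one could try to deduce $\WAPFD$ for $H$ from Proposition \ref{wapfd_conj}, combining the $\WAPAP$ statement above with the assertion that Property $\APFD$ is inherited by cocompact subgroups; but the latter appears to require the same transfer argument, so it offers no genuine shortcut.)
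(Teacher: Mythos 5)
Your $\WAPT$ half is essentially the paper's argument (induce, use $\WAPT$ of $G$, come back via the averaging map of \S\ref{s:induction}), only phrased through the identification $E^G\cong V^H$ instead of the canonical decomposition of Theorem \ref{decoca}; apart from the fact that evaluation at the basepoint is not defined on $L^2$-classes (the invariance identities hold only almost everywhere, so this needs a small Fubini-type argument rather than pointwise evaluation), that half is fine. The genuine gap is in the $\WAPFD$ half, and it is exactly the step you yourself defer: the ``Frobenius-reciprocity identification'' of the $G$-quotient $p\colon E\to Q$ with an $H$-morphism $T\colon V\to Q$. Writing $p(f)=\int_{G/H}T_x(f(x))\,d\mu(x)$ with a measurable family $(T_x)\subset\Hom(V,Q)$ amounts (since $Q$ is finite-dimensional) to representing finitely many bounded functionals on the Bochner space $L^2(G/H,V,\mu)$ by $V^*$-valued functions, which requires the Radon--Nikodym property of $V^*$ with respect to $\mu$; this fails for Banach spaces that the theorem must cover (WAP modules on $L^1(X)$ are precisely the ones used for the ergodic-theorem application, and $(L^1)^*=L^\infty$ fails RNP). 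One can retreat to weak*-measurable families via a lifting, but then the equivariance relation holds only a.e., $T:=T_H$ is an evaluation at a $\mu$-null point, and the assertion that $T\circ b$ and $(p\circ\tilde b)|_H$ differ by a coboundary is left as ``one checks''. So the crucial transfer step of the $\WAPFD$ part is asserted rather than proved, and the natural route to it needs hypotheses the setting does not provide.

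Note how the paper avoids this entirely: it never transfers the map $p$, only the cocycle. It splits $\tilde c=\tilde c_1+\tilde c_2$ with $\tilde c_1$ unbounded and valued in a finite-dimensional $G$-submodule $F$, pushes each piece back to $H$ by the explicit averaging formula \eqref{eq:average} (a Bochner integral of the cocycle itself, so no duality of $L^2(G/H,V)$ is ever invoked), and then takes as finite-codimensional submodule a complement (Proposition \ref{prop:complement}) of the span $V_1$ of the range of $c_1=\overline{\tilde c_1}$, on which the projection of $c$ is unbounded. If you want to salvage your scheme you should switch to this cocycle-level transfer rather than a map-level one. Finally, your parenthetical claim that ``the same scheme shows $\WAPAP$ descends'' is unjustified: $\mathrm{ev}_H$ is neither well-defined nor continuous on $E$, so compactness of $G$-orbit closures does not push forward to compactness of $H$-orbit closures of the values; the paper explicitly remarks after this theorem that its proof could not be adapted to $\WAPAP$, and in any case that claim is not needed for the statement.
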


\begin{proof}
We start with Property $\WAPT$. Let $(V,\pi)$ be a WAP $H$-module and $c$ a 1-cocycle. We use the notation of \S\ref{s:induction}; in particular $(E,\Ind_{H}^G\pi)$ is the induced $G$-module.
Assuming that $c$ is nonzero in the reduced cohomology, we get that $\tilde{c}$ is also nonzero in reduced cohomology. Decompose the cocycle $\tilde{c}=\tilde{c}_1+\tilde{c}_2$ according to the decomposition $E=E_1\oplus E_2$ (see Theorem \ref{decoca}(\ref{jlg1})). Since $G$ has $\WAPT$, and $\tilde{c}$ is nonzero in reduced cohomology, we obtain that $\tilde{c}_1$ is a nonzero group homomorphism. 
Therefore, integrating $\tilde{c}_1$ over $\mu$ as in (\ref{eq:average}) gives back a non-zero group homomorphism $H\to V$ which, being a 1-cocycle, is valued in $V^H$. Hence $V^H\neq 0$, proving Property $\WAPT$.

Now suppose $G$ has $\WAPFD$. We argue in the same way, but instead with $E_1=E^{G,\mathrm{ap}}$ (using Theorem \ref{decoca}(\ref{jlg2}) instead). We obtain a decomposition $E=F\oplus W$ of $G$-module, with $F$ finite-dimensional, such that the corresponding decomposition $\tilde{c}=\tilde{c}_1+\tilde{c}_2$ has $\tilde{c}_1$ unbounded (hence not an almost coboundary). 
Write $c_i=\overline{\tilde{c}_i}\in Z^1(H,\pi)$. Then $c_1$ is also not an almost coboundary, and in addition, has its range contained in a finite-dimensional subspace. Clearly the subspace $V_1$ spanned by the range of $c_1$, being the affine hull of the orbit of 0 in the affine action defined by $c_1$, is $\pi$-invariant. By Proposition \ref{prop:complement}, we can find an $H$-module complement $V=V_1\oplus V_2$, and the projection of $c_1$ is just $c_1$. On the other hand, $\tilde{c_2}$ being an almost boundary, so is $c_2$, as well as its projections. Since $c=\bar{\tilde{c}}$, we have $c=c_1+c_2$, and the projection of $c$ to $V_1$ differs from $c_1$ by a bounded function, and hence is unbounded. This proves that $H$ has Property $\WAPFD$.
\end{proof}

\begin{rem}
We could not adapt this proof to Property $\WAPAP$. 
\end{rem}

\begin{thm}\label{polgr}
Every compactly generated, locally compact group $G$ with polynomial growth has Property $\WAPFD$.
\end{thm}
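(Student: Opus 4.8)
The plan is to reduce, via the structure theory of groups of polynomial growth, to a nilpotent-by-compact situation, and then to apply the hereditarity results of this section.

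Recall the structure theory of compactly generated locally compact groups of polynomial growth (Gromov in the discrete case, Jenkins and Guivarc'h for connected Lie groups, Losert in general): such a group $G$ has a compact normal subgroup $K$ such that $G/K$ is a Lie group of polynomial growth. Moreover---using the nilshadow construction for the identity component, together with Gromov's theorem for the (discrete, polynomial growth, hence virtually nilpotent) group of connected components---$G/K$ embeds as a closed cocompact subgroup of a locally compact group $L$ that admits a closed cocompact normal subgroup $N$ which is compactly generated locally compact nilpotent; concretely one may take $L=N\rtimes C$ with $C$ compact (in the connected case, $N$ is the nilshadow of $G/K$ and $C$ a compact torus of automorphisms).

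Granting this, the argument is a short descent through the results above. Since $N$ is CGLC nilpotent, it has Property $\WAPT$ by Corollary \ref{nilwapt}. As $N$ is a closed cocompact normal subgroup of $L$, Proposition \ref{prop:subgroup}(\ref{ps2}) shows that $L$ has Property $\WAPFD$. Now $L$ is amenable, being an extension of the compact group $C$ by the nilpotent group $N$, so the closed cocompact subgroup $G/K$ automatically has finite covolume in $L$; hence Theorem \ref{thm:subgroup} yields that $G/K$ has Property $\WAPFD$. Finally $K$ is a compact normal subgroup of $G$ with quotient $G/K$, so Proposition \ref{prop:compactExt} gives that $G$ has Property $\WAPFD$, as desired.

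The one genuinely delicate ingredient is the structure statement, and specifically the passage from ``Lie group of polynomial growth'' to ``closed cocompact subgroup of a nilpotent-by-compact group''. For connected $G/K$ this is precisely the nilshadow embedding $G/K\hookrightarrow N\rtimes C$; the general case is obtained by first replacing $G/K$ with a finite-index subgroup whose component group is finitely generated nilpotent (Gromov), and then again by passing to a nilshadow-type overgroup. Note that for connected $G$ one could instead argue by induction on the derived length using Proposition \ref{prop:ni} (invariance of $\WAPT$ under central extensions), but the nilshadow route handles the general locally compact case uniformly and is shorter. It is also worth stressing that one cannot hope to avoid the passage to an overgroup: a simply connected solvable Lie group such as $\R^{4}\rtimes\R$ (with the $\R$-factor acting by an irrational two-frequency rotation) is of polynomial growth but has trivial center and is not nilpotent-by-compact, so it admits no cocompact normal nilpotent subgroup on its own.
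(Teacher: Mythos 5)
Your proof is correct and takes essentially the same route as the paper: the paper simply invokes Losert's theorem to get a copci homomorphism from $G$ to a group $N\rtimes K$ with $N$ simply connected nilpotent and $K$ compact (packaging the structure theory you assemble by hand via compact normal subgroup, nilshadow and Gromov), and then runs the identical descent through Corollary \ref{nilwapt}, Proposition \ref{prop:subgroup}(\ref{ps2}), Theorem \ref{thm:subgroup} and Proposition \ref{prop:compactExt}.
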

\begin{proof}
By Losert's theorem \cite{Lo} (due to Gromov in the discrete case), for such $G$, there exists a copci (proper continuous with cocompact image) homomorphism to a locally compact group of the form $N\rtimes K$ with $N$ a simply connected nilpotent Lie group and $K$ a compact Lie group. Let $W$ be the kernel of such a homomorphism.

Then $N$ has Property $\WAPT$ by Corollary \ref{nilwapt}, and hence $N\rtimes K$ has Property $\WAPFD$ by Proposition \ref{prop:subgroup}(\ref{ps2}), and hence $G/W$ has Property $\WAPFD$ by Theorem \ref{thm:subgroup}, and in turn $G$ has Property $\WAPFD$ by Proposition \ref{prop:compactExt}.
\end{proof}

\subsection{Stability under RCE}

\begin{thm}\label{thm:stabRCE}
If a countable group $\Lambda$ has Property $\WAPFD$ and  $\Lambda$ randomly cocompactly embeds inside another countable group $\Gamma$, then $\Lambda$ has  $\WAPFD$ as well. \end{thm}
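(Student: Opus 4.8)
The plan is to imitate the proof of Theorem~\ref{thm:subgroup} for Property $\WAPFD$, replacing induction from a closed cocompact subgroup by the RCE-induction of \S\ref{mecmec}. So fix a UME coupling realizing the random cocompact embedding of $\Lambda$ in $\Gamma$, so that in particular $X_\Gamma\subseteq X_\Lambda$, and let $\alpha\colon\Gamma\times X_\Lambda\to\Lambda$ and $\beta\colon\Lambda\times X_\Gamma\to\Gamma$ be the associated cocycles. By the restatement of $\WAPFD$ recorded after the definitions in this section, it suffices to produce, from a WAP $\Lambda$-module $(V,\pi)$ and a cocycle $c\in Z^1(\Lambda,\pi)$ that is not an almost coboundary, a nonzero finite-dimensional $\pi(\Lambda)$-submodule $V_1\subseteq V$ together with a $\Lambda$-module complement $V_2$ such that the $V_1$-component of $c$ is unbounded. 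Recall from \S\ref{mecmec} that the induced module $(W,\Ind_\Lambda^\Gamma\pi)$, with $W=L^1(X_\Lambda,\mu,V)$, is again WAP (Proposition~\ref{prop:inducedWAP}), and that there are continuous, mutually inverse maps $I$ and $T$ on $1$-cocycles inducing inverse isomorphisms in reduced $1$-cohomology, where $Tc(h)=\int_{X_\Gamma}c(\beta(h,y))(y)\,d\mu(y)$.

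First I would transport the problem to $\Gamma$. The cocycle $\tilde c:=Ic\in Z^1(\Gamma,\Ind_\Lambda^\Gamma\pi)$ is continuous by Lemma~\ref{lem:CocycleContinous}, and since $I$ has continuous inverse $T$ on reduced cohomology and $c$ is not an almost coboundary, neither is $\tilde c$. Then I would apply Property $\WAPFD$ of $\Gamma$ through Proposition~\ref{wapfd_conj}: splitting $W=W^{\Gamma,\mathrm{ap}}\oplus W'$ canonically (Theorem~\ref{decoca}(\ref{jlg2})) and $\tilde c=\tilde c^{\mathrm{ap}}+\tilde c'$ accordingly, the relation $(W')^{\Gamma,\mathrm{ap}}=0$ and the implication $\WAPFD\Rightarrow\WAPAP$ force $\tilde c'$ to be an almost coboundary, so $\tilde c^{\mathrm{ap}}$ is not an almost coboundary for the almost periodic module $W^{\Gamma,\mathrm{ap}}$. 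Property $\APFD$ then yields a nonzero finite-dimensional $\Gamma$-submodule $F\subseteq W^{\Gamma,\mathrm{ap}}$, which is complemented (Proposition~\ref{fcodim:compl}), such that --- as in the corresponding step of Theorem~\ref{thm:subgroup} --- there is a $\Gamma$-module splitting $W=F\oplus\mathcal{W}$ with respect to which the decomposition $\tilde c=\tilde c_1+\tilde c_2$ has $\tilde c_1$ (valued in $F$) unbounded and $\tilde c_2$ an almost coboundary.

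Next I would push everything back through $T$. From $T\tilde c=c$ and linearity one gets $c=c_1+c_2$ with $c_i=T(\tilde c_i)\in Z^1(\Lambda,\pi)$. Following the bookkeeping of Theorem~\ref{thm:subgroup}, $c_1$ is, modulo a genuine coboundary, a $\Lambda$-cocycle whose values span a finite-dimensional $\pi(\Lambda)$-submodule $V_1\subseteq V$: this uses that $\tilde c_1$ takes values in the finite-dimensional $\Gamma$-submodule $F$ of $L^1(X_\Lambda,\mu,V)$ and that, by $\Gamma$-invariance of $F$, the essential ranges of the finitely many basis functions of $F$ stay inside a fixed finite-dimensional subspace of $V$, so that integrating them over the finite-measure domain $X_\Gamma$ keeps the range of $c_1$ finite-dimensional up to a coboundary. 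Being finite-dimensional, $V_1$ lies in $V^{\Lambda,\mathrm{ap}}$, hence is complemented in $V$ (Proposition~\ref{prop:complement} and Theorem~\ref{decoca}(\ref{jlg2})): write $V=V_1\oplus V_2$. Since $T$ inverts $I$ in reduced cohomology, $c_1$ is not an almost coboundary, and therefore, being valued in a finite-dimensional module --- where $\overline{B^1}=B^1$ and where a cocycle is bounded iff it is a coboundary (conjugate the uniformly bounded action to an isometric one) --- it is unbounded. Finally $c_2=T(\tilde c_2)$ is an almost coboundary (as $T$ sends almost coboundaries to almost coboundaries), so its projection onto the finite-dimensional $V_1$ is an almost coboundary there, hence bounded. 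Thus the projection of $c=c_1+c_2$ onto $V_1$ differs from the unbounded $c_1$ by a bounded cocycle, so it is unbounded, as required.

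The technical core --- and the step I expect to be the main obstacle --- is making these transports through $T$ rigorous on the finite-dimensional summand: one must check that $T(\tilde c_1)$ is cohomologous to a cocycle valued in a finite-dimensional $\Lambda$-submodule of $V$ and still not an almost coboundary, and, on the side of the complement, that the decomposition $\tilde c=\tilde c_1+\tilde c_2$ can be arranged with $\tilde c_2$ an almost coboundary (or at least with $T(\tilde c_2)$ projecting boundedly onto $V_1$), exactly as in the proof of Theorem~\ref{thm:subgroup}. This is precisely the place where the hypothesis $X_\Gamma\subseteq X_\Lambda$ is essential --- it is what makes $T$ well defined and inverse to $I$, as in \cite[Theorem~3.2.1]{Shalom} --- and the finiteness of the essential range of functions in a finite-dimensional $\Gamma$-submodule of $L^1(X_\Lambda,\mu,V)$ here plays the role that compactness of $G/H$ played in the cocompact-subgroup case.
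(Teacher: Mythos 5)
Your argument is essentially the paper's own proof: the paper likewise induces $c$ to $\tilde c=Ic$ on $L^1(X_\Lambda,\mu,V)$, invokes Property $\WAPFD$ of $\Gamma$ to split off a finite-dimensional $\Gamma$-summand $F$ on which $\tilde c$ is unbounded, and pulls back through $T$ to write $c=c_1+c_2$ with $c_1$ unbounded and spanning a finite-dimensional $\pi(\Lambda)$-invariant subspace $V_1$, complemented via Proposition \ref{prop:complement}, so that the projection of $c$ to $V_1$ is unbounded. The only differences are presentational: you route the use of $\Gamma$'s Property $\WAPFD$ through Proposition \ref{wapfd_conj} and the canonical splitting of Theorem \ref{decoca}, and you explicitly flag the two technical points (that the range of $c_1=T\tilde c_1$ stays in a finite-dimensional subspace, and that $c_2=T\tilde c_2$ projects boundedly to $V_1$) which the paper asserts with the same brevity in its proof, itself declared ``almost identical'' to that of Theorem \ref{thm:subgroup}.
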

\begin{proof}
The proof is almost identical to that of Theorem \ref{thm:subgroup} but we reproduce it for the sake of completeness. Assume that $\Lambda$ randomly cocompactly embeds inside $\Gamma$ and that $\Gamma$ has Property $\WAPFD$. Consider a WAP $\Lambda$-module $(V,\pi)$ and a $1$-cocycle $c\in \Z^1(\Lambda,\pi)$. 
Assuming that $c$ is nonzero in the reduced cohomology, we get that $\tilde{c}:=Ic$ is also nonzero in reduced cohomology. By Proposition \ref{prop:inducedWAP}, the induced $\Gamma$-module $(E,Ind_{\Lambda}^{\Gamma}\pi)$ is WAP. Hence there is a decomposition $E=F\oplus W$ of $\Gamma$-module (see Theorem \ref{decoca}(\ref{jlg2})), with $F$ finite-dimensional, such that the corresponding decomposition $\tilde{c}=\tilde{c}_1+\tilde{c}_2$ has $\tilde{c}_1$ unbounded (hence not an almost coboundary). 
Write $c_i=T\tilde{c}_i\in Z^1(H,\pi)$. Then $c_1$ is also not an almost coboundary, and in addition, has its range contained in a finite-dimensional subspace. Clearly the subspace $V_1$ spanned by the range of $c_1$, being the affine hull of the orbit of 0 in the affine action defined by $c_1$, is $\pi$-invariant. By Theorem \ref{prop:complement}, we can find an $\Lambda$-module complement $V=V_1\oplus V_2$, and the projection of $c_1$ is just $c_1$. On the other hand, $\tilde{c_2}$ being an almost boundary, so is $c_2$, as well as its projections. Since $c=TIc$, we have $c=c_1+c_2$, and the projection of $c$ to $V_1$ differs from $c_1$ by a bounded function, and hence is unbounded. This proves that $H$ has Property $\WAPFD$.
\end{proof}

\subsection{Property $\APFD$}\label{pAPFD}

The following part is notably motivated by Question \ref{qapfd}.

Let $G$ be a locally compact group. Let $\mathcal{K}(G)$ be the intersection of all kernels of continuous homomorphisms into compact groups. Then $\mathcal{K}(G)$ is itself such a kernel (using a product). Note that by the Peter-Weyl theorem, it is also the intersection of all kernels of continuous homomorphisms into the finite-dimensional orthogonal groups $\mathrm{O}(n)$.

Next, define $\KT(G)$ to be the intersection of all kernels of continuous homomorphisms into the finite-dimensional isometry groups $\R^n\rtimes\mathrm{O}(n)$. Clearly, $H=G/\KT(G)$ is the largest quotient of $G$ such that $\KT(H)=1$.

Recall that $g\in G$ is {\it distorted} if there exists a compact subset $S$ of $G$ containing $g$ such that $\lim |g^n|_S/n=0$, where $|\cdot|_S$ is the word length with respect to $S$ (in particular, this includes elements of finite order and more generally elliptic elements, for which $(|g^n|_S)$ is bounded for suitable $S$). 

\begin{prop}
$\mathcal{K}(G)/\KT(G)$ is abelian, and contains no nontrivial element that is distorted in $G/\KT(G)$.
\end{prop}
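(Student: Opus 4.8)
The plan is to unwind the definitions of $\mathcal{K}(G)$ and $\KT(G)$ and to use the Peter-Weyl-type description of each. For the \emph{abelian} claim, recall that $\KT(G)$ is the intersection of kernels of continuous homomorphisms $G\to \R^n\rtimes\mathrm{O}(n)$, so $H:=G/\KT(G)$ embeds (via the diagonal of all such maps) into a product of finite-dimensional isometry groups. Under such a homomorphism $\phi\colon G\to\R^n\rtimes\mathrm{O}(n)$, the composite with the projection to $\mathrm{O}(n)$ is a homomorphism into a compact group, hence kills $\mathcal{K}(G)$; therefore $\phi(\mathcal{K}(G))$ is contained in the translation subgroup $\R^n$, which is abelian. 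Since this holds for every such $\phi$, the image of $\mathcal{K}(G)$ in $H$ lies in an abelian subgroup, so $\mathcal{K}(G)/\KT(G)$, being (isomorphic to) the image of $\mathcal{K}(G)$ in $H$, is abelian.

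For the statement about distortion, first I would reduce to $H=G/\KT(G)$: it suffices to show that $\mathcal{K}(H)=\mathcal{K}(G)/\KT(G)$ contains no nontrivial element distorted in $H$, since $\KT(H)=1$. Take $g\in\mathcal{K}(H)$, $g\neq 1$. I want to exhibit a continuous homomorphism $\psi\colon H\to\R^n\rtimes\mathrm{O}(n)$ with $\psi(g)\neq 1$; since $\KT(H)=1$, such a $\psi$ exists. As in the first part, $g\in\mathcal{K}(H)$ forces $\psi(g)$ to lie in the translation part $\R^n$, and since $\psi(g)\neq 1$ it is a nonzero translation vector $w\in\R^n\setminus\{0\}$. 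Now one computes the word-length growth: the key point is that a nonzero translation is undistorted in $\R^n\rtimes\mathrm{O}(n)$, because the $\mathrm{O}(n)$-part is compact so for a fixed compact generating set $S'$ one has $|t_w^k|_{S'}\asymp \|kw\|=k\|w\|$, where $t_w$ denotes translation by $w$ (the lower bound coming from the norm being Lipschitz-controlled by $|\cdot|_{S'}$, since $S'$ moves points a bounded distance; the displacement of the origin under $t_w^k$ is exactly $k\|w\|$). Then, for any compact $S\ni g$ in $H$ with corresponding word length $|\cdot|_S$, one has $|g^k|_S\ge c\,|\psi(g^k)|_{\psi(S)}=c\,|t_w^k|_{\psi(S)}\ge c'k$, contradicting $\lim|g^k|_S/k=0$. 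Hence $g$ is not distorted.

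A small technical care is needed in the last estimate: distortion of $g$ is defined via \emph{some} compact $S$ containing $g$, and I must make sure the chosen $S$ is a generating set so that $|\cdot|_S$ is finite (this is harmless: $H$ is a quotient of a CGLC group in our applications, but in full generality one should phrase distortion relative to compactly generated open subgroups, or simply note $\psi(S)$ generates a compactly generated subgroup of $\R^n\rtimes\mathrm{O}(n)$ containing $t_w$ and that the translation part survives). The inequality $|g^k|_S\ge c|\psi(g^k)|_{\psi(S)}$ is just the general fact that a continuous homomorphism is Lipschitz for word metrics once $\psi(S)$ is contained in a compact generating set of its (closed) image.

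The main obstacle I expect is purely bookkeeping around the definition of ``distorted'': checking that the argument is insensitive to the choice of the compact set $S$, and verifying carefully the elementary fact that nonzero translations are undistorted in $\R^n\rtimes\mathrm{O}(n)$ — i.e.\ that $|t_w^k|_{S'}$ grows linearly — for which the clean argument is that the orbit map $x\mapsto\gamma\cdot x$ at the origin is coarsely Lipschitz-equivalent to $|\gamma|_{S'}$ on the translation subgroup because $S'$ displaces the origin by a bounded amount while $t_w^k$ displaces it by exactly $k\|w\|$. Everything else is a direct unwinding of the two intersection-of-kernels definitions together with the splitting $\R^n\rtimes\mathrm{O}(n)$ and the observation that the $\mathrm{O}(n)$-quotient is compact.
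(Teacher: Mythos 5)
Your proof is correct and follows essentially the same route as the paper: pass to $G/\KT(G)$, observe that every continuous homomorphism to $\R^n\rtimes\mathrm{O}(n)$ sends $\KK(G)$ into the translation subgroup (so the image is abelian), and conclude non-distortion from the fact that nontrivial translations are undistorted in Euclidean isometry groups. The only difference is that you spell out the displacement-of-the-origin estimate showing translations are undistorted, which the paper simply asserts.
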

\begin{proof}
We can suppose that $\KT(G)$ is trivial. So we have to prove that $\KK(G)$ is abelian and has no nontrivial distorted element.

If $u,v\in\KK(G)$ do not commute, we can find $n$ and a continuous homomorphism $G\to\R^n\rtimes\mathrm{O}(n)$ such that $[u,v]$ is not in the kernel. Since $u,v\in\KK(G)$, both are mapped to translations, and we have a contradiction. Also, if $u\in\KK(G)\smallsetminus\{1\}$, we can find a continuous homomorphism as above such that $u$ is not in the kernel, and hence $u$ maps to a translation. Then we have a contradiction since the translation is undistorted in the group of Euclidean isometries.
\end{proof}

This yields a method to ``approach" $G/\KT(G)$ from $G$: first mod out by the closure of the derived subgroup $\overline{[\KK(G),\KK(G)]}$. Then mod out by the closure of the subgroup of the abelian subgroup $\KK\Big(G/\overline{[\KK(G),\KK(G)]}\Big)$ consisting of those elements that are distorted in $G/\overline{[\KK(G),\KK(G)]}$. 

\begin{ex}Let $G$ be a real triangulable Lie group. Then $\KT(G)$ is equal to the derived subgroup (because the derived subgroup is equal to $\KK(G)$ and its elements are at least quadratically distorted).

If $G=\mathbb{G}(\mathbf{Q}_p)$ for some linear algebraic $\Q_p$-group $\mathbb{G}$, let $\mathbb{H}=\mathbb{G}/\mathbb{N}$ be the largest quotient with no simple factor of positive $\Q_p$-rank, with abelian unipotent radical, and whose maximal $\Q_p$-split torus centralizes the unipotent radical. (By Borel-Tits, $G$ is compactly generated if and only if $:\mathbb{H}$ is reductive.) Then $\mathbb{N}(G)$ is contained in $\KT(G)$. (If $G$ is compactly generated, then $\mathbb{H}(\Q_p)$ is compact-by-abelian.)
\end{ex}

\begin{prop}\label{apfd_kt}
$G$ has Property $\APFD$ if and only if $G/\KT(G)$ has Property $\APFD$. If $N$ is any closed normal subgroup contained in $\KT(G)$, this is also equivalent to: $G/N$ has Property $\APFD$. 
\end{prop}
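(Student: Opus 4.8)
The plan is to reduce everything to a single structural statement: any almost periodic Banach $G$-module $V$, and any $1$-cocycle $b\in Z^1(G,V)$, factors through the quotient $G/N$ whenever $N\subseteq\KT(G)$ is a closed normal subgroup, at least after replacing $b$ by a cohomologous cocycle and modifying $V$ by a finite-codimension reduction. Granting this, the two implications become almost formal: if $G/N$ has Property $\APFD$ and $b\in Z^1(G,V)$ is not an almost coboundary, then (after the reduction) $b$ descends to a cocycle $\bar b$ on $G/N$ that is still not an almost coboundary (almost-coboundariness is detected on the generating set, and a compact generating set of $G$ maps onto one of $G/N$), so Property $\APFD$ of $G/N$ produces a finite-codimension submodule of $V$ modulo which $\bar b$, hence $b$, is unbounded; conversely Property $\APFD$ passes to quotients trivially, which handles the ``only if'' direction and the special case $N=\KT(G)$.

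The heart of the matter is thus: \emph{why does an almost periodic Banach $G$-module, together with a cocycle, essentially factor through $G/\KT(G)$?} First I would use Proposition \ref{ap_com} to replace the $G$-action on $V$ by an action of a compact group $\bar K$ through a continuous dense-image homomorphism $q\colon G\to\bar K$. By Corollary \ref{sumfin} (applied to $V$), and by Theorem \ref{thm:PW}, $V$ is the closed sum of its finite-dimensional submodules; on each such submodule $F$ the representation $G\to\GL(F)$ has relatively compact image (since orbits are relatively compact), so, averaging a scalar product, it is conjugate into some $\mathrm{O}(\dim F)$. By the very definition of $\KK(G)$ (noted in the excerpt to coincide with $\bigcap\Ker(G\to\mathrm{O}(n))$), the subgroup $\KK(G)\supseteq\KT(G)\supseteq N$ acts trivially on every finite-dimensional submodule, hence trivially on $V$. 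So the \emph{representation} $\pi$ already factors through $G/N$; the only genuine work is with the cocycle.

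For the cocycle, I would argue that $N$ acts trivially on $V$, so the restriction $b|_N$ is a continuous homomorphism $N\to V^G=V$; its image spans a finite-dimensional submodule $V_1$ (a homomorphism from a group into a module with trivial action, if $N$ is compactly generated — which is where I would need to be slightly careful, see below — has finite-dimensional image closure), which is $\pi(G)$-invariant because $N$ is normal. Using the almost-periodic decomposition $V=V_1\oplus V_2$ from Proposition \ref{prop:complement} and decomposing $b=b_1+b_2$: the part $b_2$ vanishes on $N$ and, since $\pi$ is $N$-trivial, the cocycle identity forces $b_2$ to be $N$-invariant, hence to descend to $G/N$; the finite-dimensional part $b_1$ is controlled by Property $\APFD$ directly (its image is already finite-dimensional, so if it is unbounded we are done, and if it is an almost coboundary we can absorb it). The main obstacle I anticipate is precisely the case where $N$ (equivalently $\KT(G)$) is \emph{not} compactly generated: then $b|_N$ need not have finite-dimensional image, and one must instead argue that the relevant \emph{distorted} elements of $\KK(G)/\KT(G)$ — which the preceding Proposition shows generate $\KK(G)/\KT(G)$ together with commutators — are killed in reduced cohomology by an averaging/ergodic argument exactly as in the proof of Proposition \ref{prop:ni} (where $\frac1n\sum_{i<n}b(z^i)$ exhibits the cohomology class as an almost coboundary precisely because $z$ is undistorted only on the complement). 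Making this averaging work uniformly over a possibly infinite generating set of $N$, using distortion to get the Følner-type estimate, is the delicate point; everything else is bookkeeping with the decompositions of Theorem \ref{decoca} and Proposition \ref{prop:complement}.
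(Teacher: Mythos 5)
Your formal reductions (quotient-passage for the ``only if'' direction, the sandwich argument for general $N\subset\KT(G)$, and the triviality of the linear representation on $\KK(G)$ via Proposition \ref{ap_com}) agree with the paper. But the gap you flag yourself is the whole point of the proposition, and your fallback for it does not work. Since $\KT(G)$ need not be compactly generated, $b|_{\KT(G)}$ is merely a continuous homomorphism into $(V,+)$ and need not have finite-dimensional span, so the decomposition $V=V_1\oplus V_2$ you build the argument on is unavailable. Your proposed repair misreads the preceding proposition: it says that $\KK(G)/\KT(G)$ is abelian and contains \emph{no} nontrivial element distorted in $G/\KT(G)$; it does not say that distorted elements together with commutators generate $\KK(G)/\KT(G)$, and the paper only describes a way to \emph{approach} $G/\KT(G)$ by iterating such quotients, not a generation statement for $\KT(G)$ itself (elements of $\KT(G)$ need not be distorted or products of commutators at all). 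Moreover, an averaging argument in the style of Proposition \ref{prop:ni} would at best show that part of the class dies in \emph{reduced} cohomology, whereas your scheme needs an honest cocycle vanishing on $N$ (with trivial $N$-action there are no nonzero coboundaries to absorb a homomorphism $N\to V$), so the ``delicate point'' is a genuine hole, not bookkeeping.

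The paper closes it by an idea your plan never invokes: the defining property of $\KT(G)$ as the common kernel of continuous homomorphisms into the affine isometry groups $\R^n\rtimes\mathrm{O}(n)$ is applied directly to the cocycle, with no compact generation hypothesis. Suppose $b(g)\neq 0$ for some $g\in\KT(G)$. By Lemma \ref{ptscont} there is $f\in V^*_{[G]}$ with $f(b(g))$ bounded away from $0$; since $V^*_{[G]}$ is again almost periodic, Corollary \ref{sumfin} lets one replace $f$ by an element $f'$ of a finite-dimensional submodule $M\subset V^*_{[G]}$ still not vanishing on $b(g)$. Taking $W\subset V$ to be the annihilator of $M$, the quotient $V/W$ is a finite-dimensional module, $b(g)\notin W$, and the induced affine action on $V/W$ (isometric for an averaged Euclidean structure) is a continuous homomorphism into some $\R^n\rtimes\mathrm{O}(n)$ whose kernel does not contain $g$ --- contradicting $g\in\KT(G)$. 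Hence $b$ vanishes identically on $\KT(G)$, so both $\pi$ and $b$ factor through $G/\KT(G)$, and then the formal part of your argument does finish the proof. Without this duality step (or a substitute playing the same role), your proposal does not establish the statement.
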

\begin{proof}
It is trivial that Property $\APFD$ passes to quotients, hence it passes from $G$ to $G/N$ and from $G/N$ to $G/\KT(G)$. Now suppose that $G/\KT(G)$ has Property $\APFD$. Let $V$ be an almost periodic $G$-module and $b$ a 1-cocycle that is not an almost coboundary. By Proposition \ref{ap_com}, the $G$-representation factors through a compact group; in particular, it is trivial on $\KK(G)$. So on $\KK(G)$, $b$ is given by a continuous group homomorphism $\KK(G)\to V$. We claim that $b$ vanishes on $\KT(G)$. Assume the contrary by contradiction: pick $g\in\KT(G)$ with $b(g)\neq 0$. 

Then by Lemma \ref{ptscont}, there exists $f\in V^*_{[G]}$ such that $f(b(g))\ge 2$. Since $V^*_{[G]}$ is almost periodic, the union of finite-dimensional submodules is dense (Corollary \ref{sumfin}), and hence there is $f'\in V^*_{[G]}$ inside a finite-dimensional submodule $M\subset V^*_{[G]}$ such that $f'(b(g))\ge 1$. Let $W$ be the orthogonal (for duality) of $M$, it has finite codimension and $b(g)\notin W$. This means that the projection of $b$ in $V/W$ is nonzero. Hence $g$ is not in the kernel of the affine action on $V/W$. Hence $g\notin\KT(G)$, a contradiction.
\end{proof}

Say that $G$ has Property $\APT$ if every almost periodic Banach $G$-module $V$ with $V^G=1$, we have $\overline{H^1}(G,V)=0$.

The same proof also shows:

\begin{prop}\label{apt}
Let $G$ be a locally compact group. Then $G$ has Property $\APT$ if and only if $G/\KT(G)$ has Property $\APT$. In particular, if $G$ has Property $\WAPAP$, it has Property $\WAPFD$ if and only if $G/\KT(G)$ has Property $\APT$.
\end{prop}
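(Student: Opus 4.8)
The plan is to run, essentially verbatim, the proof of Proposition \ref{apfd_kt}, and then obtain the ``in particular'' clause by combining it with Propositions \ref{wapfd_conj} and \ref{apfd_kt}. For the first assertion, one direction is free: Property $\APT$ passes to quotients, since an almost periodic $G/\KT(G)$-module $V$ with $V^{G/\KT(G)}=0$ becomes, through $G\to G/\KT(G)$, an almost periodic $G$-module with $V^G=0$, $1$-cocycles pull back, and the almost-coboundary condition is insensitive to pulling compact subsets back and forth along $G\to G/\KT(G)$; hence $\APT(G)$ forces $\APT(G/\KT(G))$. For the converse I would take an almost periodic $G$-module $V$ with $V^G=0$ and $b\in Z^1(G,V)$, and aim to show $b$ is an almost coboundary.

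By Proposition \ref{ap_com} the representation factors through a compact group, hence is trivial on $\KK(G)$, and a fortiori on $\KT(G)\subseteq\KK(G)$. The heart of the matter, exactly as in Proposition \ref{apfd_kt}, is to show that $b$ vanishes on $\KT(G)$: if not, pick $g\in\KT(G)$ with $b(g)\neq 0$; since $V^*_{[G]}$ is almost periodic and separates the points of $V$ (Lemma \ref{ptscont}), Corollary \ref{sumfin} produces $f$ inside a finite-dimensional submodule $M\subseteq V^*_{[G]}$ with $f(b(g))\neq 0$, so $b(g)$ lies outside the finite-codimensional submodule $W=M^{\perp}$; averaging an inner product on the finite-dimensional $G$-module $V/W$ presents the affine action induced by $b$ on $V/W$ as a continuous homomorphism $G\to\R^m\rtimes\mathrm{O}(m)$ that is nontrivial on $g$, contradicting $g\in\KT(G)$. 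Since then $b$ and the representation are both trivial on $\KT(G)$, $b$ descends to $\bar b\in Z^1(G/\KT(G),V)$ for the almost periodic module $V$, still with $V^{G/\KT(G)}=V^G=0$; Property $\APT$ of $G/\KT(G)$ makes $\bar b$ an almost coboundary, hence so is $b$.

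For the ``in particular'' clause, assume $\WAPAP(G)$. By Proposition \ref{wapfd_conj}, $\WAPFD(G)$ is then equivalent to $\APFD(G)$, which by Proposition \ref{apfd_kt} is equivalent to $\APFD(G/\KT(G))$; so it suffices to show that, for a group $H$ with $\KT(H)=1$, Property $\APFD$ is equivalent to Property $\APT$. The implication $\APT(H)\Rightarrow\APFD(H)$ is elementary and needs no hypothesis: given an almost periodic $H$-module $V$ and $b\in Z^1(H,V)$ not an almost coboundary, decompose $V=V^H\oplus V'$ with $V'$ the canonical complement (Theorem \ref{decoca}(\ref{jlg1})) and $b=b_0+b'$ accordingly; then $\APT(H)$ makes $b'$ an almost coboundary, so $b_0$ --- a continuous homomorphism $H\to V^H$, as $H$ acts trivially on $V^H$ --- is not an almost coboundary, hence nonzero; composing with a continuous functional on $V^H$ (extended by $0$ on $V'$, which renders it $H$-invariant) not annihilating $b_0$ yields a nonzero, therefore unbounded, homomorphism $H\to\R$, i.e.\ a codimension-one submodule modulo which $b$ is unbounded. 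The reverse implication $\APFD(H)\Rightarrow\APT(H)$ is the main obstacle, and the only place where $\KT(H)=1$ is genuinely used: starting from an almost periodic $V$ with $V^H=0$ and a non-almost-coboundary $b$, Property $\APFD(H)$ supplies a finite-codimensional $W$ modulo which $b$ is unbounded; the finite-dimensional quotient $V/W$ must satisfy $(V/W)^H=0$ (otherwise the $H$-equivariant surjection of $V$ onto $(V/W)^H$ would violate the fact that $0$ lies in the closed convex hull of every $H$-orbit in $V$, Theorem \ref{decoca}(\ref{jlg1})), so $b$ projects to an unbounded --- hence non-almost-coboundary --- cocycle for a finite-dimensional representation of $H$ with relatively compact image and no invariant vectors; after averaging an invariant inner product this gives a continuous homomorphism $H\to\R^m\rtimes\mathrm{O}(m)$, and the remaining, most delicate, task is to extract from its nontriviality a contradiction with $\KT(H)=1$, i.e.\ to ensure that the finite-dimensional quotient one has produced genuinely carries a nontrivial translation part detected by $\KT(H)$. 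Once this finite-dimensional bookkeeping is in place, $\overline{H^1}(H,V)=0$ follows, completing the equivalence and the proposition.
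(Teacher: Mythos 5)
Your treatment of the first assertion is exactly the paper's proof (the paper proves Proposition \ref{apt} by declaring it ``the same proof'' as Proposition \ref{apfd_kt}), and your fleshed-out version is correct: triviality of the representation on $\KK(G)\supseteq\KT(G)$ via Proposition \ref{ap_com}, the $V^*_{[G]}$/finite-dimensional dual submodule argument (Lemma \ref{ptscont}, Corollary \ref{sumfin}) showing $b$ vanishes on $\KT(G)$, descent of $b$ to $G/\KT(G)$, and the application of $\APT$ of the quotient. Likewise your implication $\APT\Rightarrow\APFD$ is fine, including the observation via Theorem \ref{decoca}(\ref{jlg1}) that $(V/W)^H=0$; combined with Propositions \ref{wapfd_conj} and \ref{apfd_kt} this correctly gives the ``if'' half of the second clause.

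The gap you flag in the other half is, however, not a matter of ``finite-dimensional bookkeeping'': the implication you reduce to, namely $\APFD(H)\Rightarrow\APT(H)$ when $\KT(H)=1$, is false, so this route cannot be completed. Take $H=\R^2\rtimes \mathrm{SO}(2)$. It has polynomial growth, hence $\WAPFD$ by Theorem \ref{polgr}, and a fortiori $\WAPAP$ and $\APFD$; and $\KT(H)=1$ since $H$ embeds continuously into $\R^2\rtimes\mathrm{O}(2)$. Yet the tautological module $V=\R^2$ (linear action through $\mathrm{SO}(2)$) is almost periodic with $V^H=0$, and the translation-part cocycle $b(v,k)=v$ is not an almost coboundary: for the defining isometric action on the plane, any point $x$ is displaced by $2\|x\|$ by the rotation by $\pi$ about the origin and by $1$ by a unit translation, so displacements over a fixed compact set are bounded below and there are no almost fixed points. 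Thus $\APT(H)$ fails. The conceptual point is that $\KT(H)=1$ asserts the existence of many homomorphisms to groups $\R^m\rtimes\mathrm{O}(m)$, not that such homomorphisms have bounded translation part, so an unbounded cocycle on a finite-dimensional orthogonal module with no invariant vectors yields no contradiction. Note that this example also puts the literal ``only if'' half of the Proposition's second sentence in doubt (the group above has $\WAPAP$ and $\WAPFD$ while $G/\KT(G)=G$ fails $\APT$); the paper gives no argument for it beyond ``the same proof also shows'', and in the subsequent application (Theorem \ref{cccap}) only the combination of Propositions \ref{wapfd_conj} and \ref{apfd_kt} --- i.e.\ the criterion with $\APFD$ of $G/\KT(G)$ in place of $\APT$ --- is used; that version is exactly what your argument does establish.
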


Let us now provide information about $\KT(G)$ in more specific examples.

\begin{lem}\label{gkt_poly}
Let $G$ be a compactly generated locally compact group in the class $\mathfrak{C}''$. Then $G/\KT(G)$ has polynomial growth.
\end{lem}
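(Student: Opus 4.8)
The plan is to unwind the structure of a group $G$ in the class $\mathfrak{C}''$ and track what survives in the quotient $G/\KT(G)$. Recall that $G\in\mathfrak{C}''$ means $G=UN$ with $U$ normal, $N$ compactly generated compact-by-nilpotent \emph{of polynomial growth}, $U=\prod U_i$ with each $U_i$ an open subgroup of a unipotent group over a non-discrete locally compact field of characteristic zero and normalized by $N$, and $U$ has a cocompact subgroup $V=V_1\cdots V_k$ where each $V_i$ is pointwise contracted by conjugation by some single element $t_i\in N$. The key point is condition (\ref{ssg3}): every element of the cocompact subgroup $V$ is distorted in $G$. Indeed, if $v\in V_i$ and $t=t_i$, then $t^{-n}vt^n\to 1$, so for a fixed compact $S$ containing $t$ and $v$ and a neighbourhood of $1$, the elements $t^{-n}vt^n$ eventually lie in $S$, whence $|v|_S=|t^n(t^{-n}vt^n)t^{-n}|_S\le 2|t^n|_S+1\le 2n+1$; applying this to powers $v^m$ and using that $t^{-n}v^mt^n$ still converges to $1$ as $n\to\infty$ (first choose $n$ large given $m$), one gets $|v^m|_S=o(m)$, so $v$ is distorted.

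Next I would observe that $\KT(G)$ contains every distorted element of $G$: this is essentially the content of the Proposition preceding the lemma (stating that $\KK(G)/\KT(G)$ contains no nontrivial distorted element), together with the fact that a distorted element of $G$ maps to a distorted element of $G/\KT(G)$, and an undistorted element of $\R^n\rtimes\mathrm O(n)$ cannot be the image of a distorted element — so a distorted element lies in the kernel of every continuous homomorphism to $\R^n\rtimes\mathrm O(n)$, i.e.\ in $\KT(G)$. (Alternatively one invokes directly that distorted elements die in any isometric affine action on a finite-dimensional Euclidean space, since translations by nonzero vectors are undistorted.) Hence $\KT(G)\supset V$, and since $V$ is cocompact in $U$ and $\KT(G)$ is a closed (normal) subgroup, $\KT(G)\supset U$ as well: indeed $\KT(G)\cap U$ is cocompact in $U$, and then $U/(\KT(G)\cap U)$, being a compact group which is also a quotient of the unipotent-over-char-zero group $U$ (hence with no nontrivial compact quotient — a connected unipotent real group, or a char-zero $p$-adic unipotent group, has no nontrivial compact quotient), must be trivial. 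Actually it is cleaner to argue: $\KT(G)U/\KT(G)$ is a compactly generated nilpotent subgroup of $G/\KT(G)$ all of whose elements in the image of $V$ are distorted, generating it up to cocompactness; combined with the Proposition this forces $U\subset\KT(G)$.

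Once $U\subset\KT(G)$, we get a surjection $G/\KT(G)\twoheadleftarrow G/U$, and $G/U$ is a quotient of $N$ (since $G=UN$), hence compact-by-nilpotent and compactly generated, hence of polynomial growth; therefore its further quotient $G/\KT(G)$ has polynomial growth, as desired.

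\textbf{Main obstacle.} The delicate step is the passage from ``$\KT(G)$ contains the cocompact subgroup $V$'' to ``$\KT(G)$ contains all of $U$'': one must rule out a nontrivial compact quotient of $U$ onto which $G$ might map isometrically. I would handle this using that each $U_i$ is open in a unipotent group over a characteristic-zero local field, which is a uniquely divisible (in the real case) or at least ``no nontrivial compact quotient'' group; combined with $\KT(G)$ being a \emph{closed} normal subgroup containing the cocompact $V$, the quotient $U/(U\cap\KT(G))$ is compact and a continuous quotient of $U$, forcing it to be trivial. A minor secondary point is making the distortion estimate uniform enough to conclude $\KT(G)\supseteq V_i$ for \emph{all} $i$ simultaneously, which is immediate since $\KT(G)$ is a subgroup and each generator $t_i$ witnesses the distortion of all of $V_i$.
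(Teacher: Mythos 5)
There is a genuine gap at the step where you pass from $V\subset\KT(G)$ to $U\subset\KT(G)$. Your justification is that $U$, being built from open subgroups of unipotent groups over characteristic-zero local fields, ``has no nontrivial compact quotient''; but each $U_i$ is only an \emph{open subgroup} of $\mathbb{U}_i(\K_i)$, and such a subgroup can itself be compact (e.g.\ $\Z_p\subset\Q_p$) and then has plenty of finite quotients. Concretely, take $G=\Z_p\times(\Q_p\rtimes_p\Z)$ with $U=\Z_p\times\Q_p$, $N=\Z$, $V=\{0\}\times\Q_p$: this lies in $\mathfrak{C}$, yet $G$ surjects continuously onto $\Z/p^k\Z\subset\mathrm{O}(n)$ through the $\Z_p$-factor, so $U\not\subset\KT(G)$; in fact $\KT(G)=\Q_p$ and $G/\KT(G)\cong\Z_p\times\Z$, which is not a quotient of $G/U$. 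So both the claim $U\subset\KT(G)$ and your final reduction ``$G/\KT(G)$ is a quotient of $G/U$'' are false in general. The correct (and the paper's) reduction is weaker: one only shows that $\KT(G)$ contains the contracted part, namely $U_{\mathrm{div}}=\langle\Contr(N)\rangle$ (Lemma \ref{divcon}), and then uses that $U/U_{\mathrm{div}}$ is \emph{compact}, so that $G/U_{\mathrm{div}}$ is (compact)$\cdot$(image of $N$) and hence of polynomial growth, and $G/\KT(G)$ is a further quotient. Your argument would be repaired by replacing $U$ with $U_{\mathrm{div}}$ throughout and invoking exactly this compactness, rather than trying to kill all of $U$.

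A secondary weakness is the first step. Your route ``contracted $\Rightarrow$ distorted $\Rightarrow$ in $\KT(G)$'' needs a rate: from $t^{-n}vt^n\to 1$ alone you only get, for each $m$, some $n(m)$ with $|v^m|_S\le 2n(m)+O(1)$, and sublinearity of $|v^m|_S$ requires $n(m)=o(m)$, which is not automatic from pointwise convergence (it does hold here because the contraction on the Lie algebra is exponential, but that has to be said). The paper sidesteps distortion entirely: it shows $\Contr\subset\KT(G)$ directly by observing that in $H=\R^n\rtimes\mathrm{O}(n)$ one has $\KT(H)=\{1\}$ -- no nontrivial element of $H$ has conjugates accumulating at the identity (conjugacy classes in $\mathrm{O}(n)$ are closed, and conjugacy classes of nontrivial translations are closed and avoid $1$) -- so any element of $G$ with $a_n^{-1}ga_n\to 1$ dies under every continuous homomorphism to $H$. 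This is both shorter and avoids the quantitative issue.
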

\begin{proof}We first see that $\mathrm{Contr}(G)\subset\KT(G)$. This amounts to showing that in $H=\R^n\rtimes\mathrm{O}(n)$ we have $\KT(H)=\{1\}$: indeed first in $L=\mathrm{O}(n)$ we have $\KT(L)=\{1\}$ because $L$ is compact and hence has closed conjugacy classes; it follows that $\KT(H)\subset\R^n$, and clearly the conjugacy classes of $H$ contained in $\R^n$ are compact.

By Lemma \ref{divcon}, we deduce that $U_{\mathrm{\mathrm{div}}}\subset\KT(G)$. Since the class $\mathfrak{C}''$ is stable under taking quotients, we are reduced to proving that if $G$ belongs to the class $\mathfrak{C}''$ satisfies $U_{\mathrm{\mathrm{div}}}=\{1\}$, then $G$ has polynomial growth. Indeed in this case, $U$ is compact and since $G=UN$ and $N$ has polynomial growth, the conclusion follows.
\end{proof}

The previous lemma is a way to show that $G/\KT(G)$ is small in many relevant examples. In contrast, the following proposition shows that it is often large in the setting of discrete groups.

\begin{prop}
Let $\Gamma$ be a discrete and finitely generated group. Then $\KK(\Gamma)=\KT(\Gamma)$ is the intersection of all finite index subgroups of $G$. 
\end{prop}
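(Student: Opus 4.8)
The plan is to introduce $R$ for the intersection of all finite-index subgroups of $\Gamma$ (the finite residual) and to prove the cycle of inclusions
\[
R \;\subseteq\; \KT(\Gamma) \;\subseteq\; \KK(\Gamma) \;\subseteq\; R ,
\]
which forces all three subgroups to coincide. Two of the three inclusions are bookkeeping; the remaining one is where the real content sits.

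First I would dispose of $\KK(\Gamma)\subseteq R$: any finite-index subgroup $H\le\Gamma$ contains its normal core $N$, a finite-index \emph{normal} subgroup, so $\Gamma/N$ is finite, hence compact, and the quotient map $\Gamma\to\Gamma/N$ is a (automatically continuous) homomorphism into a compact group; thus $\KK(\Gamma)\subseteq N\subseteq H$, and intersecting over all $H$ gives the claim. Next, $\KT(\Gamma)\subseteq\KK(\Gamma)$: by the Peter--Weyl characterization recalled just above the proposition, $\KK(\Gamma)$ is the intersection of the kernels of all homomorphisms $\Gamma\to\mathrm{O}(n)$; composing such a homomorphism with the inclusion $\mathrm{O}(n)\hookrightarrow\R^n\rtimes\mathrm{O}(n)$ shows each such kernel contains $\KT(\Gamma)$. (This inclusion in fact holds for every locally compact group and does not use that $\Gamma$ is discrete or finitely generated.)

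The substantive inclusion is $R\subseteq\KT(\Gamma)$. Fix an arbitrary homomorphism $\phi\colon\Gamma\to\R^n\rtimes\mathrm{O}(n)$. Embedding $\R^n\rtimes\mathrm{O}(n)$ into $\GL_{n+1}(\R)$ as the group of affine transformations exhibits $\phi(\Gamma)$ as a finitely generated linear group over a field of characteristic zero, so by Mal'cev's theorem $\phi(\Gamma)$ is residually finite. Now for every finite-index subgroup $H\le\phi(\Gamma)$ the preimage $\phi^{-1}(H)$ has finite index in $\Gamma$, hence $R\subseteq\phi^{-1}(H)$, i.e.\ $\phi(R)\subseteq H$; intersecting over all such $H$ and using residual finiteness gives $\phi(R)=\{1\}$, so $R\subseteq\ker\phi$. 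Since $\phi$ was arbitrary, $R\subseteq\KT(\Gamma)$, closing the cycle.

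The only nonroutine ingredient is Mal'cev's theorem on residual finiteness of finitely generated linear groups, applied to the affine realization of $\R^n\rtimes\mathrm{O}(n)$; I would state it explicitly and then let the three inclusions fall out in the order above, proving the two cheap ones before the Mal'cev step. I do not anticipate any genuine obstacle beyond citing Mal'cev correctly; the finite generation of $\Gamma$ is used precisely (and only) to ensure $\phi(\Gamma)$ is finitely generated so that Mal'cev applies.
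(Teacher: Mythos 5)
Your proposal is correct and follows essentially the same route as the paper: the chain $\KT(\Gamma)\subset\KK(\Gamma)\subset R(\Gamma)$ is handled by direct bookkeeping, and the substantive inclusion $R(\Gamma)\subset\KT(\Gamma)$ is reduced to Mal'cev's residual finiteness of finitely generated linear groups via the affine realization of $\R^n\rtimes\mathrm{O}(n)$. You merely spell out the details that the paper leaves as ``clearly'' and as a one-line citation of Mal'cev.
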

\begin{proof}
Denote by $R(\Gamma)$ the intersection of all finite index subgroups of $\Gamma$. Clearly, $\KT(\Gamma)\subset\KK(\Gamma)\subset R(\Gamma)$. The remaining inclusion $R(\Gamma)\subset\KT(\Gamma)$ follows from Malcev's theorem that finitely generated linear groups are residually finite.
\end{proof}

\section{A dynamical criterion for property $\WAPAP$}
This section contains the central ideas of this paper. It culminates with Theorem \ref{prop:main}, which provides dynamical criteria for Properties $\WAPT$ and $\WAPAP$. The latter is designed to apply to groups from the class $\mathfrak{C}$. The following lemmas are the key ingredients. It starts with Lemma \ref{mautner}, an analogue of Mautner's phenomenon.

\subsection{Mautner's phenomenon}

\begin{defn}\label{contr}
Let $G$ be a locally compact group, and let $N$ be a subgroup of $G$. Denote by $\Contr(N)$ the set of elements $g\in G$ such that there exists a sequence $a_n\in N$ such that $a_n^{-1}ga_n\to 1$. Such an element $g$ is called $N$-contracted.
\end{defn}

Note that $\Contr(N)$ is stable under inversion; it is not always a subgroup. 

\begin{lem}\label{mautner}{\bf (Mautner's phenomenon)}
Let $G$ be a locally compact group, $N$ a subgroup, and $L$ the subgroup generated by $\mathrm{Contr}(N)$. Consider a separately continuous, uniformly Lipschitz action of $G$ on a metric space $X$. Then the subspace $X^{N,\mathrm{ap}}$ of almost periodic points (Definition \ref{dapa}) is contained in the subspace $X^L$ of $L$-fixed points.
\end{lem}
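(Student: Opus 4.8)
The plan is to reduce everything to a single contracting element and then argue by a continuity/limit estimate. First I would fix $x \in X^{N,\mathrm{ap}}$ and let $g \in \mathrm{Contr}(N)$ be an arbitrary $N$-contracted element, with $a_n \in N$ such that $a_n^{-1} g a_n \to 1$. Since $L$ is generated by $\mathrm{Contr}(N)$ and $X^L$ is cut out by the fixed-point conditions of a generating set, it suffices to show $gx = x$ for each such $g$; here one should note that $\mathrm{Contr}(N)$ is symmetric (stable under inversion), so one gets fixed by $g$ and $g^{-1}$ simultaneously, and the set of $L$-fixed points is indeed a subgroup's fixed set.

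The key computation is the following. Let $C = \sup_{h \in G} C_h < \infty$ be the global Lipschitz constant. Write
\[
d(gx, x) \le d(gx,\, g a_n x) + d(g a_n x,\, a_n x) + d(a_n x,\, x).
\]
The middle term equals $d(a_n (a_n^{-1} g a_n) x,\, a_n x) \le C\, d\big((a_n^{-1} g a_n) x,\, x\big)$, and since $a_n^{-1} g a_n \to 1$ in $G$ and the action is separately continuous at $x$, this tends to $0$. So we are left to control $d(gx, g a_n x) \le C\, d(x, a_n x)$ together with $d(a_n x, x)$, i.e.\ everything reduces to showing $d(a_n x, x) \to 0$ along a suitable subsequence — but this is false in general, so instead I extract a subsequence along which $a_n x$ converges. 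This is where the hypothesis $x \in X^{N,\mathrm{ap}}$ enters: the orbit $Nx$ is relatively compact, so $(a_n x)$ has a convergent subsequence $a_{n_k} x \to y \in X$. Then along that subsequence $d(gx, gy) \le C\, d(x, \cdot)$... — here I need to be a little more careful and instead estimate $d(gx, x)$ directly: from the displayed inequality, $d(gx, x) \le C\, d(x, a_{n_k}x) + d(g a_{n_k} x, a_{n_k} x) + d(a_{n_k} x, x)$, and letting $k \to \infty$ gives $d(gx,x) \le C\, d(x,y) + 0 + d(y,x) = (C+1)\,d(x,y)$, which is not obviously zero.

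The fix — and the step I expect to be the main obstacle — is to iterate the contraction. Apply the above not to $g$ but to the conjugates: since $a_n^{-1} g a_n \to 1$, replace $x$ by $x$ and run the orbit argument so as to show $d(a_n^{-1} g a_n \cdot x', x') \to 0$ uniformly over $x'$ in the compact set $\overline{Nx}$, hence $d(g \cdot (a_n x'), a_n x') \to 0$; now given any subsequential limit $y = \lim a_{n_k} x$ of the orbit, continuity of the action of $g$ gives $gy = \lim g a_{n_k} x = \lim a_{n_k} x = y$, so $y \in X^g$. Thus the closed (by Lemma \ref{vap}) set $X^{g,\mathrm{ap}} \cap \overline{Nx}$... — more cleanly: every point of $\overline{Nx}$ in the limit set of $(a_n x)$ is $g$-fixed. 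To finish, observe that $g$-fixed points form a closed $N$-invariant-ish set and push the fixedness back to $x$ itself: since $a_n \in N$ and $y$ is $g$-fixed with $y = \lim a_{n_k} x$, we have $g a_{n_k} x \to y$; but also $a_{n_k} g' x \to \cdots$ — the cleanest route is to note $d(x, gx) \le d(x, a_{n_k}^{-1} y) + d(a_{n_k}^{-1} y, a_{n_k}^{-1} g y) + d(a_{n_k}^{-1} g y, gx)$ and use $gy = y$ to kill the middle term, reducing to $d(x, a_{n_k}^{-1}y) \to 0$ and $d(a_{n_k}^{-1} g y, gx) \le C\, d(a_{n_k}^{-1} y, x)$; so it all comes down to $a_{n_k}^{-1} y \to x$, which holds because $a_{n_k} x \to y$ and the action of $a_{n_k}^{-1}$ is $C$-Lipschitz with $a_{n_k}^{-1}(a_{n_k} x) = x$ — giving $d(a_{n_k}^{-1} y, x) \le C\, d(y, a_{n_k} x) \to 0$. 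Hence $gx = x$, completing the proof.
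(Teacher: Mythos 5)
There is a genuine gap in your final step, the transfer of fixedness from the limit point $y$ back to $x$. Up to that point your argument is fine (and the uniformity over $\overline{Nx}$ is an unnecessary detour: to get $gy=y$ for a subsequential limit $y=\lim a_{n_k}x$ you only need $d(ga_{n_k}x,a_{n_k}x)\le C\,d\big((a_{n_k}^{-1}ga_{n_k})x,x\big)\to 0$, which is separate continuity at the single point $x$). The problem is the inequality $d(a_{n_k}^{-1}gy,\,gx)\le C\,d(a_{n_k}^{-1}y,\,x)$: there is no single $C$-Lipschitz map sending $a_{n_k}^{-1}y\mapsto a_{n_k}^{-1}gy$ and $x\mapsto gx$, because $g$ and $a_{n_k}$ do not commute ($a_{n_k}^{-1}gy\neq g\,a_{n_k}^{-1}y$ in general; the map $z\mapsto a_{n_k}^{-1}ga_{n_k}z$ sends $a_{n_k}^{-1}y$ to $a_{n_k}^{-1}gy$ but sends $x$ to $\eps_{n_k}x$, not to $gx$). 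In fact, since $gy=y$, your third term is literally $d(a_{n_k}^{-1}y,\,gx)$, which tends to $d(x,gx)$; so your triangle inequality reduces in the limit to $d(gx,x)\le d(gx,x)$ and proves nothing. Knowing that every subsequential limit of $(a_nx)$ is $g$-fixed does not by itself yield $gx=x$, and no estimate involving $a_nga_n^{-1}$ is available, since the hypothesis only controls $a_n^{-1}ga_n$.

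The fix is to run your compactness extraction on the sequence $(a_n^{-1}x)$ instead of $(a_nx)$ (legitimate, since $N$ is a subgroup, so $a_n^{-1}x$ lies in the relatively compact orbit $Nx$). This is exactly what the paper does: choose a subsequence with $a_{n_k}^{-1}x\to x'$, write $gx=a_{n_k}\eps_{n_k}a_{n_k}^{-1}x$ with $\eps_n=a_n^{-1}ga_n\to 1$, and estimate
\[
d(gx,x)=d\big(a_{n_k}\eps_{n_k}a_{n_k}^{-1}x,\;a_{n_k}a_{n_k}^{-1}x\big)\le C\,d\big(\eps_{n_k}a_{n_k}^{-1}x,\;a_{n_k}^{-1}x\big)\le C\big(C\,d(a_{n_k}^{-1}x,x')+d(\eps_{n_k}x',x')+d(x',a_{n_k}^{-1}x)\big),
\]
which tends to $0$ using the convergence $a_{n_k}^{-1}x\to x'$ and separate continuity at the limit point $x'$ (not at $x$). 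Since the left-hand side does not depend on $k$, $gx=x$, and then $X^{N,\mathrm{ap}}\subset X^L$ follows as you say because $L$ is generated by $\mathrm{Contr}(N)$. So your opening reduction and your use of almost periodicity are the right ingredients, but the sequence you compactify must be $(a_n^{-1}x)$; with $(a_nx)$ the conjugation goes the wrong way and the argument cannot be closed.
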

\begin{proof}Let $C$ be the supremum of all Lipschitz constants of the $G$-action. Fix $u\in\mathrm{Contr}(N)$, a sequence $(a_n)$ with $a_n\in N$ and $\eps_n=a_n^{-1}ua_n\to 1$. 

Consider $x\in X^{N,\mathrm{ap}}$. Write $ux=a_n\eps_na_n^{-1}x$. Let $J$ be an infinite subset of integers such that $(a_n^{-1}x)_{j\in J}$ converges, say to $x'$. 
Then
\begin{align*}
d(a_n\eps_na_n^{-1}x,x)\le & d(a_n\eps_na_n^{-1}x,a_n\eps_nx')+d(a_n\eps_nx',a_nx')+d(a_nx',x)\\
\le &  Cd(a_n^{-1}x,x')+Cd(\eps_nx',x')+Cd(x',a_n^{-1}x)\underset{n\to\infty}{{}^{\underrightarrow{n\in J}}} 0.
\end{align*}
This shows that $ux=a_n^{-1}\eps_na_nx=x$. 
\end{proof}

Lemma \ref{mautner} generalizes Mautner's phenomenon, as well as \cite[Lemma 5.2.6]{Shalom}; both were written in a more specific context for $G$; in addition in Mautner's result the metric space is a Hilbert space with a unitary representation; in Shalom's result $X$ is an arbitrary metric space with an isometric action; in both cases the result takes the form $X^N\subset X^L$. We will use the following consequence of Lemma \ref{mautner}.

\begin{lem}\label{prop:C}
Let $G$ a locally compact group, and $N$ a subgroup of $G$. Define $M$ as the normal subgroup generated by $\mathrm{Contr}(N)$, and $H=\overline{MN}$.
Consider a separately continuous, uniformly Lipschitz action of $G$ on a metric space $X$. Then

\begin{enumerate}
\item\label{ifhg} if $H=G$, then $X^N=X^G$;
\item\label{ifhco} if $H$ is cocompact in $G$, then $X^{N,\mathrm{ap}}=X^{G,\mathrm{ap}}$. \end{enumerate}
\end{lem}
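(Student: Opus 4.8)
The two parts assert an inclusion $X^N\subseteq X^G$ (resp.\ $X^{N,\mathrm{ap}}\subseteq X^{G,\mathrm{ap}}$), the reverse inclusions being immediate since an $N$-orbit sits inside the corresponding $G$-orbit. The engine is Lemma \ref{mautner}: any point with relatively compact $N$-orbit is fixed by $L:=\langle\Contr(N)\rangle$.

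For part (1) I would study the stabilizer $P:=\{g\in G:gx=x\}$ of a point $x\in X^N$. It is a closed subgroup containing $N$, and it contains $\Contr(N)$ by Lemma \ref{mautner} (applied to $x\in X^N\subseteq X^{N,\mathrm{ap}}$). The key structural feature is that $P$ is stable under the operation $Q\mapsto\langle Q,\Contr(Q)\rangle$: if $Q\subseteq P$ then $x\in X^Q$, so Lemma \ref{mautner} forces $\Contr(Q)\subseteq P$. Iterating from $N$ — set $N_0=N$, $N_{i+1}=\langle N_i,\Contr(N_i)\rangle$, take unions at limit ordinals, and stop when the increasing (hence, for cardinality reasons, eventually stationary) chain stabilizes — produces a $\Contr$-stable subgroup $\widehat N\subseteq P$, and then also $\overline{\widehat N}\subseteq P$ since $P$ is closed. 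The goal is to deduce that $\overline{\widehat N}$ contains the normal closure $M$ of $\Contr(N)$; granting this, $P\supseteq\overline{\widehat N}\supseteq\overline{MN}=H=G$, so $x\in X^G$.

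For part (2) I would combine the same mechanism with Lemma \ref{vapcocom}. For $x\in X^{N,\mathrm{ap}}$ the set $K:=\overline{Nx}$ is compact and $N$-invariant, so each of its points again lies in $X^{N,\mathrm{ap}}$ and hence is fixed by $L$ (Lemma \ref{mautner}); since $N$ normalizes $L=\langle\Contr(N)\rangle$, the subgroup $\langle N,L\rangle$ preserves $K$, i.e.\ the orbit does not grow. Running the same iteration (all the intermediate orbit closures $\overline{N_ix}$ still equal $K$), and showing as in (1) that $M$, hence $H=\overline{MN}$, still moves $x$ inside the compact set $K$, gives $x\in X^{H,\mathrm{ap}}$. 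Finally $X^{H,\mathrm{ap}}=X^{G,\mathrm{ap}}$ by Lemma \ref{vapcocom}, valid since $H=\overline{MN}$ is closed and cocompact and the action is uniformly Lipschitz and separately continuous.

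The delicate point — where I expect the actual work to lie — is the passage from the subgroup $\langle N,\Contr(N)\rangle$, which Mautner controls directly, to the full \emph{normal} closure $M$ of $\Contr(N)$, because $N$ is not assumed normal in $G$ and $\Contr$ does not interact transparently with $G$-conjugation. When $N$ is normal this is costless: $\Contr(N)$ is then conjugation-invariant, so $M=\langle\Contr(N)\rangle$ and part (1) collapses to a single application of Lemma \ref{mautner}; and in the situations of interest (groups in the class $\mathfrak C$, where $\Contr(N)$ contains a cocompact piece of the normal subgroup $U$) the group $\langle N,\Contr(N)\rangle$ is already dense in $H=G$, so the iteration terminates at the first step. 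The substance of the general argument is thus to organize the iteration so that it does close up to $M$ — equivalently, to show $\overline{\widehat N}=G$ whenever $\overline{MN}=G$.
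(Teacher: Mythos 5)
Your argument, as written, establishes only the weaker variant of the lemma in which $H=\overline{MN}$ is replaced by $\overline{LN}$, where $L=\langle\Contr(N)\rangle$: the stabilizer argument plus Lemma \ref{mautner} correctly show that a point of $X^N$ is fixed by $\overline{LN}$, and your orbit argument in (2) correctly shows that for $x\in X^{N,\mathrm{ap}}$ the $\overline{LN}$-orbit of $x$ stays in the compact set $K=\overline{Nx}$. The decisive step --- propagating from $L$ to the full normal closure $M$ of $\Contr(N)$ in $G$ --- is precisely what you leave as a ``goal'', so the proposal is a plan with the key step missing, not a proof. Moreover it is not clear the proposed transfinite $\Contr$-iteration can ever supply that step: passing from $Q$ to $\langle Q,\Contr(Q)\rangle$ only adjoins elements contracted by sequences lying in the subgroup already built, and nothing forces a conjugate $gcg^{-1}$ with $g\notin\overline{\widehat N}$ to be contracted by such a sequence; there is no reason the stationary subgroup $\overline{\widehat N}$ (or the point stabilizer $P$) contains $M$. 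Since in case (1) the hypothesis is only $\overline{MN}=G$, not $\overline{LN}=G$, what you prove is genuinely weaker than the statement.

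For comparison, the paper's proof does not look at stabilizers of individual points at all: it invokes Lemma \ref{mautner} once to assert that $M$ acts trivially on all of $X^N$ (resp.\ $X^{N,\mathrm{ap}}$); then in (1) density of $MN$ together with separate continuity gives $X^N=X^G$, and in (2) the $H$-action on $X^{N,\mathrm{ap}}$ factors through $H/M$, in which the image of $N$ is dense, so $H$-orbit closures coincide with $N$-orbit closures and are therefore compact, after which Lemma \ref{vapcocom} (with $H$ closed and cocompact) concludes. You should be aware that the ``delicate point'' you isolated is real: Lemma \ref{mautner} as stated controls only $L$, and the paper's one-line deduction that $M$ acts trivially contains no separate argument upgrading $L$ to its normal closure. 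This is immaterial in the paper's applications, where $L$ is itself normal in $G$ (in the proof of Theorem \ref{Main} one has $\langle\Contr(N')\rangle=U_{\mathrm{div}}$ by Lemma \ref{divcon}, so $M=L$), but a blind proof of the lemma as stated cannot appeal to that; since you neither carry out the $L$-to-$M$ passage nor restrict to the $L$-version, your write-up does not establish the statement.
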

\begin{proof}
By definition $N$ acts trivially on $X^N$, and by Lemma \ref{mautner}, $M$ acts trivially on $X^N$. Hence, in the context of (\ref{ifhg}), $MN$ is dense and hence $G$ acts trivially on $X^N$.

Assume now as in (\ref{ifhco}), and write $H=\overline{MN}$, which is cocompact. Then by Lemma \ref{mautner}, $M$ acts trivially on $X^{N,\mathrm{ap}}$. In particular $H$ preserves $X^{N,\mathrm{ap}}$, and the $H$-action on $X^{N,\mathrm{ap}}$ factors through $H/M$. Since $N$ has a dense image in $H/M$, it follows that the closure of $H$-orbits in $X^{N,\mathrm{ap}}$ coincide with closure of $N$-orbits, which are compact by assumption. This shows that $X^{N,\mathrm{ap}}=X^{H,\mathrm{ap}}$.

Finally, we have $X^{H,\mathrm{ap}}=X^{G,\mathrm{ap}}$ by Lemma \ref{vapcocom}.
\end{proof}

\subsection{Controlled F\o lner sequences and sublinearity of cocycles}

We now recall some material from \cite{CTV}, also used in \cite{tesLp}.
\begin{defn}\cite{CTV}\label{ctvcontro}
Let $G$ be a locally compact group generated by some compact subset $S$ and equipped with some left Haar measure $\mu$. A sequence of compact subsets $F_n\subset G$ of positive measure is called a controlled F\o lner sequence if either $G$ is compact, or $\Diam(F_n)\to \infty$, and there exists a constant $C\geq 1$ such that for all $n\in \N$ and all $s\in S,$
$$\mu(sF_n\vartriangle F_n)\leq C\frac{\mu(F_n)}{\Diam(F_n)}.$$
\end{defn}

\begin{rem}
Let $G$ be a compactly generated group with a compact generating subset $S$. For $n$, let $f(n)$ be the smallest $m$ such that the $m$-ball contains a compact subset $F$ of positive measure such that $\mu(sF_n\vartriangle F_n)\le \mu(F_n)/n$. Then $G$ is amenable if and only if $f(n)<\infty$ for all $n$, and admits a controlled F\o lner sequence if and only if $\liminf f(n)/n<\infty$. Actually in all examples we are aware of groups with controlled F\o lner sequences, we indeed have $f(n)=O(n)$; notably strong controlled F\o lner sequences of Definition \ref{strongf} satisfy this property.
\end{rem}

The following is \cite[Corollary 3.7]{CTV} in the case of unitary Hilbert modules.

\begin{prop}\label{prop:sublin}{\bf (Sublinearity versus triviality of cocycles)}
Let $G$ be a CGLC group and let $(V,\pi)$ be a uniformly bounded Banach $G$-module. Let $b\in Z^1(G,\pi)$ be a 1-cocycle.

If $b$ is an almost coboundary, then it is sublinear, i.e.\ $\|b(g)\|=o(|g|)$ when $|g|$ goes to infinity. The converse holds if $G$ admits a controlled F\o lner sequence: if $b$ is sublinear, then it is an almost coboundary.

\end{prop}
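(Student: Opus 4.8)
\textbf{Proof plan for Proposition \ref{prop:sublin}.}

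The plan is to prove the two implications separately. For the first implication (almost coboundary $\Rightarrow$ sublinear), I would work directly from the definition of almost coboundary via the affine action $\alpha_b$. Write $c=\sup_{g\in G}\|\pi(g)\|<\infty$ (finite by Lemma \ref{b_ub}, since $V$ is uniformly bounded). Fix $\eps>0$; by definition of $\overline{B^1}(G,\pi)$ and the compact-generation reformulation, there is a vector $v\in V$ with $\sup_{s\in S}\|\alpha_b(s)v-v\|\le\eps$, i.e.\ $\sup_{s\in S}\|\pi(s)v+b(s)-v\|\le\eps$. Using the cocycle identity $b(g_1g_2)=\pi(g_1)b(g_2)+b(g_1)$ one sees that $\alpha_b(g_1g_2)v-v=\pi(g_1)(\alpha_b(g_2)v-v)+(\alpha_b(g_1)v-v)$, hence by induction, writing $g=s_1\cdots s_n$ as a product of $n=|g|_S$ elements of $S$, one gets $\|\alpha_b(g)v-v\|\le c\sum_{k}\|\alpha_b(s_k)v-v\|\le cn\eps$. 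Since $b(g)=\alpha_b(g)v-\pi(g)v = (\alpha_b(g)v-v)+(v-\pi(g)v)$ and $\|v-\pi(g)v\|\le (1+c)\|v\|$ is bounded independently of $g$, we obtain $\|b(g)\|\le c\eps|g|_S + (1+c)\|v\|$. Letting $|g|\to\infty$ and then $\eps\to 0$ gives $\limsup \|b(g)\|/|g| \le c\eps$ for every $\eps$, hence $\|b(g)\|=o(|g|)$.

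For the converse (sublinear $\Rightarrow$ almost coboundary, assuming a controlled F\o lner sequence $(F_n)$), the idea is the standard averaging trick: set $v_n = \frac{1}{\mu(F_n)}\int_{F_n} b(g)\,d\mu(g)$ (a Bochner integral, which makes sense since $b$ is continuous hence bounded on the compact set $F_n$), and show $\sup_{s\in S}\|\alpha_b(s)v_n - v_n\|\to 0$. Using $b(sg)=\pi(s)b(g)+b(s)$, a change of variables computation yields
\begin{equation*}
\pi(s)v_n + b(s) - v_n = \frac{1}{\mu(F_n)}\left(\int_{sF_n} b(g)\,d\mu(g) - \int_{F_n} b(g)\,d\mu(g)\right),
\end{equation*}
so that $\|\alpha_b(s)v_n - v_n\| \le \frac{1}{\mu(F_n)}\int_{sF_n \triangle F_n}\|b(g)\|\,d\mu(g)$. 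Now one estimates the right-hand side: on $sF_n\triangle F_n$ we have $\|b(g)\|\le \varphi(|g|)$ where $\varphi(r)=o(r)$, and every $g$ in $sF_n$ or $F_n$ satisfies $|g|\le \Diam(F_n) + O(1)$; combined with the controlled-F\o lner bound $\mu(sF_n\triangle F_n)\le C\mu(F_n)/\Diam(F_n)$, this gives $\|\alpha_b(s)v_n-v_n\| \le C\,\varphi(\Diam(F_n)+O(1))/\Diam(F_n) \to 0$ uniformly in $s\in S$ (the degenerate case $G$ compact being trivial, as then $B^1$ is already closed or one argues directly). Hence $b\in\overline{B^1}(G,\pi)$.

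The main obstacle I anticipate is purely bookkeeping rather than conceptual: making the change-of-variables identity for $\pi(s)v_n+b(s)-v_n$ precise requires care with left Haar measure and the fact that $\pi(s)$ commutes with the Bochner integral (which is fine since $\pi(s)$ is bounded linear), and one must handle the ``$\Diam(F_n)\to\infty$'' hypothesis and the additive $O(1)$ coming from $s\in S$ so that sublinearity of $\varphi$ actually kicks in. One should also double-check that $|g|\le\Diam(F_n)+O(1)$ holds for $g\in F_n$: this uses that $\Diam$ is the diameter in the word metric and that $F_n$ contains (or is within bounded distance of) the identity — a point already implicit in the framework of \cite{CTV}, so I would simply invoke it. All other estimates are routine once $c<\infty$ is in hand.
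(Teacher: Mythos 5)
Your proposal is correct and follows essentially the same route as the paper: the forward direction is the paper's approximation-by-a-bounded-coboundary argument in the equivalent language of an almost fixed point $v$ for $\alpha_b$ (your inductive word-length estimate is exactly the paper's inequality $\|c(g)\|\le C|g|\,\|c\|_S$ applied to $c=b-b'$ with $b'(g)=v-\pi(g)v$), and the converse is the same averaging of $b$ over the controlled F\o lner sets with the Bochner-integrability and $\mu(sF_n\triangle F_n)/\mu(F_n)$ estimates. The normalization you flag (that $F_n$ may be assumed to lie in a ball of radius about $\Diam(F_n)$ around the identity, by right-translating $F_n$, which changes neither the diameter nor the measure ratios) is also implicitly used in the paper's step ``$F_n\subset S^n$'', so your treatment matches theirs.
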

\begin{proof}Both implications are adaptations of the original proof for unitary Hilbert $G$-modules, up to some technical points, which we emphasize below.

Denote $C=\sup_{g\in G}\|\pi\|$, and by $S$ a compact generating subset of $G$; let $|\cdot|$ be the word length in $G$ with respect to $S$. If $b\in Z^1(G,\pi)$ and $T\subset G$, denote $\|b\|_T=\sup_{s\in S}\|b(s)\|_T$. Then we have, for all $g\in G$, the inequality $\|b(g)\|\le C|g|\|b\|_S$. 

Suppose that $b$ is an almost coboundary. For $\eps>0$, there exists a bounded cocycle $b'$ such that $\|b-b'\|\le\eps/C$ on $S$. Say, $\|b'\|_G\le c_{\eps}$. Then, using the previous inequality for the cocycle $b-b'$, we have for all $g\in G$, $\|b(g)\|\le\|b'(g)\|+\|(b-b')(g)\|\le c+|g|\eps$. Thus, for $|g|\ge c_\eps\eps^{-1}$, we have $\frac{\|b(g)\|}{|g|}\le 2\eps$. 

Now assume that $G$ admits a controlled F\o lner sequence and let us prove the converse. Suppose that $b$ is sublinear.
Let $(F_n)$ be a controlled F\o lner sequence in $G$. We need to define a
sequence $(v_n)\in V^{\N}$ by
$$v_n=\frac{1}{\mu(F_n)}\int_{F_n}b(g)dg.$$

Here is the technical issue: since $V$ is not assumed to be reflexive, we have to be more careful to claim that this integral is well-defined. Since $F_n$ is compact, it follows that on $F_n$, we can write the function $b$ (or any continuous function to a normed space) as a uniform limit of simple functions. This implies that $g\mapsto b(g)$ is Bochner-integrable.

We claim that $(v_n)$ defines a sequence of almost fixed points for
the affine action $\sigma$ defined by $\sigma(g)v=\pi(g)v+b(g)$ (which is equivalent to saying that $b$ is an almost coboundary).
Indeed, we have (noting that $\sigma(s)b(g)=b(sg)$)
\begin{eqnarray*}
\|\sigma(s)v_n-v_n\|& = &\left\|\frac{1}{\mu(F_n)}\int_{F_n}\sigma(s)b(g)dg-\frac{1}{\mu(F_n)}\int_{F_n}b(g)dg\right\|\\
                & =& \left\|\frac{1}{\mu(F_n)}\int_{F_n}b(sg)dg -\frac{1}{\mu(F_n)}\int_{F_n}b(g)dg\right\|\\
                & = &\left\|\frac{1}{\mu(F_n)}\int_{s^{-1}F_n}b(g)dg-\frac{1}{\mu(F_n)}\int_{F_n}b(g)dg\right\|\\
                & \leq & \frac{1}{\mu(F_n)}\int_{s^{-1}F_n\vartriangle
                F_n}\|b(g)\|dg.
\end{eqnarray*}
Since $F_n\subset S^n$, we obtain that
$$\|\sigma(s)v_n-v_n\| \leq  \frac{C}{n}\sup_{|g|_S\leq
n+1}\|b(g)\|,$$ which converges to $0$. 
\end{proof}

\subsection{Combing}
An important feature of the groups studied in this article is the following ``combing" property.

\begin{defn}\label{def:combing}
Let $G$ be a locally compact group generated by some compact subset $S$, and let $H\le G$ be a closed subgroup. 
We say that $G$ is $H$-combable if there exists an integer $k\in \N$  and a constant $C\geq 1$ such that every element $g\in G$ can be written as a word $w=w_1\ldots w_k$ in the alphabet $S\cup H$ 
with \begin{equation}\label{eq:combing}\sum_{i=1}^k|w_i|_S\leq C|g|_S.\end{equation}
\end{defn}

\begin{rem}
It is easy to check that being $H$-combable does not depend on the choice of $S$. Moreover, assuming for convenience that $S$ is symmetric with 1, it is equivalent to the existence of constants $k,\ell$ such that $S^n\subset ((S^{\ell n}\cap H)S)^k$ for all $n$. In most examples, we actually have a stronger property: there exist constants $k,\ell$ such that $S^n\subset ((S^{\ell }\cap H)^n S)^k$ for all $n$.
\end{rem}

If $G$ is $H$-combable with $k$ as above, then $G\subset (HS)^k$. In particular, when $H$ is compact, then $G$ is $H$-combable if and only if $G$ is compact. However, there are many interesting cases where $G$ is $H$-combable with $H$ being nilpotent and $G$ having exponential growth.

The following lemma is immediate, but we emphasize it to show how this property can be used.

\begin{lem}\label{sublinear_comb}
Let $G$ be a CGLC group and $H$ a closed subgroup such that $G$ is $H$-combable. Let $\ell$ be a length function on $G$ that is sublinear on $H$ (with respect to the restriction to $H$ of the word length of $G$). Then it is sublinear on $G$.

In particular, if $H$ is compactly generated and $\ell$ is sublinear on $H$ (with respect to its intrinsic word length), then $\ell$ is sublinear on $G$.\qed
\end{lem}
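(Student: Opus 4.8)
The plan is to run the evident ``telescoping along a combing'' argument. Recall that a length function on $G$ is subadditive, $\ell(gh)\le\ell(g)+\ell(h)$ with $\ell(1)=0$; in particular $\ell$ is bounded on the compact generating set $S$, say by $L=\sup_{s\in S}\ell(s)<\infty$, so that $\ell(g)\le L|g|_S$ for every $g\in G$ (write $g$ as a word of length $|g|_S$ in $S$ and use subadditivity). We may assume $G$ noncompact, so that ``$|g|_S\to\infty$'' is meaningful (in the compact case everything is bounded and there is nothing to prove).

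First I would produce, for a fixed $\eps>0$, a uniform bound of the shape $\ell(x)\le\eps|x|_S+R_\eps$ valid for \emph{every} letter $x$ of the alphabet $S\cup H$. For $x\in S$ this is clear, with $R_\eps=L$. For $x\in H$ it follows by a two-case split: sublinearity of $\ell$ on $H$ furnishes a threshold $R\ge 1$ such that $\ell(x)\le\eps|x|_S$ whenever $|x|_S\ge R$; and when $|x|_S<R$ we simply use $\ell(x)\le L|x|_S\le LR$. Taking $R_\eps=\max(L,LR)$ handles all cases at once.

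Next, given $g\in G$, I would invoke $H$-combability (Definition~\ref{def:combing}) to write $g=w_1\cdots w_k$ with $w_i\in S\cup H$ and $\sum_i|w_i|_S\le C|g|_S$, where $k$ and $C$ are the \emph{fixed} combing constants. Subadditivity then gives
\[\ell(g)\le\sum_{i=1}^k\ell(w_i)\le\eps\sum_{i=1}^k|w_i|_S+kR_\eps\le\eps C|g|_S+kR_\eps.\]
Dividing by $|g|_S$ and letting $|g|_S\to\infty$ yields $\limsup\ell(g)/|g|_S\le\eps C$; since $\eps>0$ is arbitrary and $C$ is fixed, $\ell(g)=o(|g|_S)$, i.e.\ $\ell$ is sublinear on $G$. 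The only point where the boundedness of $k$ and $C$ is used is precisely here, to keep the error terms under control — which is why genuine combability, rather than mere generation of $G$ by $S\cup H$, is required.

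For the final assertion I would pass from the intrinsic word length of $H$ to the restriction of $|\cdot|_S$: if $S_H$ is a compact generating set of $H$, then $S_H\subset S^m$ for some $m$, hence $|h|_S\le m|h|_{S_H}$ for all $h\in H$. In the situations of interest $H$ is moreover undistorted in $G$ (equivalently, one has the stronger form of combing recorded in the Remark above, which forces the blocks $w_i\in H$ to have intrinsic length $O(|g|_S)$), so the two notions of sublinearity on $H$ agree and the previous paragraphs apply verbatim. This transfer between the intrinsic and ambient metrics on $H$ is the only genuinely nontrivial point; the rest is bookkeeping with the combing constants.
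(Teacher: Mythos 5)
Your telescoping argument settles the main assertion completely, and it is exactly the argument the authors leave to the reader (the paper marks the lemma as immediate and gives no proof): the uniform estimate $\ell(x)\le\eps|x|_S+R_\eps$ for every letter $x\in S\cup H$, combined with subadditivity and with the fact that the combing uses a \emph{bounded} number $k$ of letters satisfying $\sum_i|w_i|_S\le C|g|_S$, yields $\ell(g)\le \eps C|g|_S+kR_\eps$ and hence sublinearity on $G$. (You implicitly assume that a length function is subadditive and bounded on compact sets; that is the intended meaning, and it holds for the functions $\ell=\|b\|$ to which the lemma is applied in Lemma \ref{cohinj}.)

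Your last paragraph, however, does not prove the ``in particular'' statement as written. The inequality you invoke, $|h|_S\le m|h|_{S_H}$, goes in the wrong direction: it shows that sublinearity with respect to the restricted word length $|\cdot|_S$ implies sublinearity with respect to the intrinsic length $|\cdot|_{S_H}$, whereas the second assertion starts from the a priori weaker intrinsic hypothesis. If $H$ is distorted in $G$ one can have $|h|_{S_H}\gg|h|_S$, and then $\ell(h)=o(|h|_{S_H})$ gives no control on $\ell(h)/|h|_S$; likewise, in the telescoping the combing only bounds $|w_i|_S$, not $|w_i|_{S_H}$, so the intrinsic hypothesis cannot be fed into it directly. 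What is needed is either undistortion of $H$ or the stronger combing property recorded in the Remark after Definition \ref{def:combing} (the $H$-letters can be taken of intrinsic length $O(|g|_S)$, as is indeed the case for the combings produced in Theorem \ref{thm:Ncombable}); your parenthetical claim that undistortion is \emph{equivalent} to that stronger property is unjustified, and ``in the situations of interest'' is an appeal to the applications rather than a proof of the statement. To be fair, this reflects an imprecision in the lemma as stated, and in all uses made of it in the paper the stronger combing is available, so your repair matches how the lemma actually functions; but as a verbatim proof of the second assertion from Definition \ref{def:combing} alone, the final step is a genuine gap.
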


\subsection{Injectivity of the restriction map in 1-cohomology}

\begin{lem}\label{cohinj}{\bf (Restriction in 1-cohomology)}
Let $G$ be a locally compact group and $N$ a closed subgroup. Suppose that $G$ has a controlled F\o lner sequence and is $N$-combable. Let $V$ be a WAP $G$-module. Then the restriction map $\overline{H^1}(G,V)\to\overline{H^1}(N,V)$ is injective.
\end{lem}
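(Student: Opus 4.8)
The strategy is to combine the sublinearity criterion (Proposition~\ref{prop:sublin}) with the combing hypothesis via Lemma~\ref{sublinear_comb}. Let $b \in Z^1(G,V)$ be a $1$-cocycle whose restriction $b|_N$ is an almost coboundary for $N$; we must show $b$ is an almost coboundary for $G$. First I would note that since $V$ is WAP it is uniformly bounded (Lemma~\ref{b_ub}), so Proposition~\ref{prop:sublin} applies. Because $b|_N$ is an almost coboundary, that same proposition (its ``easy'' direction, which does not require a controlled F\o lner sequence) gives that $b|_N$ is sublinear on $N$ with respect to the intrinsic word length of $N$ --- here one needs $N$ to be compactly generated, which follows since $G$ is compactly generated and $N$-combable (the combing expresses every element of $G$ in boundedly many $H$-syllables of controlled length, forcing $N$ to be compactly generated, as noted after Definition~\ref{def:combing}).

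Next, I would package the function $g \mapsto \|b(g)\|$ as a length-type function on $G$: it is subadditive up to the uniform bound $C=\sup_g\|\pi(g)\|$, in the sense that $\|b(gh)\| \le C\|b(h)\| + \|b(g)\|$ from the cocycle relation, which is exactly the hypothesis needed to feed into Lemma~\ref{sublinear_comb} (one may need to replace $\|b(\cdot)\|$ by $\max(1,\|b(\cdot)\|)$ or a genuine length function dominating it, but this is routine). Since this function is sublinear on $N$ and $G$ is $N$-combable, Lemma~\ref{sublinear_comb} yields that it is sublinear on $G$, i.e.\ $\|b(g)\| = o(|g|_S)$. Finally, invoking the converse direction of Proposition~\ref{prop:sublin} --- which is where the hypothesis that $G$ admits a controlled F\o lner sequence is used --- we conclude that $b$ is an almost coboundary on $G$. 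This shows the kernel of the restriction map $\overline{H^1}(G,V) \to \overline{H^1}(N,V)$ is trivial, hence the map is injective.

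\textbf{Main obstacle.} The routine technical points are checking that $\|b(\cdot)\|$ (or a suitable modification) genuinely fits the ``length function sublinear on $H$'' framework of Lemma~\ref{sublinear_comb}: the cocycle bound gives quasi-subadditivity rather than exact subadditivity, so one should either observe that Lemma~\ref{sublinear_comb} is robust under bounded multiplicative/additive perturbations, or dominate $\|b(\cdot)\|$ by an honest length function to which the lemma applies verbatim. A second point to handle carefully is that sublinearity of $b|_N$ must be measured with respect to the \emph{intrinsic} word metric of $N$, and one must know $N$ is compactly generated to make sense of this --- but as noted, combability of $G$ over $N$ forces exactly that. None of these is a serious difficulty; the proof is essentially a two-line chain once the correct lemmas are lined up, and the only place anything substantive happens is inside Proposition~\ref{prop:sublin} and Lemma~\ref{sublinear_comb}, which we are entitled to assume.
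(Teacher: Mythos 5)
Your argument is correct and is essentially the paper's own proof: from $b|_N$ being an almost coboundary you get sublinearity on $N$ by the easy direction of Proposition~\ref{prop:sublin}, propagate it to all of $G$ via $N$-combability through Lemma~\ref{sublinear_comb}, and conclude that $b$ is an almost coboundary by the converse direction of Proposition~\ref{prop:sublin}, which is where the controlled F\o lner sequence enters. The only cosmetic difference is how the subadditivity issue you flag is handled: the paper simply replaces the norm by an equivalent $G$-invariant one (available because a WAP module is uniformly bounded), so that $\|b(\cdot)\|$ becomes genuinely subadditive and Lemma~\ref{sublinear_comb} applies verbatim, rather than working with subadditivity up to the constant $C$ as you propose.
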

\begin{proof}
We can change the norm to an equivalent $G$-invariant norm.
Consider a cocycle $b\in Z^1(G,\pi)$. Suppose that in restriction to $N$, $b$ is an almost coboundary. Then by Proposition \ref{prop:sublin}, $b$ is sublinear in restriction to $N$, i.e.\ $\|b(a)\|=o(|a|_S)$. Note that $\|b\|$ is sub-additive, because the norm is $G$-invariant. Since $G$ is $N$-combable, one thus deduces from Lemma \ref{sublinear_comb} that
$b$ is sublinear on all of $G$, and therefore is an almost coboundary by Proposition \ref{prop:sublin}, using that $G$ has a controlled F\o lner sequence.
\end{proof}

\subsection{Dynamical criteria for Properties $\WAPT$ and $\WAPAP$}

We are now ready to state and prove the main result of this section. 

\begin{thm}\label{prop:main}
Let $G$ be locally compact group, and let $N$ be a closed subgroup. Let $H$ be the closure $\overline{MN}$ of the subgroup generated by $N\cup M$, where $M$ is the normal subgroup generated by $\Contr(N)$. Assume:
\begin{enumerate}
\item\label{item1} $G$ has a controlled F\o lner sequence;
\item\label{item2} $G$ is $N$-combable;
\item\label{item3}  $N$ has Property $\WAPT$;
\item\label{item4} $H$ is dense in $G$.
\end{enumerate}
Then $G$ has Property $\WAPT$.

Still assume (\ref{item1}) and (\ref{item2}) along with: 
\begin{enumerate}[(1')]
  \setcounter{enumi}{2}
\item\label{item3b}  $N$ has Property $\WAPAP$;
\item\label{item4b} $H$ is cocompact in $G$.
\end{enumerate}
Then $G$ has Property $\WAPAP$.
\end{thm}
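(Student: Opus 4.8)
\textbf{Proof plan for Theorem \ref{prop:main}.} The two assertions follow the same template; I will carry the $\WAPT$-case first and indicate the modifications for the $\WAPAP$-case. Let $(V,\pi)$ be a WAP $G$-module with $V^G=0$ (resp.\ $V^{G,\mathrm{ap}}=0$), and let $b\in Z^1(G,\pi)$ be a $1$-cocycle; I must show $b$ is an almost coboundary. The first move is to restrict to $N$. By hypothesis (\ref{item3}) (resp.\ (\ref{item3b})) $N$ has Property $\WAPT$ (resp.\ $\WAPAP$); to apply it I need the relevant fixed-point space to vanish for the $N$-module $(V,\pi)$. Here is where Mautner's phenomenon enters: by Lemma \ref{prop:C}(\ref{ifhg}) and hypothesis (\ref{item4}), the density of $H=\overline{MN}$ in $G$ forces $V^N=V^G=0$ (resp.\ by Lemma \ref{prop:C}(\ref{ifhco}) and hypothesis (\ref{item4b}), the cocompactness of $H$ gives $V^{N,\mathrm{ap}}=V^{G,\mathrm{ap}}=0$). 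One subtlety: $V^{N,\mathrm{ap}}$ only makes sense once we know $V$ is uniformly bounded, which holds since $V$ is WAP (Lemma \ref{b_ub}); and $V$ restricted to $N$ is still WAP, since weak relative compactness of orbits is inherited by subgroups. So in both cases the hypothesis on $N$ applies and yields that $b|_N$ is an almost coboundary.

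Now I invoke the combing/Følner machinery. By hypotheses (\ref{item1}) and (\ref{item2}), $G$ has a controlled Følner sequence and is $N$-combable, so Lemma \ref{cohinj} says the restriction map $\overline{H^1}(G,V)\to\overline{H^1}(N,V)$ is injective. Since $b|_N$ is an almost coboundary, i.e.\ $b$ is zero in $\overline{H^1}(N,V)$, injectivity forces $b$ to be zero in $\overline{H^1}(G,V)$, that is, $b$ is an almost coboundary on $G$. This completes the argument in the $\WAPT$-case, and verbatim the same final two sentences finish the $\WAPAP$-case: the conclusion ``$b|_N$ is an almost coboundary $\Rightarrow$ $b$ is an almost coboundary'' does not see the distinction between the two properties, because Lemma \ref{cohinj} is stated for arbitrary WAP $G$-modules.

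\textbf{Where the work really sits.} The proof of the theorem itself is then just an assembly of Lemma \ref{prop:C} (Mautner), Lemma \ref{cohinj} (injectivity of restriction), and the hypothesis on $N$; the only thing to be careful about is checking that the module-theoretic hypotheses transfer correctly to $N$ — namely that $(V,\pi)$ stays WAP as an $N$-module and that the correct fixed-point subspace ($V^N$ or $V^{N,\mathrm{ap}}$) is the one killed by Lemma \ref{prop:C}. The genuinely substantial inputs are buried one level down: Lemma \ref{cohinj} rests on Proposition \ref{prop:sublin} (sublinearity $\Leftrightarrow$ almost-coboundary, requiring the controlled Følner sequence and the Bochner-integrability discussion for non-reflexive $V$) together with Lemma \ref{sublinear_comb} (combing propagates sublinearity), and Lemma \ref{prop:C} rests on the Mautner-type Lemma \ref{mautner}. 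So the ``main obstacle'' is not in Theorem \ref{prop:main} but in having set up that toolkit; given the toolkit, the theorem is a short deduction. One should also note that no amenability hypothesis is needed explicitly here — it is encoded in the existence of the controlled Følner sequence in (\ref{item1}).
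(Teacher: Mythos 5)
Your proposal is correct and follows essentially the same route as the paper's own proof: apply Lemma \ref{prop:C} (Mautner) with hypothesis (\ref{item4}) resp.\ (\ref{item4b}) to kill $V^N$ resp.\ $V^{N,\mathrm{ap}}$, invoke Property $\WAPT$ resp.\ $\WAPAP$ of $N$, and conclude via the injectivity of restriction in reduced cohomology (Lemma \ref{cohinj}) under (\ref{item1}) and (\ref{item2}). The extra checks you record (the restriction to $N$ stays WAP, uniform boundedness via Lemma \ref{b_ub}) are correct and only make explicit what the paper leaves implicit.
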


\begin{proof}
Suppose that (\ref{item1}), (\ref{item2}), (\ref{item3}), (\ref{item4}) hold. Let $V$ be a Banach $G$-module with $V^G=0$. By (\ref{item4}) and Lemma \ref{prop:C}(\ref{ifhg}), $V^N=0$. By (\ref{item3}), $\overline{H^1}(N,V)=0$. By Lemma \ref{cohinj} and (\ref{item1}), (\ref{item2}), it follows that $\overline{H^1}(G,V)=0$. So $G$ has Property $\WAPT$.

Similarly, suppose that (\ref{item1}), (\ref{item2}), (\ref{item3b}'), (\ref{item4b}') hold. Let $V$ be a Banach $G$-module with $V^{G,\mathrm{ap}}=0$. By (\ref{item4b}') and Lemma \ref{prop:C}(\ref{ifhco}), $V^{N,\mathrm{ap}}=0$. By (\ref{item3b}'), $\overline{H^1}(N,V)=0$. By Lemma \ref{cohinj} and (\ref{item1}), (\ref{item2}), it follows that $\overline{H^1}(G,V)=0$. So $G$ has Property $\WAPAP$.
\end{proof}

\section{Groups in the class $\mathfrak{C}$}

\subsection{Families of examples in the class $\mathfrak{C}$}
We start proving that two important classes of groups belong to the class $\mathfrak{C}$ introduced in Definition \ref{defc}.

\begin{prop}\label{ex_clasc}
The following groups belong to the class $\mathfrak{C}$:
\begin{enumerate}
\item\label{item_rt} real triangulable connected Lie groups;
\item\label{qpgr} groups of the form $G=\mathbb{G}(\mathbf{Q}_p)$, where $\mathbb{G}$ is a connected linear algebraic group defined over $\Q_p$ such that $G$ is compactly generated and amenable.
\end{enumerate}
\end{prop}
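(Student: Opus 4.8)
The plan is to verify, for each of the two families, the four conditions (1)--(4) of Definition \ref{defc}, with an appropriate choice of the normal subgroup $U$ and the complement $N$. In both cases, the natural candidate for $U$ is the exponential radical (in the Lie case) respectively the unipotent radical $\mathbb{U}(\K)$ together with the non-compact part of the torus in the $p$-adic case; the candidate for $N$ is a ``Cartan-like'' complement on which the adjoint action has modulus of absolute value one in all directions. Concretely, for (\ref{item_rt}), writing $G=R\rtimes T$ where $T$ is a maximal torus (a group diagonalizable over $\R$ acting with real weights) and $R$ is the nilradical is not quite enough, because $R$ need not be contracted; instead I would let $U$ be the subgroup corresponding to the sum of the root spaces with nonzero real part of the weight (the ``exponential radical''), which is a connected normal nilpotent subgroup, and let $N$ be a complement containing the center of a Levi-type decomposition plus the ``elliptic'' directions. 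Then (1) is the decomposition $G=UN$; (2) holds since $N$ is a connected solvable real Lie group, hence CGLC and (being a connected Lie group) compact-by-nilpotent after modding by nothing --- actually here $N$ can be taken nilpotent-by-(compact torus) directly, so (\ref{ssg0}) holds; (3) holds since $U$ is a connected nilpotent real Lie group, so it is unipotent over $\K=\R$ (take $U_1=U$, a single factor); and (4) is the heart of the matter: one must exhibit a cocompact $V\subset U$ decomposing as a bounded product $V_1\cdots V_k$ with each $V_i$ contracted by some single element $t_i\in N$. This is where the eigenvalue structure is used: decompose $\mathfrak{u}$ into the weight spaces $\mathfrak{u}_\chi$ for the $T$-action; each nonzero weight $\chi$ has $\mathrm{Re}(\chi)\neq 0$ on some direction of $T$, so there is $t_i\in T\subset N$ with $\mathrm{Ad}(t_i)$ strictly contracting $\mathfrak{u}_{\chi}$; grouping weights appropriately and using that $\exp$ of a contracted subalgebra gives a contracted subgroup, and that finitely many such exponentials multiply to cover a cocompact subset (by a filtration/Baker--Campbell--Hausdorff argument on the nilpotent group $U$), yields the decomposition. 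I expect the main obstacle to be precisely this last point: organizing the weight spaces into a filtration compatible with the nilpotent group structure so that the partial products $V_1\cdots V_i$ are subgroups (or at least that the product map is proper with cocompact image), and checking that each piece is genuinely contracted by a \emph{single} element of $N$ rather than merely by a sequence.

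For (\ref{qpgr}), I would first invoke the structure theory of solvable (the amenability hypothesis forces $\mathbb{G}$ to have no $\Q_p$-isotropic semisimple part, and Borel--Tits/compact generation forces the reductive quotient to be compact-by-torus) connected linear algebraic $\Q_p$-groups: write $\mathbb{G}=\mathbb{U}\rtimes \mathbb{T}$ up to isogeny, with $\mathbb{U}$ the unipotent radical and $\mathbb{T}$ reductive; amenability plus compact generation gives that $\mathbb{T}(\Q_p)$ is compact-by-(split torus), and the split torus $\mathbb{S}$ acts on $\mathbb{U}$ with $\Q_p$-rational weights. Set $U=\mathbb{U}(\Q_p)$ and $N=\mathbb{S}(\Q_p)\cdot(\text{compact part of }\mathbb{T}(\Q_p))$; then $U$ is an open (in fact equal) subgroup of the $\Q_p$-points of a unipotent group over the non-discrete locally compact characteristic-zero field $\K=\Q_p$, giving (\ref{ssg1}); $N$ is compact-by-abelian, hence CGLC compact-by-nilpotent, giving (\ref{ssg0}); $G=UN$ gives (1); and for (4), the $\Q_p$-rational weights of $\mathbb{S}$ on $\mathrm{Lie}(\mathbb{U})$ are nontrivial on each weight space (else that space would centralize $\mathbb{S}$ and, by amenability/structure, be pushed into $N$), so for each weight $\chi$ there is $t\in\mathbb{S}(\Q_p)$ with $|\chi(t)|_p<1$, hence $t^{-n}(\cdot)t^n\to 1$ on the corresponding subgroup via the $p$-adic logarithm/filtration on the $(p-1)$-step-nilpotent-or-higher group $\mathbb{U}(\Q_p)$; grouping the weight spaces as before and using that $\mathbb{U}(\Q_p)$ admits a cocompact compact-open subgroup (a ``lattice'' built from an $\mathcal{O}_p$-form) which factors as a bounded product of contracted pieces yields (\ref{ssg3}). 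Here the main technical nuisance is handling the case where $\mathbb{T}$ is not split: one passes to the split part $\mathbb{S}$ and must check that the anisotropic directions contribute only a compact factor, which can be absorbed into $N$ without affecting the contraction statement (anisotropic tori over $\Q_p$ have compact $\Q_p$-points, so they act by bounded automorphisms and do not obstruct $U_i$ being normalized with the required contraction along the split directions).

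In both cases the verification of conditions (1), (\ref{ssg0}), (\ref{ssg1}) is essentially bookkeeping from standard structure theory (Lie theory of solvable real groups; Borel--Tits and the theory of algebraic groups over local fields), so I would state these crisply and spend the bulk of the argument on condition (\ref{ssg3}). The unified mechanism is: the action of a maximal split torus (real weights / $p$-adic valuations of weights) on the exponential radical has, on each weight space, a contracting element; a nilpotent group over a local field has a cocompact subgroup that is a finite product of subgroups each lying in a single weight space (or in a ``filtered piece'' thereof), and on each such piece a single torus element contracts. I would isolate this as the key lemma and prove it once, uniformly, using the exponential/logarithm correspondence (valid since the fields have characteristic zero, or more generally under the $(p-1)$-step hypothesis mentioned after Definition \ref{defc}) to transfer the linear-algebra statement about weight-space filtrations of the Lie algebra to the group level. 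The one genuine subtlety --- and the step I'd flag as the main obstacle --- is that contraction must be witnessed by a \emph{single} element $t_i\in N$ uniformly over each piece $V_i$, which requires choosing $t_i$ inside the open cone of the (real or $p$-adic) torus on which \emph{all} the relevant weights are simultaneously strictly negative (resp. have absolute value $<1$); such a cone is nonempty precisely because one can always split the weight spaces into finitely many groups on which a common such element exists, and bounding the number $k$ of needed groups in terms of the number of distinct weights closes the argument.
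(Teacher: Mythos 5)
Your choice of data coincides with the paper's: $U$ is the exponential radical (resp.\ the unipotent radical), $N$ is a Cartan subgroup (resp.\ the maximal $\Q_p$-split torus, with the anisotropic compact part absorbed on one side or the other), and condition (\ref{ssg3}) of Definition \ref{defc} is verified through the weight grading together with single contracting torus elements. The paper gets the bounded product decomposition $U=V_1\cdots V_k$ into weight pieces by citing \cite[\S 6.B]{CTDehn}; that is exactly the step you flag as the main obstacle and only sketch. Within that sketch there is, however, a genuine error at the load-bearing point. The exponential radical is the subalgebra \emph{generated} by the nonzero weight spaces, not their sum: already for $\R\ltimes H_3(\R)$ with weights $1,-1$ on the two generators and $0$ on the center, the sum of the nonzero weight spaces is not a subalgebra, and the exponential radical contains the zero-weight center. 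Correspondingly, your $p$-adic claim that every weight space of $\mathbb{S}$ on $\mathrm{Lie}(\mathbb{U})$ carries a nonzero weight (``else that space would \dots be pushed into $N$'') is false: the analogous Heisenberg example over $\Q_p$ is compactly generated, amenable, and has a nonzero zero-weight space inside $\mathbb{U}$. The correct input --- and the only place where the compact generation hypothesis on $G$ genuinely enters for $\mathbb{U}$ --- is that the \emph{nonzero} weight spaces generate $\mathrm{Lie}(\mathbb{U})$ as a Lie algebra (as the paper states); one must then show that the zero-weight part, which arises from brackets, is nevertheless covered by a bounded product of contracted weight pieces. That is precisely the content of \cite[\S 6.B]{CTDehn}, and your filtration/Baker--Campbell--Hausdorff outline does not yet deliver it.

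A smaller slip: $\mathbb{U}(\Q_p)$ is noncompact, so it has no ``cocompact compact-open subgroup''; in the paper the cocompact subgroup $V$ of condition (\ref{ssg3}) is $\mathbb{U}(\Q_p)$ itself (cocompact in $U=\mathbb{U}(\Q_p)\rtimes\mathbb{K}(\Q_p)$; alternatively one puts $\mathbb{K}(\Q_p)$ into $N$, which remains compact-by-nilpotent, as you do). None of this changes the strategy --- the paper's proof shows it works --- but as written your verification of (\ref{ssg3}) rests on a false structural claim at exactly the step that carries the mathematical weight, and repairing it amounts to reproving (or invoking) the decomposition result of \cite[\S 6.B]{CTDehn}.
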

\begin{proof}
(\ref{item_rt}) Let $G$ be a real triangulable connected Lie group. Let $N$ be a Cartan subgroup and $U=V$ its exponential radical. So $U$ is the intersection of the lower central series, $N$ is nilpotent, and $G=UN$. The $N$-action on the Lie algebra $\g$ induces a grading on $\g$, valued in the dual $\Hom(\mk{n},\R)$ of the abelianization of $\mathfrak{n}$, for which $\mathfrak{n}=\g_0$ and $\mathfrak{u}$ is the subalgebra generated by the $\g_\alpha$ for $\alpha\neq 0$. By definition, a weight is an element $\alpha$ in $\Hom(\mk{n},\R)$ such that $\g_\alpha\neq\{0\}$. Then there exists $k$ and a (possibly non-injective) family of nonzero weights $\alpha_1,\dots,\alpha_k$ such that, denoting $V_i=V_{\alpha_i}$, we have $U=V_1\dots V_k$; we omit the details since this is already covered by \cite[\S 6.B]{CTDehn}. The existence of elements in $N$ acting as contraction on $V_i$ is straightforward, and thus all the data fulfill the requirements to belong to the class $\mk{C}$.

(\ref{qpgr}) Write $\mathbb{G}=\mathbb{U}\rtimes (\mathbb{D}\mathbb{K})$, where $\mathbb{U}$ is the unipotent radical, and some reductive Levi factor is split into its $\Q_p$-split part $\mathbb{D}$ (a torus, by the amenability assumption) and its $\Q_p$-anisotropic part $\mathbb{K}$ (so $\mathbb{K}(\Q_p)$ is compact). Define $N=\mathbb{D}(\Q_p)$ and $V=\mathbb{U}(\Q_p)$. Thus $U=V\rtimes\mathbb{K}(\Q_p)$, so $V$ is closed cocompact in $U$. Then in the grading on $\mathfrak{g}$ induced by the $D$-action (which takes values in a free abelian group of rank $\dim(D)$, namely the group of multiplicative characters of $D$), $\mathfrak{v}$ is generated by the nonzero weights: as a consequence of the assumption that $G$ is compactly generated. Then the proof can be continued as in the real case (being also covered by the work in \cite[\S 6.B]{CTDehn}).   
\end{proof}

\begin{prop}\label{vcalfd}
Every virtually connected amenable Lie group $G$ belongs to the class $\mathfrak{C}'$ (Definition \ref{defcp}).
\end{prop}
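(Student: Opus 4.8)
The plan is to exhibit, for a given virtually connected amenable Lie group $G$, the zigzag of copci homomorphisms $G\to G_1\leftarrow G_2\to G_3$ with $G_3\in\mathfrak C$ demanded by Definition \ref{defcp}; I will always arrange it in the same shape, namely produce a locally compact group $M$, a copci homomorphism $G\hookrightarrow M$, and a closed cocompact \emph{normal} subgroup $M_0\triangleleft M$ already lying in $\mathfrak C$, and then take $G_1:=M$, $G_2:=M_0$ (with its inclusion into $M$), $G_3:=M_0$. The first step is to reduce to the connected solvable case: let $G^\circ$ be the identity component, an open subgroup of finite index, and $R:=\mathrm{rad}(G^\circ)$ its solvable radical. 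Then $R$ is characteristic in $G^\circ$, hence normal in $G$, it is closed, and $G/R$ is compact (the connected semisimple amenable quotient $G^\circ/R$ is compact and $G/G^\circ$ is finite), so $R$ is a closed cocompact normal connected solvable subgroup of $G$. Note that $R$ on its own need not admit a copci homomorphism into $\mathfrak C$ — already $R=\R^4\rtimes_{(1,\sqrt2)}\R$, with two planes rotated at incommensurable speeds, has trivial centre, no proper cocompact connected subgroup and is not compact-by-nilpotent — so an enveloping group $M$ is genuinely needed.

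Next I would invoke the semisimple (Mostow) splitting of the connected solvable group $R$: a closed cocompact embedding $R\hookrightarrow M_R$ with $R$ normal in $M_R$, where $M_R=N\rtimes\mathbb T$, $N$ a simply connected nilpotent Lie group (the nilradical of $M_R$, i.e.\ the nilshadow of $R$) and $\mathbb T$ a connected abelian group acting on $N$ by commuting semisimple automorphisms. The real Jordan decomposition of a commuting family of semisimple automorphisms gives a \emph{canonical} splitting $\mathbb T=\mathbb T_{\mathrm s}\times\mathbb T_{\mathrm c}$, where $\mathbb T_{\mathrm s}$ acts with positive real eigenvalues on $N$ and $\mathbb T_{\mathrm c}$ is compact; put $R_\triangle:=N\rtimes\mathbb T_{\mathrm s}$ and $K:=\mathbb T_{\mathrm c}$, so $M_R=R_\triangle\rtimes K$. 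Then $R_\triangle$ is a real-triangulable connected Lie group (unipotent $N$ extended by the $\R$-diagonalizable $\mathbb T_{\mathrm s}$, so $\mathrm{Ad}(R_\triangle)$ has only real eigenvalues), hence $R_\triangle\in\mathfrak C$ by Proposition \ref{ex_clasc}(\ref{item_rt}); moreover $R_\triangle$ is normal in $M_R$ with $M_R/R_\triangle\cong K$ compact. This already settles $G=R$, and more generally connected amenable $G$: writing $G^\circ=R\rtimes L$ with $L$ a compact Levi factor, set $M:=M_R\rtimes L$ (with $L$ acting on $M_R$ through functoriality of the splitting, hence preserving the canonical subgroup $R_\triangle$); then $G^\circ\hookrightarrow M$ is copci, $R_\triangle\triangleleft M$, and $M/R_\triangle\cong K\rtimes L$ is compact.

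Finally I would spread the splitting over all of $G$. By functoriality, conjugation $G\to\mathrm{Aut}(R)$ extends to an action $G\to\mathrm{Aut}(M_R)$ preserving $N$, $\mathbb T$, the canonical factor $\mathbb T_{\mathrm s}$, and hence $R_\triangle$; forming the pushout $M:=M_R\sqcup_R G$ (a locally compact group, well defined because the $G$-action on $M_R$ restricts to conjugation on $R$, and containing $M_R$ and $G$ with $M_R\cap G=R$, $M=M_R G$) gives a copci embedding $G\hookrightarrow M$ with $M/G\cong M_R/R$ compact, and $R_\triangle\triangleleft M$. The hard part will be checking that $R_\triangle$ is cocompact in $M$: the image of $R$ in the compact quotient $M_R/R_\triangle$ can be a dense non-closed subgroup (incommensurable elliptic directions), so $R\cap R_\triangle$ need not be cocompact in $R$ and naive ``product of indices'' arguments fail. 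I expect the clean way around this is to apply the splitting functorially to the whole, possibly disconnected, group $G$ rather than to $R$ alone, producing $M$ in which $G$ is cocompact and $R_\triangle$ is simultaneously normal and cocompact, after which the zigzag $G\to M\leftarrow R_\triangle\to R_\triangle$ witnesses $G\in\mathfrak C'$. Thus the only real obstacle is the structural input — the semisimple splitting of (virtually) connected amenable Lie groups, its functoriality, and the canonicity of the triangulable core — together with the bookkeeping of normality and cocompactness in the enlarged group; once these are in place the assembly of the zigzag is immediate.
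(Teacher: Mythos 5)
Your argument hinges on a structural claim that is false as stated, and it is precisely the input the paper does not reprove but imports by citation. You assert that every connected solvable Lie group $R$ admits a closed \emph{cocompact} embedding, with normal image, into $M_R=N\rtimes\mathbb{T}$ with $N$ simply connected nilpotent and $\mathbb{T}$ abelian acting semisimply. Take $R=H_3(\R)/\Z$, the Heisenberg group modulo an infinite cyclic subgroup of its center: this is a connected (nilpotent, hence amenable) Lie group, equal to its own radical, whose derived subgroup is a circle $C$. If $\phi\colon R\to N\rtimes\mathbb{T}$ were any injective continuous homomorphism with $\mathbb{T}$ abelian, then $\phi(C)=\phi([R,R])\subseteq[N\rtimes\mathbb{T},N\rtimes\mathbb{T}]\subseteq N$, and a simply connected nilpotent Lie group contains no nontrivial compact subgroup; so no such embedding exists at all, cocompact or not, normal or not. (Membership of this $R$ in $\mathfrak{C}'$ is of course easy, but only because Definition \ref{defcp} allows the first copci arrow to have compact kernel: $R\to R/C\cong\R^2$. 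Your pipeline insists on an injective copci $G\hookrightarrow M$ and never quotients by a compact subgroup, so it cannot handle such groups; this is exactly why the zigzag in the cited structure theorem is longer than one hull.) Even in the simply connected case your appeal to "Mostow splitting" is off: the classical semisimple splitting is $M_R=N\rtimes T$ with $T$ a vector group and $M_R/R\cong T$ noncompact, so cocompactness of $R$ is not what that theorem gives; the hull one actually needs (closing up only the elliptic parts into a compact torus, checking closedness, cocompactness and normality, and doing this compatibly with the action of all of $G$) has to be built and verified, and your own text defers exactly these points ("the hard part will be checking that $R_\triangle$ is cocompact in $M$", functoriality of the splitting, the pushout construction). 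A smaller but real inaccuracy in the same vein: the decomposition $\mathbb{T}=\mathbb{T}_{\mathrm s}\times\mathbb{T}_{\mathrm c}$ need not hold inside $\mathbb{T}$ (the polar and elliptic parts of an element of $\mathbb{T}$ need not lie in $\mathbb{T}$; one must enlarge the acting group), and $G^\circ=R\rtimes L$ is in general only $G^\circ=R\cdot L$ with finite intersection.

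By contrast, the paper's proof is a two-line citation: \cite[Lemma 3.A.1]{CTDehn} (itself resting on \cite[Lemmas 2.4 and 6.7]{cornulier}) supplies a chain of copci homomorphisms linking $G$ to a real-triangulable group, with the required normality at the middle stage, and triangulable groups lie in $\mathfrak{C}$ by Proposition \ref{ex_clasc}. What you are attempting is to reprove that cited lemma from scratch; that is a legitimate project, but as written the proof has a genuine gap: the "cocompact normal semisimple splitting" you invoke does not exist for connected solvable groups whose derived subgroup closes up to contain a torus, and the remaining functoriality/cocompactness assertions are exactly the unproven content. To repair it you would need to first dispose of compact normal (or covering) phenomena — e.g.\ allow copci maps with compact kernel or an arrow pointing into $G$, as the cited lemma does — before any nilshadow-type hull can be formed.
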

\begin{proof}
By \cite[Lemma 3.A.1]{CTDehn} (based on \cite[Lemmas 2.4 and 6.7]{cornulier}), there exist copci homomorphisms $G\leftarrow G_1\to G_2\leftarrow G_3$ with $G_3$ triangulable. In addition, $G_1\to G_2$ has normal image (as it is mentioned in the proof of that lemma that $G_2$ is generated by its center and the image of $G_1$). Thus $G$ belongs to the class $\mathfrak{C}'$.
\end{proof}

\subsection{Controlled F\o lner sequences for groups in the class $\mathfrak{C}"$}\label{ccfs}

\begin{defn}\label{strongf}
In a CGLC group $G$ with compact generating symmetric subset $S$ with 1 and left Haar measure $\mu$, we call strong controlled F\o lner sequence, a sequence of positive measure compact subsets $(F_n)$ such that $F_n$ belongs to a ball of radius $O(n)$ and 
$$\frac{\mu(F'_n)}{\mu(F_n)}=O(1),$$ 
where $F'_n=F_nS^n$ is the $n$-tubular neighborhood of $F_n$ with respect to word metric associated to $S$.
\end{defn}

In \cite{tes1}, the pair $(F_n,F'_n)$ is called a controlled F\o lner pair. 
An easy argument \cite[Proposition 4.9]{tes1} shows that if $(F_n)$ is a strong controlled F\o lner sequence, then there exists $k_n\in\{1,\dots,n\}$ such that $(F_nS^{k_n})$ is a controlled F\o lner sequence (Definition \ref{ctvcontro}).

We shall need the following result from \cite{CTDehn} (which is proved there for a smaller class of groups but the proof readily extends to our setting).

\begin{lem}\label{lem:BallC}\cite[Theorem 6.B.2]{CTDehn}
Fix a CGLC group $G=UN$ in the class $\mathfrak{C}"$. Let $S$ and $T$ be compact generating subsets of respectively $G$ and $N$. There exist constants  $\mu_1$ and $\mu_2>1$ such that the following holds.
For every small enough norm $\|\cdot\|$ on $\mk{u}$, 
denoting by $U[r]$ the exponential of the $r$-ball in $(\mk{u},\|\cdot\|)$, we have, for all $n$, the inclusions
\[S^n\subset U[\mu_2^n]T^n,\quad   U[\mu_1^n]T^n\subset S^{2n}.\]
\end{lem}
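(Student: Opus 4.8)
The statement asserts two-sided control of word balls in $G=UN$ (in the class $\mathfrak{C}''$) by products of an exponentially scaled ball $U[r]$ in $\mathfrak{u}$ with a word ball $T^n$ of $N$. Since the excerpt explicitly attributes this to \cite[Theorem 6.B.2]{CTDehn} and says ``the proof readily extends to our setting,'' my plan is not to reprove the estimate from scratch but to isolate exactly which hypotheses of the cited theorem are used and verify that each is available under Definition \ref{dcpp}. The cited result is proved in \cite{CTDehn} for (real/$p$-adic) triangulable groups; the features it exploits are: (i) $U$ is a closed normal subgroup that is a finite product $\prod U_i$ of open subgroups of unipotent groups over local fields (so $\mathfrak{u}$ carries a genuine Lie algebra structure, and $\exp$ is a homeomorphism onto an open subgroup, with polynomial distortion of the word metric on $U$ with respect to an adapted norm); (ii) $N$ is compact-by-nilpotent, hence of polynomial growth; (iii) the $N$-action on $\mathfrak{u}$ is by automorphisms whose matrix coefficients grow at most exponentially in the word length on $N$, and (iv) the combing/contraction hypotheses (\ref{ssg3}) of Definition \ref{defc}. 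The only difference from \cite{CTDehn} is that there $N$ is abelian (a Cartan or split torus), whereas here $N$ is merely compact-by-nilpotent of polynomial growth; I must check the argument of \cite[\S 6.B]{CTDehn} only uses polynomial growth of $N$ and the exponential bound on the action, not commutativity.

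Concretely I would proceed as follows. First fix an adapted norm $\|\cdot\|$ on $\mathfrak{u}$: by compactness of a generating set, for $\|\cdot\|$ small enough the ``unit'' ball $U[1]=\exp(\{\|x\|\le 1\})$ is a compact neighborhood of $1$ in $U$ and a generating set of the open subgroup $\exp(\mathfrak{u})$, while the word metric on $U$ induced by $S\cap U$ is comparable to $\log(1+\|\cdot\|)$ up to the (polynomial) distortion coming from nilpotency; this is standard for lattices/open subgroups in unipotent groups over local fields of characteristic zero, and is where characteristic zero is tacitly used. Second, for the inclusion $S^n\subset U[\mu_2^n]T^n$: write each generator $s\in S$ as $s=u_s n_s$ with $u_s\in U$, $n_s\in N$ (shrinking/enlarging $S$ and $T$ as needed so this and the reverse inclusion of generating sets hold), expand a word of length $n$ in $S$ by repeatedly pushing the $N$-parts to the right using $n u = (n u n^{-1}) n$; each conjugation multiplies the relevant $\mathfrak{u}$-coordinate by an operator of norm at most $\mu^{|n|_T}$ for some $\mu>1$, and since the total $N$-length accumulated is $O(n)$ (bounded by the word length), the resulting $U$-part lies in $U[\mu_2^n]$ for a suitable $\mu_2$, while the $N$-part lies in $T^{O(n)}$; after rescaling $n$ this gives $S^{n}\subset U[\mu_2^n]T^n$. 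Third, for the reverse inclusion $U[\mu_1^n]T^n\subset S^{2n}$: here one uses the combing hypothesis (\ref{ssg3}), i.e.\ that a cocompact $V=V_1\cdots V_k\le U$ has each $V_i$ contracted by some $t_i\in N$, so that an element of $U$ at scale $\mu_1^n$ in $\mathfrak{u}$-coordinates can be written as a product of $k$ factors each of the form $t_i^{-m}(\text{bounded})t_i^m$ with $m=O(n)$, hence of word length $O(n)$ in $S$; choosing $\mu_1>1$ small enough relative to the contraction rates makes the constant come out to $2n$ (again after the harmless rescaling of $n$ absorbed into ``$O(n)$'').

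The main obstacle I expect is the reverse inclusion $U[\mu_1^n]T^n\subset S^{2n}$, specifically making the contraction argument uniform across the finitely many $U_i$ (which may live over different local fields $\K_i$) and controlling the interaction between the factors $V_1,\dots,V_k$ when multiplying them back together, since $U$ need not be abelian and reordering factors introduces commutator terms. This is handled in \cite[\S 6.B]{CTDehn} by an induction on the nilpotency length of $\mathfrak{u}$ together with the grading coming from the $N$-action, and the commutator terms lie in higher-length graded pieces which are themselves contracted at least as fast; I would simply invoke that structure, noting that the only input that changes in our generality (several fields, $N$ compact-by-nilpotent rather than abelian) affects constants but not the shape of the induction. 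Thus the proof reduces to the bookkeeping verification that Definition \ref{dcpp} supplies precisely the data (i)--(iv) above, after which \cite[Theorem 6.B.2]{CTDehn} applies verbatim.
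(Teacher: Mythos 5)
Your proposal matches the paper's own treatment: the paper gives no independent proof of Lemma \ref{lem:BallC}, but simply cites \cite[Theorem 6.B.2]{CTDehn} and remarks that the proof given there for a smaller class of groups ``readily extends'' to the class $\mathfrak{C}''$, which is precisely the reduction you carry out (identifying the hypotheses actually used and checking they survive when $N$ is only of polynomial growth rather than abelian/nilpotent). Your sketch of the two inclusions (pushing the $N$-letters to the right with exponential control on the $\mathfrak{u}$-coordinates, and using the contraction elements $t_i$ for the reverse inclusion) is consistent with the mechanism the paper itself uses in \S\ref{ccomb}, so there is no gap relative to the paper's argument.
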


By a straightforward application of the Baker-Campbell-Hausdorff formula, we obtain the following lemma.
\begin{lem}\label{lem:nonarchGroupBall}
Let $\mk{u}$ be a finite-dimensional nilpotent Lie algebra over a finite product of non-archimedean local fields of characteristic zero.  
Consider a norm $\|\cdot\|$ on $\mk{u}$. There exists $d\in \N$ such that  for all $r\geq 2$
$$\langle U[r]\rangle \subset U[r^d],$$ 
where $\langle U[r]\rangle$ is the (compact) subgroup generated by $U[r].$
\end{lem}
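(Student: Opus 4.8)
The plan is to use the Baker--Campbell--Hausdorff (BCH) formula, which is valid here since $\mathfrak{u}$ is a finite-dimensional \emph{nilpotent} Lie algebra over a finite product of non-archimedean local fields of characteristic zero, so the exponential map is a polynomial bijection (in fixed coordinates) with polynomial inverse, and the group law on $U = \Exp(\mathfrak{u})$ transported to $\mathfrak{u}$ is given by the BCH series, which terminates after finitely many terms, say of degree $\le c$ where $c$ is the nilpotency class. The key point is that each BCH term is a fixed iterated Lie bracket, hence a fixed multilinear (of degree $\le c$) map $\mathfrak{u}^{\times j} \to \mathfrak{u}$, and therefore submultiplicative up to a constant with respect to $\|\cdot\|$: there is a constant $A \ge 1$ with $\|x \cdot y\| \le A(\|x\| + \|y\|)^c$ for all $x,y$ (or, more crudely, $\|x\cdot y\| \le A\max(\|x\|,\|y\|,1)^c$).

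First I would fix coordinates and invoke BCH to get such a constant $A$, and also a constant controlling inverses, $\|x^{-1}\| = \|-x\| = \|x\|$ (inversion is just negation in $\mathfrak{u}$, so this is free). Next, for $r \ge 2$, I want to bound the norm of an arbitrary product $x_1 \cdot x_2 \cdots x_m$ of elements of $U[r]$ (i.e. $\|x_i\| \le r$). By induction on $m$, using submultiplicativity, one gets a bound of the form $\|x_1 \cdots x_m\| \le (A')^{m} r^{c^{?}}$ or similar --- but this naive bound grows with $m$, which is fatal since $\langle U[r]\rangle$ contains products of unbounded length. So the real argument must be different: one should first show $\langle U[r]\rangle$ is itself compact (it is a compactly generated subgroup of the nilpotent, hence locally elliptic-by-..., group $U$; more directly, $U[r]$ generates the same group as the compact open subgroup obtained by taking a lattice-order-of-magnitude ball, which is a subgroup for $r$ in the value group), and then bound its diameter.

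Here is the cleaner route I would take. Choose $r$ and observe that there is a compact \emph{subgroup} $W_r$ of $U$ with $U[r] \subset W_r \subset U[r^{d_0}]$ for a suitable $d_0$ depending only on $\mathfrak{u}$ and $\|\cdot\|$: indeed, pick a basis $(e_i)$ adapted so that the structure constants are integral (over the ring of integers $\mathcal{O}$ of each local field factor), and let $W_r$ be the $\mathcal{O}$-span of $\pi^{k} e_i$ where $\pi^{k}$ has absolute value comparable to $r$; by BCH with integral structure constants and the ultrametric inequality this $\mathcal{O}$-submodule is closed under the group law and inversion, hence a subgroup, and it is sandwiched between $U[cr]$ and $U[c'r]$ for constants $c,c'$, so after adjusting exponents between $U[r]$ and $U[r^{d_0}]$ (using $r \ge 2$ to absorb multiplicative constants into the exponent). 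Since $U[r] \subset W_r$ and $W_r$ is a subgroup, $\langle U[r]\rangle \subset W_r \subset U[r^{d}]$ with $d = d_0$.

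\textbf{Main obstacle.} The delicate point is the passage from "finitely many BCH terms, each submultiplicative" to a genuine \emph{subgroup} containing $U[r]$: one cannot just iterate the submultiplicativity estimate because products in $\langle U[r]\rangle$ have unbounded length. The fix is the observation that over non-archimedean fields, balls around $0$ in a Lie algebra with integral structure constants are actual subgroups (by BCH and the ultrametric inequality, exactly as in the classical theory of $p$-adic analytic groups / uniform pro-$p$ groups), so $U[r]$ is essentially already a subgroup up to enlarging the radius by a bounded factor, which becomes a bounded \emph{exponent} $d$ once we use $r \ge 2$. I would present the argument in this order: (i) coordinates and integral structure constants via rescaling the basis; (ii) BCH terminates and, with the ultrametric inequality, a suitable $\mathcal{O}$-lattice ball $W_r$ is a subgroup; (iii) sandwich $U[r] \subset W_r \subset U[r^d]$; (iv) conclude $\langle U[r]\rangle \subset W_r \subset U[r^d]$. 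The phrase "straightforward application of BCH" in the statement suggests the authors have exactly this kind of short argument in mind.
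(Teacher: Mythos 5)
Your overall strategy --- reduce to a single non-archimedean field, use that the Baker--Campbell--Hausdorff series terminates, and exhibit a genuine subgroup $W_r$ wedged between $U[r]$ and $U[r^d]$ --- is the right one (the paper itself gives no more detail than ``straightforward application of BCH''), but your construction of $W_r$ has a genuine gap. You take $W_r$ to be a homothetic dilate of a fixed $\mathcal{O}$-lattice $L$ with integral structure constants, of radius comparable to $r$, and claim it is closed under the group law ``by BCH with integral structure constants and the ultrametric inequality''. That classical fact is about \emph{small} balls: for a contraction $\pi^{k}L$ ($k$ large) one has $[\pi^{k}L,\pi^{k}L]\subset\pi^{2k}L\subset\pi^{k}L$ and the BCH denominators are absorbed. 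For a dilate of factor $r\ge 2$ it fails: integrality only gives $[\pi^{-k}L,\pi^{-k}L]\subset\pi^{-2k}L$, which is strictly larger than $\pi^{-k}L$. Concretely, in the Heisenberg algebra over $\Q_p$ with $[e_x,e_y]=e_z$, the product $\exp(p^{-k}e_x)\exp(p^{-k}e_y)$ has $e_z$-coordinate $\tfrac12 p^{-2k}$, of norm about $r^2$ where $r=p^k$; and the commutator of these two elements is $\exp(p^{-2k}e_z)$, of norm exactly $r^2$. So not only is your $W_r$ not a subgroup: no set of radius $O(r)$ can contain $\langle U[r]\rangle$ when $\mathfrak{u}$ is nonabelian, so the sandwich $U[r]\subset W_r\subset U[c'r]$ is impossible, and the exponent $d$ must genuinely reach the nilpotency class $c$ (up to norm-equivalence constants).

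The repair is to weight the dilation along the lower central series rather than dilating homothetically. Choose a splitting $\mathfrak{u}=\bigoplus_{j=1}^{c}\mathfrak{m}_j$ with the $j$-th term of the lower central series equal to $\bigoplus_{i\ge j}\mathfrak{m}_i$, and set $W_r=\Exp\{x:\ \|x_j\|\le C_j r^{j}\ \text{for all }j\}$, where $x_j$ is the $\mathfrak{m}_j$-component. A BCH term which is a bracket of components lying in $\mathfrak{m}_{i_1},\dots,\mathfrak{m}_{i_k}$ ($k\ge 2$) lands in the term of the lower central series of weight $s=i_1+\cdots+i_k$, so it contributes to $\mathfrak{m}_j$ only when $s\le j$, and its norm is at most a constant (structure constants, the finitely many BCH coefficients, norm comparisons) times $C_{i_1}\cdots C_{i_k}r^{s}\le A'\,C_{i_1}\cdots C_{i_k}\,r^{j}$ since $r\ge 1$; crucially the $C_i$ occurring have $i\le j-1$. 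By the ultrametric inequality these finitely many contributions do not accumulate, so choosing $C_1,\dots,C_c$ recursively (independently of $r$) makes $W_r$ stable under the BCH product and under $x\mapsto -x$, hence a subgroup. Then $U[r]\subset W_r\subset U[Cr^{c}]\subset U[r^{d}]$ with $d=c+\lceil\log_2 C\rceil$, using $r\ge 2$, and $\langle U[r]\rangle\subset W_r$ gives the lemma. (Equivalently, one can avoid constructing a subgroup and prove by induction on the word length, tracking each graded component separately, that $\log(e^{x_1}\cdots e^{x_m})$ stays in this weighted ball; your observation that a naive iteration of submultiplicativity blows up with $m$ is exactly why the graded bookkeeping, together with the ultrametric inequality, is needed.) Your reduction from a finite product of fields to a single field and the absorption of multiplicative constants into the exponent via $r\ge 2$ are fine as stated.
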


In the proof of \cite[Theorem II.1]{Guih}, Guivarc'h provides an asymptotic description of $K^n$, where $K$ is a compact symmetric generating subset of a nilpotent connected Lie group $G$, which in particular implies the following lemma.
\begin{lem}\label{lem:realGroupBall}
Let $\mk{u}$ be a finite-dimensional nilpotent Lie algebra over $\R$.  
Consider a norm $\|\cdot\|$ on $\mk{u}$. For every compact symmetric generating subset $K$ of $U$, there exists $C>1$ and $\eps>0$ such that  for all integers  $r\geq 2$,
$$U[\eps r^{1/C}]\subset K^r\subset U[r^C].$$ 
\end{lem}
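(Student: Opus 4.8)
The plan is to recover the statement from Guivarc'h's asymptotic description of the balls $K^n$ in the simply connected nilpotent Lie group $U$ (taken simply connected, so that $\Exp\colon\mk u\to U$ is a diffeomorphism), proving the two inclusions separately. First I would fix a basis $(e_1,\dots,e_n)$ of $\mk u$ adapted to the lower central series $\mk u=\mk u^{(1)}\supseteq\dots\supseteq\mk u^{(s)}\supseteq\mk u^{(s+1)}=0$, give $e_j$ the weight $w_j\in\{1,\dots,s\}$ equal to the largest $i$ with $e_j\in\mk u^{(i)}$, and introduce the homogeneous quasi-norm $\|X\|_*:=\max_j|x_j|^{1/w_j}$; it is comparable to $\|\cdot\|$ in the only way that matters here, namely (up to multiplicative constants depending on $\mk u$) $\|X\|$ is dominated by $\|X\|_*^{\,s}$ when $\|X\|_*\ge1$, and $\|X\|_*$ is dominated by $\|X\|^{1/s}$ when $\|X\|$ is bounded, so one can freely pass between the sets $U[\,\cdot\,]$ and quasi-norm balls.

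For the inclusion $U[\eps r^{1/C}]\subseteq K^r$: since $K$ is a compact generating set of the connected group $U$, Baire's theorem applied to the increasing union $U=\bigcup_m\bigl(\bigcup_{j\le m}K^j\bigr)$ of closed sets produces a power with nonempty interior, and then, using that $K$ is symmetric, some $K^{m}$ is a neighbourhood of $1$, hence contains $U[\delta]$ for some $\delta>0$. Using the one-parameter subgroup $t\mapsto\Exp(tX)$, i.e.\ $\Exp(X)=\Exp(X/N)^N$, one gets $U[\delta]^N\supseteq U[\delta N]$ for every integer $N\ge1$, whence $K^r\supseteq U[\delta]^{\lfloor r/m\rfloor}\supseteq U[\delta\lfloor r/m\rfloor]\supseteq U[\eps r]\supseteq U[\eps r^{1/C}]$ with $\eps=\delta/(2m)$ and any $C\ge1$, once $r$ is large; the finitely many small $r$ are handled by shrinking $\eps$.

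For the reverse inclusion $K^r\subseteq U[r^C]$: compactness of $K$ gives $\rho_*:=\sup_{k\in K}\|\Exp^{-1}(k)\|_*<\infty$, and the point is that $\|\cdot\|_*$ is quasi-subadditive for the group law. To see this I would write $\mu(X,Y)=\Exp^{-1}(\Exp X\cdot\Exp Y)$, apply Baker--Campbell--Hausdorff together with the (non-automorphic) dilations $\delta_t(e_j)=t^{w_j}e_j$: every iterated bracket contributing to the $e_j$-component of $\mu(\delta_tX,\delta_tY)$ lands in $\mk u^{(\ell)}$ with $\ell\ge2$ at least the sum of the weights of its entries, so it carries a factor $t^a$ with $a\le w_j$; hence $\mu(\delta_tX,\delta_tY)=\delta_t\bigl(\mu_t(X,Y)\bigr)$ with $(t,X,Y)\mapsto\mu_t(X,Y)$ jointly continuous on $\{\|X\|_*,\|Y\|_*\le1\}\times[1,+\infty]$, and compactness yields a constant $\lambda\ge1$ with $\|\mu(X,Y)\|_*\le\lambda\max(\|X\|_*,\|Y\|_*)$ whenever $\max(\|X\|_*,\|Y\|_*)\ge1$. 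Unrolling the inequality $a_r\le\lambda\,a_{\lceil r/2\rceil}$ for $a_r:=\sup_{g\in K^r}\|\Exp^{-1}(g)\|_*$ gives $a_r\le\lambda\rho_*\,r^{\log_2\lambda}$ (after enlarging $\rho_*$ to be $\ge1$), and then the comparison $\|X\|\le n\|X\|_*^{\,s}$ converts this into $K^r\subseteq U[\,n(\lambda\rho_*)^s r^{s\log_2\lambda}\,]$; one finishes by choosing $C$ large enough to absorb the constant and to dominate the finitely many remaining small $r$.

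I expect the only genuinely delicate point to be the quasi-subadditivity of the homogeneous quasi-norm under multiplication, a standard fact in the theory of homogeneous (Carnot-type) nilpotent groups and precisely what is extracted from the proof of \cite[Theorem~II.1]{Guih}; everything else is bookkeeping with the dilations and with the comparison $\|\cdot\|\leftrightarrow\|\cdot\|_*$. It is worth pointing out where the real case departs from the non-archimedean Lemma~\ref{lem:nonarchGroupBall}: here $U[\delta]$ already generates all of $U$ with no compact ``saturation'' $\langle U[r]\rangle$ available, which is why the upper bound must be obtained through the quasi-norm estimate rather than through a compact subgroup.
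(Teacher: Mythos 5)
Your proof is, in substance, the paper's: the paper disposes of this lemma by citing the asymptotic description of the balls $K^n$ contained in Guivarc'h's proof of Theorem II.1 of [Guih], and what you write out (basis adapted to the lower central series, weights, the homogeneous quasi-norm, its quasi-subadditivity via the dilations and Baker--Campbell--Hausdorff, then the doubling recursion for the upper inclusion, and Baire plus one-parameter subgroups, i.e.\ $\Exp(X)=\Exp(X/N)^N$, for the lower inclusion) is precisely a reconstruction of that estimate, so the core of the argument is sound and in fact more self-contained than the paper's. Two small steps deserve a word: you repeatedly use monotonicity of the powers of $K$, e.g.\ $K^{m\lfloor r/m\rfloor}\subset K^{r}$ and $a_{\lfloor r/2\rfloor}\le a_{\lceil r/2\rceil}$, which requires the identity to lie in a suitable power of $K$; symmetry only gives $1\in K^2$, so odd gaps need a remark (for instance that $U$, being divisible, has no index-two subgroup, so $1$ lies in some odd power of $K$ as well; or simply run the recursion with the monotone envelope $\max_{j\le r}a_j$). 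These are routine repairs.

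The one genuine over-claim is the sentence ``the finitely many small $r$ are handled by shrinking $\eps$''. For $r=2$ the desired inclusion forces $K^2$ to be a neighbourhood of $1$, and no choice of $\eps$ can create that: take $U=\R$, $D=\{\sum_{k\ge1}a_k4^{-k}:a_k\in\{0,1\}\}$ and $K=D\cup(-D)$. Then $K$ is compact, symmetric and generates $\R$ (indeed $D+D+D=[0,1]$), but $K+K=(D+D)\cup(D-D)\cup(-(D+D))$ is a finite union of self-similar sets of Hausdorff dimension $\log 3/\log 4<1$, hence has empty interior, so $U[\eps\,2^{1/C}]\not\subset K^2$ for every $\eps>0$. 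So the left-hand inclusion is only true for all $r\ge r_0(K)$ (or under the extra hypothesis that $K$ is a generating compact neighbourhood of $1$); this imprecision is already present in the lemma as stated and in the paper's citation-style proof, and it is harmless for the applications (Theorem \ref{th:C_strong} and Corollary \ref{cor:BallCReal} only use large radii), but your write-up should state the threshold $r_0(K)$ honestly rather than claim to absorb the small $r$ into $\eps$.
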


We deduce the following corollaries:
\begin{cor}\label{cor:BallCnonArch}
Under the assumptions of Lemma \ref{lem:BallC}, and assuming in addition that 
$U$ is totally disconnected, there exist constants  $\lambda_1$ and $\lambda_2>1$ such that the following holds.
For every norm $\|\cdot\|$ on $\mk{u}$, 
denoting by $U[r]$ the exponential of the $r$-ball in $(\mk{u},\|\cdot\|)$, we have, for all large enough $n$, the inclusions
\[S^n\subset \langle U[\lambda_2^n]\rangle T^n,\quad   \langle U[\lambda_1^n]\rangle T^n\subset S^{2n}.\]
\end{cor}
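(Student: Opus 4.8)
The plan is to deduce the corollary from Lemma \ref{lem:BallC} together with Lemma \ref{lem:nonarchGroupBall}. Since $U$ is totally disconnected, all the locally compact fields underlying $\mk{u}$ are non-archimedean, so $\mk{u}$ is a finite-dimensional nilpotent Lie algebra over a finite product of non-archimedean local fields of characteristic zero; in particular the exponential map and the balls $U[r]$ are well-defined, and both Lemma \ref{lem:BallC} and Lemma \ref{lem:nonarchGroupBall} are available. First I would fix, once and for all, a norm $\|\cdot\|_\ast$ on $\mk{u}$ small enough for Lemma \ref{lem:BallC} to apply, let $d\in\N$ be the integer that Lemma \ref{lem:nonarchGroupBall} attaches to $\|\cdot\|_\ast$, and write $U_\ast[r]$ for the corresponding balls. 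I then set $\lambda_2=\mu_2^2$ and $\lambda_1=\mu_1^{1/(2d)}$; both are $>1$ since $\mu_1,\mu_2>1$, and --- this is the point --- they depend only on the constants furnished by Lemmas \ref{lem:BallC} and \ref{lem:nonarchGroupBall}, not on the ambient norm $\|\cdot\|$.

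Given an arbitrary norm $\|\cdot\|$ on $\mk{u}$, equivalence of norms on a finite-dimensional space yields $c\ge 1$ with $c^{-1}\|\cdot\|_\ast\le\|\cdot\|\le c\|\cdot\|_\ast$, so that $U_\ast[s]\subseteq U[cs]$, $U[s]\subseteq U_\ast[cs]$, and hence $\langle U[s]\rangle\subseteq\langle U_\ast[cs]\rangle$ for every $s>0$. For the first inclusion I would chain, using Lemma \ref{lem:BallC} for $\|\cdot\|_\ast$,
$$S^n\subset U_\ast[\mu_2^n]T^n\subseteq U[c\mu_2^n]T^n\subseteq U[\mu_2^{2n}]T^n=U[\lambda_2^n]T^n\subseteq\langle U[\lambda_2^n]\rangle T^n,$$
where the third inclusion holds as soon as $c\le\mu_2^n$, i.e.\ for all large $n$. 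For the second inclusion, starting from $\langle U[\lambda_1^n]\rangle\subseteq\langle U_\ast[c\lambda_1^n]\rangle$, I would apply Lemma \ref{lem:nonarchGroupBall} (legitimate once $c\lambda_1^n\ge 2$) to get $\langle U_\ast[c\lambda_1^n]\rangle\subseteq U_\ast[(c\lambda_1^n)^d]=U_\ast[c^d\mu_1^{n/2}]$, using $\lambda_1^d=\mu_1^{1/2}$; since $c^d\mu_1^{n/2}\le\mu_1^n$ as soon as $c^d\le\mu_1^{n/2}$, i.e.\ for all large $n$, this gives $\langle U[\lambda_1^n]\rangle\subseteq U_\ast[\mu_1^n]$, and Lemma \ref{lem:BallC} then yields $\langle U[\lambda_1^n]\rangle T^n\subseteq U_\ast[\mu_1^n]T^n\subseteq S^{2n}$.

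The one delicate point --- and really the only one --- is the interaction of the two quantifiers: $\lambda_1$ and $\lambda_2$ must be fixed independently of $\|\cdot\|$, whereas the threshold on $n$ beyond which the inclusions hold may depend on it, which is precisely what ``for all large enough $n$'' allows. This works because $\|\cdot\|$ enters only through the equivalence constant $c=c(\|\cdot\|)$, and each use of $c$ above has the shape ``$c^{\mathrm{const}}\le(\mathrm{base})^n$ for $n$ large'', which is valid exactly because the relevant base ($\mu_2$, resp.\ $\mu_1$) is $>1$: the exponential growth in $n$ supplied by Lemma \ref{lem:BallC} absorbs both the norm-dependent constant and the polynomial loss $r\mapsto r^d$ coming from Lemma \ref{lem:nonarchGroupBall}. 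Everything else is routine bookkeeping with balls in $\mk{u}$.
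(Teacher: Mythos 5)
Your argument is correct and is exactly the deduction the paper intends (it states the corollary without proof, immediately after Lemmas \ref{lem:BallC} and \ref{lem:nonarchGroupBall}): fix a small enough reference norm, combine $S^n\subset U[\mu_2^n]T^n$ and $U[\mu_1^n]T^n\subset S^{2n}$ with $\langle U[r]\rangle\subset U[r^d]$, and absorb the norm-equivalence constant and the polynomial loss $r\mapsto r^d$ into the exponential bases, which is what permits norm-independent $\lambda_1,\lambda_2>1$ at the cost of ``for all large enough $n$''. Your handling of the quantifier order (constants first, threshold on $n$ possibly depending on the norm) is the only delicate point and you treat it correctly.
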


\begin{cor}\label{cor:BallCReal}
We keep the assumptions of Lemma \ref{lem:BallC}, assuming in addition that $U$ is connected, and that $K$ is a compact generating set of $U$. Then there exist constants  $\beta_1$ and $\beta_2>1$ such that the following holds. For all large enough $n$, we have the inclusions
\[S^n\subset K^{\beta_1^n} T^n,\quad  K^{\beta_2^n}T^n\subset S^{2n}.\]
\end{cor}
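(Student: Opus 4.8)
The statement (Corollary~\ref{cor:BallCReal}) should follow by combining Lemma~\ref{lem:BallC} with the description of balls in the connected nilpotent group $U$ provided by Lemma~\ref{lem:realGroupBall}. The rough idea is that the sets $U[r]$ (exponentials of $r$-balls in $(\mk{u},\|\cdot\|)$) and the combinatorial balls $K^r$ in $U$ are mutually comparable \emph{up to a polynomial change of the radius}, so that feeding this polynomial comparison into the exponential estimates $S^n\subset U[\mu_2^n]T^n$ and $U[\mu_1^n]T^n\subset S^{2n}$ of Lemma~\ref{lem:BallC} only changes the base of the exponential, which is exactly what the corollary asserts.

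\begin{proof}[Proof of Corollary~\ref{cor:BallCReal}]
Fix a small enough norm $\|\cdot\|$ on $\mk{u}$ so that Lemma~\ref{lem:BallC} applies, giving constants $\mu_1,\mu_2>1$ with $S^n\subset U[\mu_2^n]T^n$ and $U[\mu_1^n]T^n\subset S^{2n}$ for all $n$. Since $K$ is a compact generating subset of the connected nilpotent Lie group $U$, Lemma~\ref{lem:realGroupBall} provides $C>1$ and $\eps>0$ with
\[
U[\eps r^{1/C}]\subset K^r\subset U[r^{C}]\qquad\text{for all integers }r\ge 2.
\]
From the right-hand inclusion above, applied with $r^{C}=\mu_2^n$, i.e.\ $r=\lceil\mu_2^{n/C}\rceil$, we get $U[\mu_2^n]\subset K^{r}$ with $r\le \beta_1^n$ for a suitable constant $\beta_1>1$ (any $\beta_1>\mu_2^{1/C}$ works, for $n$ large); hence $S^n\subset U[\mu_2^n]T^n\subset K^{\beta_1^n}T^n$ for all large enough $n$. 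For the other inclusion, from the left-hand inclusion $U[\eps r^{1/C}]\subset K^r$ choose $r$ so that $\eps r^{1/C}\ge \mu_1^n$, i.e.\ $r\ge (\mu_1^n/\eps)^{C}$; taking $\beta_2>1$ with $\beta_2^n\ge (\mu_1^n/\eps)^{C}$ for all large $n$ (any $\beta_2>\mu_1^{C}$ works) gives $K^{\beta_2^n}\supset U[\mu_1^n]$, whence $K^{\beta_2^n}T^n\supset U[\mu_1^n]T^n\subset S^{2n}$, that is, $K^{\beta_2^n}T^n\subset S^{2n}$. This establishes both inclusions for all large enough $n$.
\end{proof}

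The only mild subtlety is bookkeeping: one must check that the polynomial distortion between $U[r]$ and $K^r$ in Lemma~\ref{lem:realGroupBall} indeed only affects the exponent linearly in $n$ (so that $\mu_i^n$ raised to a fixed power $C$, or with exponent divided by $C$, is again of the form $\beta_i^n$), and that the parameter $T$ is kept unchanged throughout, which is immediate since $T^n$ appears identically in Lemma~\ref{lem:BallC}. There is no real obstacle here; the proof is a routine concatenation of the two lemmas, and the statement is essentially the ``connected $U$'' analogue of Corollary~\ref{cor:BallCnonArch}, whose proof proceeds in the same way but invoking Lemma~\ref{lem:nonarchGroupBall} in place of Lemma~\ref{lem:realGroupBall}.
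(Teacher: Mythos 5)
Your overall strategy is exactly the intended one (the paper states this corollary as an immediate deduction from Lemma \ref{lem:BallC} and Lemma \ref{lem:realGroupBall}), but both halves of your argument use the inclusions of Lemma \ref{lem:realGroupBall} in the wrong direction, and the second half ends with a set-theoretic non sequitur. For the first inclusion you need $U[\mu_2^n]\subset K^{\beta_1^n}$; the right-hand inclusion $K^r\subset U[r^C]$ that you invoke gives $K^{r}\subset U[\mu_2^n]$ when $r^C=\mu_2^n$, which is the reverse of what is needed. The correct tool is the \emph{left}-hand inclusion $U[\eps r^{1/C}]\subset K^r$: choosing $r=\lfloor\beta_1^n\rfloor$ with $\beta_1>\mu_2^C$ gives $\eps r^{1/C}\ge\mu_2^n$ for large $n$, hence $U[\mu_2^n]\subset K^{\beta_1^n}$ and $S^n\subset K^{\beta_1^n}T^n$. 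In particular your constant $\beta_1>\mu_2^{1/C}$ is not justified (and since $C>1$ it is in general too small); the exponent should be $C$, not $1/C$.

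For the second inclusion the error is more serious: you prove $U[\mu_1^n]\subset K^{\beta_2^n}$ and then argue ``$K^{\beta_2^n}T^n\supset U[\mu_1^n]T^n\subset S^{2n}$, hence $K^{\beta_2^n}T^n\subset S^{2n}$'', but from $A\supset B$ and $B\subset C$ one cannot conclude $A\subset C$. What is actually needed is the opposite containment $K^{\beta_2^n}\subset U[\mu_1^n]$, which comes from the \emph{right}-hand inclusion $K^r\subset U[r^C]$ applied with $r=\lfloor\beta_2^n\rfloor$: any $1<\beta_2\le\mu_1^{1/C}$ gives $\beta_2^{nC}\le\mu_1^n$, hence $K^{\beta_2^n}\subset U[\beta_2^{nC}]\subset U[\mu_1^n]$ and so $K^{\beta_2^n}T^n\subset U[\mu_1^n]T^n\subset S^{2n}$. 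Note that here $\beta_2$ must be chosen \emph{small} (enlarging $\beta_2$ only makes the desired inclusion harder), so your prescription ``any $\beta_2>\mu_1^C$ works'' is backwards. With these two corrections (swap the roles of the two inclusions of Lemma \ref{lem:realGroupBall}, take $\beta_1>\mu_2^C$ and $1<\beta_2\le\mu_1^{1/C}$, and handle the integrality of $r$ and the constant $\eps$ by taking $n$ large), the routine concatenation you describe does prove the corollary, in the same way its non-Archimedean analogue Corollary \ref{cor:BallCnonArch} follows from Lemma \ref{lem:nonarchGroupBall}.
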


Finally, we shall use 
\begin{lem}\label{lem:actionBall}
Under the assumptions of Lemma \ref{lem:BallC}, there exists $\lambda>1$ such that for all $r\geq 1$, $n\in \N$, $g\in U[r]$, and $h\in T^n$,
$$h^{-1}gh\in U[\lambda^nr].$$
In particular, if $U$ is connected, and $K$ is a compact generating subset of $U$, there exist $\alpha,b\geq 0$ such that for all  integers $r\geq 2$, $n\geq 1$, for all $g\in K^r$ and $h\in T^n$,
$$h^{-1}gh\in K^{\lceil \alpha^nr^{b}\rceil}.$$
\end{lem}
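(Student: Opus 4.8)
The plan is to linearize: the conjugation action of $N$ on the normal subgroup $U$ is, through the exponential parametrization of the ambient unipotent group, intertwined with the adjoint action of $N$ on the Lie algebra $\mathfrak{u}$, and on $\mathfrak{u}$ the relevant distortion is controlled for the cheap reason that $T$ is compact.

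Since $U$ is normal in $G=UN$ and each $U_i$ is normalized by $N$, conjugation by $h\in N$ induces a continuous automorphism $\mathrm{Ad}(h)$ of the normed space $(\mathfrak{u},\|\cdot\|)$, and $\mathrm{Ad}\colon N\to\mathrm{GL}(\mathfrak{u})$ is a continuous homomorphism into the bounded automorphisms. Because the $U_i$ are (open in) unipotent groups over local fields of characteristic zero, the exponential $\exp\colon\mathfrak{u}\to\mathbb{U}:=\prod_i\mathbb{U}_i(\K_i)$ is a polynomial bijection, the $N$-action on $U$ extends to $\mathbb{U}$, and $h^{-1}\exp(X)h=\exp(\mathrm{Ad}(h^{-1})X)$ for all $h\in N$, $X\in\mathfrak{u}$; recall $U[r]=\exp\big(\{X\in\mathfrak{u}:\|X\|\le r\}\big)$. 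By compactness of $T$ and continuity of $\mathrm{Ad}$ there is $\lambda_0<\infty$ with $\|\mathrm{Ad}(t^{-1})X\|\le\lambda_0\|X\|$ for all $t\in T$, $X\in\mathfrak{u}$; put $\lambda:=\max(\lambda_0,2)>1$. For $h=t_1\cdots t_n\in T^n$ one has $\mathrm{Ad}(h^{-1})=\mathrm{Ad}(t_n^{-1})\cdots\mathrm{Ad}(t_1^{-1})$, hence $\|\mathrm{Ad}(h^{-1})X\|\le\lambda^n\|X\|$; thus for $g=\exp(X)\in U[r]$ we get $h^{-1}gh=\exp(\mathrm{Ad}(h^{-1})X)\in U[\lambda^n r]$, which is the first assertion.

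For the ``in particular'' part, assume $U$ connected, so $\mathfrak{u}$ is a nilpotent Lie algebra over $\R$ and $\exp$ is a diffeomorphism onto $U$. Let $C>1$ and $\eps\in(0,1]$ be constants furnished by Lemma \ref{lem:realGroupBall} for $K$ (shrinking $\eps$ is harmless). For an integer $r\ge2$ we have $g\in K^r\subseteq U[r^C]$, so by the first part $h^{-1}gh\in U[\lambda^n r^C]$ for all $h\in T^n$. Set $\rho:=\lceil\lambda^{nC}\eps^{-C}r^{C^2}\rceil$; then $\rho\ge2$ (since $\lambda\ge2$, $n,C\ge1$, $r\ge2$) and $\eps\rho^{1/C}\ge\lambda^n r^C$, so Lemma \ref{lem:realGroupBall} gives $U[\lambda^n r^C]\subseteq U[\eps\rho^{1/C}]\subseteq K^\rho$. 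Finally put $\alpha:=\lambda^C\eps^{-C}$ and $b:=C^2$; from $\eps\le1$ we get $\eps^{-nC}\ge\eps^{-C}$, so $\rho\le\lceil\alpha^n r^b\rceil$, and since $K$ is symmetric and contains $1$, $K^\rho\subseteq K^{\lceil\alpha^n r^b\rceil}$. Hence $h^{-1}gh\in K^{\lceil\alpha^n r^b\rceil}$.

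Almost everything here is routine bookkeeping; the one point that must be stated with care is the identity $h^{-1}\exp(X)h=\exp(\mathrm{Ad}(h^{-1})X)$ valid for all of $\mathfrak{u}$ in the mixed archimedean/non-archimedean setting --- that the $N$-action on each open subgroup $U_i\le\mathbb{U}_i(\K_i)$ extends to $\mathbb{U}_i(\K_i)$ and corresponds under $\exp_i$ to the linear map $\mathrm{Ad}(h)$ on $\mathfrak{u}_i$. This is standard for unipotent groups in characteristic zero and is already implicitly in force in Lemma \ref{lem:BallC} and its corollaries, so I would invoke it rather than reprove it; the rest is merely the comparison of balls $U[\,\cdot\,]$ with powers of generators coming from Lemma \ref{lem:realGroupBall}.
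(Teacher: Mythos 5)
Your proof is correct and follows essentially the same route as the paper: the paper also takes $\lambda$ to be the supremum over $t\in T$ of the operator norm of the ($N$-conjugation, i.e.\ adjoint) action on $(\mathfrak{u},\|\cdot\|)$, gets the first claim from submultiplicativity of the operator norm, and deduces the ``in particular'' part from Lemma \ref{lem:realGroupBall}. You have simply written out the details (the identity $h^{-1}\exp(X)h=\exp(\mathrm{Ad}(h^{-1})X)$ and the explicit constants $\alpha=\lambda^C\eps^{-C}$, $b=C^2$) that the paper leaves implicit.
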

\begin{proof}
Let $\lambda$ be the supremum over all $t\in T$ of the operator norm of $t$ acting on the normed vector space $(\mk{u},\|\cdot\|)$. The first statement is now a direct consequence of the fact that the operator norm is submultiplicative. The second statement follows from Lemma \ref{lem:realGroupBall}. 
\end{proof}

\begin{thm}\label{th:C_strong}
Every CGLC group $G=NU$ in the class $\mathfrak{C}"$ (in particular in the class $\mathfrak{C}$) admits a strong controlled F\o lner sequence.
\end{thm}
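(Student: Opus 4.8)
The plan is to build $F_n$ of the form $\Phi_n\,T^{c_0 n}$, where $T$ is a compact generating set of $N$, $c_0$ is a constant fixed at the end, and $\Phi_n\subset U$ is a ``box'' that is exponentially large inside $U$ but only linearly large inside $G$. Two inputs make this possible. First, the metric comparisons of Lemma~\ref{lem:BallC} and Corollaries~\ref{cor:BallCnonArch}--\ref{cor:BallCReal}: $U$ is logarithmically distorted in $G$, so a subset of $U$ of radius $\rho^{O(n)}$ still lies in the $G$-ball of radius $O(n)$; moreover $S^n\subset U[\mu_2^n]\,T^n$. Second, $N$ has polynomial growth, so the $T$-ball of radius $(c_0+1)n$ has measure comparable to that of radius $c_0 n$. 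Fix once and for all compact symmetric generating sets $S\supset T$ of $G$ and $N$.

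For $\Phi_n$ one must use boxes adapted to the grading of $U$ rather than metric balls, because $U$ is non-abelian. Write $\mathfrak{u}=\bigoplus_{j\ge 1}\mathfrak{u}^{(j)}$ for the $N$-invariant lower central grading ($[\mathfrak{u}^{(i)},\mathfrak{u}^{(j)}]\subset\mathfrak{u}^{(i+j)}$), compatible with the direct product $U=\prod_i U_i$ of Definition~\ref{defc}(3); choose a norm $\|\cdot\|$ on $\mathfrak{u}$ respecting this decomposition, and for $R\ge 1$ set $\Box(R)=\{\sum_j X_j:\|X_j\|\le R^j\}$ and $\Phi(R)=\exp(\Box(R))$, using the polynomial exponential (the real one on the connected factor, the $\K_i$-adic ones on the totally disconnected factors). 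Scaling the $j$-th layer by $R^j$ makes $\Phi(R)$ behave like a subgroup: by Baker--Campbell--Hausdorff, $\Phi(R)\Phi(R)\subset\Phi(C_0R)$ with $C_0$ independent of $R$ (a bracket $[X_i,X_j]$ lands in layer $i+j$ with norm $\le C\|X_i\|\|X_j\|\le CR^{i+j}$), while $\mu_U(\Phi(C_0R))/\mu_U(\Phi(R))=C_0^{\sum_j j\dim\mathfrak{u}^{(j)}}=O(1)$; also $U[cR]\subset\Phi(R)\subset U[R^L]$ for the nilpotency class $L$ and a constant $c>0$. Let $\lambda>1$ be the constant of Lemma~\ref{lem:actionBall}. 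Now choose $\rho$ large enough that $U[(\lambda^{c_0}\mu_2)^n]\subset\Phi(\rho^n)$ for all $n$ (possible since $\lambda^{c_0 n}\mu_2^n=(\lambda^{c_0}\mu_2)^n$ and it suffices that $\rho\ge\lambda^{c_0}\mu_2/c$), and set $\Phi_n=\Phi(\rho^n)$, $F_n=\Phi_n\,T^{c_0 n}$.

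Then I would check the two conditions of Definition~\ref{strongf}. Diameter: $\Phi_n\subset U[\rho^{Ln}]\subset S^{O(n)}$ by Lemma~\ref{lem:BallC}, and $T^{c_0 n}\subset S^{O(n)}$, so $F_n$ lies in a $G$-ball of radius $O(n)$; and $\mu_G(F_n)>0$ since the product map $U\times N\to G$ carries $\mu_U\otimes\mu_N$ to $\mu_G$ up to a continuous positive density and both factors have positive measure. Følner ratio: using $S^n\subset U[\mu_2^n]T^n$ and conjugating the unipotent part to the left of $T^{c_0 n}$ via Lemma~\ref{lem:actionBall} ($h^{-1}U[r]h\subset U[\lambda^{c_0 n}r]$ for $h\in T^{c_0 n}$),
\[F_nS^n\ \subset\ \Phi_n\,T^{c_0 n}U[\mu_2^n]T^n\ \subset\ \Phi_n\,U[(\lambda^{c_0}\mu_2)^n]\,T^{(c_0+1)n}\ \subset\ \Phi_n\Phi_n\,T^{(c_0+1)n}\ \subset\ \Phi(C_0\rho^n)\,T^{(c_0+1)n}\]
by the choice of $\rho$ and of the box. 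Hence $\mu_G(F_nS^n)/\mu_G(F_n)$ is bounded by $\mu_U(\Phi(C_0\rho^n))/\mu_U(\Phi(\rho^n))=O(1)$ times the ratio of the density-weighted volumes of the $N$-balls $T^{(c_0+1)n}$ and $T^{c_0 n}$, and this last ratio is $O(1)$ because $N$ has polynomial growth (when the relative modular character is trivial on $N$, e.g.\ $G$ unimodular, this is immediate; in general one absorbs that character into the choice of the $N$-Følner set). The totally disconnected and connected cases run identically, substituting Corollary~\ref{cor:BallCnonArch} resp.\ \ref{cor:BallCReal} for Lemma~\ref{lem:BallC}, Lemma~\ref{lem:nonarchGroupBall} resp.\ \ref{lem:realGroupBall} for the control of $\langle U[R]\rangle$ resp.\ of word-balls $K^R$ of $U$, and the relevant part of Lemma~\ref{lem:actionBall}; for a general $U$ one combines the two, the direct product structure making the estimates independent on each factor.

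The main obstacle is the competition in the choice of $\Phi_n$: it must be large enough in $U$ to swallow both the unipotent part $U[\mu_2^n]$ of $S^n$ and its conjugates $U[(\lambda^{c_0}\mu_2)^n]$ under the $N$-ball $T^{c_0 n}$, yet small enough in $G$ to lie in a ball of linear radius. That both exponential scales fit inside a linearly bounded $G$-ball is exactly the logarithmic distortion of $U$ in $G$ (Lemma~\ref{lem:BallC}), the quantitative shadow of the contracting elements $t_i$ built into the class $\mathfrak{C}''$ (Definition~\ref{defc}(4)). The remaining difficulties are of bookkeeping type: the non-commutativity of $U$ forces grading-adapted boxes $\Phi(R)$ rather than balls, so that ``box$\,\cdot\,$box$\,=\,$box'' with bounded measure loss; in the totally disconnected case one keeps the boxes inside genuine compact subgroups using Lemma~\ref{lem:nonarchGroupBall}; and one must track the modular density of $\mu_G$ along $N$, which stays harmless thanks to the polynomial growth of $N$.
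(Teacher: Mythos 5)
Your proposal is correct and takes essentially the same route as the paper's proof: Følner sets of the form (exponentially large subset of $U$)$\cdot T^{O(n)}$, linear diameter via Lemma \ref{lem:BallC} and its corollaries, the tube estimate obtained by pushing the $U$-component of $S^n$ through the $N$-ball with Lemma \ref{lem:actionBall}, and the volume ratio controlled by polynomial growth of $N$ together with controlled growth of the $U$-part. The only differences are bookkeeping: you control volumes in $U$ with filtration-adapted boxes and Baker--Campbell--Hausdorff (note that only $[\mathfrak{u}^{(i)},\mathfrak{u}^{(j)}]\subset\bigoplus_{k\ge i+j}\mathfrak{u}^{(k)}$ is available in general, since an honest $N$-invariant grading need not exist, but that is all your estimate uses), whereas the paper takes word balls $K^{\mu^n}$ in the connected part and compact subgroups $\langle U_{\mathrm{na}}[\mu^n]\rangle$ in the totally disconnected part and concludes by a doubling/covering argument; and the modular-function caveat you flag for the $N$-direction is passed over just as silently in the paper's final counting step.
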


\begin{proof} 
We write $U=U_{\R}\times U_{\mathrm{na}}$, where $U_{\R}$ is connected, and $U_{\mathrm{na}}$ is totally disconnected.
Fix some large enough integer $\mu$ (to be specified later) and define
$$F_n=(K^{\mu^n}\times \langle U_{\mathrm{na}}[\mu^n]\rangle)T^n,$$
where $K$ is a compact symmetric generating subset of $U_{\R}$.

 By Corollaries \ref{cor:BallCnonArch} and \ref{cor:BallCReal}, there exists $C>0$ such that $F_n\subset S^{Cn}$. 
Now observe that if $\mu$ is large enough, Lemma \ref{lem:actionBall}  implies that 
$$\langle U_{\mathrm{na}}[\mu^n]\rangle T^n\langle U_{\mathrm{na}}[\lambda_1^n]\rangle=\langle U_{\mathrm{na}}[\mu^n]\rangle T^{n}.$$
 On the other hand, assuming $\mu\geq \beta_1^b\lambda$, we have
 $$K^{\mu^n}T^nK^{\beta_1^n}\subset K^{\mu^n+\lambda^n\beta_1^{bn}}T^n\subset K^{2\mu^n}T^n.$$
We deduce that
 $$F_n'\subset (\langle U_{\mathrm{na}}[\mu^n]\rangle \times K^{2\mu^n}) T^{2n}.$$
 Finally, in order to conclude, we observe that by the doubling property for both $N$ and $U_{\R}$, there exists an integer $k$ such that for all all $n$ there exist finite subsets $X_n\subset N$ and $Y_n\subset U_{\R}$ of cardinality at most $k$ such that $T^{2n}\subset T^nX_n$ and $K^{2\mu^n}\subset Y_n K^{\mu^n}$. Hence we have
$$ F_n'\subset Y_nF_nX_n,$$ from which we deduce that
\[|F_n'|\leq k^2|F_n|.\qedhere\]
\end{proof}

\subsection{Combability of groups  in the class $\mathfrak{C}"$}\label{ccomb}

\begin{thm}\label{thm:Ncombable}
CGLC groups in the class $\mathfrak{C}"$ are $N$-combable in the sense of Definition \ref{def:combing}.
\end{thm}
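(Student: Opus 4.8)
The plan is to establish the combability criterion in the form given by the remark after Definition~\ref{def:combing}: produce constants $k,\ell$ such that $S^n\subset((S^{\ell n}\cap N)S)^k$ for all $n$ (in fact I will obtain the stronger form $S^n\subset((S^\ell\cap N)^nS)^k$). Fix compact symmetric generating sets $S\ni1$ of $G$ and $T$ of $N$ with $T\subset S$, and a small enough norm $\|\cdot\|$ on $\mathfrak u$ as in Lemma~\ref{lem:BallC}, so that $S^n\subset U[\mu_2^n]T^n$ and $T^n\subset S^n\cap N$. Thus it suffices to ``comb'' the elements of $U[\mu_2^n]$. Since $V$ is cocompact in $U$ we have $U=V\Omega$ for some compact $\Omega$; given $u\in U[\mu_2^n]$ write $u=v\omega$ with $\omega\in\Omega\subset S^{O(1)}$ and $v=u\omega^{-1}$, which lies in $U[\mu_2^{O(n)}]$ because in the nilpotent group $U$ the norm satisfies $\|xy\|\le\mathrm{poly}(\|x\|,\|y\|)$. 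Finally, by the structural analysis of \cite[\S6.B]{CTDehn} underlying the decomposition $V=V_1\cdots V_k$ of Definition~\ref{defc}(\ref{ssg3}), this product decomposition is ball-controlled: $v=v_1\cdots v_k$ with $v_i\in V_i$ and $\|v_i\|\le\mu_2^{O(n)}$.

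The key point is that for each $i$, conjugation by $t_i^{-1}$ contracts $V_i$ \emph{geometrically}. Since $U$ is unipotent over fields of characteristic zero, $\exp\colon\mathfrak u\to U$ is a homeomorphism and $N$ acts linearly on $\mathfrak u$; put $A_i=\mathrm{Ad}(t_i^{-1})$, so that $\log(t_i^{-m}vt_i^m)=A_i^m\log v$. The hypothesis $t_i^{-n}vt_i^n\to1$ for all $v\in V_i$ says $A_i^nX\to0$ for all $X\in\log V_i$; hence $\log V_i$ lies in the stable subspace of $A_i$ (the sum of its generalized eigenspaces for eigenvalues of absolute value $<1$), on which $\|A_i^n\|\le C\theta^n$ for some $\theta\in(0,1)$. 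Consequently there exist $R_0<\infty$ and $c'>0$ such that whenever $v\in V_i$ with $\|v\|\le r$ and $m\ge c'\log_+ r$, the element $w:=t_i^{-m}vt_i^m$ satisfies $\|\log w\|\le R_0$, i.e.\ $w\in V_i\cap U[R_0]\subset S^{O(1)}$, the last inclusion by Lemma~\ref{lem:BallC} (or, after separating the connected and totally disconnected parts of $U$, by Corollaries~\ref{cor:BallCReal} and \ref{cor:BallCnonArch}). Since $\|v_i\|\le\mu_2^{O(n)}$ we may take $m_i=O(n)$, so $v_i=t_i^{m_i}w_it_i^{-m_i}$ with $t_i\in N\cap S$ and $w_i\in S^{O(1)}$.

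Plugging this in and using $g=u\tau=v_1\cdots v_k\,\omega\,\tau$ with $\tau\in T^n\subset S^{O(n)}\cap N$, we regroup
\[g=(t_1^{m_1})\,w_1\,(t_1^{-m_1}t_2^{m_2})\,w_2\cdots(t_{k-1}^{-m_{k-1}}t_k^{m_k})\,w_k\,(t_k^{-m_k})\,\omega\,\tau.\]
Each parenthesized syllable lies in $N$ and has $S$-length $O(n)$, while each $w_i$ and $\omega$ has $S$-length $O(1)$; this writes $g$ as a product of a bounded number ($2k+2$) of syllables alternating between $N$ and $S$-bounded pieces, with total $S$-length $O(n)=O(|g|_S)$. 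Breaking the bounded pieces into single generators gives $S^n\subset((S^{\ell n}\cap N)S)^{k'}$, and since the $N$-syllables are bounded powers of the $t_i$'s and of $T$-elements one even gets $S^n\subset((S^\ell\cap N)^nS)^{k'}$. The main obstacle is the second paragraph: upgrading the merely qualitative convergence $t_i^{-n}vt_i^n\to1$ to a uniform geometric contraction rate valid on all of $V_i$ — which is exactly where the characteristic-zero Lie-algebra structure of $U$ is essential — together with the ball-controlled product decomposition $V=V_1\cdots V_k$, whose proof is the content of \cite[\S6.B]{CTDehn}.
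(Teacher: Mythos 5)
Your proposal is correct and follows essentially the same route as the paper: reduce modulo the $N$-part to an element of $U$ with exponentially bounded Lie-algebra norm, invoke the ball-controlled decomposition into the pieces $V_i$ from \cite[\S 6.B]{CTDehn}, upgrade the pointwise contraction $t_i^{-n}vt_i^n\to 1$ to a uniform geometric contraction of $\mathrm{Ad}(t_i^{-1})$ on the stable subspace containing $\log V_i$, and write each $v_i$ as $t_i^{m_i}w_it_i^{-m_i}$ with $m_i=O(n)$ and $w_i$ in a bounded set. The only cosmetic differences are that the paper gets the norm bound via its own Lemma \ref{lem:word/norm} rather than Lemma \ref{lem:BallC}, and reduces to the case $U=V_j$ instead of writing out the regrouped product.
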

\begin{proof}
Let $G\in \mathfrak{C}$ and let $S$ be a compact generating subset of $G$. For convenience, we shall assume that the generating set $S=S_U\cup S_N$, where $S_U\subset U$ (resp.\ $S_N\subset N$). We assume that $i=1$, namely that $G=UN$, with $U$ being a nilpotent algebraic group over some local field $\K$; the general case being similar. Let $q:G\to M:=G/U$. For every $g\in G$ of size $n$, its projection has length $\leq n$ with respect to $q(S)$, hence $g$ can be written as a product $g=um$, such that $m$ has length equal to $|q(g)|\leq n$, and $u$ has length $\leq n+|q(g)|\leq 2n$. Therefore, it is enough to prove (\ref{eq:combing})  for $g=u$.

Consider a finite-dimensional faithful representation of $U$ as unipotent matrices in  $M_d(\K)$ and equip the latter with a submultiplicative  norm $\|\cdot\|$. 
 We shall use the notation $\preceq$ and $ \simeq$ to mean ``up to multiplicative and additive constants".

Moreover, an easy calculation (using that $U$ is unipotent) shows that for all $u_1,\ldots, u_n \in U$, 
\begin{equation}\label{eq:unipotentnorm}\|u_1\ldots u_n\|\preceq n^d\max_i\|u_i\|.\end{equation}
We shall also use the fact that given a norm $\|\cdot\|_{\mathrm{Lie}}$ on the Lie algebra $\mathfrak{u}$ of $U$, one has 
$$\log \|u\|\simeq \log \|\log(u)\|_{\mathrm{Lie}},$$
where $\log: U\to \mathfrak{u}$ is the inverse of the exponential map. This estimate follows from the Baker-Campbell-Hausdorff formula, using that  $\log$ and $\exp$ are  polynomial maps.
The action by conjugation of $N$ on $U$ induces a group homomorphism $N\to \Aut(\mathfrak{u})$. Let $\|\cdot\|_{\mathrm{op}}$ be the operator norm on  $\End(\mathfrak{u})$, that by abuse of notation we consider as a norm on elements of $N$. Let $C=\max_{m \in S_N}\|m\|_{\mathrm{op}}$ and $K=\max_{u\in S_U} \|u\|_{\mathrm{Lie}}$. 

\begin{lem}\label{lem:word/norm}
For all $u\in U$, $|u|_S\succeq \log \|u\|$.   
\end{lem}

\begin{proof}
Assume that $|u|_S=n$, and so that $u=m_1u_1\ldots m_nu_n$, where the $u_i\in S_U$ and $m_i\in S_N$. Denote $h^g=g^{-1}hg$. One has the following formula 
$$u=u_1^{m_1}u_2^{m_1m_2}\ldots u_n^{m_1\ldots m_n}.$$
by (\ref {eq:unipotentnorm}),
by submultiplicativity, one has
$$\|u\|\preceq n^d\max_i\|u_i^{m_1\ldots m_i}\|.$$
On the other hand, for every $i$,  $\|u^{m_1\ldots m_i}\|_{\mathrm{Lie}}\leq \|m_1\ldots m_i\|_{\mathrm{op}}\|u_i\|_{\mathrm{Lie}}\leq KC^n$. The lemma follows.  
\end{proof}

In addition, we have
\begin{lem}\cite[Lemma 6.B.3.]{CTDehn}
There exists an integer $\ell$ such that every element $x\in U$ can be written as $v_1\ldots v_{\ell}$ with $v_i\in \bigcup_jV_j$, and $\max \|v_i\|\preceq \|u\|.$
\end{lem}
Thanks to  the previous lemma, it is enough to treat the case where $U=V_j$. Therefore we can assume that there exists $t\in N$ contracting all of $U$. Up to replacing it by some power, we can assume that $\|t\|_{\mathrm{op}}\leq 1/2$. For convenience, let us assume that $S_U$ contains all elements $u\in U$ such that $\|u\|_{\mathrm{Lie}}\leq 1$.
Given an element $u\in U$ such that $\|u\|_{\mathrm{Lie}}\leq 2^n$, it follows that the element $u^{t^n}$ belongs to $S_U$. It follows from Lemma \ref{lem:word/norm}, that every element $u\in U$ such that $|u|_S\leq n$ can be written as $t^{n'}u_0t^{-n'}$, where $n'\preceq n$ and $u_0\in S_U$. This finishes the proof that $G$ is $N$-combable. 
\end{proof}

\section{Proof of Theorem \ref{Main} and other results}
\subsection{Proof of Theorem \ref{Main}}
We need to check all four requirements of Theorem \ref{prop:main}:
\begin{itemize}
\item $G$ has a controlled F\o lner sequence: this is done in \S\ref{ccfs}.
\item $G$ is $N$-combable: this is done in \S\ref{ccomb}.
\item $N$ has Property $\WAPT$: this is easy (Corollary \ref{nilwapt} with Proposition \ref{prop:compactExt})
\end{itemize}

The last requirement, (\ref{item4}), can actually fail, but we can arrange it to hold enlarging $N$, replacing it with $N'=NW$ for some suitable compact subgroup $W$ normalized by $N$. Namely, we use:

\begin{lem}
Let $U$ be a open subgroup in a finite product $L$ of unipotent real and $p$-adic fields. Then the divisible subgroup $U_{\mathrm{div}}$ of $U$ is closed, cocompact in $U$ and contains the real component.
\end{lem}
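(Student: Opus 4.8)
The plan is to pass to the Lie algebra and reduce everything to the abelian $p$-adic case, after splitting off the archimedean part.

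\textbf{Reduction to the non-archimedean part.} Write $L=L_{\R}\times L_{\mathrm{td}}$, where $L_{\R}$ is the product of the archimedean unipotent factors $\mathbb{U}_i(\R)$ and $L_{\mathrm{td}}$ that of the non-archimedean ones $\mathbb{U}_i(\Q_{p_i})$. Since $L_{\R}$ is connected and $L_{\mathrm{td}}$ totally disconnected, $L_{\R}\times\{1\}$ is the identity component of $L$, hence is contained in the open subgroup $U$; together with the direct product structure this forces $U=L_{\R}\times U_{\mathrm{td}}$ with $U_{\mathrm{td}}:=U\cap(\{1\}\times L_{\mathrm{td}})$ open in $L_{\mathrm{td}}$. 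Now $L_{\R}$ is a simply connected nilpotent real Lie group, so $\exp\colon\mathfrak l_{\R}\to L_{\R}$ is a bijection and $\exp(X)^n=\exp(nX)$; thus $L_{\R}$ is (uniquely) divisible, hence is a divisible subgroup of $U$ and so $L_{\R}\subseteq U_{\mathrm{div}}$ --- this is the claim that $U_{\mathrm{div}}$ contains the real component. Since the maximal divisible subgroup of a direct product is the product of the maximal divisible subgroups (which exist, nilpotent groups being radicable-closed inside their Mal'cev completion), $U_{\mathrm{div}}=L_{\R}\times(U_{\mathrm{td}})_{\mathrm{div}}$, and it remains to show that $(U_{\mathrm{td}})_{\mathrm{div}}$ is closed and cocompact in $U_{\mathrm{td}}$.

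\textbf{A Lie-algebraic description.} Let $\mathfrak u$ be the nilpotent Lie algebra of $L_{\mathrm{td}}$, a free module of finite rank over $\mathbb{K}:=\prod_i\Q_{p_i}$, so $\exp\colon\mathfrak u\to L_{\mathrm{td}}$ is a bijection; again $\exp(X)^n=\exp(nX)$ shows $L_{\mathrm{td}}$ is uniquely divisible, and $\Q$ is dense in $\mathbb{K}$ by weak approximation. A subgroup of $U_{\mathrm{td}}$ is divisible exactly when it is stable under the canonical power families $g\mapsto g^q=\exp(q\log g)$, $q\in\Q$, and since $U_{\mathrm{td}}$ is closed this is equivalent to stability under the $\mathbb{K}$-power families $g\mapsto g^\lambda=\exp(\lambda\log g)$, $\lambda\in\mathbb{K}$. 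Each $\{g^\lambda:\lambda\in\mathbb{K}\}=\exp(\mathbb{K}\log g)$ is a divisible subgroup, and any divisible subgroup is the union of the ones it contains; hence $(U_{\mathrm{td}})_{\mathrm{div}}=\exp(\mathfrak s)$ where $\mathfrak s:=\{X\in\mathfrak u:\exp(\lambda X)\in U_{\mathrm{td}}\text{ for all }\lambda\in\mathbb{K}\}$. The set $\mathfrak s$ is the intersection over $\lambda\neq 0$ of the closed sets $\lambda^{-1}\exp^{-1}(U_{\mathrm{td}})$, hence is closed and $\mathbb{K}$-stable; so $(U_{\mathrm{td}})_{\mathrm{div}}=\exp(\mathfrak s)$ is closed in $U_{\mathrm{td}}$.

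\textbf{Cocompactness via the lower central series.} In the abelian case, when $U_{\mathrm{td}}$ is an open subgroup $V'$ of a $\mathbb{K}$-module $\mathbb V$, the set $\mathfrak s$ is the set of $x$ whose whole line $\mathbb{K}x$ lies in $V'$; this is a subgroup (because $V'+V'=V'$) and $\mathbb{K}$-stable (by density of $\Q$ and closedness of $V'$), hence a submodule. Choosing a complement $E$ with $\mathbb V=\mathfrak s\oplus E$, the quotient $V'/V'_{\mathrm{div}}$ identifies with an open subgroup of $E$ containing no line (a line would pull back into $\mathfrak s$), hence is a compact open subgroup of $E$; so $V'_{\mathrm{div}}$ is closed and cocompact. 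In general, run a descending induction along the lower central series $L_{\mathrm{td}}=\Gamma_1\supseteq\Gamma_2\supseteq\dots\supseteq\Gamma_{c+1}=\{1\}$, each $\Gamma_i$ being the group of points of an algebraic subgroup with $\Gamma_i/\Gamma_{i+1}$ a finite-dimensional $\mathbb{K}$-module: intersecting with $U_{\mathrm{td}}$ exhibits each subquotient $(U_{\mathrm{td}}\cap\Gamma_i)/(U_{\mathrm{td}}\cap\Gamma_{i+1})$ as an open subgroup of $\Gamma_i/\Gamma_{i+1}$, to which the abelian case applies.

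\textbf{The main obstacle.} The delicate point is precisely this induction: the central extensions $1\to\Gamma_i/\Gamma_{i+1}\to L_{\mathrm{td}}/\Gamma_{i+1}\to L_{\mathrm{td}}/\Gamma_i\to1$ have divisible right term but kernel $\Gamma_i/\Gamma_{i+1}$ (an open subgroup of a $\mathbb{K}$-module) that is in general \emph{not} divisible, so neither ``divisible'' nor ``cocompact'' visibly passes through. I would get around this by strengthening the inductive hypothesis to: $(U_{\mathrm{td}})_{\mathrm{div}}=\exp(\mathfrak s)$ with $\mathfrak s$ a $\mathbb{K}$-Lie \emph{subalgebra} of $\mathfrak u$ (equivalently, $\mathfrak s$ is Baker--Campbell--Hausdorff-closed). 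On the Lie-algebra side the extension problem becomes linear: one lifts the subalgebra obtained on the $\mathfrak u/\Gamma_i$-layer to $\mathfrak u$ and corrects it within the abelian layer $(\mathfrak s\cap\Gamma_i)/\Gamma_{i+1}$, using from the abelian case that the maximal submodule of an open subgroup of a $\mathbb{K}$-module is complemented by a compact open subgroup. Compactness of $U_{\mathrm{td}}/(U_{\mathrm{td}})_{\mathrm{div}}$ then follows by squeezing it between the compact quotient coming from the top layer and that coming from the bottom (central) layer, which finishes the proof. The last enlargement remark about $U_{\mathrm{div}}$ containing the real component is already covered by the first paragraph.
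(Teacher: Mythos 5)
Your first paragraph (splitting off the real factor and noting $L_{\R}\subseteq U_{\mathrm{div}}$) is fine, but the heart of the lemma is not actually proved. The identity $(U_{\mathrm{td}})_{\mathrm{div}}=\exp(\mathfrak{s})$ needs $\exp(\mathfrak{s})$ to be a divisible \emph{subgroup}, i.e.\ $\mathfrak{s}$ to be closed under the Baker--Campbell--Hausdorff product; your justification ``any divisible subgroup is the union of the ones it contains'' is false as stated ($\Q\subset\Q_p$ is a divisible subgroup containing no nontrivial set $\exp(\mathbb{K}\log g)$), and what the closedness-plus-density argument really yields is only the inclusion $D\subseteq\exp(\mathfrak{s})$ for each divisible subgroup $D$. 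The converse inclusion, and with it cocompactness in the non-abelian case, is exactly what your final paragraph is supposed to deliver, and there you only describe a plan (``lift\dots and correct within the abelian layer'', ``squeezing'') without carrying out the lifting/correcting step or the compactness estimate; you yourself flag it as the delicate point, and as written the cocompactness is established only when $U_{\mathrm{td}}$ is abelian. For comparison, the paper bypasses the lower-central-series induction entirely: after showing $U_{\mathrm{div}}$ is closed (as $\bigcap_n\phi^n(U)$ for the $p$-th power homeomorphism $\phi$), it uses that an extension of divisible nilpotent groups is divisible, so the quotient $U/U_{\mathrm{div}}$ has no nontrivial $p$-divisible elements, and then the fact that $\phi$ contracts to the identity forces $U/U_{\mathrm{div}}$ to be compact.

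A second, fixable, error: $\Q$ is dense in $\mathbb{K}=\prod_i\Q_{p_i}$ only when the $p_i$ are pairwise distinct (weak approximation concerns inequivalent places), whereas the lemma allows the same prime to occur in several factors, and then your characterization genuinely fails. For $U_{\mathrm{td}}=\{(x,y)\in\Q_p\times\Q_p:\ x-y\in\Z_p\}$, open in $\Q_p^2$, the divisible subgroup is the diagonal copy of $\Q_p$ (cocompact, as the lemma predicts), which is not a $\mathbb{K}$-submodule, while your $\mathfrak{s}=\{0\}$, so $\exp(\mathfrak{s})\neq (U_{\mathrm{td}})_{\mathrm{div}}$ and it is not cocompact. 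You must first group the non-archimedean factors by prime (a finite product of unipotent $\Q_p$-groups is again one), i.e.\ reduce to one place at a time as the paper does, before introducing scalar stability; with that reduction your abelian argument (no line in the quotient, hence a compact open subgroup of the complement) is correct, but the non-abelian step above still has to be supplied.
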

\begin{proof}
Since $U$ decomposes as a product over the components, we can suppose $L$ is real or $p$-adic for a single $p$. In the real case, necessarily $U=L$. Suppose that $L$ is $p$-adic. Then $U_{\mathrm{div}}=\bigcap_n \phi^n(U)$, where $\phi(u)=u^p$. Since $u$ is a self-homeomorphism of $L$, $\phi^n(U)$ is closed and hence $U_{\mathrm{div}}$ is closed; this implies that it is a $p$-adic subgroup, and it easily follows that it is Zariski-closed. An extension of two divisible nilpotent groups is divisible. In particular, in the quotient $L/U_{\mathrm{div}}$, the open subgroup group $U/U_{\mathrm{div}}$ has no $p$-divisible element. Since $\phi$ contracts to 0, we can deduce that $U/U_{\mathrm{div}}$ is compact. 
\end{proof}

\begin{lem}
Let $G$ be a group in the class $\mathfrak{C}$, with $U,N$ as in the definition. Then $G$ has a compact subgroup $W\subset U$, normalized by $N$, such that $U=U_{\mathrm{\mathrm{div}}}W$.
\end{lem}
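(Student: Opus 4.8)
The plan is to handle the factors of $U$ separately and, on each non-archimedean factor, build $W$ as an $N$-invariant compact lift of the profinite quotient by the divisible subgroup. Since $U=\prod_iU_i$ is a finite \emph{direct} product with each $U_i$ normalised by $N$, and the maximal divisible subgroup of a direct product is the product of the maximal divisible subgroups, we have $U_{\mathrm{div}}=\prod_i(U_i)_{\mathrm{div}}$; it therefore suffices to produce, for each $i$, a compact subgroup $W_i\subset U_i$ normalised by $N$ with $U_i=(U_i)_{\mathrm{div}}W_i$, and set $W=\prod_iW_i$. If $\K_i$ is archimedean then the connected group $U_i$ equals the simply connected nilpotent Lie group $\mathbb U_i(\K_i)$, which is divisible (as $\exp$ is bijective and $\exp(X)^n=\exp(nX)$); so $(U_i)_{\mathrm{div}}=U_i$ and $W_i=\{1\}$ works. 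Assume henceforth $\K_i$ non-archimedean; write $D:=(U_i)_{\mathrm{div}}$ and $\pi\colon U_i\to Q:=U_i/D$. By the preceding lemma $D$ is closed, cocompact in $U_i$ and contains the identity component (trivial here), so $Q$ is a compact $p$-adic analytic group, hence topologically finitely generated; $D$ is characteristic in $U_i$, hence $N$-invariant; and $Q$ is torsion-free, since $u^n\in D$ forces the unique $n$-th root of $u^n$ in $\mathbb U_i(\K_i)$, namely $u$, to lie in the divisible group $D$.

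Two facts will be used. \textbf{(A)} \emph{The $N$-action on $Q$ is precompact.} A topologically finitely generated profinite group has only finitely many open subgroups of index $\le m$ for each $m$, so the intersection of those is a characteristic open subgroup, and these form a neighbourhood basis at $1$; hence $\Aut(Q)$ is equicontinuous, so compact by Ascoli's theorem, and the image of $N\to\Aut(Q)$ is relatively compact. \textbf{(B)} \emph{$V\subset U_{\mathrm{div}}$.} Each subset $V_j$ from Definition~\ref{defc}(\ref{ssg3}) is contracted by some $t_j\in N$: $t_j^{-n}vt_j^{n}\to1$. In $\mathfrak u:=\prod_i\mathrm{Lie}(\mathbb U_i)(\K_i)$, with the $N$-equivariant bijection $\exp$ and $t_j$ also denoting the induced automorphism of $\mathfrak u$, this says $t_j^{-n}\log v\to0$, whence $\tfrac1m\,t_j^{-n}\log v\to0$; so for $n$ large $t_j^{-n}(v^{1/m})t_j^{n}=\exp\!\big(\tfrac1m\,t_j^{-n}\log v\big)$, where $v^{1/m}:=\exp(\tfrac1m\log v)$ is the unique $m$-th root in $\prod_i\mathbb U_i(\K_i)$, lies in the open set $U$; by $N$-invariance of $U$, $v^{1/m}\in U$. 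Hence $\{v^r:r\in\Q\}\subset U$ is a divisible subgroup, so $V_j\subset U_{\mathrm{div}}$ and $V=V_1\cdots V_k\subset U_{\mathrm{div}}$. Projecting to the $i$-th factor, $D$ contains a product of $N$-contracted subsets whose closure is cocompact in $U_i$.

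Now choose a compact open subgroup $P\le U_i$ with $\pi(P)=Q$, possible because $U_i$ is the union of an increasing chain of compact open subgroups whose images form an increasing chain of open subgroups of $Q$ with union $Q$, stationary by (A). Put $W_i:=\bigcap_{a\in N}aPa^{-1}$; this is a compact subgroup of $U_i$, and it is $N$-invariant since $b\,W_i\,b^{-1}=\bigcap_a(ba)P(ba)^{-1}=W_i$ for $b\in N$. It remains to verify $\pi(W_i)=Q$, i.e.\ $U_i=D\,W_i$. Using $aPa^{-1}\cap D=a(P\cap D)a^{-1}$ and fact (B) — the $N$-action contracts a cocompact product of subsets of $D$, so the conjugates $a(P\cap D)a^{-1}$ are cofinal among compact open subgroups of $D$ — the intersection defining $W_i$ collapses precisely the divisible part of $P$, while $\pi(aPa^{-1})=Q$ for every $a$ by (A), so that surjectivity persists under the intersection. (In detail: for $q\in Q$ pick $v_0\in P$ with $\pi(v_0)=q$; a corrected lift $w=v_0d$, $d\in D$, lies in $aPa^{-1}$ for every $a\in N$ iff $d$ lies in a prescribed coset $X_a$ of a compact open subgroup $\Sigma_a\le D$; the cofinality of the $\Sigma_a$ and (A) show the family $(X_a)$ has the finite-intersection property with compact terms, so $\bigcap_aX_a\neq\varnothing$.) Setting $W:=\prod_iW_i$, with $W_i=\{1\}$ on archimedean factors, completes the proof.

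The main obstacle is precisely this last verification that $W_i$ surjects onto $Q$. The difficulty is that the extension $1\to D\to U_i\to Q\to1$ is only $N$-equivariant, not equivariant for the compact group $\bar N\subset\Aut(Q)$ — the $N$-action on $D$ is far from precompact, as it contracts $V$ — so one cannot merely average a continuous set-theoretic splitting $Q\to U_i$ over $\bar N$. The contraction (fact (B)) is what makes the twisting compatible at all scales, forcing the relevant $\varprojlim^{1}$ obstruction to vanish and producing an honest $N$-equivariant compact lift.
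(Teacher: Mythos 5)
The reduction to the non-archimedean factors, your Fact (A), and your Fact (B) are all fine (and close in spirit to what the paper does), but the final step -- that $W_i=\bigcap_{a\in N}aPa^{-1}$ still surjects onto $Q$ -- is exactly where the argument breaks, and it is in fact false for an arbitrary compact open $P$ with $\pi(P)=Q$. Concretely, take $U=\Q_p\times\Q_p\times\Z_p$, open in $\mathbb{G}_a^3(\Q_p)$, and $N=\Z^2$ acting by the automorphisms $\mathrm{diag}(p,1,1)$ and $\mathrm{diag}(1,p,1)$; the group $G=U\rtimes N$ lies in $\mathfrak{C}$ (take $V_1=\Q_p\times0\times0$, $V_2=0\times\Q_p\times0$), $U_{\mathrm{div}}=\Q_p\times\Q_p\times0$, and $Q\cong\Z_p$ via the third coordinate. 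Choose $P=\{(x,y,z)\in\Z_p^3:\ x\equiv z\pmod p\}$, a compact open subgroup of $U$ with $\pi(P)=Q$. If $a\in N$ is the element whose conjugation acts on $U$ as $\mathrm{diag}(p,1,1)$, then any element of $P\cap aPa^{-1}$ satisfies both $x\equiv z\pmod p$ and $x\in p\Z_p$, hence $z\in p\Z_p$: already the two-fold intersection fails to surject onto $Q$, your family $(X_a)$ fails the finite intersection property (for $q\in\Z_p^\times$ the cosets $X_1$ and $X_a$ are disjoint), and $\pi\bigl(\bigcap_{a\in N}aPa^{-1}\bigr)\subset p\Z_p\subsetneq Q$. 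So ``surjectivity persists under the intersection'' is both unjustified and untrue in general.

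The auxiliary claim invoked to support it -- that the conjugates $a(P\cap D)a^{-1}$ are cofinal among compact open subgroups of $D$ -- is also false in general: each $V_j$ is contracted by its own $t_j$, and no single $a\in N$ need contract all of $D$ (already for $D\cong\Q_p\times\Q_p$ with a $\mathrm{diag}(p,p^{-1})$-type action the conjugates of $\Z_p\times\Z_p$ are never small), so neither the cofinality nor the finite intersection property can be extracted from Fact (B). The missing idea is that the compact subgroup has to be produced inside the \emph{distal} part of the $N$-action, not recovered from an arbitrary $P$ by intersecting conjugates: the paper notes that the $N$-action on the compact quotient $U/U_{\mathrm{div}}$ is distal, that the distal part $U_1$ of the $N$-action on the elliptic (totally disconnected) part of $U$ already maps onto $U/U_{\mathrm{div}}$ (the contracted directions lie in $U_{\mathrm{div}}$ -- this is your Fact (B)), and that $U_1$ is an increasing union of $N$-invariant compact open subgroups, one of which must surject onto the compact quotient; that subgroup is $W$. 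If you wish to keep your framework, you would have to choose $P$ inside such a distal subgroup and prove that the distal part surjects onto $Q$ -- at which point the intersection over $N$ becomes unnecessary and you have essentially reproduced the paper's proof.
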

\begin{proof}
The quotient $U/U_{\mathrm{\mathrm{div}}}$ is compact, and is a product of various unipotent $p$-adic groups. So the $N$-action is necessarily distal (only eigenvalues of modulus 1). 

Let $U_{\mathrm{na}}$ be the elliptic radical of $U$, so $U_{\mathrm{na}}\times U^\circ$. Let $U_1$ be the distal part of the $N$-action on $U_{\mathrm{na}}$. Then the restriction to $U_{\mathrm{na}}$ of the quotient map $U\to U/U_{\mathrm{\mathrm{div}}}$ is surjective. Moreover, $U_1$ is an increasing union of its $N$-invariant compact open subgroups. Hence there exists an $N$-invariant compact open subgroup $W$ of $U_1$ whose image in $U/U_{\mathrm{\mathrm{div}}}$ is surjective.
\end{proof}

To conclude, we define $N'=NW$. This does not affect the first condition, nor the second since we pass to a larger subgroup $N$. Since Property $\WAPT$ is also invariant under extensions by compact kernels (Proposition \ref{prop:compactExt}), $N'$ has Property $\WAPT$. Finally, the last verification (Lemma \ref{divcon}) is that the subgroup generated by $\mathrm{Contr}(N')$ is equal to $U_{\mathrm{div}}$ and we deduce that $H=G$.

\begin{lem}\label{divcon}
The subgroup $V$ generated by $\mathrm{Contr}(N)$ is equal to $U_{\mathrm{\mathrm{div}}}$.
\end{lem}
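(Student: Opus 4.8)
The plan is to prove the two inclusions separately. For the inclusion $\mathrm{Contr}(N)\subset U_{\mathrm{div}}$, observe that if $g\in\mathrm{Contr}(N)$, then there is a sequence $a_n\in N$ with $a_n^{-1}ga_n\to 1$. Since $N$ normalizes $U$ and each $U_i$, and $U$ is closed in $G$, the element $g$ must lie in $U$; moreover, writing $g=(g_i)$ according to the decomposition $U=\prod U_i$, each coordinate $g_i$ is contracted by the $N$-action on $U_i$. So it suffices to check, in each factor $U_i$, that a contracted element lies in $(U_i)_{\mathrm{div}}$, equivalently in $U_{\mathrm{div}}$. In the real factors this is automatic since $U_i$ is already divisible (being the exponential of a real nilpotent Lie algebra). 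For a $p$-adic factor, pass to the Lie algebra $\mathfrak{u}_i$: since the $N$-action is by automorphisms, it acts $\Q_p$-linearly on $\mathfrak{u}_i$, and the contraction hypothesis for $g_i=\exp(X)$ says (after conjugating the Lie algebra estimate through BCH, as in Lemma \ref{lem:nonarchGroupBall}) that $\mathrm{Ad}(a_n^{-1})X\to 0$. An element $X$ in the image of a linear contraction by powers of a fixed automorphism lies in the sum of eigenspaces of eigenvalue of absolute value $<1$; this subspace is a $\Q_p$-subspace, hence $\Q_p$-divisible, and its exponential is contained in the divisible part of $U_i$. This shows $\mathrm{Contr}(N)\subset U_{\mathrm{div}}$, and since $U_{\mathrm{div}}$ is a subgroup (Lemma preceding this one), the group $V$ it generates is also contained in $U_{\mathrm{div}}$.

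For the reverse inclusion $U_{\mathrm{div}}\subset V$, I would use condition (\ref{ssg3}) of Definition \ref{defc}: $U$ has a cocompact subgroup with a decomposition $V=V_1\cdots V_k$ where each $V_i$ consists of elements contracted by some fixed $t_i\in N$, i.e. $V_i\subset\mathrm{Contr}(N)$. So the cocompact subgroup is contained in $V$. It then remains to argue that $V\supset U_{\mathrm{div}}$; since $V$ contains a cocompact subgroup of $U$, $V$ is itself cocompact in $U$ (it is a subgroup containing a cocompact one), so $U/V$ is compact. Now $U_{\mathrm{div}}$ is divisible, and the image of $U_{\mathrm{div}}$ in the compact group $U/V$ is a divisible subgroup; but the divisible part we are after is exactly the obstruction measured in the totally disconnected compact quotient — more precisely, each $p$-adic factor contributes a compact $p$-group quotient with no $p$-divisible elements outside the divisible part, forcing the image of $U_{\mathrm{div}}$ there to be trivial. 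Combined with the fact that the real factors are already absorbed (they lie in $\mathrm{Contr}(N)$ via condition (\ref{ssg3}), or one checks directly they are divisible and contracted), we get $U_{\mathrm{div}}\subseteq V$.

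The main obstacle I expect is the bookkeeping in the reverse inclusion: making precise that ``$V$ cocompact in $U$ plus $V$ contains all contracted elements'' actually forces $V\supseteq U_{\mathrm{div}}$, rather than merely $V$ being cocompact. The cleanest route is probably to work factor by factor: for a $p$-adic factor $U_i$ with Lie algebra $\mathfrak{u}_i$, decompose $\mathfrak{u}_i$ under a fixed $t_i\in N$ into the contracting part (eigenvalues of absolute value $<1$), the dilating part, and the part of absolute value $=1$; the contracting part exponentiates into $V$ by (\ref{ssg3}), the dilating part does too (by running $t_i^{-1}$, if it lies in $N$, or by symmetry of $\mathrm{Contr}$), and one must check that the remaining ``distal'' part contributes nothing to $U_{\mathrm{div}}$ — this is where divisibility versus the structure of compact $p$-adic unipotent groups enters, and is the technical heart. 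I would lean on the two preceding lemmas (that $U_{\mathrm{div}}$ is closed and cocompact, and that $U=U_{\mathrm{div}}W$ with $W$ compact $N$-invariant) to organize this, reducing to showing $W$ meets $V$ in a cocompact-in-$W$ subgroup, which then suffices since we only need $H=\overline{MN}=G$ and $M$ is the normal (hence closed) subgroup generated by $\mathrm{Contr}(N)$.
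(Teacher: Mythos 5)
Your forward inclusion $\mathrm{Contr}(N)\subset U_{\mathrm{div}}$ takes a different route from the paper, which simply notes that the induced $N$-action on the compact quotient $U/U_{\mathrm{div}}$ is distal, so contracted elements have trivial image there. Your Lie-algebra version has two slips as written: first, $\mathrm{Contr}(N)$ is defined with an \emph{arbitrary} sequence $a_n\in N$, not powers of a single automorphism, so you cannot invoke "the sum of eigenspaces of modulus $<1$ of a fixed automorphism"; second, even granting such a subspace, its exponential need not be contained in $U_i$ at all, since $U_i$ is only an \emph{open} subgroup of $\mathbb{U}_i(\K_i)$, so the phrase "its exponential is contained in the divisible part of $U_i$" does not make sense as stated. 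Both points are reparable (argue element by element: if $\mathrm{Ad}(a_n^{-1})X\to 0$ then $\exp(p^{-k}\mathrm{Ad}(a_n^{-1})X)\in U_i$ for large $n$ by openness, hence $\exp(p^{-k}X)\in U_i$ by $N$-invariance of $U_i$, for every $k$), but the repair is a different argument from the one you wrote. Also, "$N$ normalizes $U$ and $U$ is closed" is not by itself a reason that a contracted element lies in $U$.

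The reverse inclusion is where the actual content of the lemma sits, and your plan does not establish it. From condition (4) you correctly get that $V$ contains a cocompact subgroup of $U$, hence (with the forward inclusion) that $\overline{V}$ is cocompact in $U_{\mathrm{div}}$; but the paper must then prove that $U_{\mathrm{div}}$ admits \emph{no} proper closed cocompact subgroup: in the abelian case because the quotient would be a nontrivial compact, divisible, totally disconnected abelian group, which cannot exist (a nontrivial profinite group has a nontrivial finite quotient), together with the observation that $V$ is closed since it is generated by eigenspaces; the general case is reduced to this via the abelianization, i.e.\ $V[U_{\mathrm{div}},U_{\mathrm{div}}]=U_{\mathrm{div}}$ forces $V=U_{\mathrm{div}}$ by nilpotency. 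Your sketch instead treats $U/V$ as a compact group although $V$ is not known to be normal in $U$ (this is exactly what the commutator reduction is for), never addresses closedness of $V$, and explicitly defers "the technical heart" -- which is precisely the missing step. Finally, the closing retreat to "it suffices that $W\cap V$ be cocompact in $W$, since we only need $H=\overline{MN}=G$" is not a valid fallback: mere cocompactness of $\overline{MN'}$ only feeds the $\WAPAP$ criterion of Theorem \ref{prop:main}, whereas the proof of Theorem \ref{Main} needs $H$ dense, i.e.\ genuinely $U_{\mathrm{div}}\subseteq V$.
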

\begin{proof}
Clearly, the image of $\mathrm{Contr}(N)$ in the compact group $U/U_{\mathrm{\mathrm{div}}}$ is trivial. Hence $\mathrm{Contr}(N)\subset U_{\mathrm{\mathrm{div}}}$. Thus $\bar{V}$ is cocompact in $U_{\mathrm{\mathrm{div}}}$. It is easy to see that $U_{\mathrm{\mathrm{div}}}$ has no proper cocompact subgroup: indeed, if $U_{\mathrm{\mathrm{div}}}$ is abelian, its quotient by $\bar{V}$ is a compact, divisible totally disconnected abelian group and apart from the trivial group, this does not exist (since nontrivial profinite groups have nontrivial finite quotients). So, when $U_{\mathrm{\mathrm{div}}}$ is abelian, we deduce that $V$ is dense. But in this case, it is clear that $V$ is closed, since it is generated by some eigenspaces. So $V=U_{\mathrm{\mathrm{div}}}$ when $U$ is abelian.

In general, we deduce that $V[U_{\mathrm{\mathrm{div}}},U_{\mathrm{\mathrm{div}}}]=U_{\mathrm{\mathrm{div}}}$, that is, $V$ generates the nilpotent group $U_{\mathrm{\mathrm{div}}}$ modulo commutators, and deduce that $V=U_{\mathrm{\mathrm{div}}}$.
\end{proof}

\subsection{Other results}

\begin{thm}\label{cccap}
Every compactly generated locally compact group $G$ having an open subgroup $G'$ of finite index in the class $\mathfrak{C}''$ (Definition \ref{dcpp}) has Property $\WAPFD$.
\end{thm}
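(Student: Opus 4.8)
The plan is to show that $G$ has both Property $\WAPAP$ and Property $\APFD$, and then to conclude by Proposition \ref{wapfd_conj}. For $\WAPAP$, note first that since $G'$ is open of finite index it is a closed cocompact subgroup, so by Proposition \ref{prop:subgroup}(\ref{ps1}) it suffices to prove that $G'$ has Property $\WAPAP$. Writing $G'=UN$ as in Definition \ref{dcpp}, I would apply the second half of Theorem \ref{prop:main} to the pair $(G',N)$, verifying its hypotheses as in the proof of Theorem \ref{Main} but noting that $N$ now merely has polynomial growth instead of being compact-by-nilpotent. Concretely: $G'$ admits a controlled F\o lner sequence by Theorem \ref{th:C_strong} together with the fact, recalled after Definition \ref{strongf}, that a strong controlled F\o lner sequence admits a reparametrization which is a controlled F\o lner sequence; $G'$ is $N$-combable by Theorem \ref{thm:Ncombable}; and $N$, having polynomial growth, has Property $\WAPFD$ by Theorem \ref{polgr}, hence Property $\WAPAP$. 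Finally, letting $M$ be the normal subgroup of $G'$ generated by $\Contr(N)$ and $H=\overline{MN}$, Lemma \ref{divcon} gives $\langle\Contr(N)\rangle=U_{\mathrm{div}}$ (its proof uses only the unipotent subgroup $U$, hence applies verbatim in $\mathfrak{C}''$); since $U_{\mathrm{div}}$ is characteristic in the normal subgroup $U$, it is normal in $G'$, so $M=U_{\mathrm{div}}$ and $H=\overline{U_{\mathrm{div}}N}\supseteq N$. As also $U_{\mathrm{div}}\subseteq H$ and $U_{\mathrm{div}}$ is cocompact in $U$ (as recalled before Lemma \ref{divcon}), the map $U/U_{\mathrm{div}}\to G'/H$ sending $uU_{\mathrm{div}}$ to $uH$ is a well-defined continuous surjection from a compact group, so $H$ is cocompact in $G'$. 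The second half of Theorem \ref{prop:main} then gives Property $\WAPAP$ for $G'$, hence for $G$.

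For $\APFD$, I would use Proposition \ref{apfd_kt}, which reduces this to $G/\KT(G)$ having Property $\APFD$; and for that it suffices that $G/\KT(G)$ has polynomial growth, since then Theorem \ref{polgr} yields even Property $\WAPFD$, a fortiori Property $\APFD$. To get polynomial growth of $G/\KT(G)$, recall from the proof of Lemma \ref{gkt_poly} that $\Contr(G)\subseteq\KT(G)$; as $\Contr(N)\subseteq\Contr(G')\subseteq\Contr(G)$ and $\KT(G)$ is a subgroup, the equality $\langle\Contr(N)\rangle=U_{\mathrm{div}}$ gives $U_{\mathrm{div}}\subseteq\KT(G)\cap G'$. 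Now $U_{\mathrm{div}}$ is normal in $G'$ with $U/U_{\mathrm{div}}$ compact, and $G'/U$ is a quotient of the polynomial-growth group $N$; hence $G'/U_{\mathrm{div}}$, being compact-by-(polynomial growth), has polynomial growth, and so does its quotient $G'/(\KT(G)\cap G')\cong G'\KT(G)/\KT(G)$. The latter is an open subgroup of finite index in $G/\KT(G)$, which therefore has polynomial growth. Combining this with the previous paragraph via Proposition \ref{wapfd_conj} shows that $G$ has Property $\WAPFD$.

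The only point that needs real care --- rather than a genuine obstacle --- is the adaptation in the first paragraph: one must check that the proof of Theorem \ref{Main}, and the structural facts it rests on (Lemma \ref{divcon}, the cocompactness of $U_{\mathrm{div}}$ in $U$, Theorems \ref{th:C_strong} and \ref{thm:Ncombable}), never use nilpotency of $N$ but only its polynomial growth, and that the weaker inputs ``$N$ has Property $\WAPAP$'' and ``$H$ cocompact in $G$'' (in place of density) are precisely what the second half of Theorem \ref{prop:main} consumes, so that the conclusion is Property $\WAPAP$ rather than Property $\WAPT$. Everything else is routine bookkeeping with the stability results already established.
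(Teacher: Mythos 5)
Your proof is correct and follows essentially the same route as the paper: Property $\WAPAP$ for $G'$ via the second half of Theorem \ref{prop:main} (with the $\mathfrak{C}''$ verifications supplied by Theorems \ref{th:C_strong}, \ref{thm:Ncombable}, \ref{polgr} and Lemma \ref{divcon}, no enlargement of $N$ being needed since cocompactness of $H$ suffices), transfer to $G$ by Proposition \ref{prop:subgroup}, and Property $\APFD$ via polynomial growth of $G/\KT(G)$ together with Theorem \ref{polgr} and Proposition \ref{apfd_kt}, concluding by Proposition \ref{wapfd_conj}. Your explicit finite-index argument for the polynomial growth of $G/\KT(G)$ is a welcome extra bit of care, since Lemma \ref{gkt_poly} as stated applies to the subgroup $G'$ rather than to $G$ itself.
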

\begin{proof}
Along with Theorem \ref{polgr} and using the second part of Theorem \ref{prop:main} the above proof shows that $G'$ has Property $\WAPAP$. That $G'$ has Property $\WAPAP$ follows from Proposition \ref{prop:subgroup}(\ref{ps1}).

Then we observe that $G/\KT(G)$ has polynomial growth (see \S\ref{pAPFD} for the definition of $\KT(G)$), by Lemma \ref{gkt_poly}, and hence has Property $\WAPFD$ by Theorem \ref{polgr}. Hence $G/\KT(G)$ has Property $\APFD$, and in turn, by Proposition \ref{apfd_kt}, $G$ has Property $\APFD$. Since $G$ has Property $\WAPAP$, this shows that $G$ has Property $\WAPFD$.
\end{proof}

Note that by Proposition \ref{apt}, we have a criterion whether $G$ has Property $\WAPT$, namely if and only if the group of polynomial growth $G/\KT(G)$ has Property $\WAPT$.

\begin{proof}[Proof of Corollary \ref{CPWAPAP}]
The statement is that locally compact groups in the class $\mathfrak{C}'$ (Definition \ref{defcp}) have Property $\WAPFD$. Indeed, consider $G\to G_1\leftarrow G_2\to G_3$ as in Definition \ref{defcp}. Since $G_3$ belongs to the class $\mathfrak{C}$, it has Property $\WAPT$ by Theorem \ref{Main} Since $G_2\to G_3$ is copci and $G_3$ has Property $\WAPT$, $G_2$ has Property $\WAPT$ by Theorem \ref{thm:subgroup} and Proposition \ref{prop:compactExt}. Since $G_2\to G_1$ is copci with normal image, it follows that $G_1$ has Property $\WAPFD$ by Proposition \ref{prop:subgroup} and Proposition \ref{prop:compactExt}. By Theorem \ref{thm:subgroup} again (for Property $\WAPFD$ this time) and Proposition \ref{prop:compactExt}, it follows that $G$ has Property $\WAPFD$.
\end{proof}

\begin{proof}[Proof of Corollary \ref{vcalfd2}]
This follows from Corollary \ref{CPWAPAP} in combination with Proposition \ref{vcalfd}. 
\end{proof}

\begin{thm}\label{cor:Delorme}
Let $G$ be a connected solvable Lie group. Then every WAP Banach $G$-module with nonzero reduced first cohomology has a 1-dimensional factor (with nonzero first cohomology).
\end{thm}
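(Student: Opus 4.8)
The plan is to extract everything from Corollary~\ref{vcalfd2}: a connected solvable Lie group is amenable and (being its own identity component) virtually connected, hence has Property $\WAPFD$, and therefore also Property $\WAPAP$. The rest of the argument unwinds these two properties together with the structure theory of compact connected solvable Lie groups.

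First I would reduce to the almost periodic case. Let $(V,\pi)$ be a WAP Banach $G$-module with $\overline{H^1}(G,V)\neq 0$ and fix $b\in Z^1(G,V)$ nonzero in $\overline{H^1}(G,V)$. By Theorem~\ref{decoca}(\ref{jlg2}) there is a $G$-invariant topological direct sum $V=V^{G,\mathrm{ap}}\oplus V_0$ with $V_0^{G,\mathrm{ap}}=0$; writing $b=\beta+b_0$ accordingly, Property $\WAPAP$ forces $b_0\in\overline{B^1}(G,V_0)$, hence $\beta$ is not an almost coboundary and $\overline{H^1}(G,V^{G,\mathrm{ap}})\neq 0$. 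The module $V^{G,\mathrm{ap}}$ is a closed submodule of a WAP module, hence WAP, and is almost periodic by construction. Applying Property $\WAPFD$ to $(V^{G,\mathrm{ap}},\beta)$ produces a closed $G$-submodule $W\subset V^{G,\mathrm{ap}}$ of positive finite codimension modulo which $\beta$ has unbounded projection; then, since $V^{G,\mathrm{ap}}$ is almost periodic, the restatement of $\WAPFD$ for almost periodic modules (the lemma in \S\ref{propwa} giving the equivalence of (\ref{ape1}) and (\ref{ape2})) upgrades this to a $G$-module decomposition $V^{G,\mathrm{ap}}=U\oplus U'$ with $\dim U<\infty$ and, writing $\beta=\beta_U+\beta_{U'}$, the cocycle $\beta_U$ unbounded. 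Now $V=U\oplus(U'\oplus V_0)$ is a $G$-invariant topological direct sum, so $U$ is a finite-dimensional factor of $V$ whose associated component of $b$ is exactly $\beta_U$. Since $U$ is finite-dimensional, $B^1(G,U)$ is a finite-dimensional, hence closed, subspace of $Z^1(G,U)$ consisting of bounded cocycles (the representation on $U$ is uniformly bounded, Lemma~\ref{b_ub}), so the unbounded cocycle $\beta_U$ is nonzero in $\overline{H^1}(G,U)$. Thus $V$ has a finite-dimensional factor with nonzero reduced first cohomology.

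It then remains to pass from such a finite-dimensional factor $U$ to a one-dimensional one, using that the action on $U$ factors through a torus. Since $\rho_U$ is uniformly bounded, $\sup_g\|\rho_U(g)\|$ and $\sup_g\|\rho_U(g)^{-1}\|$ are both finite, so the closure $K:=\overline{\rho_U(G)}$ in $\mathrm{End}(U)$ is bounded with determinants bounded away from $0$, hence is a compact subgroup of $\mathrm{GL}(U)$. As the closure of the connected group $\rho_U(G)$ it is connected; as a closed subgroup of $\mathrm{GL}(U)$ it is a Lie group; and it is solvable, since its iterated derived subgroups lie in the closures of the corresponding iterated derived subgroups of $\rho_U(G)$, which are trivial beyond the derived length of $G$. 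A compact connected solvable Lie group is a torus, so $K$ is a torus. Working over $\C$ (after complexifying $V$ if it is a real module), decompose $U$ into its $K$-isotypic components $U=\bigoplus_{\chi\in\widehat K}U_\chi$: there are finitely many, each is a $G$-submodule and a canonical direct summand (hence a $G$-factor), and each is isomorphic to $\C_\chi^{\,m_\chi}$, where $\C_\chi$ is the one-dimensional module on which $G$ acts through $\chi$. Hence $\overline{H^1}(G,U)=\bigoplus_\chi\overline{H^1}(G,\C_\chi)^{\,m_\chi}$, so $\overline{H^1}(G,\C_\chi)=Z^1(G,\C_\chi)\neq 0$ for some $\chi$, and $\C_\chi$ is the desired one-dimensional factor of $U_\chi$, hence of $U$, hence of $V$.

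I do not expect a genuine obstacle here once Corollary~\ref{vcalfd2} is in hand: the only two points that require care are (a) replacing $V$ by $V^{G,\mathrm{ap}}$ so that Property $\WAPFD$ yields an honest finite-dimensional \emph{direct summand} rather than merely a finite-codimensional submodule — this is the step that genuinely uses almost periodicity — and (b) the structural input that a compact connected solvable Lie group is a torus, which is what makes the final splitting into one-dimensional pieces possible; the real-versus-complex bookkeeping is handled by complexification.
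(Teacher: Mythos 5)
Your proof is correct and is essentially the paper's own argument spelled out in full: the paper's proof is the one-line observation that the theorem follows from Corollary \ref{vcalfd2} together with the fact that finite-dimensional unitary irreducible representations of connected solvable Lie groups have complex dimension one, and your reduction to $V^{G,\mathrm{ap}}$, extraction of a finite-dimensional factor via $\WAPFD$, and the compact-closure/torus/character decomposition are precisely the details being elided. One harmless slip: at the end, $\overline{H^1}(G,\C_\chi)=H^1(G,\C_\chi)$ (since $B^1$ is finite-dimensional, hence closed), not $Z^1(G,\C_\chi)$, as $B^1(G,\C_\chi)$ is generally nonzero; only the nonvanishing is needed, so the argument is unaffected.
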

\begin{proof}
This follows from Corollary \ref{vcalfd2}, using that finite-dimensional unitary irreducible representations of connected solvable Lie groups have complex dimension one.
\end{proof}

\section{Subgroups of $\GL(n,\Q)$}

\subsection{Unipotent closure}
Let $(G_i)$ be a family of locally compact groups, with given compact open subgroups $K_i$. The corresponding semirestricted product is the subgroup of $\prod_i G_i$ consisting of families of whose coordinates are in $K_i$ with finitely many exceptions (in other words, it is the subgroup generated by $\prod K_i$ and $\bigoplus G_i$); it has a natural group topology for which $\prod K_i$ is a compact open subgroup. We denote it by $\prod_i^{(K_i)}G_i$.

If $H_i\subset G_i$ is a family of closed subgroups, $\prod_i^{(K_i\cap H_i)}H_i$ naturally occurs as a closed subgroup of $\prod_i^{(K_i)}G_i$. We call it a standard subgroup (according to this given decomposition).

Now assume that, $p$ ranging over the prime numbers, $G_p$ is a $p$-elliptic locally compact group (in the sense that every compact subset of $G$ is contained in a pro-$p$-subgroup). Then we have

\begin{lem}\label{clostan}
Every closed subgroup $H$ of $\prod^{(K_p)}G_p$ is standard. 
\end{lem}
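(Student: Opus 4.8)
The plan is to read the decomposition off directly. Write $G=\prod_p^{(K_p)}G_p$, let $\pi_p\colon G\to G_p$ be the coordinate projection and $\iota_p\colon G_p\to G$ the inclusion of the $p$-th factor (all other coordinates $1$), and for $h\in G$ set $\supp(h)=\{q:h_q\notin K_q\}$, a finite set by definition of the semirestricted product. The candidate is: $H_p:=\pi_p(H)\le G_p$, and I claim $H=\prod_p^{(K_p\cap H_p)}H_p$.

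The heart of the matter is the assertion that for every prime $p$ and every $h\in H$ one has $\iota_p(\pi_p(h))\in H$. Here is where $p$-ellipticity is used: any $x\in G_q$ lies in a pro-$q$ subgroup, so $\overline{\langle x\rangle}$ is a compact procyclic pro-$q$ group, i.e. a quotient of $\Z_q$. Choose, via the Chinese Remainder Theorem, positive integers $n_k$ with $n_k\equiv 1\ (\mathrm{mod}\ p^k)$ and $n_k\equiv 0\ (\mathrm{mod}\ q^k)$ for every prime $q\ne p$ with $q\le k$; then $n_k\to 1$ in $\Z_p$ and $n_k\to 0$ in $\Z_q$ for every fixed $q\ne p$. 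For $h\in H$, the $p$-coordinate of $h^{n_k}$ is $h_p^{n_k}\to h_p$ in $\overline{\langle h_p\rangle}$, while for $q\ne p$ the $q$-coordinate $h_q^{n_k}\to 1$ in $\overline{\langle h_q\rangle}$; since $\supp(h)$ is finite and $h_q^{n_k}\in K_q$ whenever $h_q\in K_q$, a short check against a basic neighbourhood of $1$ shows $h^{n_k}\to\iota_p(h_p)$ in $G$. As $h^{n_k}\in H$ and $H$ is closed, $\iota_p(\pi_p(h))\in H$.

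Granting this, the rest is routine. First $\iota_p(H_p)\subseteq H$; applying the continuous map $\iota_p$ to $\overline{H_p}$ and using that $H$ is closed gives $\iota_p(\overline{H_p})\subseteq H$, hence $\overline{H_p}\subseteq\pi_p(H)=H_p$, so each $H_p$ is a closed subgroup of $G_p$. The inclusion $H\subseteq\prod_p^{(K_p\cap H_p)}H_p$ is immediate, since $\pi_p(h)\in H_p$ for all $p$ and $\pi_p(h)\in K_p\cap H_p$ for all but finitely many $p$. For the reverse, take $g=(g_p)$ with $g_p\in H_p$ for all $p$ and $g_p\in K_p$ for $p$ outside a finite set $F$; for each finite $S\supseteq F$ the product $\prod_{p\in S}\iota_p(g_p)$ lies in $H$ (each factor does), and $g\cdot\big(\prod_{p\in S}\iota_p(g_p)\big)^{-1}$ has all coordinates in $K_p$ and is trivial on $S$, hence lies in arbitrarily small neighbourhoods of $1$ as $S$ grows; since $H$ is closed, $g\in H$. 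Thus $H$ is standard.

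The one delicate point is the convergence $h^{n_k}\to\iota_p(h_p)$: one must force $n_k\to 0$ in $\Z_q$ for \emph{all} primes $q\ne p$ simultaneously (not merely those in $\supp(h)$), so that the finitely many coordinates singled out by an arbitrary basic neighbourhood are eventually controlled — this is exactly what the clause "$q\le k$" in the choice of $n_k$ secures. It is also precisely here that the $p$-ellipticity of the $G_q$ is indispensable: it makes each $\overline{\langle h_q\rangle}$ a procyclic pro-$q$ group, so that raising $h_q$ to a $q$-adically small exponent drives it to $1$ — without this (e.g. for a discrete factor) the conclusion fails.
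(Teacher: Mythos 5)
Your proof is correct and follows essentially the same route as the paper's: the key step of showing each coordinate projection of an element of $H$ (re-embedded) stays in $H$ via integers $n_k$ converging to $1$ in $\Z_p$ and to $0$ in $\Z_q$ for $q\neq p$ (your CRT choice is just an explicit form of the paper's appeal to density of $\Z$ in $\prod_p\Z_p$), followed by the same two-inclusion argument identifying $H$ with $\prod^{(K_p\cap H_p)}H_p$. Your extra care about convergence against basic neighbourhoods and the closedness of $H_p$ only fills in details the paper leaves implicit.
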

\begin{proof}
Let us show that $H$ is closed under taking under all projections. Fix a prime $q$. That $\Z$ is dense in $\prod_p\Z_p$ implies that there exists a sequence $(n_i)$ in $\Z$ such that $n_i\to 1$ in $\Z_q$ and $n_i\to 0$ in $\Z_p$ for all $p\neq q$. Then for every $x\in \prod^{(K_p)}G_p$, the sequence $x^{n_i}$ tends to the projection of $x$ on $G_p$. In particular, if $x\in H$, then this projection also belongs to $H$.

Now let $H_p$ be the projection of $H$ on $G_p$. Then the closed subgroup generated by the $H_p$ contains $\prod_p (H_p\cap K_p)$ and contains $\bigoplus_p H_p$. Thus $\prod^{(K_p\cap H_p)}H_p$ is contained in $H$. Conversely, $H$ is contained in both $\prod_p H_p$ and $\prod^{(K_p)}G_p$, and the intersection of these two is by definition $\prod^{(K_p\cap H_p)}H_p$, so $H$ is contained in $\prod^{(K_p\cap H_p)}H_p$; we conclude that these subgroups are equal.
\end{proof}

Recall that the ring of {\it finite adeles} is the semirestricted product $\mathbf{A}=\prod_p^{(\Z_p)}\Q_p$; the diagonal inclusion embeds $\mathbf{Q}$ as a dense subring into $\mathbf{A}$ and as a discrete cocompact subring in $\mathbf{A}\times\R$ (the latter is known as ring of {\it adeles}).

\begin{defn}Let $H$ be a subgroup of $\GL_m(\Q)$. We define its fine closure as the subgroup $\mathcal{C}(H)$ of $\GL_m(\mathbf{A}\times\R)$ generated by the closures of the various $p$-adic projections $\pi_p(H)$, and the Zariski closure $\mathcal{C}_0(H)$ of the real projection.
\end{defn}

\begin{lem}\label{coficlo}
Let $H$ be a subgroup of $\GL_m(\Q)$ conjugate to a subgroup of upper unipotent matrices. Then $H$ is cocompact in its fine closure $\mathcal{C}(H)$. 
\end{lem}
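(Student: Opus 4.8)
The claim is that a group $H\subset\GL_m(\Q)$ consisting of (conjugates of) upper unipotent matrices is cocompact in its fine closure $\mathcal{C}(H)\subset\GL_m(\mathbf{A}\times\R)$. Since conjugation by a fixed element of $\GL_m(\Q)$ is an automorphism of $\GL_m(\mathbf{A}\times\R)$ that carries the fine closure to the fine closure, I may assume $H$ is genuinely a subgroup of the group $\mathbb{U}_m$ of upper unipotent matrices. The first step is to reduce the problem to the case where $H$ is itself \emph{finitely generated}: the fine closure of $H$ is the increasing union of the fine closures $\mathcal{C}(H_\alpha)$ as $H_\alpha$ ranges over finitely generated subgroups of $H$, and by nilpotency of $\mathbb{U}_m$ these closures stabilize (the Zariski closure of the real projection and each $p$-adic closure can only grow in a bounded way — the ambient unipotent group has finite-dimensional Lie algebra over each completion). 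So $\mathcal{C}(H)=\mathcal{C}(H_\alpha)$ for $\alpha$ large, and it suffices to prove cocompactness of $H_\alpha$ in $\mathcal{C}(H_\alpha)$; replacing $H$ by $H_\alpha$, I assume $H$ finitely generated.

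The second step identifies the structure of $\mathcal{C}(H)$ for $H\subset\mathbb{U}_m(\Q)$ finitely generated. Each generator of $H$ has matrix entries in $\Q$, hence involves only finitely many primes in its denominators; so all of $H$ lies in $\mathbb{U}_m(\Z[1/n])$ for some integer $n$. For primes $p\nmid n$, the $p$-adic projection of $H$ lands in the compact group $\mathbb{U}_m(\Z_p)$, so its closure is a compact subgroup of $\mathbb{U}_m(\Z_p)$ and contributes to the ``semirestricted'' (compact-open) part. For the finitely many primes $p\mid n$, the closure $\overline{\pi_p(H)}$ is a closed — hence $p$-adic analytic, and in fact Zariski-closed by Lemma \ref{clostan}'s circle of ideas applied to unipotent groups — subgroup of $\mathbb{U}_m(\Q_p)$. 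Thus $\mathcal{C}(H)$ is a standard subgroup (in the sense preceding Lemma \ref{clostan}) of $\prod_p^{(\mathbb{U}_m(\Z_p))}\mathbb{U}_m(\Q_p)\times \mathbb{U}_m(\R)$, of the form $\mathbb{U}_0\times\prod^{(W_p)}\mathbb{U}_p$ where $\mathbb{U}_0=\mathcal{C}_0(H)(\R)$ is a real unipotent Lie group, the $W_p$ are compact open for $p\nmid n$, and $\mathbb{U}_p=\overline{\pi_p(H)}$ is a closed unipotent $p$-adic group for $p\mid n$.

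The third and central step is a \emph{lattice/compactness argument via logarithms and the adeles}. Passing to the Lie algebra via the (polynomial, bijective) exponential map, $\mathcal{C}(H)$ corresponds to a closed additive subgroup $\mathfrak{h}\subset\mathfrak{u}_m(\mathbf{A}\times\R)$ that is a Lie subalgebra over the appropriate rings, and $H$ corresponds (after the standard Lazard/Malcev correction by Baker–Campbell–Hausdorff, which changes $H$ only by commensurability and does not affect cocompactness) to the additive group generated by $\log$ of the generators, sitting inside $\mathfrak{h}$. The heart of the matter is then the classical fact that $\Q$ is discrete and cocompact in $\mathbf{A}\times\R$ (the adele ring): indeed the $\Q$-span $\mathfrak{h}_\Q$ of the logarithms is a finite-dimensional $\Q$-vector space, its adelic points $\mathfrak{h}_\Q\otimes_\Q(\mathbf{A}\times\R)$ contain $\mathfrak{h}$ as a finite-index-type closed cocompact subgroup, and $\mathfrak{h}_\Q$ embeds diagonally as a discrete cocompact subgroup; combining these, the image of $H$ (a finite-index subgroup of an arithmetic lattice in $\mathfrak{h}_\Q$) is cocompact in $\mathfrak{h}$. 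Transporting back through $\exp$ (which is a homeomorphism, so preserves cocompactness), $H$ is cocompact in $\mathcal{C}(H)$. One then induces up the lower central series of $\mathbb{U}_m$ if one prefers an argument avoiding a direct adelic statement in the nonabelian case: cocompactness for the abelianization and for the last nonzero term of the lower central series, plus the extension lemma ``an extension of cocompact by cocompact is cocompact'', finishes it.

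\textbf{Main obstacle.} The delicate point is the interface between the three topologies simultaneously present — the Zariski topology used to define $\mathcal{C}_0(H)$ at the real place, the $p$-adic (Hausdorff) topology used for the finite places dividing the denominators, and the pro-finite (compact open) topology at the remaining places — and checking that $\mathcal{C}(H)$ so assembled really is the adelic-points group of a single $\Q$-algebraic unipotent group, namely the $\Q$-Zariski closure of $H$ in $\mathbb{U}_m$. Concretely one must verify that $\overline{\pi_p(H)}$ equals $\mathbb{G}(\Q_p)$ where $\mathbb{G}$ is this $\Q$-Zariski closure (Borel density-type statements for unipotent groups, or direct computation using that $H$ is Zariski-dense in $\mathbb{G}$ and unipotent groups have no nontrivial proper Zariski-closed finite-index-behaving subgroups over local fields) and that for almost all $p$ one has $\overline{\pi_p(H)}\supseteq\mathbb{G}(\Z_p)$ in a suitable model, so that the product is exactly $\mathbb{G}(\mathbf{A}\times\R)$ with its natural (restricted-product) topology. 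Once $\mathcal{C}(H)=\mathbb{G}(\mathbf{A}\times\R)$ is established, cocompactness of $H$ is the standard statement that a unipotent $\Q$-group has its $\Q$-points (more precisely, an arithmetic subgroup, and $H$ is commensurated by such) cocompact in its adelic points, which is classical.
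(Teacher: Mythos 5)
Your overall strategy (pass to logarithms and invoke Malcev/adelic lattice theory) is reasonable in spirit, but two of its load-bearing steps are wrong. First, the reduction to finitely generated $H$ fails: the fine closures of an increasing family of finitely generated subgroups do not stabilize. Take $H=\Z[1/p]$ realized as upper unipotent $2\times 2$ matrices and $H_k=p^{-k}\Z$: then $\overline{\pi_p(H_k)}=p^{-k}\Z_p$ increases strictly forever, while $\overline{\pi_p(H)}=\Q_p$; boundedness of dimension does not force a chain of closed subgroups of a $p$-adic unipotent group to stabilize. This is not a peripheral case: in the application (Proposition \ref{unipclo}) $H$ is the unipotent radical of a finitely generated solvable subgroup of $\GL_m(\Q)$, which is typically not finitely generated (e.g.\ $\Z[1/p]$ inside $\BS(1,p)$), and it is precisely then that the $p$-adic factors of $\mathcal{C}(H)$ are noncompact and the lemma has content; for finitely generated $H$ every $\overline{\pi_p(H)}$ is compact (a finitely generated unipotent subgroup of $\GL_m(\Q_p)$ has compact closure), and the statement essentially collapses to Malcev's theorem on lattices in real unipotent groups. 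Second, the structural picture your last two steps aim for is incorrect: $\overline{\pi_p(H)}$ is in general neither Zariski closed nor equal to $\mathbb{G}(\Q_p)$ for the $\Q$-Zariski closure $\mathbb{G}$ (for $H=\Z$ it is $\Z_p$), so $\mathcal{C}(H)$ is usually a proper, restricted-product-like subgroup of $\mathbb{G}(\mathbf{A}\times\R)$ -- that is the whole point of the definition of the fine closure. Relatedly, the key claim of your third step, that $\mathfrak{h}$ is cocompact in $\mathfrak{h}_\Q\otimes_\Q(\mathbf{A}\times\R)$, is false: for $H=\Z$ it asserts that $\hat{\Z}\times\R$ is cocompact in $\mathbf{A}\times\R$, whereas $\mathbf{A}/\hat{\Z}\cong\Q/\Z$ is infinite discrete. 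The statement that actually has to be proved -- that the diagonal copy of $H$ is cocompact in the closed subgroup generated by its local closures -- is never established in your argument.

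For comparison, the paper avoids adelic lattice theory altogether: since every element of $H$ is unipotent, each $\mathcal{C}_p(H)$ is $p$-elliptic, and Lemma \ref{clostan} (closed subgroups of semirestricted products of $p$-elliptic groups are standard) shows that the diagonal image $\pi_+(H)$ is dense in the finite-adelic part $\mathcal{C}_+(H)$; hence $\mathcal{C}(H)=\pi(H)(K\times\mathcal{C}_0(H))$ with $K=\prod_p(\mathcal{C}_p(H)\cap\GL_m(\Z_p))$ compact open. The real factor is then handled by a Zariski-density argument: if a nonzero continuous homomorphism $f:\mathcal{C}_0(H)\to\R$ killed the projection of $\pi(H)\cap(K\times\mathcal{C}_0(H))$, one picks $(u,b)\in\pi(H)$ with $f(b)\neq 0$ and replaces it by $(u^n,b^n)$ with $u^n\in K$, so that $f(b^n)=nf(b)\neq 0$, a contradiction; since for connected real unipotent groups Zariski density is equivalent to cocompactness, this gives the result. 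If you want to salvage your approach, you need an argument of this kind (or an honest adelic one) that treats the non-finitely-generated case directly rather than reducing to the finitely generated one.
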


(It is well-known that the condition is equivalent to assuming that each element of $H$ is unipotent.)

\begin{proof}
Denote by $\mathcal{C}_p(H)$ the closure of the projection of $H$ in $\GL_m(\Q_p)$ and $\mathcal{C}_+(H)$ the closure of subgroup they generate in $\GL_m(\mathbf{A})$, so that $\mathcal{C}(H)=\mathcal{C}_+(H)\times \mathcal{C}_0(H)$. Denote by $\pi_+$ and $\pi$ the natural embeddings $\GL_m(\Q)\to\GL_m(\mathbf{A})$ and $\GL_m(\Q)\to\GL_m(\mathbf{A}\times\R)$.

By the assumption, $\mathcal{C}_p(H)$ is $p$-elliptic for every prime $p$. Then by Lemma \ref{clostan}, $\pi_+(H)$ is dense in $\mathcal{C}_+(H)=\prod_p^{(\mathcal{C}_p(H)\cap\Z_p)}\mathcal{C}_p(H)$. Let $K$ be the compact open subgroup $\prod_p(\mathcal{C}_p(H)\cap\Z_p)$. Then this implies that $\mathcal{C}_+(H)=\pi_+(H)K$. Therefore, $\mathcal{C}(H)=\pi(H)(K\times \mathcal{C}_0(H))$.

We claim that the projection of $\pi(H)\cap (K\times \mathcal{C}_0(H))$ is cocompact in $\mathcal{C}_0(H)$. Indeed, in a connected unipotent real group $U$, a subgroup is cocompact if and only if it is Zariski-dense, if and only it is not contained in the kernel any nonzero homomorphism $U\to\R$. Assume by contradiction we have such a homomorphism $f$ on $\mathcal{C}_0(H)$. Pick $(u,b)\in\pi(H)$ with $u\in \mathcal{C}_+(H)$ and $b\in \mathcal{C}_0(H)$ with $f(b)\neq 0$. Then there exists $n\ge 1$ such that $u^n\in K$. Hence $(u^n,b^n)\in\pi(H)\cap (K\times \mathcal{C}_0(H))$ but $f(b^n)=nf(b)\neq 0$, a contradiction. 

So the projection of $\pi(H)\cap (K\times \mathcal{C}_0(H))$ is cocompact in $\mathcal{C}_0(H)$. This implies (pulling back by a quotient homomorphism with compact kernel) that $\pi(H)\cap (K\times \mathcal{C}_0(H))$ is cocompact in $K\times \mathcal{C}_0(H)$. Since $K\times \mathcal{C}_0(H)$ is an open subgroup and $\mathcal{C}(H)=\pi(H)(K\times \mathcal{C}_0(H))$, this implies that $\pi(H)$ is cocompact in $\mathcal{C}(H)$.
\end{proof}

Now, let $\Gamma$ be a finitely generated, virtually solvable subgroup of $\GL_m(\Q)$. Let $U$ be its unipotent radical (the intersection with the unipotent radical of its Zariski closure, which is also the largest normal subgroup of $\Gamma$ consisting of unipotent elements). So $\Gamma/U$ is finitely generated and virtually abelian. Identify $\Gamma$ with its image in $\GL_m(\mathbf{A}\times\R)$.

\begin{prop}\label{unipclo}
$\mathcal{C}(U)$ is open in $\Gamma\mathcal{C}(U)$, which is closed in $\GL_m(\mathbf{A}\times\R)$, and $\Gamma$ is cocompact in $\Gamma\mathcal{C}(U)$.
\end{prop}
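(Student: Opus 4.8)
The plan is to reduce everything to two facts: that $\Gamma$ sits discretely, hence closedly, in $\GL_m(\mathbf{A}\times\R)$, and that $U$ is cocompact in $\mathcal{C}(U)$ by Lemma \ref{coficlo}; the rest is a soft ``discrete subgroup times compact set'' argument. First I would record that $\Gamma$ normalizes $\mathcal{C}(U)$: since $U$ is normal in $\Gamma$, for $\gamma\in\Gamma$ the diagonal conjugation by $\gamma$ acts by a homeomorphism on each factor $\GL_m(\Q_p)$ and by an algebraic automorphism on $\GL_m(\R)$, so it sends each $p$-adic closure $\mathcal{C}_p(U)$ to $\mathcal{C}_p(\gamma U\gamma^{-1})=\mathcal{C}_p(U)$ and the real Zariski closure $\mathcal{C}_0(U)$ to $\mathcal{C}_0(\gamma U\gamma^{-1})=\mathcal{C}_0(U)$, hence preserves the subgroup they generate. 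Therefore $\Gamma\mathcal{C}(U)$ is a subgroup, with $\mathcal{C}(U)$ normal in it and $\Gamma\mathcal{C}(U)/\mathcal{C}(U)\cong\Gamma/(\Gamma\cap\mathcal{C}(U))$. Also, $\Q$ being discrete in $\mathbf{A}\times\R$, the subgroup $\GL_m(\Q)$, and a fortiori $\Gamma$, is discrete in $\GL_m(\mathbf{A}\times\R)$, hence closed.

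Next, Lemma \ref{coficlo} gives that $U$ is cocompact in $\mathcal{C}(U)$; since $U\subseteq\Gamma\cap\mathcal{C}(U)\subseteq\mathcal{C}(U)$, the subgroup $\Gamma\cap\mathcal{C}(U)$ is cocompact in $\mathcal{C}(U)$ as well, so I can choose a compact set $C\subseteq\mathcal{C}(U)$ with $\mathcal{C}(U)=(\Gamma\cap\mathcal{C}(U))\,C$, whence $\Gamma\mathcal{C}(U)=\Gamma(\Gamma\cap\mathcal{C}(U))\,C=\Gamma C$. The three assertions then follow from this presentation. For closedness: if $\gamma_nc_n\to x$ with $\gamma_n\in\Gamma$, $c_n\in C$, then after extracting a subsequence $c_n\to c\in C$, so $\gamma_n\to xc^{-1}$, and $\Gamma$ being closed forces $xc^{-1}\in\Gamma$, i.e.\ $x\in\Gamma C$ (equivalently, the map $\Gamma\times C\to\GL_m(\mathbf{A}\times\R)$ is proper). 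For cocompactness of $\Gamma$: $\Gamma$ is a closed subgroup and $\Gamma\backslash\Gamma\mathcal{C}(U)=\Gamma\backslash\Gamma C$ is the continuous image of the compact set $C$. For openness of $\mathcal{C}(U)$ in $\Gamma\mathcal{C}(U)$: were it false, using that the ambient group is metrizable one would get a sequence $g_n\to 1$ in $\Gamma\mathcal{C}(U)\smallsetminus\mathcal{C}(U)$; writing $g_n=\gamma_nc_n$ with $\gamma_n\in\Gamma$, $c_n\in C$ and extracting so that $c_n\to c\in C\subseteq\mathcal{C}(U)$, we get $\gamma_n=g_nc_n^{-1}\to c^{-1}$, hence $\gamma_n=c^{-1}$ for $n$ large by discreteness, and then $g_n=c^{-1}c_n\in\mathcal{C}(U)$, a contradiction.

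The only substantial ingredient is Lemma \ref{coficlo}, so I do not anticipate a real obstacle. The step deserving the most care is the normalization claim: one has to verify the equivariance of the fine-closure construction under diagonal conjugation place by place, using that $\mathcal{C}_p$ is a topological closure and $\mathcal{C}_0$ a Zariski closure, each preserved by the pertinent automorphisms. A secondary routine point is that ``$\Gamma$ discrete and closed, $C$ compact'' genuinely forces both that $\Gamma C$ is closed and that $\mathcal{C}(U)$ is open in $\Gamma\mathcal{C}(U)$; here the metrizability (second countability) of $\GL_m(\mathbf{A}\times\R)$ is what lets me argue with sequences instead of nets.
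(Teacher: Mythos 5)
Your proof is correct, but it follows a genuinely different route from the paper's. The paper chooses a $\Gamma$-invariant partial flag with irreducible quotients and considers the block-diagonal ``trace'' homomorphism $\phi$ on the corresponding upper block-triangular group: its kernel on $\Gamma$ is exactly $U$, it extends to $\Gamma\mathcal{C}(U)$ with kernel exactly $\mathcal{C}(U)$ and image the discrete group $\phi(\Gamma)$, which yields openness of $\mathcal{C}(U)$ at once; closedness of $\Gamma\mathcal{C}(U)$ then follows (a subgroup with an open locally compact subgroup is locally compact, hence closed), and only the cocompactness statement uses Lemma \ref{coficlo}. You instead use the discreteness of $\GL_m(\Q)$ (hence of $\Gamma$) in $\GL_m(\mathbf{A}\times\R)$ together with Lemma \ref{coficlo} to write $\Gamma\mathcal{C}(U)=\Gamma C$ with $C\subset\mathcal{C}(U)$ compact, and then run the soft ``closed discrete subgroup times compact set'' argument for all three assertions. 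Both discreteness facts are the same standard adelic input (the paper applies it to $\phi(\Gamma)$, you to $\Gamma$ itself; for your version one should note that the adelic group topology refines the matrix-entry topology, so discreteness of $M_m(\Q)$ in $M_m(\mathbf{A}\times\R)$ suffices). Two remarks on the trade-offs: your argument makes openness and closedness depend on the cocompactness Lemma \ref{coficlo}, whereas the paper's $\phi$ gives openness independently of it and moreover identifies the discrete quotient $\Gamma\mathcal{C}(U)/\mathcal{C}(U)$ with $\phi(\Gamma)\cong\Gamma/U$; on the other hand, your explicit verification that $\Gamma$ normalizes $\mathcal{C}(U)$ (conjugation preserves each $\mathcal{C}_p(U)$ and the real Zariski closure, hence the closed subgroup they generate) is a genuine plus, since this is needed for $\Gamma\mathcal{C}(U)$ to be a group (as used in Proposition \ref{propgp}) and is left implicit in the paper.
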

\begin{proof}
Choose a partial flag in $\Q^m$ that is $\Gamma$-invariant with irreducible successive quotients. This yields an upper block-triangular decomposition. Then $U$ acts trivially on the irreducible subquotients, which means it acts by matrices with identity diagonal blocks. Let $\phi$ be the homomorphism mapping a matrix that is upper triangular in this decomposition to its ``diagonal trace", that is, replacing all upper unipotent blocks with 0. Then the kernel of $\phi:\Gamma\to\phi(\Gamma)$ is exactly $U$. Moreover, $\phi$ extends to $\Gamma\mathcal{C}(U)$ and $\phi(\Gamma\mathcal{C}(U))=\phi(\Gamma)$, which is discrete. Hence the kernel $\mathcal{C}(U)$ of $\phi:\Gamma\mathcal{C}(U)\to\phi(\Gamma)$ is open in $\Gamma\mathcal{C}(U)$. This implies in particular that $\Gamma\mathcal{C}(U)$ is closed in $\GL_m(\mathbf{A}\times\R)$. 

Lemma \ref{coficlo} ensures that $U$ is cocompact in $\mathcal{C}(U)$, and the cocompactness statement follows.
\end{proof}

\begin{rem}
A related embedding construction is performed by Shalom and Willis in the proof of \cite[Prop.\ 7.5]{ShW13}, in the context of certain lattices in semisimple groups.
\end{rem}

\subsection{Partial splittings}

\begin{lem}\label{gs1}
Let $M$ be a locally compact group and $s$ a contracting automorphism of $M$. Define $f(g)=gs(g)^{-1}$. Then $f$ is a self-homeomorphism of $M$.
\end{lem}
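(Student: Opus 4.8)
The map $f$ is continuous since both $g \mapsto g$ and $g \mapsto s(g)^{-1}$ are continuous and the group operations are continuous. The real content is producing a continuous inverse. I would construct the inverse as an explicit convergent ``telescoping'' product. Formally, if $h = gs(g)^{-1}$, then $g = h\, s(g)$, and iterating this substitution gives
\[
g = h\, s(h)\, s^2(h) \cdots s^{n-1}(h)\, s^n(g)
\]
for every $n \geq 1$, which one verifies by an immediate induction using that $s$ is an automorphism. Since $s$ is contracting, $s^n(g) \to 1$ as $n \to \infty$ for every fixed $g$, so one is led to define
\[
F(h) = \lim_{n \to \infty} h\, s(h)\, s^2(h) \cdots s^{n-1}(h) = \prod_{k=0}^{\infty} s^k(h),
\]
and the claim will be that $F$ is a well-defined continuous map $M \to M$ inverse to $f$.

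\textbf{Key steps, in order.} First I would establish convergence of the infinite product $\prod_{k\ge 0} s^k(h)$. Because $s$ is contracting, there is a compact identity neighborhood $U$ with $s(U) \subset U$ and $\bigcap_n s^n(U) = \{1\}$, and moreover $s^n(C) \to \{1\}$ uniformly for $C$ ranging in a fixed compact set (this is the standard consequence of contraction on a locally compact group, e.g.\ via the associated proper metric or directly). This gives that the partial products $P_n(h) = \prod_{k=0}^{n-1} s^k(h)$ form a Cauchy-type sequence, uniformly for $h$ in a compact set, hence converge; completeness is not an issue since $M$ is locally compact and the tails $\prod_{k\ge n} s^k(h)$ eventually lie in a fixed compact set. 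Uniform convergence on compact sets then yields continuity of $F$. Second, I would check $f \circ F = \mathrm{id}$: from $F(h) = h\, s(F(h))$ (which follows by passing to the limit in $P_{n+1}(h) = h\, s(P_n(h))$), one gets $F(h)\, s(F(h))^{-1} = h$, i.e.\ $f(F(h)) = h$. Third, $F \circ f = \mathrm{id}$: given $g$, the telescoping identity above says $P_n(f(g)) = g\, s^n(g)^{-1}$, and letting $n \to \infty$ gives $F(f(g)) = g$ since $s^n(g) \to 1$. Hence $f$ is a continuous bijection with continuous inverse $F$, i.e.\ a self-homeomorphism of $M$.

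\textbf{Main obstacle.} The only delicate point is the convergence of the infinite product in the non-abelian, non-metrizable-a-priori setting: one must argue that the partial products $P_n(h)$ converge, and do so uniformly on compact subsets of $M$, using only that $s$ is a contracting automorphism of a locally compact group. The clean way is to invoke that a contracting automorphism admits a compatible proper left-invariant metric (or at least a basis of compact identity neighborhoods $U$ with $s(U)\subset U$, $\bigcap s^n(U)=\{1\}$, together with the uniform contraction $\sup_{h\in C} s^n(h) \to 1$ for compact $C$); granting this, the estimate $P_m(h)^{-1}P_n(h) = s^m(h)s^{m+1}(h)\cdots s^{n-1}(h) \in s^m(U)\cdot s^{m+1}(U)\cdots$, which shrinks to $\{1\}$, makes the sequence of partial products converge, and the tail control confines everything to a fixed compact set so that the limit exists in $M$. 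Everything else is formal manipulation of the telescoping identity.
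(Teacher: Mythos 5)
Your overall strategy is the same as the paper's: define $F(h)=h\,s(h)\,s^2(h)\cdots$, prove the infinite product converges uniformly on compact sets, and then the telescoping identities give $f\circ F=F\circ f=\mathrm{id}$ formally. The formal part of your argument is fine. The gap is in the one step that is actually the content of the lemma, namely the ``summability''. You assert that from a compact identity neighborhood $U$ with $s(U)\subset U$ and $\bigcap_n s^n(U)=\{1\}$ (plus uniform contraction on compacta) it follows that
\[
s^m(U)\,s^{m+1}(U)\cdots s^{n-1}(U)
\]
``shrinks to $\{1\}$'' as $m\to\infty$, uniformly in $n$. That does not follow from the stated hypotheses as written: a product of many small sets need not be small (indeed $s^k(U)\subset s^m(U)$ for $k\ge m$ only gives containment in $(s^m(U))^{\,n-m}$, which is useless), and a proper left-invariant metric compatible with the topology — which exists on any locally compact $\sigma$-compact group and has nothing to do with contraction — gives no summable decay of the terms either. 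What one really needs is either a gauge/metric in which $s$ contracts geometrically, or some substitute such as a subgroup structure; this is exactly the point where the paper works: in the totally disconnected case it takes $U=K$ a compact open \emph{subgroup} with $s(K)\subset K$, so the whole tail product stays inside $s^m(K)\to\{1\}$; in the connected case $M$ is a vector group and the geometric series estimate applies; and the general case is reduced to these two via Siebert's canonical decomposition of $M$ as the direct product of its identity component and a totally disconnected factor.

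Your sketch can be repaired without the structure theory, but it requires an argument you did not give: using uniform contraction on compacta, choose $N$ with $s^N(U\cdot U)\subset U$, set $V_j=s^{jN}(U)$, so that $V_jV_j\subset V_{j-1}$ and $\bigcap_j V_j=\{1\}$; then a right-to-left regrouping induction shows that any product taking boundedly many factors from each level $V_j$ with levels tending to infinity lies in $V_{J-O(1)}$, which gives the required uniform smallness of the tails (and in particular that all partial products stay in a fixed compact set). Note also that even the uniformity of contraction on compact sets, and the existence of $U$ (or of a compact open subgroup $K$) with $s(U)\subset U$, are themselves nontrivial facts about contracting automorphisms due to Siebert, so they should be cited rather than treated as formal consequences of the pointwise definition. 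As it stands, the crucial estimate is asserted rather than proved, so the proposal does not yet constitute a complete proof.
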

\begin{proof}
We wish to define its inverse as $F(g)=gs(g)s^2(g)\dots$; we need to check that this product is ``summable" (uniformly on compact subsets, namely that $\prod_{k=n}^{n+\ell}s^k(g)$ tends to 1 when $n$ tends to $+\infty$, uniformly in $\ell$ and for $g$ in any given compact subset. 

If $M$ is totally disconnected, then it has a compact open subgroup $K$ such that $s(K)\subset K$, and then $\bigcap_{n\ge 0}s^n(K)=\{1\}$. Then the summability condition immediately follows.

If $M$ is connected, then $M$ is a finite-dimensional real vector space with a linear contraction and the summability is a standard verification.

The general case follows from the fact that $M$ decomposes canonically as topological direct product of $M^\circ$ and its elliptic radical, which is totally disconnected \cite[Prop.\ 4.2]{Sie}.

Once the summability is established, it is immediate that $F\circ f$ and $f\circ F$ are both the identity of $G$.
\end{proof}

\begin{lem}
Let $G$ be a locally compact group in an extension $1\to M\to G\to A\to 1$, with $M$ and $A$ abelian. Assume that some element $g$ of $G$ right-acts on $M$ as a contraction. Then the centralizer of $g$ is a section of the extension.
\end{lem}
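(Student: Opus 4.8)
The plan is to show that the quotient map $p\colon G\to A$ restricts to a bijective homomorphism $C_G(g)\to A$; since centralizers of points are closed, a routine topological argument then upgrades this to an isomorphism. Write $s$ for the topological automorphism of $M$ induced by the (right) action of $g$, that is $s(m)=g^{-1}mg$; by hypothesis $s$ is contracting. The first step is to check that $p$ is injective on $C_G(g)$, i.e.\ that $C_G(g)\cap M=\{1\}$: if $m\in M$ commutes with $g$ then $s(m)=m$, hence $s^n(m)=m$ for all $n$, and since $s^n(m)\to 1$ we get $m=1$. (This uses only that a contracting automorphism has no nontrivial fixed point, not that $M$ is abelian.)

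The second and main step is surjectivity. Given $a\in A$, I would pick any lift $h_0\in G$ with $p(h_0)=a$. Because $A$ is abelian, the image of the commutator $c:=[g,h_0]=g^{-1}h_0^{-1}gh_0$ in $A$ is trivial, so $c\in M$ and $h_0^{-1}gh_0=gc$. I then look for a correction $m\in M$ so that $h:=h_0m$ centralizes $g$. Using that $M$ is abelian, a direct computation gives
\[
h^{-1}gh=m^{-1}(gc)m=g\cdot c\cdot\bigl(m\,s(m)^{-1}\bigr),
\]
so the condition $h^{-1}gh=g$ becomes $m\,s(m)^{-1}=c^{-1}$. By Lemma \ref{gs1} applied to the contracting automorphism $s$ of $M$, the map $f\colon M\to M$, $f(m)=m\,s(m)^{-1}$, is a self-homeomorphism, in particular surjective, so the required $m$ exists. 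Then $h\in C_G(g)$ and $p(h)=a$, which proves that $p(C_G(g))=A$.

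Finally, $C_G(g)$ is a closed subgroup of $G$, and $p$ restricts to a continuous bijective homomorphism $C_G(g)\to A$; invoking the open mapping theorem for (second countable) locally compact groups, this is a topological isomorphism, so $C_G(g)$ is the image of a continuous section of the extension. I do not expect a genuine obstacle here: all the weight is carried by Lemma \ref{gs1}, whose summability statement is exactly what makes the correction term $m\,s(m)^{-1}$ sweep out all of $M$. The only points needing a little care are fixing the conjugation convention (so that $s$, rather than $s^{-1}$, is the contraction) and the purely formal topological upgrade at the end.
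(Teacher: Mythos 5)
Your proof is correct and follows essentially the same route as the paper: both reduce the statement to showing $C_G(g)\cap M=\{1\}$ and $C_G(g)M=G$, and solve the resulting equation $m\,s(m)^{-1}=c^{-1}$ in $M$ using the surjectivity of $m\mapsto m\,s(m)^{-1}$ from Lemma \ref{gs1} (the paper phrases the same computation via the commutator identity $[hm,g]=[h,g][m,g]$). Your closing appeal to the open mapping theorem (with its second-countability caveat) is not needed for the paper's purposes, which only use the algebraic splitting by a closed subgroup; if you do want the topological upgrade, it follows more directly by noting that $x\mapsto x\,f^{-1}(c(x)^{-1})$ is a continuous retraction $G\to C_G(g)$ that factors through $G/M\cong A$.
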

\begin{proof}
Let $H$ be the centralizer of $g$. Clearly $H\cap M=\{1\}$. So it is enough to show that $HM=G$. Equivalently, letting $h$ be any element of $G$, we have to show that the equation $[hm,g]=1$ has a solution $m\in M$. Here the commutator is defined as $[X,Y]=X^{-1}Y^{-1}XY$ and satisfies the identity $[XY,Z]=[X,Z]^Y.[Y,Z]$. Then $[hm,g]=[h,g][m,g]$. By Lemma \ref{gs1}, $m\mapsto [m,g]$ is a self-homeomorphism of $M$. So indeed we obtain a unique solution.
\end{proof}

In the following lemma, we refer to Definition \ref{contr} for the definition of $\mathrm{Contr}(F)$.

\begin{lem}\label{existsplit}
Let $G$ be a locally compact group in an extension $1\to U\to G\to A\to 1$, with $A$ compactly generated abelian and $U$ sub-unipotent (over a finite product of adic and real fields), i.e., a closed subgroup of a unipotent group containing the real component. Then $G$ has a compactly generated, closed subgroup $F$ such that
 $FU=G$ and $F$ has polynomial growth, and $F\cap U^\circ$ is the distal part of $U^\circ$. Moreover, if $G$ is compactly generated, the subgroup generated by $F$ and $\mathrm{Contr}(F)$ is cocompact.
\end{lem}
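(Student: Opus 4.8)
<br>

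The goal is to prove Lemma~\ref{existsplit}: given an extension $1 \to U \to G \to A \to 1$ with $A$ compactly generated abelian and $U$ sub-unipotent, produce a compactly generated closed subgroup $F$ with $FU = G$, $F$ of polynomial growth, $F \cap U^\circ$ equal to the distal part of $U^\circ$, and (when $G$ is compactly generated) with $\langle F, \mathrm{Contr}(F)\rangle$ cocompact.

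\medskip

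\textbf{Plan of proof.} The strategy is to split off the ``contracted directions'' of $U$ and keep only a polynomial-growth piece inside $G$. First I would reduce to the case where $U$ is abelian: write $U = U^\circ \times U_{\mathrm{na}}$ using the canonical decomposition of a sub-unipotent group into its connected component and its elliptic (= totally disconnected) radical \cite[Prop.\ 4.2]{Sie}. Since $A$ is abelian, the conjugation action of $G$ on $U$ factors through $A$; pick a finitely generated subgroup $A_0 = \Z^r \le A$ whose image generates $A$ cocompactly (possible as $A$ is compactly generated abelian), and analyze the action of $A_0$ on $U$. Over each local field factor, the $A_0$-action on the relevant abelianizations has a well-defined decomposition into a distal part (eigenvalues of absolute value $1$ on the real factor; bounded orbits, i.e.\ a compact-by-(lattice) behavior on the $p$-adic factors) and a ``non-distal'' part, which over the reals contributes the contracted/expanded directions. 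Write $U_{\mathrm{dist}}$ for the distal part: this is a closed, $G$-normal subgroup, and $U_{\mathrm{dist}} \cap U^\circ$ is the distal part of $U^\circ$ as required. Now iterated use of the two preceding lemmas (Lemma~\ref{gs1} and the centralizer lemma) is the heart of it: working modulo $U_{\mathrm{dist}}$, the quotient $U/U_{\mathrm{dist}}$ has the property that some element of $G/U_{\mathrm{dist}}$ acts on it as a contraction (on the non-elliptic part), so the centralizer lemma yields a section splitting off a complement to $U/U_{\mathrm{dist}}$; lifting, one obtains a closed subgroup $F$ with $FU = G$ and $F \cap U = U_{\mathrm{dist}}$ (up to first passing to the nilpotent case by induction on nilpotency length of $U$, since those lemmas were stated for abelian $M$).

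\medskip

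\textbf{Polynomial growth of $F$.} Once $F$ is constructed, $F$ is an extension $1 \to U_{\mathrm{dist}} \to F \to A \to 1$ with $A$ abelian of polynomial growth and $U_{\mathrm{dist}}$ nilpotent with $A$ acting distally. A group of this shape has polynomial growth: by Guivarc'h--Jenkins--Losert type criteria (or directly, since $F$ is a closed subgroup of a solvable group on which the adjoint-type action has only eigenvalues of modulus $1$), $F$ is of polynomial growth. I would also note $F$ is compactly generated: it surjects onto $A$ with kernel $U_{\mathrm{dist}}$, and $U_{\mathrm{dist}}$ is compactly generated (a sub-unipotent group with distal $A$-action is an increasing union of compact-open-by-(finitely generated) pieces, stabilized by finitely many generators of $A$).

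\medskip

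\textbf{The cocompactness of $\langle F, \mathrm{Contr}(F)\rangle$, which I expect to be the main obstacle.} Assume $G$ is compactly generated. The subgroup $V$ generated by $\mathrm{Contr}(F)$ sits inside $U$ (images of contracted elements die in $A$ which is distal-acted-on, so they land in $U$), and in fact $V$ recaptures the ``divisible/contracted'' complement to $U_{\mathrm{dist}}$ inside $U$ — this is exactly the kind of statement proved in Lemma~\ref{divcon}, and I would mimic that argument: $\mathrm{Contr}(F)$ maps trivially into the compact group $U/\langle\mathrm{Contr}(F)\rangle$-type quotient, $\overline V$ is cocompact in the relevant divisible subgroup, and a divisible (sub-)unipotent group has no proper cocompact subgroup (its abelianization would be a nontrivial compact divisible totally disconnected group, impossible), so $V$ fills out that complement. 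Then $\langle F, V\rangle \supseteq F U_{\mathrm{contracted}}$, and since $U = U_{\mathrm{dist}} \cdot U_{\mathrm{contracted}}$ up to cocompactness and $F U_{\mathrm{dist}} = F$, we get $\langle F, V\rangle$ cocompact in $FU = G$. The delicate point is that ``$\mathrm{Contr}(F)$'' refers to elements contracted by conjugation by elements of $F$ (not of $G$), so I must verify that $F$ is large enough to contain contracting elements for every contracted direction of $U$ — this follows because $F$ surjects onto $A$, which already contains the needed contracting elements modulo $U_{\mathrm{dist}}$, and one lifts them into $F$; compact generation of $G$ is what guarantees there are enough such directions to make the span cocompact. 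The bookkeeping across the real factor (genuine contractions) versus the $p$-adic factors (bounded-but-not-relatively-compact orbits, which are ``contracted'' only in the $\mathrm{Contr}$ sense after passing to the divisible part) is the part requiring the most care.
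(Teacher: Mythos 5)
Your construction hinges on a one-shot splitting: having defined $U_{\mathrm{dist}}$, you claim that ``some element of $G/U_{\mathrm{dist}}$ acts on $U/U_{\mathrm{dist}}$ as a contraction'' and then apply the centralizer lemma (the one preceding Lemma \ref{existsplit}) to split off the whole non-distal part at once. This step is false in general: a single element need not contract the full non-distal quotient. Take $U=\R^2$ and $A=\Z^2$ acting by $(m,n)\mapsto\mathrm{diag}(2^m3^{-n},\,2^{-m}3^n)$; the distal part is trivial, yet no element of the extension contracts $U$, since that would require $m\log 2<n\log 3$ and $n\log 3<m\log 2$ simultaneously. A contracting element is guaranteed only on an \emph{irreducible} non-distal module (there, the contraction subgroup of $g$ or of $g^{-1}$ is a nonzero submodule, hence everything) — and this is exactly why the paper does not split off $U/U_{\mathrm{dist}}$ in one stroke but instead inducts on $\dim U$, peeling off one irreducible non-distal submodule $M$ of the last term of the lower central series at a time, and invokes the centralizer lemma only in the terminal case where $U$ itself is irreducible and non-distal. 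Repairing your argument means iterating the splitting piece by piece, i.e.\ reconstructing the paper's induction, so the gap is not cosmetic.

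A second, related problem is the announced ``reduction to the case where $U$ is abelian.'' For nonabelian $U$ the distal weights span a subalgebra of $\mathfrak u$ but not an ideal (the bracket of a distal with a non-distal weight vector has non-distal weight), so $U_{\mathrm{dist}}$ need not be normal in $U$, let alone in $G$, and the quotient you want to form does not exist as stated; also, the conjugation action of $G$ on $U$ factors through $A$ only once $U$ is abelian. The phrase ``induction on nilpotency length'' hides the real work: one must carry the condition $F\cap U^\circ=$ distal part of $U^\circ$ through the successive quotients and deal with the degenerate branches (e.g.\ when the partial splitting subgroup meets $U$ in an open subgroup, when the bottom term of the lower central series is already distal, and the terminal abelian irreducible case), which is precisely the case analysis in the paper's proof. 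Your treatment of the final cocompactness statement (mimicking Lemma \ref{divcon}, using that $F$ surjects onto $A$ and that compact generation of $G$ forces $U$ to be compactly generated as a normal subgroup) is in the right spirit and close to the paper's, but it rests on the flawed construction of $F$ above.
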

\begin{proof}
We first prove the result with $F$ not assumed compactly generated (so polynomial growth means that all its open, compactly generated subgroups have polynomial growth). The result immediately follows, since we can replace then $F$ with a large enough compactly generated open subgroup $F'$ still satisfying $F'U=G$ (since $G/U$ is compactly generated).

We argue by induction on the dimension of $U$ (the sum of its real and $p$-adic dimensions for various $p$). If $\dim(U)=0$, then $U=1$ and the result is trivial.
Let $Z$ be the last term of the lower central series of $U$. Then $Z$ has positive dimension. If the action of $A$ on $Z$ is distal (i.e., all eigenvalues have modulus 1), we define $M=Z$; otherwise, in the Lie algebra we find an irreducible non-distal submodule, which corresponds to an irreducible submodule $M$ of $Z$; in both cases $M$ has positive dimension. We can argue by induction for $1\to U/M\to G/M\to A\to 1$ to get a closed subgroup $L/M$ of polynomial growth with $LN=G$ and $(L/M)\cap (U/M)^\circ$ is the distal part of $(U/M)^\circ$. If $Z$ is distal as $A$-module, then $L$ also has polynomial growth and we are done with $F=L$. Otherwise, $M$ is irreducible non-distal and $L$ contains the distal part of $U^\circ$. We have the extension $1\to L\cap U\to L\to L/L\cap U\to  1$. Note that $L\cap U$ is sub-unipotent, making use that $L\cap U^\circ$ is connected. If $\dim(L)<\dim(U)$, we can argue once more by induction within $L$ to find the desired subgroup. Otherwise, $L\cap U$ is open in $U$. This means that $(L\cap U)/M$ is open in $U/M$, and if this happens, $G/M$ has polynomial growth. If $U$ is not abelian, this forces $G$ to have  polynomial growth, and then we are done with $F=L$. Otherwise $U$ is abelian. In this case, if the distal part is nontrivial, we can argue in the same way with $M=D$. If the distal part is trivial, and we choose $M$ irreducible as above, the previous argument works as soon as $U/M$ is nontrivial. So the remaining case is when $U$ is irreducible and non-distal; in particular the distal part of $U^\circ$ is trivial. In this case, there exists an element acting as a contraction (if $g$ acts non-distally, the contraction part of either $g$ or $g^{-1}$ is a nonzero submodule, hence is all of $M$), so we can invoke Lemma \ref{existsplit}.

Now suppose that $G$ is compactly generated and let us prove the last statement. It is enough to show that the subgroup of $U$ generated by $(F\cap U)\cup\mathrm{Contr}(F)$ is cocompact, i.e., contains $U_{\mathrm{div}}$. First, it contains $U^\circ$, because we have ensured that $F\cap U$ contains the distal part. For the non-Archimedean part, that $G$ is compactly generated implies that $U$ is compactly generated as normal subgroup (because $G/U$ is compactly presented, being abelian), and hence it follows that the non-Archimedean part of $U_{\mathrm{div}}$ is contained in the subgroup generated by $\mathrm{Contr}(F)$ (indeed, otherwise we would obtain an $A$-equivariant quotient of $U$ isomorphic to $\Q_p^k$ with an irreducible distal action, for some $p,k$ and get a contradiction).
\end{proof}

\begin{prop}\label{propgp}
Let $\Gamma$ be a finitely generated amenable subgroup of $\GL_m(\Q)$, and let $G=\Gamma V\subset \GL_m(\mathbf{A}\times\R)$ with $V=\mathcal{C}(U)$ as defined as before Proposition \ref{unipclo}. Then $G$ has an open normal finite index subgroup $G'$ of the form $VF$ with $F$ compactly generated of polynomial growth, such that the subgroup of $G'$ generated by $F\cup\mathrm{Contr}(F)$ is cocompact. In particular, $G'$ belongs to the class $\mathfrak{C}''$ (Definition \ref{dcpp}).
\end{prop}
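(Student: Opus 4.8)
The plan is to construct the finite-index subgroup $G'$, bring it within range of the partial-splitting Lemma~\ref{existsplit}, and then reorganise its conclusion into a decomposition witnessing membership in $\mathfrak{C}''$. First I would analyse $G/V$. By Proposition~\ref{unipclo} the subgroup $V=\mathcal{C}(U)$ is open in $G=\Gamma V$, so $G/V$ is discrete; the composite $\Gamma\to G\to G/V$ is onto (since $G=\Gamma V$), with kernel $\Gamma\cap V=U$ --- the last equality being read off from the homomorphism $\phi$ built in the proof of Proposition~\ref{unipclo}. Hence $G/V\cong\Gamma/U$ is finitely generated and virtually abelian. I would pick a finite-index abelian subgroup of $\Gamma/U$ and replace it by the intersection of its finitely many conjugates, obtaining a normal finite-index abelian subgroup $\bar{A}\trianglelefteq G/V$; let $G'$ be its preimage in $G$. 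Then $G'$ is open (preimage of a clopen subset of a discrete group), of finite index, normal in $G$, contains $V$, equals $\Gamma'V$ for $\Gamma'=\Gamma\cap G'$, has $G'/V\cong\bar{A}$ compactly generated abelian, and is compactly generated (being an open finite-index subgroup of $G$, which is compactly generated because it contains the finitely generated cocompact subgroup $\Gamma$).

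Next I would apply Lemma~\ref{existsplit} to the extension $1\to V\to G'\to\bar{A}\to 1$; here $V=\mathcal{C}(U)$ is sub-unipotent, being a closed subgroup of a unipotent group over $\mathbf{A}\times\R$ containing the real component (if one wants the kernel literally over a finite product of fields, one first divides by the compact profinite subgroup $P=\prod_{p\nmid N}\overline{\pi_p(U)}$, where $N$ collects the primes dividing a denominator occurring in $\Gamma$, and lifts back at the end). This yields a compactly generated closed subgroup $F\le G'$ of polynomial growth with $FV=G'$, with $F\cap V^{\circ}$ equal to the distal part of $V^{\circ}$, and --- since $G'$ is compactly generated --- with $\langle F\cup\mathrm{Contr}(F)\rangle$ cocompact in $G'$. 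This already establishes the explicit assertions: $G'=VF$ with $F$ compactly generated of polynomial growth, and the subgroup generated by $F\cup\mathrm{Contr}(F)$ cocompact.

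It remains to see $G'\in\mathfrak{C}''$. Decompose $V$ under the conjugation action of $\bar{A}=G'/V$ into its contracted part $U_0$ --- the closure of the subgroup generated by $\mathrm{Contr}(F)$, equivalently the ``exponential radical'' spanned by the nonzero-weight subspaces --- and a complementary distal part $D$. Only the finitely many primes dividing a denominator occurring in $\Gamma$ contribute to $U_0$, since at every other prime the action factors through $\GL_m(\Z_p)$ and all weights take values of $p$-adic absolute value $1$; thus $U_0$ is a finite direct product of open subgroups of real and $p$-adic unipotent algebraic groups, and being $\bar{A}$-stable and normal in $V$ it is normal in $G'$. I would set $U:=U_0$ and $N:=F\cdot(D\cap\mathcal{C}_+(U))$; note $F$ already contains the distal part of $V^{\circ}$ and $D\cap\mathcal{C}_+(U)$ is compact. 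Then $G'=UN$ (since $V=U_0D$ and $D$ splits between $F$ and $D\cap\mathcal{C}_+(U)$), $N$ is CGLC and, being $F$ extended by a compact normal subgroup, has polynomial growth, and grouping $U$ by $\bar{A}$-weights exhibits the cocompact subgroup required by condition~(\ref{ssg3}) of Definition~\ref{defc}, the contracting elements $t_i\in N$ being chosen as lifts of suitable elements of $\bar{A}$ --- exactly as in the proof of Proposition~\ref{ex_clasc}. The main obstacle is precisely this last step: separating $\mathcal{C}(U)$ cleanly into the contracted part (supported at finitely many places, which becomes $U$) and the distal part (including the profinite factor at the remaining primes, which must be absorbed into $N$), and then checking that the contracted part really is a product of $N$-contracted subsets; by contrast the passage to a finite-index subgroup and the invocation of Lemma~\ref{existsplit} are routine once this target decomposition is in view.
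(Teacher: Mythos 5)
Your overall architecture is the same as the paper's: pass to an open normal finite-index subgroup $G'$ with $G'/V$ abelian (the paper takes $G'=\Lambda V$ for a finite-index $\Lambda\le\Gamma$ with unipotent-by-abelian Zariski closure; your pullback of a normal abelian finite-index subgroup of $G/V\cong\Gamma/U$ is equivalent), then apply Lemma~\ref{existsplit}; your device of splitting off the compact normal factor $P=\prod_{p\nmid N}\mathcal{C}_p(U)$, so that $V=V'\times P$ with $V'$ over finitely many places, is a legitimate way to meet the ``finite product of fields'' hypothesis. The genuine gap is in your last step, in the assertion that $D\cap\mathcal{C}_+(U)$ is compact, which you use to conclude that $N:=F\cdot(D\cap\mathcal{C}_+(U))$ is a compact normal extension of $F$, hence CGLC of polynomial growth. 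For the natural (zero-weight) choice of $D$ this is false, and at an \emph{active} prime, so it is not cured by the $P$-trick. Take $\Gamma\subset\GL_3(\Q)$ generated by $t=\mathrm{diag}(p,1,p)$ and the elementary unipotents $e_{12}(1)$, $e_{23}(1)$: then $\Gamma$ is finitely generated solvable, its unipotent radical $U$ is the Heisenberg group over $\Z[1/p]$, conjugation by $t$ acts on $u(x,y,z)$ by $(x,y,z)\mapsto(px,p^{-1}y,z)$, and $\mathcal{C}_p(U)$ is the Heisenberg group over $\Q_p$, whose distal (zero-weight) part is its center $\cong\Q_p$ --- noncompact. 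The statement of Lemma~\ref{existsplit} does not force this center into $F$ (the lemma's $F$ is only a large enough compactly generated \emph{open} subgroup, which may meet the center in $\Z_p$), and then your $N$ contains a central copy of $\Q_p$ acted on trivially, so $N$ is not compactly generated: any compact subset of $N$ generates a subgroup meeting that copy of $\Q_p$ in a compact subgroup. So as written the pair $(U_0,N)$ need not satisfy Definition~\ref{defc}/\ref{dcpp}, and the ``compact extension'' justification is unavailable.

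The proposition itself is not endangered, and your choice $U:=U_0$ is the right one: in the example above the offending center already lies in $U_0$ (it is a commutator of the two contracted root subgroups), and condition~(\ref{ssg3}) holds because every Heisenberg element is a product of boundedly many elements of the two root groups. What your argument needs in place of compactness of $D\cap\mathcal{C}_+(U)$ is the statement that all noncompact non-Archimedean distal directions of $V$ are already contained in $U_0$: otherwise $V$ would admit an $\bar A$-equivariant quotient isomorphic to some $\Q_p^k$ with distal action, contradicting compact generation of $G'$ --- this is precisely the argument closing the paper's proof of Lemma~\ref{existsplit}, and the analogue of Lemma~\ref{divcon}. Consequently $\mathcal{C}_+(U)$ is compact \emph{modulo} $U_0$, and the correct move is to enlarge $F$ only by a suitable $F$-invariant compact subgroup $W\subset V$ surjecting onto that compact quotient (exactly as the paper replaces $N$ by $NW$ in the proof of Theorem~\ref{Main}), not by all of $D\cap\mathcal{C}_+(U)$. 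With that correction (and a word on why $U_0$ is open in its Zariski closure at each of the finitely many active places), your verification of membership in $\mathfrak{C}''$ goes through; the paper itself dismisses this step as immediate, so the work you attempted here is genuinely the missing content --- it just has to be done with the compact-generation argument rather than the false compactness claim.
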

\begin{proof}
$\Gamma$ has a finite index subgroup $\Lambda$ whose Zariski closure is unipotent-by-abelian. It follows that $G'=\Lambda V$ is open normal of finite index in $G$, and $G'/V$ is abelian. So Lemma \ref{existsplit} applies. The last statement immediately follows.
\end{proof}

\begin{cor}\label{ccpp}
Every finitely generated amenable subgroup of $\GL_m(\Q)$ embeds as a cocompact lattice into a locally compact group $G$ with an open subgroup of finite index $G'$ in the class $\mathfrak{C}''$.\qed
\end{cor}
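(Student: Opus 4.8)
The plan is simply to assemble Propositions~\ref{unipclo} and~\ref{propgp}; the substantial work has already been carried out there, so the corollary is a matter of bookkeeping. Let $\Gamma$ be a finitely generated amenable subgroup of $\GL_m(\Q)$, let $U$ be its unipotent radical, and, as in the paragraph preceding Proposition~\ref{unipclo}, set $V=\mathcal{C}(U)$ and $G=\Gamma V$, viewed inside $\GL_m(\mathbf{A}\times\R)$.

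First I would check that $G$ is a locally compact group in which $\Gamma$ sits as a cocompact lattice. Local compactness is immediate: Proposition~\ref{unipclo} asserts that $G=\Gamma\mathcal{C}(U)$ is closed in $\GL_m(\mathbf{A}\times\R)$, and the latter group is locally compact. For discreteness of $\Gamma$ in $G$, recall that $\Q$ embeds as a discrete (indeed cocompact) subring of $\mathbf{A}\times\R$; hence $\mathrm{M}_m(\Q)$ is discrete in $\mathrm{M}_m(\mathbf{A}\times\R)$, so $\GL_m(\Q)$, and a fortiori its subgroup $\Gamma$, is discrete in $\GL_m(\mathbf{A}\times\R)$, and therefore discrete in the closed subgroup $G$. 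Finally, cocompactness of $\Gamma$ in $G$ is precisely the last assertion of Proposition~\ref{unipclo}. Thus $\Gamma$ is a cocompact lattice in $G$.

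It then remains to exhibit the required open finite-index subgroup, and this is exactly the content of Proposition~\ref{propgp}: it produces an open normal subgroup $G'$ of finite index in $G$, of the form $G'=VF$ with $F$ compactly generated of polynomial growth and with the subgroup generated by $F\cup\mathrm{Contr}(F)$ cocompact, and it records that $G'$ lies in the class $\mathfrak{C}''$ (Definition~\ref{dcpp}). Combining the two propositions yields the corollary.

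There is no genuine obstacle at this stage; the only point requiring a (routine) verification is the discreteness of $\Gamma$ in $G$. The real difficulties lie upstream: the construction of the fine closure and the cocompactness in it (Lemma~\ref{coficlo}, Proposition~\ref{unipclo}), and the partial-splitting argument producing $F$ of polynomial growth with $\langle F\cup\mathrm{Contr}(F)\rangle$ cocompact (Lemma~\ref{existsplit}, Proposition~\ref{propgp}).
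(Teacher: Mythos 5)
Your proposal is correct and follows exactly the route the paper intends: the corollary is stated with a \qed precisely because it is the combination of Proposition \ref{unipclo} (closedness of $\Gamma\mathcal{C}(U)$, discreteness of $\Gamma$ coming from $\Q$ being discrete in $\mathbf{A}\times\R$, and cocompactness) with Proposition \ref{propgp} (the open normal finite-index subgroup $G'=VF$ in the class $\mathfrak{C}''$). Your added verification of discreteness is the only detail the paper leaves implicit, and it is handled correctly.
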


\begin{cor}\label{amglmq}
Every finitely generated amenable subgroup of $\GL_m(\Q)$ has Property $\WAPFD$, and hence has Property $\HFD$.
\end{cor}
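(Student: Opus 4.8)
The plan is to assemble results already established: the cocompact-hull construction of the previous subsection, the fact that a group with a finite-index open subgroup in $\mathfrak{C}''$ has Property $\WAPFD$, and the inheritance of $\WAPFD$ by cocompact lattices. Concretely, let $\Gamma\le\GL_m(\Q)$ be finitely generated and amenable. By Corollary \ref{ccpp}, $\Gamma$ embeds as a cocompact lattice into a locally compact group $G$ admitting an open finite-index subgroup $G'$ in the class $\mathfrak{C}''$. Since $\Gamma$ is finitely generated and cocompact in $G$, the group $G$ is compactly generated, so Theorem \ref{cccap} applies and gives that $G$ has Property $\WAPFD$.

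Next I transfer $\WAPFD$ from $G$ down to $\Gamma$. As a cocompact lattice, $\Gamma$ is a closed cocompact subgroup of $G$ of finite covolume, so Theorem \ref{thm:subgroup} (which states that Properties $\WAPT$ and $\WAPFD$ pass to such subgroups) yields Property $\WAPFD$ for $\Gamma$. Finally, Property $\WAPFD$ implies Property $\HFD$: if $V$ is a unitary Hilbert $G$-module with no nonzero finite-dimensional submodule, then $V^{G,\mathrm{ap}}=0$ by Corollary \ref{sumfin}, and since $\WAPFD$ implies $\WAPAP$ we conclude $\overline{H^1}(G,V)=0$, which is exactly Property $\HFD$.

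The genuine work has already been carried out before this corollary: it lies in Corollary \ref{ccpp} (namely Proposition \ref{propgp} together with the unipotent-closure machinery) and in Theorem \ref{cccap}, which itself rests on the dynamical criterion Theorem \ref{prop:main} and on the controlled F\o lner and combability theorems for the class $\mathfrak{C}''$. At the level of the present statement there is essentially no obstacle; the only point meriting a moment's care is a bookkeeping one, that the ``cocompact lattice'' produced by Corollary \ref{ccpp} does satisfy the hypothesis ``closed cocompact subgroup of finite covolume'' of Theorem \ref{thm:subgroup} — which is immediate, finite covolume being part of the definition of a lattice (and automatic in any case here, since $G/\Gamma$ is compact and $G$ is amenable).
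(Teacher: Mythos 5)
Your argument is correct and coincides with the paper's own proof: invoke Corollary \ref{ccpp} for the cocompact hull, Theorem \ref{cccap} to get Property $\WAPFD$ for $G$, and Theorem \ref{thm:subgroup} to pass it down to $\Gamma$. The extra remarks (compact generation of $G$, finite covolume of the lattice, and the deduction of $\HFD$ from $\WAPFD$) are accurate bookkeeping that the paper leaves implicit.
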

\begin{proof}
Use the notation of Corollary \ref{ccpp}, so that $G'$ belongs to the class $\mathfrak{C}''$. By Theorem \ref{cccap}, we deduce that $G$ has Property $\WAPFD$. Hence by Theorem \ref{thm:subgroup}, $\Gamma$ has Property $\WAPFD$:
\end{proof}

\begin{rem}
Corollary \ref{amglmq} works when $\Q$ is replaced with any number field $K$, since $\GL_m(K)$ embeds into $\GL_{m[K:\Q]}(\Q)$.
\end{rem}

Using that every finitely generated VSP group is quotient of a virtually torsion-free finitely generated VSP group \cite{KL17}, and the fact that $\WAPFD$ passes to quotients, this can be improved to
\begin{cor}\label{cor:VSP}
Every finitely generated amenable VSP group has Property $\WAPFD$.
\end{cor}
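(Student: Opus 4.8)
The plan is to deduce Corollary~\ref{cor:VSP} from Corollary~\ref{amglmq} (which already handles finitely generated amenable subgroups of $\GL_m(\Q)$) by combining two purely group-theoretic facts: that Property $\WAPFD$ descends along quotient maps, and the theorem of Kropholler--Lorensen that every finitely generated VSP group is a quotient of a virtually torsion-free one.

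First I would record that Property $\WAPFD$ passes to quotients. If $\Gamma \twoheadrightarrow Q$ and $(V,\pi)$ is a WAP $Q$-module, then inflation makes $V$ a WAP $\Gamma$-module whose $\Gamma$-submodules are exactly its $Q$-submodules (the $\Gamma$-action factoring through $Q$), and a $1$-cocycle $b$ of $Q$ that is nonzero in $\overline{H^1}(Q,V)$ pulls back to a cocycle of $\Gamma$ nonzero in $\overline{H^1}(\Gamma,V)$ (inflation being injective in degree one since the kernel acts trivially on $V$); hence any finite-codimensional $\Gamma$-submodule modulo which the pullback is unbounded is a $Q$-submodule with the same property. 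This is the stability under quotients already used in the proof of Proposition~\ref{prop:compactExt}. Granting this, the result of Kropholler--Lorensen \cite{KL17} reduces Corollary~\ref{cor:VSP} to the case of a virtually torsion-free, finitely generated VSP group $\Gamma$.

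For such $\Gamma$ I would invoke the classical structure theory of finitely generated soluble groups of finite Pr\"ufer rank: they are minimax, and a virtually torsion-free finitely generated soluble minimax group embeds in $\GL_d(\Q)$ for a suitable $d$ --- equivalently, as stated in the introduction, the virtually torsion-free finitely generated VSP groups are precisely the finitely generated amenable subgroups of $\GL_d(\Q)$ as $d$ varies. With this embedding in hand, Corollary~\ref{amglmq} applies and yields Property $\WAPFD$ for $\Gamma$; combined with the previous paragraph this gives Property $\WAPFD$ for every finitely generated (amenable) VSP group, and in particular Property $\HFD$ via the implication $\WAPFD\Rightarrow\HFD$.

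I expect essentially no genuine obstacle here: all the analytic and geometric content is already in the chain leading to Corollary~\ref{amglmq} (the cocompact-hull construction placing subgroups of $\GL_m(\Q)$ inside the class $\mathfrak{C}''$, Proposition~\ref{propgp}/Corollary~\ref{ccpp} and Theorem~\ref{thm:ccpp}, fed through the dynamical criterion Theorem~\ref{prop:main} and Theorem~\ref{cccap}), and the present statement is a matter of assembling it with \cite{KL17}. The one point requiring care is citing correctly the linearity of virtually torsion-free finitely generated VSP groups over $\Q$ via the soluble minimax structure theory, since that is exactly what licenses the application of Corollary~\ref{amglmq}.
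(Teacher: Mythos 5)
Your proposal is correct and follows essentially the same route as the paper: the paper's proof of Corollary \ref{cor:VSP} is precisely the combination of Corollary \ref{amglmq} (via the identification of virtually torsion-free finitely generated VSP groups with finitely generated amenable subgroups of $\GL_d(\Q)$), the Kropholler--Lorensen theorem \cite{KL17}, and the fact that Property $\WAPFD$ passes to quotients. Your extra justifications (inflation in degree one, the minimax structure theory behind linearity over $\Q$) are exactly the points the paper treats as known, so there is nothing to add.
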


\begin{cor}\label{cor:Jb}
For every finitely generated VSP group $G$ equipped with a finite generating subset $S$, there exists $c>0$ such that the $L^p$-isoperimetric profile inside balls (see \S\ref{ichas}) satisfies

\begin{equation}\label{eq:Jb} J^b_{G,p}(n)\geq cn.
\end{equation}
\end{cor}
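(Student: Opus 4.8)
The plan is to reduce the general VSP case to finitely generated amenable subgroups of $\GL_m(\Q)$, then exhibit for such a group a strong controlled F\o lner sequence (via the cocompact hull in the class $\mathfrak{C}''$), and finally use the standard fact that a strong controlled F\o lner sequence forces $J^b_{\cdot,p}$ to grow at least linearly.

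\emph{Reduction to subgroups of $\GL_m(\Q)$.} By the theorem of Kropholler--Lorensen recalled after Theorem~\ref{VSPWAPAP}, $G$ is a quotient $q\colon\tilde G\twoheadrightarrow G$ of a virtually torsion-free finitely generated VSP group $\tilde G$, and such a $\tilde G$ embeds in some $\GL_m(\Q)$. Fix a finite symmetric generating set $\tilde S$ of $\tilde G$ and put $S=q(\tilde S)$. I claim $J^b_{G,p}(n)\ge J^b_{\tilde G,p}(n)$; since $J^b$ is, up to the equivalence $\asymp$, independent of the choice of finite generating set, this reduces the statement to $\tilde G$. Given $\tilde f\in L^p(\tilde G)$ supported in the $n$-ball, set $f(y)=\big(\sum_{x\in q^{-1}(y)}|\tilde f(x)|^p\big)^{1/p}$ on $G$. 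Then $f$ is supported in $q(B_{\tilde G}(1,n))\subset B_G(1,n)$, and $\|f\|_p=\|\tilde f\|_p$. Moreover, for $s\in S$ with a lift $\tilde s\in\tilde S$ one has $q^{-1}(s^{-1}y)=\tilde s^{-1}q^{-1}(y)$, so applying the triangle inequality for the $\ell^p$-norm on each fibre $q^{-1}(y)$ gives $|f(y)-f(s^{-1}y)|\le\big(\sum_{x\in q^{-1}(y)}|\tilde f(x)-\tilde f(\tilde s^{-1}x)|^p\big)^{1/p}$, whence $\|f-\lambda(s)f\|_p\le\|\tilde f-\lambda(\tilde s)\tilde f\|_p$. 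Taking suprema yields the claimed inequality.

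\emph{A strong controlled F\o lner sequence on $\tilde G$.} By Corollary~\ref{ccpp}, $\tilde G$ embeds as a cocompact lattice in a CGLC group $\hat G$ having an open finite-index subgroup $\hat G'$ in the class $\mathfrak{C}''$. By Theorem~\ref{th:C_strong}, $\hat G'$ admits a strong controlled F\o lner sequence (Definition~\ref{strongf}); this property is inherited by $\hat G$ (an open finite-index over-group) and then by the cocompact lattice $\tilde G$, via the standard transfer of controlled F\o lner data through a bounded fundamental domain (which in our situation amounts to the quasi-isometry invariance of the isoperimetric profile inside balls). Now it is known (see \cite{tes1}, where such a pair $(F_n,F'_n)$ is called a controlled F\o lner pair) that a strong controlled F\o lner sequence implies $J^b_{H,p}(n)\succeq n$: concretely, after rescaling one may assume $F_n\subset B(1,Cn)$ and $\mu(F_nS^{2n})\le C\mu(F_n)$; then the function $f_n$ equal to $n$ on $F_n$, vanishing off $F'_n=F_nS^n\subset B(1,2Cn)$, and $1$-Lipschitz in between, satisfies $\|f_n\|_p\ge n\,\mu(F_n)^{1/p}$ while $f_n-\lambda(s)f_n$ is bounded by $1$ and supported in $F_nS^{2n}$, so $\sup_{s\in S}\|f_n-\lambda(s)f_n\|_p\le C^{1/p}\mu(F_n)^{1/p}$. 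Hence $J^b_{H,p}(2Cn)\ge C^{-1/p}n$, and monotonicity of $n\mapsto J^b_{H,p}(n)$ together with its positivity on balls gives \eqref{eq:Jb}. Applying this to $H=\tilde G$ and combining with the previous paragraph proves the corollary.

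\emph{Main obstacle.} The $\ell^p$-reduction to $\GL_m(\Q)$-subgroups and the passage from a strong controlled F\o lner sequence to \eqref{eq:Jb} are short; the substance is in Corollary~\ref{ccpp} (the construction of the cocompact hull, carried out earlier) and, secondarily, in checking carefully that a strong controlled F\o lner sequence descends from $\hat G'$ to $\hat G$ and then to the cocompact lattice $\tilde G$. Here one has to be a little careful with the left/right asymmetry coming from non-unimodularity of $\hat G'$ when transporting the F\o lner sets and their tubular neighbourhoods; it is convenient that the final step is run on the discrete---hence unimodular---group $\tilde G$, where the tube estimates and the distance-function argument are unambiguous.
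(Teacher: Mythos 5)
Your proposal is correct and follows essentially the same route as the paper: reduce to finitely generated amenable subgroups of $\GL_m(\Q)$ via Kropholler--Lorensen together with monotonicity of $J^b_{\ast,p}$ under quotients, pass to the cocompact hull with finite-index subgroup in the class $\mathfrak{C}''$ (Theorem \ref{thm:ccpp}), use its strong controlled F\o lner sequence (Theorem \ref{th:C_strong}), and transfer the linear lower bound back by quasi-isometry invariance of the profile. The only difference is cosmetic: where the paper cites \cite[Theorem 1]{tes2} for the quotient step and \cite[Proposition 4.9]{tes1} for the implication from controlled F\o lner pairs to $J^b_{G,p}(n)\succeq n$, you write out direct arguments (your fibrewise $\ell^p$ computation is exactly the content of the former).
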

\begin{proof}
By Lemma \ref{lem:BallC}, groups of the class $\mathfrak{C}''$ have a strong controlled F\o lner sequence (called a controlled F\o lner pair in \cite{tes1}). By \cite[Proposition 4.9]{tes1}, this implies that groups of the class $\mathfrak{C}''$ satisfy (\ref{eq:Jb}). On the other hand, $G$ being quasi-isometric to its cocompact hull, we deduce from Theorem \ref{thm:ccpp} and \cite[Theorem 1]{tes3} that every amenable finitely generated subgroup of $\GL_m(\Q)$ satisfies (\ref{eq:Jb}). Now, we once again apply the main result of Kropholler and Lorensen \cite{KL17} and the fact that $J^b_{\ast,p}$ behaves well under taking quotients \cite[Theorem 1]{tes2} to conclude. \end{proof}

\section{Mean ergodic theorem and Bourgain's theorem}\label{aux}
\subsection{Proof of Proposition \ref{ergoprop}}

The ``if" part was already addressed in the introduction. For the other direction, suppose that $G$ does not have $\WAPT$, but has $\WAPFD$. This means that $G$ has a finite-dimensional orthogonal representation $\pi$ with nonzero first reduced cohomology and no non-zero invariant vector. Now, recall that by the standard Gaussian construction (see \cite[Corollary A.7.15]{BHV08}), one can assume that $\pi$ is a subrepresentation of some orthogonal representation $\pi'$ of $G$ coming from a measure-preserving ergodic action on some probability space $X$. Let $b$ be a 1-cocycle for $\pi$ that is not an almost coboundary, and let $b'$ be the corresponding cocycle for $\pi'$: note that since $\pi$ has no invariant vectors, $b'$ is orthogonal to the space of constant functions. 
One therefore has $b'(g)(x)=c(g)(x)$ where $c$ is a square-integrable cocycle $c:X\times G\to \R$ of zero average. Using that $b'$ is not an almost coboundary, one easily checks that $\frac{1}{|g|}(c(g)(x))$ does not tend to zero in $L^2$-norm as $|g|\to \infty$. In particular the ergodic theorem for $G$ in $L^2$ (defined similarly) fails; since the inclusion $L^2(X)\to L^1(X)$ is continuous, it also fails in $L^1$.

\begin{rem}
The above proof works with no change if $G$ has Property $\HFD$ and not $\HT$. This is actually more general, since the reader can easily check that if $G$ has Property $\WAPFD$ but not $\WAPT$, then it has Property $\HFD$ but not $\HT$.
\end{rem}

\subsection{Proof of Corollary \ref{corbo}}

For every commutative unital ring $R$ and $t\in R^\times$, consider the group 
 $$A(R,t)=\left\{\left(\begin{array}{cc}
t^n & x\\
0 & t^{-n}
\end{array}\right); x\in R, n\in \Z \right\}\subset\GL_2(R).$$
Let us fix some prime $p$.
 Note that the ring $\F_p[t,t^{-1}]$ embeds densely in $\F_p(\!(t)\!)$, but the diagonal embedding  $\F_p[t,t^{-1}]\to \F_p(\!(t)\!)\oplus\F_p(\!(t)\!)$ sending $t$ to $(t,t^{-1})$  is easily seen to be discrete and cocompact. The lamplighter group 
$L_p=\mathbf{F}_p\wr \Z$ can be described as $A(\mathbf{F}_p[t,t^{-1}],t)$, and therefore embeds as a cocompact lattice in $G=(\mathbf{F}_p\lp t\rp)^2 \rtimes \Z$, where $\Z$ acts by multiplication by $t$ on the first factor and by $t^{-1}$ on the second factor. First, we note that this implies that $L_p$ has $\WAPT$. 
On the other hand the group $G$, and therefore $L_p$ quasi-isometrically embeds as a subgroup of $\big(A(\mathbf{F}_p\lp t\rp,t)\big)^2$.  Observe that $A(\F_p(\!(t)\!),t)$ acts properly and cocompactly on the Bass-Serre tree of $\SL(2,\F_p(\!(t)\!))$. 
It follows that $L_p$ embeds quasi-isometrically into a product of two $(p+1)$-regular trees. Therefore, in order to show Bourgain's theorem, it is enough to prove that $L_p$ does not quasi-isometrically embed into any superreflexive Banach space. Since $L_p$ is amenable, by \cite[Theorem 9.1]{NP} it is enough to prove that $L_p$ does not admit any affine isometric  action on some superreflexive Banach space $E$ whose orbits are quasi-isometrically embedded. Consider the $1$-cocycle $b$ associated to such an action. Since $L_p$ has Property $\WAPT$, this cocycle decomposes as $b=b_1+b_2$, where $b_1$ is a group homomorphism to $E$, and $b_2$ is an almost coboundary. Approximating $b_2$ by coboundaries, one easily checks that it is sublinear, namely $\|b_2(g)\|/|g|\to 0$ as $|g|\to \infty$ (where $|\cdot|$ is some arbitrary word metric on $L_p$). This clearly implies that $b$ cannot be a quasi-isometric embedding.  So the corollary is proved.

\begin{rem}Note that in Corollary \ref{corbo}, we only recover the qualitative part of Bourgain's theorem. Indeed, the latter also provides optimal quantitative estimates on the distortion (as in \cite{tes1}), which do not follow from the approach here.
\end{rem}


\begin{thebibliography}{KM98b}

\bibitem[A13]{Austin} T. Austin (with an appendix of L. Bowen). Integrable measure equivalence for groups of polynomial growth. Groups Geom. Dyn. 10(1) (2016) 117--154.


\bibitem[ANT13]{ANT} T. Austin, A. Naor, R. Tessera. Sharp quantitative nonembeddability of the Heisenberg group into superreflexive Banach spaces, GGD 7 (2013), no. 3, 497--522.

\bibitem[AB40]{AB} L. Alaoglu and G. Birkhoff, General ergodic theorems, Ann. Math. 41 (1940), no. 2, 293--309. 

\bibitem[BFGM07]{BFGM} U. Bader, A. Furman, T. Gelander and N. Monod, Property (T) and rigidity for actions on Banach spaces, Acta Math. 198 (2007), no. 1, 57--105.

\bibitem[BFS13]{BFS} U. Bader, A. Furman, and R. Sauer. Integrable measure equivalence and rigidity of hyperbolic lattices. Invent. Math. 194 (2013), no. 2, 313--379.

\bibitem[BHV08]{BHV08} B. Bekka, P. de la Harpe, A. Valette. Kazhdan's Property~(T).  New mathematical monographs: 11, Cambridge University Press, 2008.

\bibitem[BJM]{BJM} J. F. Berglund, H. D Junghenn and P. Milnes, Analysis on semigroups. Function spaces,
compactifications, representations, Canadian Mathematical Society Series of Monographs and Advanced Texts. A Wiley-Interscience Publication. John Wiley \& Sons, Inc., New York, 1989.


\bibitem[BD91]{BD} D. Boivin, Y. Derriennic. \newblock The ergodic theorem for additive cocycles of $\Z^d$ and $\R^d$. Ergodic Theory Dyn. Syst. 11, No.1, 19--39 (1991).

\bibitem[Bo86]{Bourgain} J. Bourgain.
\newblock  The metrical interpretation of super-relexivity in Banach spaces. Israel J. Math. 56
(1986), 221--230.




\bibitem[BRS13]{BRS} U. Bader, C. Rosendal, R. Sauer. On the cohomology of weakly almost periodic representations. J. Topol. Anal. Vol. 6, No. 2, 153--165.

\bibitem[BZ17]{BZ} J. Brieussel, T. Zheng. Shalom's property $\HFD$ and extensions by $Z$ of
locally finite groups. ArXiv:1706.00707, 2017.

\bibitem[C08]{cornulier} Y. Cornulier. Dimension of asymptotic cones of Lie groups. J. Topology 1(2) (2008), 343--361.


\bibitem[CT17]{CTDehn} Y. Cornulier, R. 
Tessera, Geometric presentation of Lie groups and their Dehn functions. Publ. Math. IHES 125(1) (2017), 79--219.

\bibitem[CTV07]{CTV} Y. Cornulier, R. 
Tessera, A.  Valette.
\newblock Isometric group actions on Hilbert spaces: growth of
cocycles. \newblock Geom. Funct. Anal. 17 (2007), 770--792. 


\bibitem[D77]{Del} P. Delorme. 1-cohomologie
des représentations unitaires des groupes de Lie semi-simples et
résolubles. Produits tensoriels continus de représentations. Bull.
Soc. Math. France  105 (1977), 281--336.

\bibitem[EFH15]{EFH} T. Eisner, B. Farkas, M. Haase and R. Nagel, Operator Theoretic Aspects of Ergodic Theory,
Graduate Texts in Mathematics, 272. Springer, Cham, 2015.

\bibitem[G93]{Gr} M. Gromov. Asymptotic invariants of infinite groups. In G. Niblo and
M. Roller (Eds.), Geometric group theory II, number 182 in LMS lecture notes. Camb. Univ. Press, 1993.

\bibitem[G03]{GrRW} M. Gromov. Random walks on random groups. Geom. Funct. Anal. Vol. 13 (2003) 73--146.

\bibitem[G72]{G72} A. Guichardet. Sur la cohomologie des groupes topologiques. II. Bull. Sci. Math. (2) 96 (1972), 305--332.


\bibitem[G80]{Guit} A. Guichardet. Cohomologie des groupes topologiques et des alg\`ebres de Lie. Textes Math\'ematiques, 2. CEDIC, Paris, 1980.

\bibitem[Gu73]{Guih} Y. Guivarc'h. Croissance polynomiale et p\'eriodes des fonctions
harmoniques, \textit{Bull. Sc. Math. France} \textbf{101} (1973), 333--379.

\bibitem[K38]{K} S. Kakutani, Iteration of linear operations in complex Banach spaces, Proc. Imp. 
Acad. Tokyo vol. 14 (1938), 295--300.

\bibitem[K86]{K86} P. Kropholler
Cohomological dimension of soluble groups. 
J. Pure Appl. Algebra 43 (1986), no. 3, 281--287. 

\bibitem[KL97]{KL} B. Kleiner and B. Leeb, Rigidity of quasi-isometries for symmetric spaces and Euclidean buildings, Inst. Hautes Etudes Sci. Publ. Math. 86 (1997), 111--197.

\bibitem[KL17]{KL17} P. Kropholler and K. Lorensen. Torsion-free covers of virtually solvable minimax groups. ArXiv 1510.07583.

\bibitem[KS97]{KS} N. J. Korevaar and R. M. Schoen. Global existence theorems for harmonic maps to non-locally
compact spaces. Comm. Anal. Geom. 5 (1997), 333--387.

\bibitem[Lo87]{Lo} V. Losert. On the structure of groups with polynomial growth, Math. Z. 195 (1987) 109--117. 

\bibitem[Ma06]{FM} F. Martin. Reduced 1-cohomology of connected locally compact groups and applications. J. Lie Theory 16 (2006), no. 2, 311--328.

\bibitem[M95]{Mok} N. Mok. Harmonic forms with values in locally constant Hilbert bundles. Proceedings of the
Conference in Honor of Jean-Pierre Kahane (Orsay, 1993). J. Fourier Anal. Appl. 1995, Special
Issue, 433--453.

\bibitem[MS06]{MS} N. Monod, Y. Shalom, Orbit equivalence rigidity and bounded cohomology, Annals of Math., 164 (2006), 825-878.

\bibitem[NP]{NP} A. Naor, Y. Peres. $L_p$ compression, traveling salesmen, and stable walks. Duke Math. J. 157(1) (2011), 53--108.

\bibitem[RN62]{RN} C. Ryll-Nardzewski, Generalized random ergodic theorems and weakly almost periodic functions. Bull. Acad. Polon. Sci. Ser. Sci. Math. Astronom. Phys. 10 (1962), 271--275.


\bibitem[Sh00]{Shal} Y. Shalom. Rigidity of commensurators and irreducible lattices. Invent. Math. 141 (2000),
1--54.

\bibitem[Sh04]{Shalom} Y. Shalom. Harmonic analysis,
cohomology, and the large scale geometry of amenable groups.
Acta Mathematica 193 (2004), 119--185.


\bibitem[Shi55]{Shi} K. Shiga. Representations of a compact group on a Banach space. J. Math. Soc. Japan Volume 7, Number 3 (1955), 224--248.

\bibitem[ShW13]{ShW13} Y. Shalom, G. Willis.
Commensurated subgroups of arithmetic groups,
totally disconnected groups and adelic rigidity.
Geom. Funct. Anal. 23 (2013), no. 5, 1631--1683.

\bibitem[Sie86]{Sie} E.~Siebert. Contractive automorphisms on locally compact groups. Math. Z. 191 (1986), 73--90. 

\bibitem[Tal84]{Talagrand} M. Talagrand. Weak Cauchy sequences in $L^1(E)$.
Amer. J. of Math. 106(3) (1984), 703--724.

\bibitem[T09]{tesLp} R. Tessera. Vanishing of the first reduced cohomology with values in a Lp-representation. Annales de l'institut Fourier, 59 no. 2 (2009), p. 851--876. 

\bibitem[T11]{tes1} R. Tessera. Asymptotic isoperimetry on groups and uniform embeddings into Banach spaces. Comment. Math. Helv. 86 3, (2011), 499-535.

\bibitem[T08]{tes3}  Large scale Sobolev inequalities on metric measure spaces and applications. Rev. Mat. Iberoam. 24 (2008), no. 3, 825--864. 

\bibitem[T13]{tes2} R. Tessera.  Isoperimetric profile and random walks on locally compact solvable groups. Rev. Mat. Iberoam. 29 (2) (2013), 715-737.

\bibitem[Y38]{Y} K. Yosida. Mean ergodic theorem in Banach spaces, Proc. Imp. Acad. Tokyo vol. 
14 (1938) pp. 292--294
\end{thebibliography}

\baselineskip=16pt


\bigskip

\footnotesize

\end{document}